\numberwithin{equation}{section}
\theoremstyle{plain}
\newtheorem{Thm}{Theorem}[section]
\newtheorem{Lem}[Thm]{Lemma}
\newtheorem{Prop}[Thm]{Proposition}
\newtheorem{Cor}[Thm]{Corollary}
\theoremstyle{remark}
\newtheorem{Def}[Thm] {Definition}
\newtheorem{Rem}[Thm] {Remark}
\theoremstyle{remark}
\theoremstyle{definition}
\newcommand{\eps}{\varepsilon}
\newcommand{\N}{\mathbb{N}}
\newcommand{\B}{\mathcal{B}}
\begin{document}

\title{ Topological entropy and Hausdorff dimension of shrinking target sets }
\author{Xiaobo Hou, Xueting Tian and Yiwei Zhang}

\address{Xiaobo Hou, School of Mathematical Sciences,  Fudan University\\Shanghai 200433, People's Republic of China}
\email{20110180003@fudan.edu.cn}

\address{Xueting Tian, School of Mathematical Sciences,  Fudan University\\Shanghai 200433, People's Republic of China}
\email{xuetingtian@fudan.edu.cn}

\address{Yiwei Zhang,  Department of Mathematics \& SUSTech International Center for Mathematics, Southern University of Science and Technology, Shenzhen, Guangdong 518055, China}
\email{zhangyw@sustech.edu.cn}

\begin{abstract}
In this paper, we study the topological entropy and the Hausdorff dimension of a shrinking target set. We give lower and upper bounds of topological entropy and Hausdorff dimension for dynamical systems with exponential specification property and Lipschitz continuity for maps and homeomorphisms. It generally  applies to uniformly hyperbolic systems, expanding systems, and some symbolic dynamics. We show that lower and upper bounds coincide for both topological entropy and Hausdorff dimension when the systems are hyperbolic automorphisms of torus induced from a matrix with only two different eigenvalues, expanding endomorphism of the torus induced from a matrix with only one eigenvalue or some symbolic systems including one or two-sided shifts of finite type and sofic shifts.
\end{abstract}

\keywords{shrinking target set; Hausdorff dimension; topological entropy;  hyperbolic systems; exponential specification property}
\subjclass[2020] {37B65; 37C45; 37D20}
\maketitle

\section{Introduction}
Let $(X,f)$ be a non-degenerate dynamical system, where $(X,d)$ be a nondegenerate $($i.e.,
with at least two points$)$ compact metric space, and $f:X \rightarrow X$ be a continuous self map. For each dynamical system, let $\varphi$ be a positive real-valued function defined on $\mathbb{N}$, $\mathcal{Z}=\{z_i\}_{i=0}^{\infty}\subset X$  and  $S=\{s_i\}_{i=0}^{\infty}\subset \mathbb{N},$ we define the \emph{$S$-shrinking target set} by
\begin{equation}\label{equ:Sshrinkingtarget}
\mathfrak{S}(f,\varphi,\mathcal{Z},S):=\{x\in X: d(f^n(x),z_n)<\varphi(n)\text{ for  infinitely many }n\in S\}.
\end{equation}
When $S=\mathbb{N}$, the set $\mathfrak{S}(f,\varphi,\mathcal{Z},S)$ in \eqref{equ:Sshrinkingtarget} reduces to the classical shrinking target set. \eqref{equ:Sshrinkingtarget} means if at time $n$, one has a ball $B(z_{n},\varphi(n))$ centered at $z_{n}$ with radius $\varphi(n)\to0$ (as $n\to\infty$), then what kind of properties does the set of point $x$ have, whose image $f^{n}(x)$ are in ball $B(z_{n},\varphi(n))$ for infinitely many times $n\in S$.

The studies of shrinking target sets have been back to the seminal works from Hill and Velani [17], where they call the points in the set of well approximatable points in analogy with the classical theory of metric Diophantine approximation [11, 27], particularly the Jarn\'{i}k--Besicovitch Theorem [3, 19].

The complexity (e.g., entropies and dimensions) of the shrinking target sets have been rigorously investigated for several important classes of dynamical systems, including  expanding rational maps \cite{Hill-Velani,Hill-Velani-2}, parabolic rational maps \cite{Stratmann-Urba}, $\beta$-expansions \cite{Tan-Wang}, finite Klenian groups \cite{DMPV,Hill-Velani-3}, Markov expanding systems \cite{Urbanski,LWWX,Reeve}, matrix transformations on the torus \cite{Hill-Velani-4,Li-Liao-Velani-Zorin}, irrational rotations \cite{BD,BHKV,Fan-Wu}. Usually, various versions of mass transfer principles are developed (see e.g., \cite{Beresnevich-Velani,Koi-Rams,Wang-Wu2,Li-Liao-Velani-Zorin}) to get the lower bound of the Hausdorff dimension of the shrinking target sets. We recommend the readers to the survey paper by Wang and Wu \cite{Wang-Wu} for more information. Recently, the shrinking target sets associated with some two-dimensional hyperbolic linear Anosov systems are also investigated 
in \cite{HPWZ} through results on Riesz energies (e.g., \cite{Persson}).

On the other hand, the studies on the shrinking target sets are the basis and inspire many related works, such as the so-called quantitative
recurrence properties \cite{Boshernitzan}, dynamical Borel-Cantelli lemma \cite{Che-kle}, uniform Diophantine approximation \cite{BL},
recurrence time \cite{Bar-Sau}, waiting time \cite{Galatolo}, etc.

In this paper, we consider both the topological entropy and the Hausdorff dimension of the shrinking target sets in the context of \textbf{general} topological dynamical systems with the exponential non-uniform specification property. In contrast to the studies on the shrinking target sets associated with special classes of dynamical systems, we will use the exponential property derived from hyperbolicity to trace orbit segments exponentially. This method is available for both expanding and hyperbolic systems, and it allows us to find points that visit the given balls with an exponentially shrinking radius.  

Using Lipschitz properties, we give upper bounds on the topological entropy by finding dynamical balls covering the shrinking target set. The hyperbolicity allows us to put dynamical balls in balls. Thus, we can obtain upper bounds on the Hausdorff dimension. The lower bounds are obtained by constructing a Cantor subset with large enough topological entropy and Hausdorff dimension using the so-called `exponential non-uniform specification property'. Therefore, our method is independent of the mass transfer principle.

Our main results enable us to give lower and upper bounds of topological entropy and Hausdorff dimension for general uniformly hyperbolic systems, expanding systems, and some symbolic systems. We show that lower and upper bounds coincide for both topological entropy and Hausdorff dimension when the systems are hyperbolic automorphisms of torus induced from a matrix with only two different eigenvalues, expanding endomorphism of the torus induced from a matrix with only one eigenvalue or some symbolic systems including one or two-sided shifts of finite type and sofic shifts. In particular, we can deal with any dimensional hyperbolic matrix with only two different eigenvalues, which is an improvement of \cite[Theorem 1]{HPWZ}.


\subsection{Statement of main results}
Our main results contain the results on both lower and upper bounds for topological entropy and Hausdorff dimension of the shrinking target sets, which will be stated in Subsections \ref{subsection_lowerbounds} and \ref{subsection_upperbounds}, respectively.

\subsubsection{Lower bounds for topological entropy and Hausdorff dimension}\label{subsection_lowerbounds}
	Let us first concentrate on the lower bounds for topological entropy and Hausdorff dimension on $\mathfrak{S}(f,\varphi,\mathcal{Z},S)$. Before we state the main results, we introduce some assumptions:
	
	(A) $(X,f)$  has the exponential non-uniform specification property  with respect to the exponent $(\lambda_1,\lambda_2)$ at some scale $\varepsilon_0>0$ (see Subsection \ref{sec-specfication} for precise definition).
	
	(B1) \emph{$L$-Lipschitz:} $f:X\to X$ is a Lipschitz map with $\sup\{\frac{d(f(x),f(y))}{d(x,y)}:x\neq y\in X\}\leq L<\infty.$
	
	(B2) \emph{$(L_1,L_2)$-bi-Lipschitz:} $f:X\to X$ is a bi-Lipschitz homeomorphism with
	$$ \sup\{\frac{d(f^{-1}(x),f^{-1}(y))}{d(x,y)}:x\neq y\in X\}\leq L_1<\infty,\ \sup\{\frac{d(f(x),f(y))}{d(x,y)}:x\neq y\in X\}\leq L_2<\infty.$$
	
	We consider the intersection of countable many shrinking target sets. For each $j\geq 1,$ let
	\begin{enumerate}
		\item[$\cdot$] $\varphi_j$ be a map from $\mathbb{N}$ to $(0,1],$
		\item[$\cdot$] $\mathcal{Z}_j=\{z_j^i\}_{i=0}^{\infty}\subset X$ be a sequence of points,
		\item[$\cdot$] $S_j=\{s_j^i\}_{i=0}^\infty\subset \mathbb{N}$ be a sequence of positive integers with $\lim\limits_{i\to\infty}s_j^i=\infty.$
	\end{enumerate}
	Define $$\overline{\tau}_j:=\limsup\limits_{n\to\infty}-\frac{\ln\varphi_j(n)}{n},\ \overline{\tau}:=\sup\limits_{j\geq 1}\overline{\tau}_j,$$
	$$\underline{\tau}_j:=\liminf\limits_{n\to\infty}-\frac{\ln\varphi_j(n)}{n},\ \underline{\tau}:=\sup\limits_{j\geq 1}\underline{\tau}_j.$$
\begin{Thm}\label{theorem-shrinking-1}
	Suppose that $(X,f)$ is a dynamical system satisfying (A). If $\overline{\tau}<\lambda_1$, then
    the set
    $\cap_{j=1}^{\infty}\mathfrak{S}(f,\varphi_j,\mathcal{Z}_j,S_j)$ is non-empty and
    \begin{equation}\label{equa-thm-1}
    	h_{top}(f,\cap_{j=1}^{\infty}\mathfrak{S}(f,\varphi_j,\mathcal{Z}_j,S_j))\geq \frac{\lambda_1\lambda_2-\lambda_2\overline{\tau} }{\lambda_1\lambda_2+\lambda_1\overline{\tau}}h_{top}(f,X).
    \end{equation}
    Moreover,
    \begin{enumerate}
    	\item[(1)] if further $f:X\to X$ is  a $L$-Lipschitz map, then
    	\begin{equation}\label{equa-thm-2}
    		\operatorname{dim}_H \cap_{j=1}^{\infty}\mathfrak{S}(f,\varphi_j,\mathcal{Z}_j,S_j)\geq \frac{1}{\ln L}\frac{\lambda_1\lambda_2-\lambda_2\overline{\tau} }{\lambda_1\lambda_2+\lambda_1\overline{\tau}}h_{top}(f,X).
    	\end{equation}
    	\item[(2)] if further $f:X\to X$ is a $(L_1,L_2)$-bi-Lipschitz homeomorphism, then
    	\begin{equation}\label{equa-thm-3}
    		\operatorname{dim}_H \cap_{j=1}^{\infty}\mathfrak{S}(f,\varphi_j,\mathcal{Z}_j,S_j)\geq (\frac{1}{\ln L_1}+\frac{1}{\ln L_2}\frac{\lambda_1\lambda_2-\lambda_2\overline{\tau} }{\lambda_1\lambda_2+\lambda_1\overline{\tau}})h_{top}(f,X).
    	\end{equation}
    \end{enumerate}
\end{Thm}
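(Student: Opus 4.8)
The plan is, for each small $\eta,\delta>0$ with $\overline\tau+\eta<\lambda_1$, to construct a Cantor-type subset $F=F_{\eta,\delta}$ of $\bigcap_{j\ge1}\mathfrak{S}(f,\varphi_j,\mathcal{Z}_j,S_j)$ whose topological entropy --- and, under (B1) or (B2), whose Hausdorff dimension --- is at least the asserted bound with $\overline\tau$ replaced by $\overline\tau+\eta$ and $h_{top}(f,X)$ by $h_{top}(f,X)-\delta$; the set $F$ is visibly nonempty (the construction works even with a single branch, so this also settles nonemptiness and does not need $h_{top}(f,X)>0$), and letting $\eta,\delta\downarrow0$ and taking unions finishes everything. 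For the bookkeeping: since $\limsup_n\bigl(-\tfrac{\ln\varphi_j(n)}{n}\bigr)\le\overline\tau_j\le\overline\tau$, for each $j$ there is $N_j$ with $\varphi_j(n)>e^{-(\overline\tau+\eta)n}$ for $n\ge N_j$; I fix a sequence of ``check data'' $(j_k,n_k)_{k\ge1}$ with $n_k\in S_{j_k}$, $n_k\ge N_{j_k}$, every index $j$ occurring infinitely often, and $n_k$ increasing very rapidly. Forcing, along the orbit of every $x\in F$, a visit $d(f^{n_k}(x),z^{j_k}_{n_k})<e^{-(\overline\tau+\eta)n_k}<\varphi_{j_k}(n_k)$ at each check time then puts $F$ inside every $\mathfrak{S}(f,\varphi_j,\mathcal{Z}_j,S_j)$.

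Two kinds of orbit segments enter the construction. First, ``free'' segments carrying entropy: fixing a scale $\varepsilon\le\varepsilon_0$ with $h(\varepsilon):=\limsup_m\tfrac1m\ln s_m(\varepsilon)>h_{top}(f,X)-\delta$ (where $s_m(\varepsilon)$ is the maximal cardinality of an $(m,\varepsilon)$-separated set, and $h(\varepsilon)\uparrow h_{top}(f,X)$ as $\varepsilon\downarrow0$), for infinitely many lengths $m$ one has $(m,\varepsilon)$-separated sets $E_m\subset X$ with $\#E_m\ge e^{m(h_{top}(f,X)-2\delta)}$. Second, at stage $k$ the single point $z^{j_k}_{n_k}$, used as a length-one ``target'' segment. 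The exponential non-uniform specification property (A) lets me concatenate any finite string of such segments into an honest orbit of a point, with each segment shadowed at a precision I may prescribe $\le\varepsilon_0$, provided I insert between consecutive segments ``buffers'' whose lengths grow only logarithmically in the reciprocal of the required precision, with proportionality constants $1/\lambda_1$ and $1/\lambda_2$ on the two sides; a limit point of the shadowing orbits over longer and longer strings is then an actual point of $X$ realizing all the shadowing, hence all the target visits.

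Now the count. I shadow each free segment at precision $\varepsilon/3$, so distinct choices from the $E_m$'s keep shadowing orbits $(m,\varepsilon/3)$-separated, and I shadow the $k$-th target at precision $\rho_k=e^{-(\overline\tau+\eta)n_k}$, which by (A) costs a buffer of length $\asymp\tfrac{(\overline\tau+\eta)n_k}{\lambda_1}$ before time $n_k$ and $\asymp\tfrac{(\overline\tau+\eta)n_k}{\lambda_2}$ after it, the rest of the time being packed with free segments. Since the $n_k$ grow very fast, at the moment $T_k\asymp n_k\bigl(1+\tfrac{\overline\tau+\eta}{\lambda_2}\bigr)$ when the $k$-th buffer has just closed, the free time used so far is $\asymp n_k\bigl(1-\tfrac{\overline\tau+\eta}{\lambda_1}\bigr)$, so the number of branches of the tree down to level $k$ is $\approx\exp\!\bigl[n_k\bigl(1-\tfrac{\overline\tau+\eta}{\lambda_1}\bigr)(h_{top}(f,X)-2\delta)\bigr]$. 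Dividing the exponent by $T_k$ gives
\[
\frac{1-\frac{\overline\tau+\eta}{\lambda_1}}{1+\frac{\overline\tau+\eta}{\lambda_2}}\bigl(h_{top}(f,X)-2\delta\bigr)=\frac{\lambda_1\lambda_2-\lambda_2(\overline\tau+\eta)}{\lambda_1\lambda_2+\lambda_1(\overline\tau+\eta)}\bigl(h_{top}(f,X)-2\delta\bigr),
\]
and an entropy-distribution argument for such Moran sets (using that distinct branches stay Bowen-separated at scale $\varepsilon/3$) bounds $h_{top}(f,F)$ below by this value; letting $\eta,\delta\downarrow0$ gives \eqref{equa-thm-1}. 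For the dimension I reuse $F$: under (B1) a Bowen ball $B_m(x,\varepsilon/3)$ contains the metric ball $B\bigl(x,\tfrac{\varepsilon}{3}L^{-m}\bigr)$ while two branches first splitting at level $k'$ are $\gtrsim L^{-T_{k'}}$-separated in $(X,d)$, and the mass distribution principle for the uniform-on-branches measure bounds $\dim_H F$ below by $\frac1{\ln L}$ times the rate above --- this is \eqref{equa-thm-2}. Under (B2) I additionally prepend at each stage a \emph{free} backward orbit segment (no target is imposed on negative times), choosing forward and backward lengths so that $L_2^{-(\mathrm{forward})}\asymp L_1^{-(\mathrm{backward})}$, so that the backward coordinate contributes the full rate $h_{top}(f,X)$ through $L_1$ and the forward coordinate the reduced rate through $L_2$, and the two add to give \eqref{equa-thm-3}.

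The load-bearing point is the buffer-length balance turning $(\lambda_1,\lambda_2,\overline\tau)$ into exactly the coefficient $\frac{\lambda_1\lambda_2-\lambda_2\overline\tau}{\lambda_1\lambda_2+\lambda_1\overline\tau}$, together with the observation that cycling through the countably many constraints along a very sparse sequence of check times costs nothing asymptotically. I expect the hardest technical part to be the Hausdorff-dimension lower bound rather than the entropy bound: one must control the overlaps of the Bowen balls forming $F$ at every scale --- which pits the sparseness of $(n_k)$ against the regularity the mass distribution estimate needs --- and, in the bi-Lipschitz case, verify that the forward and backward contributions to the local dimension of the natural measure genuinely add rather than interfere.
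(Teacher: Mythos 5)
Your outline follows the paper's own route almost step for step: a Cantor set obtained by gluing entropy-carrying separated blocks with orbit segments passing through the targets via the exponential non-uniform specification property, check times cycling through the countably many constraints along a very rapidly growing sequence, a uniform measure on branches whose Bowen-ball decay is worst exactly at the times $T_k$ at which a target overhead has just closed, the entropy-distribution and mass-distribution principles, and, in the bi-Lipschitz case, additional free backward blocks so that the backward direction contributes the full entropy rate through $L_1$; your bookkeeping reproduces the paper's coefficient $\frac{\lambda_1\lambda_2-\lambda_2\overline{\tau}}{\lambda_1\lambda_2+\lambda_1\overline{\tau}}$ exactly, and your Steps correspond to the paper's Steps 1--7.

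The one place where your write-up, taken literally, would fail is the way you invoke (A). Definition \ref{def-exp-nonuni-speci} does not let you ``prescribe a precision per segment'' by lengthening the gaps between segments: the gaps $p(n,\varepsilon_0)$ depend only on the block lengths and the fixed scale, and a length-one target block is shadowed only to within $\varepsilon_0$, no matter how long the surrounding buffers are; the exponential gain is available only at times lying deep inside a traced block. Hence the ``buffer of length $\asymp(\overline{\tau}+\eta)n_k/\lambda_1$ before time $n_k$'' cannot be idle gap time --- it must itself be traced orbit. Concretely, you must replace the length-one target by the orbit segment of an $m_k$-th preimage $z_k$ of $z^{j_k}_{n_k}$, with $m_k\approx(\overline{\tau}+\eta)n_k/\lambda_1$, prolonged for $m_k'\approx(\overline{\tau}+\eta)n_k/\lambda_2$ further steps (preimages exist since (A) forces $f$ to be surjective); this is exactly the paper's choice $f^{m_k}(z_k)=z_v^{s_k}$ in (\ref{equation-CO}). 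Since the lengths you budget are precisely these, the repair costs nothing and the rest of your accounting goes through. The same remark applies to shadowing the free blocks ``at precision $\varepsilon/3$'': either justify it by the same extension trick (an $O(1)$ overhead per block), or do as the paper does and take the blocks $(\delta^*,\mathfrak{n},3\varepsilon^*)$-separated in the sense of Lemma \ref{lemma-A}, so that some separating time falls in the exponentially accurate middle of each block; with plain $(m,\varepsilon)$-separated blocks and only scale-$\varepsilon_0$ accuracy near block ends, distinct branches need not remain separated after gluing.
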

\begin{Rem}\label{Rem-under}
	 $\overline{\tau}$ in the results of  Theorem \ref{theorem-shrinking-1} can be repaced by $\underline{\tau}$ when $S_j=\mathbb{N}$ for any $j\geq 1.$    In fact we can choose $\tilde{S}_j=\{\tilde{s}_j^i\}_{i=1}^{\infty}\subset \mathbb{N}$ such that $\lim\limits_{i\to \infty}-\frac{\ln\varphi(\tilde{s}_j^i)}{\tilde{s}_j^i}=\underline{\tau}_j,$ and define  $\tilde{\varphi}_j$ on $\mathbb{N}$  as
	$$
	\tilde{\varphi}_j(n):=\left\{\begin{array}{ll}
		\varphi_j(n), & \text { if } n\in \tilde{S}_j,\\
		1, & \text { if } n\in \mathbb{N}\setminus \tilde{S}_j.
	\end{array}\right.
	$$
	Then $\overline{\tau}_j(\tilde{\varphi}_j):=\limsup\limits_{n\to\infty}-\frac{\ln\tilde{\varphi}_j(n)}{n}=\lim\limits_{i\to \infty}-\frac{\ln\varphi(\tilde{s}_j^i)}{\tilde{s}_j^i}=\underline{\tau}_j,$ and  $\mathfrak{S}(f,\tilde{\varphi}_j,\mathcal{Z}_j,\tilde{S}_j)\subset\mathfrak{S}(f,\varphi_j,\mathcal{Z}_j,S_j).$
\end{Rem}

\begin{Rem}
	Under weaker assumptions, the lower bounds in Theorem \ref{theorem-shrinking-1}  will be smaller.
	In the definition of exponential non-uniform specification property (Definition \ref{def-exp-nonuni-speci}), it's required that $$
	\limsup _{n \rightarrow \infty} \frac{1}{n} p(n, \varepsilon)=0.
	$$
	When the exponential  non-uniform  specification property is weaker in the following sense\footnote{Readers can refer to \cite{LTY} for examples with weak non-uniform specification property.}:
	$$
	\chi:=\limsup _{n \rightarrow \infty} \frac{1}{n} p(n, \varepsilon)>0,$$
	the lower bounds of topological entropy and Hausdorff dimension  obtained  in Theorem \ref{theorem-shrinking-1}  will be smaller. Precisely, proceeding with the same argument as the proof of Theorem \ref{theorem-shrinking-1}, one can check that under the assumption of weak non-uniform  specification property, the results of Theorem \ref{theorem-shrinking-1} holds if
	$\frac{\lambda_1\lambda_2-\lambda_2\overline{\tau} }{\lambda_1\lambda_2+\lambda_1\overline{\tau}}$ is replaced by $$\frac{1}{1+\chi}\frac{\lambda_1\lambda_2-\lambda_2\overline{\tau} }{\lambda_1\lambda_2+\lambda_1\overline{\tau}+(\lambda_1+\lambda_2)\chi\overline{\tau}}.$$
\end{Rem}

The assumption $(A)$ requires that the dynamical system $(X,f)$ is topologically mixing. When  $(X,f)$ is topologically transitive, one can always find a mixing subset for some iterate of $f$ utilizing the periodic decompositions theorem. For such systems, we also obtain a result on the shrinking target set. 
Suppose that $(X,f)$ is a dynamical system with a non-empty compact set $Y\subseteq X$ and a positive integer $N$ such that $f^{N}(Y)=Y$ and  $X=\bigcup_{l=0}^{N-1}f^{l}(Y)$.
For a sequence of points $\mathcal{Z}=\{z_i\}_{i=0}^{\infty}\subset X$  and  a sequence of positive integers $S=\{s_i\}_{i=0}^{\infty}$ with $\lim\limits_{i\to\infty}s_i=\infty$, we define the index set of $(\mathcal{Z},S)$ by $$\mathrm{Ind}(\mathcal{Z},S,Y):=\{(I_1,I_2)\in\{0,1,\dots,N-1\}^2: \text{ there are infinitely many }i \text{ s.t. } z_{s_i}\in f^{I_1}(Y) \text{ and }N|(s_i-I_2)\}.$$
And we define $\mathrm{Ind}'(\mathcal{Z},S,Y):=\{I_1-I_2:(I_1,I_2)\in \mathrm{Ind}(\mathcal{Z},S,Y)\}.$
Using the pigeon-hole principle, one has $\mathrm{Ind}(\mathcal{Z},S,Y)\neq\emptyset$ and thus $\mathrm{Ind}'(\mathcal{Z},S,Y)\neq\emptyset.$

Now, we introduce the following assumption:

($\tilde{A}$) $(X,f)$ is a dynamical system with a non-empty compact set $Y\subseteq X$ and a positive integer $N$ such that $f^{N}(Y)=Y$, $X=\bigcup_{l=0}^{N-1}f^{l}(Y)$ and  $(Y,f^N)$  has the exponential non-uniform specification property  with respect to the exponent $(N\lambda_1,N\lambda_2)$ at some scale $\varepsilon_0>0$.

\begin{Cor}\label{theorem-shrinking-2}
	Suppose that $(X,f)$ is a dynamical system satisfying ($\tilde{A}$) and $f$ is Lipschitz. If $\overline{\tau}<\lambda_1$ and $\cap_{j=1}^{\infty}\mathrm{Ind}'(\mathcal{Z}_j,S_j,Y)\neq\emptyset$, then
	the set
	$\cap_{j=1}^{\infty}\mathfrak{S}(f,\varphi_j,\mathcal{Z}_j,S_j)$ is non-empty and
	\begin{equation}\label{equa-thm-4}
		h_{top}(f,\cap_{j=1}^{\infty}\mathfrak{S}(f,\varphi_j,\mathcal{Z}_j,S_j))\geq \frac{\lambda_1\lambda_2-\lambda_2\overline{\tau} }{\lambda_1\lambda_2+\lambda_1\overline{\tau}}h_{top}(f,X).
	\end{equation}
	Moreover,
	\begin{enumerate}
		\item[(1)] if further $f:X\to X$ is  a $L$-Lipschitz map, then
		\begin{equation}\label{equa-thm-5}
			\operatorname{dim}_H \cap_{j=1}^{\infty}\mathfrak{S}(f,\varphi_j,\mathcal{Z}_j,S_j)\geq \frac{1}{\ln L}\frac{\lambda_1\lambda_2-\lambda_2\overline{\tau} }{\lambda_1\lambda_2+\lambda_1\overline{\tau}}h_{top}(f,X).
		\end{equation}
		\item[(2)] if further $f:X\to X$ is a $(L_1,L_2)$-bi-Lipschitz homeomorphism, then
		\begin{equation}\label{equa-thm-6}
			\operatorname{dim}_H \cap_{j=1}^{\infty}\mathfrak{S}(f,\varphi_j,\mathcal{Z}_j,S_j)\geq (\frac{1}{\ln L_1}+\frac{1}{\ln L_2}\frac{\lambda_1\lambda_2-\lambda_2\overline{\tau} }{\lambda_1\lambda_2+\lambda_1\overline{\tau}})h_{top}(f,X).
		\end{equation}
	\end{enumerate}
\end{Cor}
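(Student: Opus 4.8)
The plan is to reduce the transitive case to the mixing case already handled by Theorem \ref{theorem-shrinking-1}, applied to the system $(Y,f^N)$, and then transport the resulting Cantor set back into $X$ via the iterates $f^0,\dots,f^{N-1}$. First I would fix, for each $j\ge1$, an element $d_j\in\cap_{j'=1}^{\infty}\mathrm{Ind}'(\mathcal Z_{j'},S_{j'},Y)$; by hypothesis this intersection is non-empty, so a single pair $(I_1^j,I_2^j)\in\mathrm{Ind}(\mathcal Z_j,S_j,Y)$ with $I_1^j-I_2^j\equiv d$ can be selected for a common residue $d$. Passing to the subsequence of indices $i$ witnessing membership in $\mathrm{Ind}(\mathcal Z_j,S_j,Y)$ for this pair, we get an infinite set $S_j'\subseteq S_j$ of times $s$ with $z_s\in f^{I_1^j}(Y)$ and $N\mid (s-I_2^j)$. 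Replacing $S_j$ by $S_j'$ only shrinks the target set, so it suffices to produce points in $\cap_{j}\mathfrak S(f,\varphi_j,\mathcal Z_j,S_j')$.

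Next I would translate "visiting $z_s$ along $S_j'$" into a shrinking-target condition inside $(Y,f^N)$. Write $s=I_2^j+Nm$ with $m\to\infty$. For a point of the form $x=f^{I_2^j}(y)$ with $y\in Y$, one has $f^s(x)=f^{I_1^j}\big((f^N)^m(y)\big)$ up to the relation $I_1^j\equiv I_2^j+d$, so after applying $f^{-(I_1^j)}$ (using a bi-Lipschitz or merely Lipschitz estimate on the finitely many iterates $f^0,\dots,f^{N-1}$, whose distortion is bounded by a constant $C$) the condition $d(f^s(x),z_s)<\varphi_j(s)$ becomes a condition $d((f^N)^m(y),w_m^j)<C^{-1}\varphi_j(I_2^j+Nm)$ for the pulled-back targets $w_m^j:=f^{-I_1^j}(z_s)\in Y$. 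Because the exponential rate of $n\mapsto\varphi_j(I_2^j+Nm)$ in the variable $m$ is $N$ times the rate in $n$, and $(Y,f^N)$ has the exponential non-uniform specification property with exponent $(N\lambda_1,N\lambda_2)$, the hypothesis $\overline\tau<\lambda_1$ becomes exactly the hypothesis $N\overline\tau<N\lambda_1$ needed to invoke Theorem \ref{theorem-shrinking-1} for $(Y,f^N)$; the constant $C^{-1}$ and the constant $N$ shift affect only lower-order terms in the $\limsup$ and so do not change $\overline\tau$.

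Then I would apply Theorem \ref{theorem-shrinking-1} to $(Y,f^N)$ with the data $\{\tilde\varphi_j,\{w_m^j\}_m,\tilde S_j\}$ to obtain a non-empty subset $K\subseteq Y$ with
\begin{equation*}
h_{top}(f^N,K)\ge \frac{N^2\lambda_1\lambda_2-N^2\lambda_2\overline\tau}{N^2\lambda_1\lambda_2+N^2\lambda_1\overline\tau}\,h_{top}(f^N,Y)=\frac{\lambda_1\lambda_2-\lambda_2\overline\tau}{\lambda_1\lambda_2+\lambda_1\overline\tau}\,h_{top}(f^N,Y),
\end{equation*}
and analogously for Hausdorff dimension under (B1) or (B2). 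Finally I would push $K$ forward by $f^{I_2^j}$ — more precisely, intersect the pulled-back conditions over all $j$ simultaneously inside the single system $(Y,f^N)$, which is legitimate because Theorem \ref{theorem-shrinking-1} already handles countable intersections — set $K':=f^{d}(K)$ (or the appropriate fixed iterate dictated by the common residue $d$), and observe $K'\subseteq\cap_j\mathfrak S(f,\varphi_j,\mathcal Z_j,S_j)$. Since $f^{d}$ is a bi-Lipschitz homeomorphism onto its image (finitely many iterates), $\dim_H K'=\dim_H K$, and $h_{top}(f,K')\ge \tfrac1N h_{top}(f^N,K')=\tfrac1N h_{top}(f^N,K)$ while $h_{top}(f^N,Y)=N\,h_{top}(f,Y)=N\,h_{top}(f,X)$ by the power rule for topological entropy and the covering $X=\cup_{l}f^l(Y)$; combining these gives the stated bounds.

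The main obstacle is the bookkeeping of the residues: one must check that a \emph{single} shift $f^{d}$ (equivalently a single coset of $N\mathbb Z$) simultaneously works for all $j\ge1$, which is precisely what the assumption $\cap_{j\ge1}\mathrm{Ind}'(\mathcal Z_j,S_j,Y)\neq\emptyset$ guarantees; without it the forward-iterate needed for different $j$ could be incompatible and the intersection could be empty. A secondary technical point is verifying that the bounded distortion constant $C$ from the finitely many iterates genuinely disappears in the exponential rate — this is routine since $\frac{1}{m}\ln C\to0$ — and that the power rule $h_{top}(f^N,\cdot)=N\,h_{top}(f,\cdot)$ is being applied to the relevant (possibly non-compact, non-invariant) sets, which holds for the Bowen/Pesin--Pitskel topological entropy used throughout.
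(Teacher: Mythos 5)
Your overall strategy (reduce to $(Y,f^N)$ via Theorem \ref{theorem-shrinking-1} with exponent $(N\lambda_1,N\lambda_2)$, pull the targets back through finitely many iterates using the Lipschitz constant, rescale time by $N$, and finish with the power rule for Bowen entropy) is the same as the paper's. The structural difference is where the residue shift $d$ is absorbed: you build the Cantor set $K$ inside $Y$ and then push it forward, setting $K'=f^{d}(K)$, whereas the paper never takes an image — it replaces the reference piece $Y$ by $f^{I'}(Y)$ at the outset (noting that $(\tilde A)$ persists for this piece, since the specification hypothesis is only required at some scale) and pulls the targets back by $f^{I_2^j}$ with radii divided by $L^{I_2^j}$, so that the set produced by Theorem \ref{theorem-shrinking-1} is already contained in $\cap_{j}\mathfrak{S}(f,\varphi_j,\mathcal{Z}_j,S_j)$.

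This difference is where your argument has a genuine gap. You justify the transfer of entropy and dimension from $K$ to $K'=f^{d}(K)$ by asserting that $f^{d}$ is ``a bi-Lipschitz homeomorphism onto its image''; that is true only in case (2), where $f$ is assumed to be a bi-Lipschitz homeomorphism. The main entropy bound \eqref{equa-thm-4} and item (1) only assume $f$ Lipschitz, possibly non-invertible (think of expanding maps or one-sided shifts), and then $f^{d}$ need be neither injective nor co-Lipschitz. For Hausdorff dimension the inequality you need goes the wrong way: a Lipschitz map satisfies $\dim_H f^{d}(K)\leq\dim_H K$, and there is no general lower bound, so \eqref{equa-thm-5} does not follow from your construction as written; likewise the Frostman estimate for the pushed-forward measure is lost because preimages under $f^{d}$ of a small ball need not be small. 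For Bowen entropy the identity $h_{top}(f^N,f^{d}(K))=h_{top}(f^N,K)$ does in fact hold, but it requires a separate covering argument (covering $f^{-d}$ of a Bowen ball by boundedly many longer Bowen balls), not the bi-Lipschitz claim you invoke. The clean repair is exactly the paper's move: do not push the set forward; instead apply Theorem \ref{theorem-shrinking-1} to $(f^{I'}(Y),f^N)$ (equivalently, normalize so that $0\in\cap_{j\ge1}\mathrm{Ind}'(\mathcal{Z}_j,S_j,Y)$ and $(I_2^j,I_2^j)\in\mathrm{Ind}(\mathcal{Z}_j,S_j,Y)$), pull the targets back by $f^{I_2^j}$ with radii $\varphi_j/L^{I_2^j}$, and observe that the resulting $\mathfrak{S}(f^N,\overline{\varphi}_j,\overline{\mathcal{Z}}_j,\overline{S}_j/N)$ sits directly inside $\mathfrak{S}(f,\varphi_j,\mathcal{Z}_j,S_j)$, so no image of the Cantor set is ever taken. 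Your remaining steps (rates multiplying by $N$, the $N$'s cancelling in the quotient, $h_{top}(f^N,Y)=Nh_{top}(f,X)$, and the common residue supplied by $\cap_j\mathrm{Ind}'\neq\emptyset$) match the paper and are fine.
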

\begin{Rem}
	The condition $\cap_{j=1}^{\infty}\mathrm{Ind}'(\mathcal{Z}_j,S_j,Y)\neq\emptyset$ in Corollary \ref{theorem-shrinking-2} can not be removed.  In fact, when $\cap_{j=1}^{\infty}\mathrm{Ind}'(\mathcal{Z}_j,S_j,Y)=\emptyset,$ the set  $\cap_{j=1}^{\infty}\mathfrak{S}(f,\varphi_j,\mathcal{Z}_j,S_j)$ can be empty. For example, if $f^{2}(Y)=Y$, $X=Y\cup f(Y)$ and $Y\cap f(Y)=\emptyset,$ $\mathcal{Z}_1 \subset Y,$ $\mathcal{Z}_2\subset f(Y)$, $S_1=S_2=2\mathbb{N}$ where  $l\mathbb{N}:=\{lm:m\in\mathbb{N}\},$ and $0<\sup_{n\in\mathbb{N}}\varphi_1(n),\sup_{n\in\mathbb{N}}\varphi_2(n)<\inf_{y_1\in Y,y_2\in f(Y)}d(y_1,y_2),$ then one has $\mathrm{Ind}(\mathcal{Z}_1,S_1,Y)=\{(0,0)\},$ $\mathrm{Ind}(\mathcal{Z}_2,S_2,Y)=\{(1,0)\}$ and thus $\cap_{j=1}^{2}\mathrm{Ind}'(\mathcal{Z}_j,S_j,Y)=\emptyset.$ Note that for any $x\in\mathfrak{S}(f,\varphi_1,\mathcal{Z}_1,S_1)$, one has $f^{2m_0}(x)\in Y$ for some $m_0\in\mathbb{N}.$ It follows that $f^{2m}(x)\in Y$ for any $m\in\mathbb{N}$ and thus $x\not\in\mathfrak{S}(f,\varphi_2,\mathcal{Z}_2,S_2).$ So $\cap_{j=1}^{2}\mathfrak{S}(f,\varphi_j,\mathcal{Z}_j,S_j)=\emptyset.$
\end{Rem}

Now, we apply Theorem \ref{theorem-shrinking-1} to the covering problem. Let $\varphi$ be a positive function defined on $\mathbb{N}.$ For any $x\in X,$ set
$$\mathfrak{C}(f,\varphi,x):=\{y\in X:  d(f^n(x),y)<\varphi(n)\text{ for infinitely many }n\in \mathbb{N}\}.$$
Using Theorem \ref{theorem-shrinking-1}, we show that there is $x_0$ such that $\mathfrak{C}(f,\varphi,x_0)$ is dense in $X$, and we give  lower bounds of topological entropy and Hausdorff dimension of the set of such points.
\begin{Cor}\label{Cor-covering}
	Suppose that $(X,f)$ is a dynamical system satisfying ($\tilde{A}$) and $f$ is Lipschitz.
	Let $\varphi$ be a positive function defined on $\mathbb{N}$ such that $\overline{\tau}<\lambda_1$ where $\overline{\tau}=\limsup\limits_{n\to\infty}-\frac{\ln\varphi(n)}{n}.$
	Then
	the set
	$\mathfrak{D}(f,\varphi):=\{x\in X: \mathfrak{C}(f,\varphi,x)\text{ is dense in }X\}$
	is non-empty and
	\begin{equation}
		h_{top}(f,\mathfrak{D}(f,\varphi))\geq \frac{\lambda_1\lambda_2-\lambda_2\overline{\tau} }{\lambda_1\lambda_2+\lambda_1\overline{\tau}}h_{top}(f,X).
	\end{equation}
	Moreover,
	\begin{enumerate}
		\item[(1)] if further $f:X\to X$ is  a $L$-Lipschitz map, then
		\begin{equation}
			\operatorname{dim}_H \mathfrak{D}(f,\varphi)\geq \frac{1}{\ln L}\frac{\lambda_1\lambda_2-\lambda_2\overline{\tau} }{\lambda_1\lambda_2+\lambda_1\overline{\tau}}h_{top}(f,X).
		\end{equation}
		\item[(2)] if further $f:X\to X$ is a $(L_1,L_2)$-bi-Lipschitz homeomorphism, then
		\begin{equation}
			\operatorname{dim}_H \mathfrak{D}(f,\varphi)\geq (\frac{1}{\ln L_1}+\frac{1}{\ln L_2}\frac{\lambda_1\lambda_2-\lambda_2\overline{\tau} }{\lambda_1\lambda_2+\lambda_1\overline{\tau}})h_{top}(f,X).
		\end{equation}
	\end{enumerate}
	In particular, when $N=1$, the results  hold if we replace $\overline{\tau}$ by $\underline{\tau},$
	where $\underline{\tau}:=\liminf\limits_{n\to\infty}-\frac{\ln\varphi(n)}{n}.$
\end{Cor}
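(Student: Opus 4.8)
The plan is to realize $\mathfrak{D}(f,\varphi)$ as a superset of a countable intersection of ordinary shrinking target sets with constant target sequences, and then quote Corollary \ref{theorem-shrinking-2} (and, in the borderline case $N=1$, Theorem \ref{theorem-shrinking-1} together with Remark \ref{Rem-under}).

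First I would fix a countable dense subset $\{w_k\}_{k\ge 1}$ of $X$ and, for each $k\ge 1$, set $\varphi_k:=\varphi$, $S_k:=\mathbb{N}$, and let $\mathcal{Z}_k:=\{z^i_k\}_{i\ge 0}$ be the constant sequence $z^i_k\equiv w_k$. Unwinding the definitions, $x\in\mathfrak{S}(f,\varphi_k,\mathcal{Z}_k,S_k)$ means $d(f^n(x),w_k)<\varphi(n)$ for infinitely many $n\in\mathbb{N}$, which is exactly the statement $w_k\in\mathfrak{C}(f,\varphi,x)$. Hence if $x\in\bigcap_{k\ge1}\mathfrak{S}(f,\varphi_k,\mathcal{Z}_k,S_k)$ then $\{w_k\}_{k\ge1}\subseteq\mathfrak{C}(f,\varphi,x)$, so $\mathfrak{C}(f,\varphi,x)$ contains a dense set and is therefore dense, i.e.\ $x\in\mathfrak{D}(f,\varphi)$. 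This gives the inclusion
$$\bigcap_{k\ge1}\mathfrak{S}(f,\varphi_k,\mathcal{Z}_k,S_k)\subseteq\mathfrak{D}(f,\varphi).$$

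Next I would verify the hypotheses of Corollary \ref{theorem-shrinking-2} for the family $\{(\varphi_k,\mathcal{Z}_k,S_k)\}_{k\ge1}$. Since each $\varphi_k=\varphi$, one has $\overline\tau_k=\overline\tau$, so $\sup_k\overline\tau_k=\overline\tau<\lambda_1$. For the index condition, note that because $X=\bigcup_{l=0}^{N-1}f^l(Y)$, for each $k$ one can pick $I^{(k)}\in\{0,\dots,N-1\}$ with $w_k\in f^{I^{(k)}}(Y)$; taking $I_1=I_2=I^{(k)}$ in the definition of $\mathrm{Ind}$ and using that $z_{s^i_k}=w_k$ for all $i$ while $S_k=\mathbb{N}$ contains infinitely many $i$ with $N\mid(s^i_k-I^{(k)})$, we get $(I^{(k)},I^{(k)})\in\mathrm{Ind}(\mathcal{Z}_k,S_k,Y)$ and hence $0\in\mathrm{Ind}'(\mathcal{Z}_k,S_k,Y)$ for every $k$; thus $0\in\bigcap_k\mathrm{Ind}'(\mathcal{Z}_k,S_k,Y)\ne\emptyset$. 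Corollary \ref{theorem-shrinking-2} then gives non-emptiness of $\bigcap_k\mathfrak{S}(f,\varphi_k,\mathcal{Z}_k,S_k)$ together with the entropy bound and, under the extra Lipschitz hypotheses, the two Hausdorff dimension bounds. Since topological entropy and Hausdorff dimension are monotone under set inclusion, all of these bounds, as well as non-emptiness, transfer to the larger set $\mathfrak{D}(f,\varphi)$.

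For the last assertion, when $N=1$ the assumption $(\tilde A)$ coincides with $(A)$ (take $Y=X$), so one may instead invoke Theorem \ref{theorem-shrinking-1} directly; as every $S_k=\mathbb{N}$, Remark \ref{Rem-under} permits replacing $\overline\tau$ by $\underline\tau$, and the argument above runs verbatim with $\underline\tau$ in place of $\overline\tau$. I do not expect a real obstacle here: the analytic content is entirely carried by Corollary \ref{theorem-shrinking-2}, and the only step that needs a moment's care is checking $\bigcap_k\mathrm{Ind}'(\mathcal{Z}_k,S_k,Y)\ne\emptyset$ — a condition that, as the Remark following Corollary \ref{theorem-shrinking-2} shows, can genuinely fail in general, but which holds automatically in the present situation thanks to the freedom $S_k=\mathbb{N}$ and the constancy of the sequences $\mathcal{Z}_k$.
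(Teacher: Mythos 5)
Your proposal is correct and follows essentially the same route as the paper: choose a countable dense sequence of points, form constant target sequences with $\varphi_k=\varphi$ and $S_k=\mathbb{N}$, observe that the countable intersection of the corresponding shrinking target sets sits inside $\mathfrak{D}(f,\varphi)$, check $0\in\bigcap_k\mathrm{Ind}'(\mathcal{Z}_k,S_k,Y)$ exactly as you do, and then apply Corollary \ref{theorem-shrinking-2} (and Theorem \ref{theorem-shrinking-1} with Remark \ref{Rem-under} when $N=1$) together with monotonicity. No gaps.
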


\subsubsection{Upper bounds for topological entropy and Hausdorff dimension}\label{subsection_upperbounds}
Next, we turn to the upper bounds for topological entropy and Hausdorff dimension. For any $n \in \mathbb{N}$, the $d_n$-distance between $x, y \in X$ is defined as
$$d_{n,f}(x,y):=\max\{d(f^{i}(x),f^{i}(y)):0\leq i\leq n-1\}.$$
Let $x \in X, n \geq 1$ and $\varepsilon>0$, the dynamical ball $B_n(x, \varepsilon,f)$ is defined as the set
$$
B_{n}(x,\varepsilon,f):=\{y\in X:d_n(x,y)<\varepsilon\},\quad
\overline{B}_{n}(x,\varepsilon,f):=\{y\in X:d_n(x,y)\leq \varepsilon\}.
$$

Now, we introduce the following hyperbolic assumption:

(C1) \emph{$\lambda$-hyperbolic:} Let $\lambda>0.$ $f:X\to X$ is said to be $\lambda$-hyperbolic if for any $\varepsilon_h>0$ there is $0<\varepsilon\leq\varepsilon_h$  such that for any $x\in X$ and $n\in\mathbb{N},$ one has $B_{n}(x,\varepsilon,f)\subset B_n(x,\varepsilon_h,\lambda,f),$ where $$B_n(x,\varepsilon_h,\lambda,f):=\{y\in X:d(f^{i}(x),f^i(y))<\varepsilon_h e^{-(n-1-i)\lambda} \text{ for any } 0\leq i\leq n-1\}.$$

(C2) \emph{$(\lambda_1,\lambda_2)$-hyperbolic:} Let $\lambda_1,\lambda_2>0.$ $f:X\to X$ is said to be $(\lambda_1,\lambda_2)$-hyperbolic if for any $\varepsilon_h>0$ there is $0<\varepsilon\leq\varepsilon_h$ such that for any $x\in X$ and $n\in\mathbb{N},$  one has $B_{n}(x,\varepsilon,f)\subset B_n(x,\varepsilon_h,\lambda_1,\lambda_2,f),$ where $$B_{n}(x,\varepsilon_h,\lambda_1,\lambda_2,f):=\{y\in X:d(f^{i}(x),f^i(y))< \varepsilon_h e^{-\min\{i\lambda_1,(n-1-i)\lambda_2\} }\text{ for any }0\leq i\leq n-1\}.$$
\begin{Rem}
	(1) When $\lambda_1=+\infty,$ one has $B_{n}(x,\varepsilon,\lambda_1,\lambda_2,f)=B_n(x,\varepsilon,\lambda_2,f)$. Thus $f:X\to X$ is $\lambda$-hyperbolic if and only if  $f:X\to X$ is $(+\infty,\lambda)$-hyperbolic.
	
	(2) Under the assumption $(\lambda_1,\lambda_2)$-hyperbolic, the exponential (non-uniform) specification property is equivalent to the classic (non-uniform) specification property. (See Subsection \ref{sec-specfication} and Figure \ref{Fig-1})
\end{Rem}

\begin{Thm}\label{Thm-upper-bound}
	Suppose that $(X,f)$ is a dynamical system. Let $\varphi$ be a positive function defined on $\mathbb{N},$ $\mathcal{Z}=\{z_i\}_{i=0}^{\infty}\subset X$ be a sequence of points. Denote $\underline{\tau}:=\liminf\limits_{n\to\infty}-\frac{\ln\varphi(n)}{n}.$
	\begin{enumerate}
		\item[(1)] Assume that $(X,f)$ is $L$-Lipschitz.  Then
		 \begin{equation}\label{equa-thm-10}
			h_{top}(f,\mathfrak{S}(f,\varphi,\mathcal{Z},\mathbb{N}))\leq (1-\frac{\underline{\tau}}{\ln L+\underline{\tau}})h_{top}(f,X).
		\end{equation}
		If further $f:X\to X$ is  $\lambda$-hyperbolic, then
		\begin{equation}\label{equa-thm-11}
			\operatorname{dim}_H \mathfrak{S}(f,\varphi,\mathcal{Z},\mathbb{N})\leq \frac{1}{\lambda}(1-\frac{\underline{\tau}}{\ln L+\underline{\tau}})h_{top}(f,X).
		\end{equation}
		\item[(2)] Assume that $(X,f)$ is a $(L_1,L_2)$-bi-Lipschitz homeomorphism.
		\begin{enumerate}
			\item[(2.1)]  If $\ln L_1>\underline{\tau},$ then
			\begin{equation}\label{equa-thm-12}
				h_{top}(f,\mathfrak{S}(f,\varphi,\mathcal{Z},\mathbb{N}))\leq \frac{\ln L_1\ln L_2-\underline{\tau}\ln L_2}{\ln L_1\ln L_2+\underline{\tau}\ln L_1}h_{top}(f,X).
			\end{equation}
			If further $f:X\to X$ is  $(\lambda_1,\lambda_2)$-hyperbolic, then
			\begin{equation}\label{equa-thm-13}
				\operatorname{dim}_H \mathfrak{S}(f,\varphi,\mathcal{Z},\mathbb{N})\leq (\frac{1}{\lambda_1}+\frac{1}{\lambda_2}\frac{\ln L_1\ln L_2-\underline{\tau}\ln L_2}{\ln L_1\ln L_2+\underline{\tau}\ln L_1})h_{top}(f,X).
			\end{equation}
			\item[(2.2)] If $\ln L_1= \underline{\tau},$ then
			\begin{equation}\label{equa-thm-14}
				h_{top}(f,\mathfrak{S}(f,\varphi,\mathcal{Z},\mathbb{N}))=0.
			\end{equation}
				If further $f:X\to X$ is  $(\lambda_1,\lambda_2)$-hyperbolic, then
				\begin{equation}\label{equa-thm-15}
					\operatorname{dim}_H \mathfrak{S}(f,\varphi,\mathcal{Z},\mathbb{N})\leq \frac{1}{\lambda_1}h_{top}(f,X).
				\end{equation}
			\item[(2.3)] If $\ln L_1< \underline{\tau},$ then
			\begin{equation}\label{equa-thm-16}
				h_{top}(f,\mathfrak{S}(f,\varphi,\mathcal{Z},\mathbb{N}))=0.
			\end{equation}
			If further $f:X\to X$ is  $(\lambda_1,\lambda_2)$-hyperbolic, then
			\begin{equation}\label{equa-thm-17}
				\operatorname{dim}_H \mathfrak{S}(f,\varphi,\mathcal{Z},\mathbb{N})=0.
			\end{equation}
		\end{enumerate}
	\end{enumerate}
\end{Thm}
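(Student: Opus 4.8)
The plan is to construct, for every large threshold $N$, an explicit and economical cover of $\mathfrak{S}(f,\varphi,\mathcal{Z},\mathbb{N})$ by dynamical balls of lengths tending to infinity, and then read off the two Carath\'eodory sums --- the one defining $h_{top}(f,\cdot)$ on subsets and the one defining $\operatorname{dim}_H$. The engine is a \emph{determination estimate}: fix $\delta>0$; since $\underline\tau=\liminf_n-\tfrac{\ln\varphi(n)}{n}$, one has $\varphi(n)\le e^{-(\underline\tau-\delta)n}$ for all large $n$, so if $d(f^n(x),z_n)<\varphi(n)$ then $L$-Lipschitz continuity forces $d(f^{n+i}(x),f^i(z_n))\le L^i\varphi(n)$ for every $i\ge0$ (and, when $f$ is $(L_1,L_2)$-bi-Lipschitz, also $d(f^{n-j}(x),f^{-j}(z_n))\le L_1^j\varphi(n)$ for $j\ge0$). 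Thus the orbit of $x$ is pinned, within a fixed scale $\varepsilon$, to the pre-determined orbit of $z_n$ for about $\ell_n:=\lfloor\tfrac{(\underline\tau-\delta)n+\ln\varepsilon}{\ln L}\rfloor$ steps forward (and about $p_n:=\lfloor\tfrac{(\underline\tau-\delta)n+\ln\varepsilon}{\ln L_1}\rfloor$ steps backward). Writing $h:=h_{top}(f,X)$ and taking a minimal $(k,\varepsilon)$-spanning set $E_k$ of $X$ --- so $\#E_k\le e^{k(h+\delta)}$ for $k$ large --- I would check that every $x\in\mathfrak{S}$ lies, for some visit time $n\ge N$, in a cell $U_{n,e}:=B_n(e,\varepsilon,f)\cap f^{-n}\big(\overline B_{\ell_n+1}(z_n,\varepsilon,f)\big)$ with $e\in E_n$, and that each cell is contained in a single $(m_n,2\varepsilon)$-dynamical ball with $m_n:=n+\ell_n+1$, $m_n\to\infty$; the family $\{U_{n,e}\}$ then legitimately covers $\mathfrak{S}$ in the Bowen--Pesin--Pitskel definition of $h_{top}(f,\cdot)$.

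For \eqref{equa-thm-10} I would estimate, for an exponent $s$,
\[
\sum_{n\ge N}\#E_n\,e^{-s m_n}\ \le\ \sum_{n\ge N}\exp\!\Big(n\big[(h+\delta)-s\big(1+\tfrac{\underline\tau-\delta}{\ln L}\big)\big]\Big),
\]
which tends to $0$ as $N\to\infty$ once $s>\frac{h+\delta}{1+(\underline\tau-\delta)/\ln L}$; letting $\delta\to0$ yields $h_{top}(f,\mathfrak{S})\le\frac{h}{1+\underline\tau/\ln L}=(1-\frac{\underline\tau}{\ln L+\underline\tau})h$. For \eqref{equa-thm-11} I would invoke $\lambda$-hyperbolicity: for $\varepsilon$ small each $(m_n,2\varepsilon)$-dynamical ball sits in some $B_{m_n}(\cdot,\varepsilon_h,\lambda,f)$, hence (reading the constraint at time $0$) inside a metric ball of radius $\varepsilon_h e^{-(m_n-1)\lambda}\to0$; the Hausdorff sum $\sum_{n\ge N}e^{n(h+\delta)}\big(2\varepsilon_h e^{-(m_n-1)\lambda}\big)^{t}$ is dominated by $\sum_{n\ge N}\exp\!\big(n[(h+\delta)-t\lambda(1+(\underline\tau-\delta)/\ln L)]\big)$, whence $\operatorname{dim}_H \mathfrak{S}\le\frac1\lambda(1-\frac{\underline\tau}{\ln L+\underline\tau})h$.

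In the bi-Lipschitz case the backward tail is also exploited, so the ``free'' block has length $n-p_n\approx n\big(1-\tfrac{\underline\tau}{\ln L_1}\big)$ and the cells become $B_{n-p_n}(e,\varepsilon,f)\cap f^{-(n-p_n)}\big(\overline B_{p_n+\ell_n+1}(f^{-p_n}z_n,\varepsilon,f)\big)$ with $e$ in an $(n-p_n,\varepsilon)$-spanning set. If $\ln L_1<\underline\tau$, then $p_n>n$ for $n$ large, so the backward tail already reaches time $0$: then $\mathfrak{S}\subset\bigcup_{n\ge N}f^{-n}\big(B(z_n,\varphi(n))\big)$ with $\diam f^{-n}\big(B(z_n,\varphi(n))\big)\le 2L_1^n\varphi(n)\le 2e^{n(\ln L_1-\underline\tau+\delta)}\to0$, so a single cover element per $n$, of growing dynamical length, forces $h_{top}(f,\mathfrak{S})=0$ and $\operatorname{dim}_H \mathfrak{S}=0$ --- this is \eqref{equa-thm-16} and \eqref{equa-thm-17}. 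If $\ln L_1\ge\underline\tau$, the same Carath\'eodory computation with count $e^{(n-p_n)(h+\delta)}$ and length $m_n=n+\ell_n+1$ gives
\[
h_{top}(f,\mathfrak{S})\ \le\ \lim_{\delta\to0}\frac{(h+\delta)\big(1-\frac{\underline\tau-\delta}{\ln L_1}\big)}{1+\frac{\underline\tau-\delta}{\ln L_2}}\ =\ \frac{\ln L_1\ln L_2-\underline\tau\ln L_2}{\ln L_1\ln L_2+\underline\tau\ln L_1}\,h,
\]
i.e. \eqref{equa-thm-12}, which degenerates to $0$ when $\ln L_1=\underline\tau$ --- this is \eqref{equa-thm-14}.

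The hard part will be the Hausdorff-dimension bound \eqref{equa-thm-13} (and its specialisation \eqref{equa-thm-15}) under $(\lambda_1,\lambda_2)$-hyperbolicity, because there \emph{a one-sided dynamical ball is not a small metric ball at its initial time} --- the weight $e^{-\min\{i\lambda_1,(m-1-i)\lambda_2\}}$ degenerates to $1$ at $i=0$, so the shortcut used for \eqref{equa-thm-11} is unavailable. I would circumvent this by subdividing each cell $U_{n,e}$ into metric balls of radius $\rho_n\to0$: at the balanced time $i^\ast\approx\frac{\lambda_2}{\lambda_1+\lambda_2}m_n$ the image $f^{i^\ast}(U_{n,e})$ does lie in a genuinely small ball (radius of order $\varepsilon_h e^{-(m_n-1)\lambda_1\lambda_2/(\lambda_1+\lambda_2)}$), so pulling back by $f^{-i^\ast}$ confines $U_{n,e}$ to a set which is exponentially thin in the direction contracted forward at rate $\lambda_2$ but only of fixed size in the direction contracted forward at rate $\lambda_1$; covering that set by $\rho_n$-balls therefore costs an extra factor of order $e^{\gamma n}$, with $\gamma$ and the decay rate of $\rho_n$ dictated by $\lambda_1,\lambda_2$ (and, along the non-contracting direction, by how $\ln L_1$ compares with $\lambda_1$), invertibility entering through the symmetry that $f^{-1}$ is $(\lambda_2,\lambda_1)$-hyperbolic. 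Feeding count $e^{(n-p_n)(h+\delta)}e^{\gamma n}$ and diameter $\rho_n$ into the Hausdorff sum and optimising over the exponent $t$ should return precisely $\frac{1}{\lambda_1}h+\frac{1}{\lambda_2}h_{top}(f,\mathfrak{S})$, i.e. \eqref{equa-thm-13} with $h_{top}(f,\mathfrak{S})$ as in \eqref{equa-thm-12}, and the case $\ln L_1=\underline\tau$ reduces this to $\frac{1}{\lambda_1}h$, which is \eqref{equa-thm-15}. Controlling the interplay of these two competing exponential rates in the subdivision is the only genuine obstacle; the rest is the Lipschitz determination estimate together with the definitions of topological entropy on subsets and of Hausdorff dimension.
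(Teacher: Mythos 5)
Most of your plan coincides with the paper's actual proof: the determination estimate $d(f^{n+i}(x),f^i(z_n))\le L^i\varphi(n)$ (and its backward analogue with $L_1$), the cover by cells pinned to the orbit of $z_n$ and indexed by an $(n,\varepsilon)$- resp.\ $(n-p_n,\varepsilon)$-spanning set, each cell sitting in one long dynamical ball, and the Carath\'eodory sums, give exactly \eqref{equa-thm-10}, \eqref{equa-thm-11}, \eqref{equa-thm-12}, \eqref{equa-thm-14} and \eqref{equa-thm-16} as in the paper; your treatment of \eqref{equa-thm-17} via $\diam f^{-n}\bigl(B(z_n,\varphi(n))\bigr)\le 2L_1^n\varphi(n)\le 2e^{n(\ln L_1-\underline{\tau}+\delta)}\to 0$ is correct and in fact simpler than the paper's, since it never uses the hyperbolicity hypothesis.

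The genuine gap is \eqref{equa-thm-13} (and hence \eqref{equa-thm-15}), which you leave as a sketch. Your plan --- pull the cell back to the balanced time $i^\ast$, note it is ``exponentially thin in the forward-contracted direction but of fixed size in the other,'' and cover it by $\rho_n$-balls at a cost $e^{\gamma n}$ with $\gamma$ ``dictated by $\lambda_1,\lambda_2$ (and $\ln L_1$)'' --- cannot be executed in the stated generality: $X$ is only a compact metric space, so there are no directions, no volume or doubling estimates, and no a priori bound on how many $\rho_n$-balls such a ``thin'' set needs; moreover the correct extra cost is governed by $h_{top}(f,X)$, not by the exponents and Lipschitz constants alone (the extra term in \eqref{equa-thm-13} is $\tfrac{1}{\lambda_1}h_{top}(f,X)$). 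The paper replaces your geometric subdivision by a dynamical one: it intersects each cell $J^*(w)$ with the sets $f^{\psi_3(n)}B_{\psi_3(n)}(\tilde w,\tfrac{\varepsilon}{2},f)$, where $\tilde w$ runs over a maximal $(\psi_3(n),\tfrac{\varepsilon}{2})$-separated (hence spanning) set and $\psi_3(n)=\lceil\lambda_2(n+\psi_2(n))/\lambda_1\rceil$, paying the entropy-counted factor $e^{\psi_3(n)(h_{top}(f,X)+2\delta)}$; each nonempty piece is a two-sided dynamical ball $f^{\psi_3(n)}B_{n+\psi_2(n)+\psi_3(n)}(f^{-\psi_3(n)}(w''),\varepsilon,f)$, and $(\lambda_1,\lambda_2)$-hyperbolicity places it inside a metric ball of radius $\varepsilon_h e^{-(n+\psi_2(n)-1)\lambda_2}$, precisely because $\min\{\psi_3(n)\lambda_1,(n+\psi_2(n)-1)\lambda_2\}=(n+\psi_2(n)-1)\lambda_2$ by the choice of $\psi_3$. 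Feeding the count $e^{(n-\psi_1(n))(h+2\delta)}\,e^{\psi_3(n)(h+2\delta)}$ and this radius into the Hausdorff sum yields $\tfrac{1}{\lambda_1}h_{top}(f,X)+\tfrac{1}{\lambda_2}\cdot(\text{the bound \eqref{equa-thm-12}})$, i.e.\ \eqref{equa-thm-13}; \eqref{equa-thm-15} then follows either by the same computation or, as in the paper, by running (2.1) with $L_1+\delta$ and letting $\delta\to 0$. To close your proof you must substitute this entropy-counted backward pinning for the ball-subdivision step.
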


\subsection{Applications}
Derived from Theorem \ref{theorem-shrinking-1} and \ref{theorem-shrinking-2}, we will provide several useful applications (e.g., hyperbolic sets, uniformly expanding maps, and symbolic dynamics). In some applications, the upper and lower bounds coincide, and therefore the explicit value of the Hausdoff dimension and the topological entropy of the shrinking target sets are estimated.
\subsubsection{Hyperbolic set}
Let $M$ be a compact smooth Riemann manifold and $f:M\to M$ be a diffeomorphism. A compact $f$-invariant set $\Lambda\subset M$ is said to be \emph{uniformly $(\lambda_1,\lambda_2)$-hyperbolic} where $0< \lambda_1,\lambda_2<1$, if for any $x\in \Lambda$ there is a splitting of the tangent space $T_{x}M=E^{s}(x)\oplus E^{u}(x)$ which is preserved by the differential $Df$ of $f$:
\begin{equation*}
	Df(E^{s}(x))=E^{s}(f(x)),\ Df(E^{u}(x))=E^{u}(f(x)),
\end{equation*}
and 
\begin{equation*}
	|Df(v)|\leq \lambda_1|v|,\ \forall x\in \Lambda,\ v\in E^{s}(x),
\end{equation*}
\begin{equation*}
	|Df^{-1}(v)|\leq \lambda_2|v|,\ \forall x\in \Lambda,\ v\in E^{u}(x).
\end{equation*}

$\Lambda$ is said to be \emph{locally maximal} if there exists an open neighborhood $U$ of $\Lambda$ such that $\Lambda=\bigcap_{n \in \mathbb{Z}} f^n U$,
\begin{Prop}\label{Prop-hyper-spec}
	Let $\Lambda\subset M$ be a transitive locally maximal uniformly $(\lambda_1,\lambda_2)$-hyperbolic set for  a diffeomorphism $f.$ Then there is a non-empty compact set $Y\subseteq \Lambda$ and a positive integer $N$ such that $f^{N}(Y)=Y$, $\Lambda=\bigcup_{l=0}^{N-1}f^{l}(Y)$ and  $(Y,f^N)$  has the exponential specification property  with respect to the exponent $(N\ln(\lambda'_1)^{-1},N\ln(\lambda'_2)^{-1})$ at any scale  for any $\lambda_1<\lambda'_1<1$ and $\lambda_2<\lambda'_2<1$. If further $(\Lambda,f)$ is mixing, then $Y=\Lambda$ and $N=1.$
\end{Prop}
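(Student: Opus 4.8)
The plan is to obtain the statement by combining the standard structure theory of locally maximal hyperbolic sets with a quantitative ``exponential'' version of the shadowing lemma. First I would recall Smale's spectral decomposition: since $\Lambda$ is transitive and locally maximal, $\Lambda$ is a single basic set, and by the spectral decomposition for basic sets there exist a positive integer $N$ and a compact set $Y\subseteq\Lambda$ with $f^N(Y)=Y$, $f^i(Y)\cap f^j(Y)$ having empty interior for $0\le i<j\le N-1$, $\Lambda=\bigcup_{l=0}^{N-1}f^l(Y)$, and such that $(Y,f^N)$ is topologically mixing. If $(\Lambda,f)$ is already mixing then $N=1$ and $Y=\Lambda$, which gives the last sentence for free. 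Thus it suffices to prove that a topologically mixing locally maximal hyperbolic set $(Y,g)$ (with $g=f^N$, whose hyperbolicity constants are the $N$-th powers of those of $f$, hence contraction/expansion rates $(\lambda_1')^N,(\lambda_2')^N$ for any chosen $\lambda_i<\lambda_i'<1$) has the exponential specification property with exponent $(N\ln(\lambda_1')^{-1},N\ln(\lambda_2')^{-1})$ at every scale.

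For this core claim I would use the shadowing/closing lemma in its uniform-hyperbolic form, but keeping track of rates. Fix a small $\delta>0$ (a local-product-structure scale for $Y$); for $g$ uniformly hyperbolic on the locally maximal set $Y$ one has: there is $c>0$ and, for each $\eta>0$, a $\beta>0$ such that every $\beta$-pseudo-orbit is $\eta$-shadowed by a true orbit in $Y$, and moreover — this is the quantitative refinement I need — a finite orbit segment of length $m$ that $\beta$-shadows a pseudo-orbit is traced with error decaying like $(\lambda_1')^{k}$ going forward from the start and like $(\lambda_2')^{m-1-k}$ going backward from the end (this is exactly the content of the two transversal contraction rates along $E^s$ and $E^u$ after iterating the local product structure; the exponents become $N\ln(\lambda_i')^{-1}$ once we substitute $g=f^N$). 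Using topological mixing of $(Y,g)$, I then produce the ``gluing time'' function: for the mixing constant at scale $\delta$, there is $M=M(\delta,\beta)$ so that any two points can be connected by a $\beta$-pseudo-orbit with a transition block of length at most $M$; since $M$ does not grow with the number or length of the orbit segments being concatenated, the ``mistake function'' $p(n,\varepsilon)$ can be taken constant in $n$, whence $\limsup_n \tfrac1n p(n,\varepsilon)=0$ and in fact we get the \emph{specification} (not merely non-uniform specification) property. Shadowing the concatenated pseudo-orbit by a true orbit, and invoking the rate-refined shadowing above on each piece, yields precisely the exponential specification property of Definition~\ref{def-exp-nonuni-speci} with the asserted exponents, at the scale $\eta$; since $\eta>0$ was arbitrary, this holds at any scale.

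The step I expect to be the main obstacle is the rate-refined shadowing estimate: ordinary shadowing only gives a uniform bound $\eta$ on the tracing error along the whole segment, whereas the exponential specification property demands the error at interior times $i$ of a block of length $n$ be bounded by $\varepsilon\, e^{-\min\{i\,N\ln(\lambda_1')^{-1},(n-1-i)\,N\ln(\lambda_2')^{-1}\}}$. To get this I would not appeal to a black-box shadowing lemma but instead run the graph-transform / local-product-structure construction by hand: at each concatenation time the true orbit point lies in the intersection of a local stable manifold through one piece's endpoint and a local unstable manifold through the next piece's start, and iterating forward contracts the stable component at rate $(\lambda_1')^N$ while iterating backward contracts the unstable component at rate $(\lambda_2')^N$; summing the geometric contributions of \emph{all} the earlier and later concatenation points (there are finitely many, with bounded gaps once the segments are long, but in the specification setup one must allow arbitrarily many) gives a convergent geometric series, so the bound at time $i$ is controlled by the nearest concatenation point on each side, which produces exactly the $\min\{\cdot,\cdot\}$ exponent after absorbing the constant into $\varepsilon$. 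A secondary technical point is checking that the transition length $M$ from topological mixing can be chosen uniformly (independent of which points are being connected), which is immediate from compactness of $Y$ and the definition of topological mixing, and that the shadowing orbit actually lies in $Y$ rather than merely in a neighborhood — this is exactly where local maximality of $\Lambda$ (hence of $Y$) is used.
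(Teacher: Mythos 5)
Your proposal is correct in outline, but for the core step it takes a genuinely different (and heavier) route than the paper. You and the paper agree on the first and last parts: spectral decomposition produces $Y$ and $N$ with $(Y,f^N)$ mixing, and the mixing case gives $Y=\Lambda$, $N=1$. For the exponential specification itself, however, the paper does not re-run any shadowing argument with rates: it simply cites the classical specification theorem for mixing locally maximal hyperbolic sets (Katok--Hasselblatt) to get ordinary specification at every scale, and then upgrades it for free via Proposition \ref{Prop-hype}, Proposition \ref{Prop-N-hype} and Proposition \ref{Prop-exp+spec}. The point you are working hard to extract from a rate-refined shadowing lemma is really a property of a \emph{single} Bowen ball in a uniformly hyperbolic set: if $d(f^iz,f^ix)<\varepsilon'$ for all $0\le i\le n-1$, then by the local product structure the stable component of the deviation contracts forward at rate $\lambda_1'$ and the unstable component contracts backward from the end at rate $\lambda_2'$, so $B_n(x,\varepsilon',f)\subset B_n(x,\varepsilon,\ln(\lambda_1')^{-1},\ln(\lambda_2')^{-1},f)$; applying this block by block to the point furnished by ordinary specification (at the smaller scale $\varepsilon'$) yields exponential specification with no need to sum geometric contributions over all concatenation points. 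Your plan — concatenated $\beta$-pseudo-orbits with a uniform mixing transition length, plus a quantitative shadowing estimate of the form $\sum_k C(\lambda_1')^{(t-t_k)^+}(\lambda_2')^{(t_k-t)^+}$ over jump times $t_k$ — does work and has the virtue of being self-contained (it re-proves the specification theorem rather than citing it), and your identification of the rate-refined shadowing as the main obstacle is accurate; but that obstacle is exactly what the paper's factorization avoids, and you could simplify your own argument to essentially the paper's by noticing that the exponential decay toward the middle of each block follows from hyperbolicity of the Bowen ball alone, independently of how the gluing point was constructed.
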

\begin{proof}
	By spectral decomposition \cite[Theorem 18.3.1]{KatHas} there is a non-empty compact set $Y\subseteq \Lambda$ and a positive integer $N$ such that $f^{N}(Y)=Y$, $\Lambda=\bigcup_{l=0}^{N-1}f^{l}(Y)$ and  $(Y,f^N)$ is mixing. By  \cite[Theorem 18.3.9]{KatHas}, $(Y,f^N)$ has the  specification property at any scale. So by Proposition \ref{Prop-hype}, Proposition \ref{Prop-N-hype} and Proposition \ref{Prop-exp+spec}, $(Y,f^N)$  has the exponential specification property  with respect to the exponent $(N\ln(\lambda'_1)^{-1},N\ln(\lambda'_2)^{-1})$ at any scale for any $\lambda_1<\lambda'_1<1$ and $\lambda_2<\lambda'_2<1$.
\end{proof}
\begin{Thm}\label{Thm-hyper}
	Let $\Lambda\subset M$ be a transitive locally maximal uniformly $(\lambda_1,\lambda_2)$-hyperbolic set for  a diffeomorphism $f.$ Denote $L_2=\sup\limits_{x\in\Lambda}\sup\limits_{v\in T_xM,v\neq0}\frac{|Df(v)|}{|v|},$ $L_1=\sup\limits_{x\in\Lambda}\sup\limits_{v\in T_xM,v\neq0}\frac{|Df^{-1}(v)|}{|v|}.$
	\begin{enumerate}
		\item[(1)] If $\overline{\tau}<-\ln\lambda_1$ and $\cap_{j=1}^{\infty}\mathrm{Ind}'(\mathcal{Z}_j,S_j,Y)\neq\emptyset$, then
		the set
		$\cap_{j=1}^{\infty}\mathfrak{S}(f,\varphi_j,\mathcal{Z}_j,S_j)$ is non-empty and  $$\frac{\ln\lambda_1^{-1}\ln\lambda_2^{-1}-\overline{\tau}\ln\lambda_2^{-1} }{\ln\lambda_1^{-1}\ln\lambda_2^{-1}+\overline{\tau}\ln\lambda_1^{-1}}h_{top}(f,X)\leq h_{top}(f,\cap_{j=1}^{\infty}\mathfrak{S}(f,\varphi_j,\mathcal{Z}_j,S_j))\leq \frac{\ln L_1\ln L_2-\underline{\tau}\ln L_2}{\ln L_1\ln L_2+\underline{\tau}\ln L_1}h_{top}(f,X),$$
		\begin{equation*}
			\begin{split}
				(\frac{1}{\ln L_1}+\frac{1}{\ln L_2}\frac{\ln\lambda_1^{-1}\ln\lambda_2^{-1}-\overline{\tau}\ln\lambda_2^{-1} }{\ln\lambda_1^{-1}\ln\lambda_2^{-1}+\overline{\tau}\ln\lambda_1^{-1}})h_{top}(f,X)&\leq \operatorname{dim}_H \cap_{j=1}^{\infty}\mathfrak{S}(f,\varphi_j,\mathcal{Z}_j,S_j)\\
				&\leq (\frac{1}{\ln\lambda_1^{-1}}+\frac{1}{\ln\lambda_2^{-1}}\frac{\ln L_1\ln L_2-\underline{\tau}\ln L_2}{\ln L_1\ln L_2+\underline{\tau}\ln L_1})h_{top}(f,X).
			\end{split}
		\end{equation*}
		When $(\Lambda,f)$ is mixing, the condition $\cap_{j=1}^{\infty}\mathrm{Ind}'(\mathcal{Z}_j,S_j,Y)\neq\emptyset$ can be omitted.    When $(\Lambda,f)$ is mixing and $S_j=\mathbb{N}$ for any $j\geq 1,$ the results  hold if we replace $\overline{\tau}$ by $\underline{\tau}.$ Moreover, if  $L_1=\lambda_1^{-1}, L_2=\lambda_2^{-1}$, we have
		$$h_{top}(f,\cap_{j=1}^{\infty}\mathfrak{S}(f,\varphi_j,\mathcal{Z}_j,S_j))= \frac{\ln L_1\ln L_2-\underline{\tau}\ln L_2}{\ln L_1\ln L_2+\underline{\tau}\ln L_1}h_{top}(f,X),$$
		$$\operatorname{dim}_H \cap_{j=1}^{\infty}\mathfrak{S}(f,\varphi_j,\mathcal{Z}_j,S_j)= \frac{1}{\ln L_1}\frac{\ln L_1+\ln L_2}{\ln L_2+\underline{\tau}}h_{top}(f,X).$$
		\item[(2)] If $\underline{\tau}=\ln L_1,$ then $$h_{top}(f,\cap_{j=1}^{\infty}\mathfrak{S}(f,\varphi_j,\mathcal{Z}_j,S_j))=0,$$
		$$\operatorname{dim}_H \cap_{j=1}^{\infty}\mathfrak{S}(f,\varphi_j,\mathcal{Z}_j,S_j)\leq \frac{1}{\ln\lambda^{-1}_1}h_{top}(f,X).$$
		\item[(3)] If $ \underline{\tau}>\ln L_1,$ then $$h_{top}(f,\cap_{j=1}^{\infty}\mathfrak{S}(f,\varphi_j,\mathcal{Z}_j,S_j))=0,$$
		$$\operatorname{dim}_H \cap_{j=1}^{\infty}\mathfrak{S}(f,\varphi_j,\mathcal{Z}_j,S_j)=0.$$
	\end{enumerate}
\end{Thm}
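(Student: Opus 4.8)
The statement of Theorem~\ref{Thm-hyper} is an \emph{application} of the abstract machinery to uniformly hyperbolic sets, so the plan is to assemble it from Proposition~\ref{Prop-hyper-spec}, Corollary~\ref{theorem-shrinking-2} (and Theorem~\ref{theorem-shrinking-1} in the mixing case), and Theorem~\ref{Thm-upper-bound}. The only genuinely new ingredient is the dictionary that turns ``uniformly $(\lambda_1,\lambda_2)$-hyperbolic'' into the abstract hypotheses (B1)--(B2) and (C1)--(C2), together with a limiting argument $\lambda_i'\downarrow\lambda_i$ in the auxiliary rates that occur in Proposition~\ref{Prop-hyper-spec}.

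First I would set up the Lipschitz and hyperbolicity data for $f|_\Lambda$, equipped with the metric restricted from $M$. Pushing a minimizing geodesic through $f$ (resp.\ $f^{-1}$) and bounding its length by $\|Df\|$ (resp.\ $\|Df^{-1}\|$) shows that $f|_\Lambda$ is $L_2$-Lipschitz and $f^{-1}|_\Lambda$ is $L_1$-Lipschitz, so $f|_\Lambda$ is $(L_1,L_2)$-bi-Lipschitz and (B1)--(B2) hold; in particular $f$ is Lipschitz, as needed for Corollary~\ref{theorem-shrinking-2}. Since $Df$ expands $E^u$ by a factor $\geq\lambda_2^{-1}$ and $Df^{-1}$ expands $E^s$ by a factor $\geq\lambda_1^{-1}$, one gets $L_1\geq\lambda_1^{-1}$ and $L_2\geq\lambda_2^{-1}$. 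The standard hyperbolic estimates (the ones invoked in the proof of Proposition~\ref{Prop-hyper-spec}) give that $f|_\Lambda$ is $(\ln(\lambda_1')^{-1},\ln(\lambda_2')^{-1})$-hyperbolic in the sense of (C2) for every $\lambda_1<\lambda_1'<1$, $\lambda_2<\lambda_2'<1$ (and $\lambda$-hyperbolic in the obvious one-sided reductions).

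For the lower bounds in part~(1), fix $\lambda_1',\lambda_2'$ close enough to $\lambda_1,\lambda_2$ that $\overline{\tau}<\ln(\lambda_1')^{-1}$, which is possible because $\overline{\tau}<-\ln\lambda_1$. By Proposition~\ref{Prop-hyper-spec}, $(Y,f^N)$ has the exponential specification property, hence the exponential non-uniform specification property (with $p\equiv 0$), with respect to the exponent $(N\ln(\lambda_1')^{-1},N\ln(\lambda_2')^{-1})$; so $(\tilde{A})$ holds with the pair $(\ln(\lambda_1')^{-1},\ln(\lambda_2')^{-1})$. Since moreover $\cap_{j=1}^{\infty}\mathrm{Ind}'(\mathcal{Z}_j,S_j,Y)\neq\emptyset$, Corollary~\ref{theorem-shrinking-2} applies and gives non-emptiness of $\cap_{j=1}^{\infty}\mathfrak{S}(f,\varphi_j,\mathcal{Z}_j,S_j)$ together with the lower bounds for $h_{top}$ and (using the bi-Lipschitz structure) for $\operatorname{dim}_H$, with $\lambda_i$ replaced by $\ln(\lambda_i')^{-1}$; letting $\lambda_i'\downarrow\lambda_i$ and using continuity of the displayed rational expressions in $(\lambda_1',\lambda_2')$ yields precisely the stated lower bounds. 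When $(\Lambda,f)$ is mixing, Proposition~\ref{Prop-hyper-spec} gives $Y=\Lambda$ and $N=1$, so (A) holds outright and Theorem~\ref{theorem-shrinking-1} applies with no index hypothesis; if in addition each $S_j=\mathbb{N}$, Remark~\ref{Rem-under} lets us replace $\overline{\tau}$ by $\underline{\tau}$ throughout the lower bounds.

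For the upper bounds I would apply Theorem~\ref{Thm-upper-bound}(2) to $f|_\Lambda$, which is $(L_1,L_2)$-bi-Lipschitz and, for the dimension statements, $(\ln(\lambda_1')^{-1},\ln(\lambda_2')^{-1})$-hyperbolic. Since $\mathfrak{S}(f,\varphi_j,\mathcal{Z}_j,S_j)\subseteq\mathfrak{S}(f,\varphi_j,\mathcal{Z}_j,\mathbb{N})$ and $h_{top}$, $\operatorname{dim}_H$ are monotone, the intersection is contained in each single full shrinking target set; taking the infimum over $j$ of the single-set bounds of Theorem~\ref{Thm-upper-bound} and using that these bounds are continuous and decreasing in the liminf parameter replaces $\underline{\tau}_j$ by $\underline{\tau}=\sup_j\underline{\tau}_j$. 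In case~(1) we have $\ln L_1\geq-\ln\lambda_1>\overline{\tau}\geq\underline{\tau}$, so case~(2.1) of Theorem~\ref{Thm-upper-bound} applies and gives the claimed upper bounds for $h_{top}$ and, after $\lambda_i'\downarrow\lambda_i$, for $\operatorname{dim}_H$; in case~(2), $\underline{\tau}=\ln L_1$ is case~(2.2), yielding $h_{top}=0$ and $\operatorname{dim}_H\leq\frac{1}{\ln\lambda_1^{-1}}h_{top}(f,\Lambda)$; in case~(3), $\underline{\tau}>\ln L_1$ is case~(2.3), yielding both quantities equal to $0$. Finally, when $L_1=\lambda_1^{-1}$ and $L_2=\lambda_2^{-1}$ (and, for the displayed equalities, in the mixing $S_j=\mathbb{N}$ regime so that the lower bounds also carry $\underline{\tau}$), the two families of bounds coincide: the entropy bounds become literally equal, and for the dimension one checks the elementary identity $\frac{1}{\ln L_1}+\frac{1}{\ln L_2}\cdot\frac{\ln L_2(\ln L_1-\underline{\tau})}{\ln L_1(\ln L_2+\underline{\tau})}=\frac{1}{\ln L_1}\cdot\frac{\ln L_1+\ln L_2}{\ln L_2+\underline{\tau}}$, which is the asserted exact value. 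The main obstacle is not conceptual but a matter of bookkeeping in the limit: one must check that $\overline{\tau}<\ln(\lambda_1')^{-1}$ survives shrinking $\lambda_1'$ toward $\lambda_1$, that the specification exponent degrades only to $\ln(\lambda_i')^{-1}$, that the base-$N$ rescaling in $(\tilde{A})$ is matched correctly, and that the two-sided hyperbolicity parameters used for the dimension upper bounds are the same $\ln(\lambda_i')^{-1}$ used in the lower bounds, so that the two sides meet exactly as $\lambda_i'\downarrow\lambda_i$.
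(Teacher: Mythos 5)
Your proposal is correct and follows essentially the same route as the paper: the paper's proof is exactly to note that $f|_\Lambda$ is $(L_1,L_2)$-bi-Lipschitz with $\lambda_1^{-1}\leq L_1$, and then to combine Proposition~\ref{Prop-hyper-spec} (via Proposition~\ref{Prop-hype}), Corollary~\ref{theorem-shrinking-2}, Theorem~\ref{Thm-upper-bound} and Remark~\ref{Rem-under}, which is precisely your assembly, with the $\lambda_i'\downarrow\lambda_i$ limit and the algebraic identity for the exact dimension value filled in explicitly.
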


Hyperbolic automorphisms of torus are the prime examples of hyperbolic dynamical systems. The action of a matrix $A \in S L(d, \mathbb{Z})$ on $\mathbb{R}^d$ induces an automorphism of the torus $\mathbb{T}^d=\mathbb{R}^d / \mathbb{Z}^d$ as following
$$
f_A(x)=A x \quad \bmod 1.
$$
$f_A$ or $A$ is called hyperbolic  if the matrix $A$ has no eigenvalues on the unit circle.
\begin{Thm}\label{Thm-hyper-torus}
	Let $A \in S L(d, \mathbb{Z})$  be a hyperbolic matrix. Assume that $A$ has only two different eigenvalues $\lambda_s,\lambda_u$  with $0<|\lambda_s|<1<|\lambda_u|.$
	\begin{enumerate}
		\item[(1)] If $\underline{\tau}<-\ln|\lambda_s|$, then
		the set
		$\cap_{j=1}^{\infty}\mathfrak{S}(f_A,\varphi_j,\mathcal{Z}_j,\mathbb{N})$ is non-empty and
		$$h_{top}(f_A,\cap_{j=1}^{\infty}\mathfrak{S}(f_A,\varphi_j,\mathcal{Z}_j,\mathbb{N}))= d_s\frac{\ln |\lambda_s|^{-1}\ln |\lambda_u|-\underline{\tau}\ln |\lambda_u|}{\ln |\lambda_u|+\underline{\tau}},$$
		$$\operatorname{dim}_H \cap_{j=1}^{\infty}\mathfrak{S}(f_A,\varphi_j,\mathcal{Z}_j,\mathbb{N})= d_s\frac{\ln |\lambda_s|^{-1}+\ln |\lambda_u|}{\ln |\lambda_u|+\underline{\tau}},$$
		where $d_s$ is the multiplicity of $\lambda_s.$
		\item[(2)] If $\underline{\tau}=-\ln|\lambda_s|,$ then $$h_{top}(f_A,\cap_{j=1}^{\infty}\mathfrak{S}(f_A,\varphi_j,\mathcal{Z}_j,\mathbb{N}))=0,$$
		$$\operatorname{dim}_H \cap_{j=1}^{\infty}\mathfrak{S}(f_A,\varphi_j,\mathcal{Z}_j,\mathbb{N})\leq d_s.$$
		\item[(3)] If $ \underline{\tau}>-\ln|\lambda_s|,$ then $$h_{top}(f_A,\cap_{j=1}^{\infty}\mathfrak{S}(f_A,\varphi_j,\mathcal{Z}_j,\mathbb{N}))=0,$$
		$$\operatorname{dim}_H \cap_{j=1}^{\infty}\mathfrak{S}(f_A,\varphi_j,\mathcal{Z}_j,\mathbb{N})=0.$$
	\end{enumerate}
\end{Thm}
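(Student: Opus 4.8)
The plan is to treat $\mathbb{T}^d$ itself as a transitive, locally maximal uniformly hyperbolic set for $f_A$ (indeed $f_A$ is mixing, being a hyperbolic toral automorphism), to pass to an adapted flat metric in which the hyperbolicity exponents and the bi-Lipschitz constants of $f_A$ are simultaneously nearly optimal, to apply Theorems~\ref{theorem-shrinking-1} and~\ref{Thm-upper-bound} there, and finally to let the metric tend to the ideal one. Write $E^s,E^u\subset\mathbb{R}^d$ for the generalized eigenspaces of $\lambda_s,\lambda_u$, of dimensions $d_s,d_u$; both eigenvalues are real, since a complex-conjugate pair would share a modulus. From $\det A=\pm1$ we get $|\lambda_s|^{d_s}|\lambda_u|^{d_u}=1$, hence $d_s\ln|\lambda_s|^{-1}=d_u\ln|\lambda_u|$, and by the classical entropy formula $h_{top}(f_A,\mathbb{T}^d)=d_u\ln|\lambda_u|=d_s\ln|\lambda_s|^{-1}$; this identity is exactly what makes the upper and lower bounds collapse to the stated closed forms. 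Observe also that $\lambda_s$ is an algebraic integer of modulus $<1$, hence irrational, so $A|_{E^s}$ (whose only eigenvalue is $\lambda_s$) is not semisimple, and likewise $A|_{E^u}$; consequently the operator norm of $A$ is strictly greater than $|\lambda_u|$ in every inner product. This non-conformality is the only genuine difficulty, and is the reason a limiting argument is needed rather than a single application of the theorems.

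For each small $\eta>0$ I would fix an inner product on $\mathbb{R}^d$ with $E^s\perp E^u$ and $\|A|_{E^s}\|\le|\lambda_s|e^{\eta}$, $\|A|_{E^u}\|\le|\lambda_u|e^{\eta}$, $\|A^{-1}|_{E^s}\|\le|\lambda_s|^{-1}e^{\eta}$, $\|A^{-1}|_{E^u}\|\le|\lambda_u|^{-1}e^{\eta}$ (a Lyapunov norm, obtained by rescaling a Jordan basis), and let $d_\eta$ be the induced flat metric on $\mathbb{T}^d$. Since $d_\eta$ is smooth it is bi-Lipschitz to the standard metric, so the Hausdorff dimension and the Bowen entropy $h_{top}(f_A,\cdot)$ of any subset are unchanged by the switch; and if $c^{-1}d\le d_\eta\le cd$, then for every $\varphi,\mathcal{Z}$
\[\mathfrak{S}_{d_\eta}(f_A,c^{-1}\varphi,\mathcal{Z},\mathbb{N})\ \subseteq\ \mathfrak{S}_{d}(f_A,\varphi,\mathcal{Z},\mathbb{N})\ \subseteq\ \mathfrak{S}_{d_\eta}(f_A,c\varphi,\mathcal{Z},\mathbb{N}),\]
while multiplying $\varphi$ by a constant changes neither $\underline\tau$ nor $\overline\tau$. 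Hence it suffices to prove all the bounds with $d$ replaced by $d_\eta$ and then send $\eta\to0$. In the metric $d_\eta$, $\mathbb{T}^d$ is uniformly $(|\lambda_s|e^{\eta},|\lambda_u|^{-1}e^{\eta})$-hyperbolic with splitting $E^s\oplus E^u$, so Proposition~\ref{Prop-hyper-spec} (with $Y=\mathbb{T}^d$, $N=1$ by mixing) supplies the exponential specification property with exponent $(\lambda_1^\eta,\lambda_2^\eta)$ for any $\lambda_1^\eta$ just below $\ln|\lambda_s|^{-1}-\eta$ and $\lambda_2^\eta$ just below $\ln|\lambda_u|-\eta$; moreover $f_A$ is $(L_1^\eta,L_2^\eta)$-bi-Lipschitz with $L_1^\eta=\|A^{-1}\|\in[|\lambda_s|^{-1},|\lambda_s|^{-1}e^{\eta}]$ and $L_2^\eta=\|A\|\in[|\lambda_u|,|\lambda_u|e^{\eta}]$ (the stable norm dominating for $A^{-1}$, the unstable one for $A$, once $\eta$ is small), and $f_A$ is $(\lambda_1^\eta,\lambda_2^\eta)$-hyperbolic in the sense of (C2) with the same exponents. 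All the quantities $\lambda_i^\eta$ and $\ln L_i^\eta$ tend to the corresponding members of $\{\ln|\lambda_s|^{-1},\ln|\lambda_u|\}$ as $\eta\to0$.

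In case~(1), $\underline\tau<\ln|\lambda_s|^{-1}$, the lower bounds come from Theorem~\ref{theorem-shrinking-1} applied in $d_\eta$ to the functions $c^{-1}\varphi_j$: hypothesis (A) holds with exponent $(\lambda_1^\eta,\lambda_2^\eta)$, the condition $S_j=\mathbb{N}$ lets us replace $\overline\tau$ by $\underline\tau$ via Remark~\ref{Rem-under}, and $\underline\tau<\lambda_1^\eta$ once $\eta$ is small; this gives non-emptiness and the lower bounds $\tfrac{\lambda_1^\eta\lambda_2^\eta-\lambda_2^\eta\underline\tau}{\lambda_1^\eta\lambda_2^\eta+\lambda_1^\eta\underline\tau}h_{top}(f_A,\mathbb{T}^d)$ for entropy and $\big(\tfrac{1}{\ln L_1^\eta}+\tfrac{1}{\ln L_2^\eta}\tfrac{\lambda_1^\eta\lambda_2^\eta-\lambda_2^\eta\underline\tau}{\lambda_1^\eta\lambda_2^\eta+\lambda_1^\eta\underline\tau}\big)h_{top}(f_A,\mathbb{T}^d)$ for dimension, which by the inclusion above bound the corresponding quantities of $\cap_j\mathfrak{S}_d(f_A,\varphi_j,\mathcal{Z}_j,\mathbb{N})$; letting $\eta\to0$ and inserting $h_{top}(f_A,\mathbb{T}^d)=d_s\ln|\lambda_s|^{-1}$, a short calculation turns these into $d_s\tfrac{\ln|\lambda_s|^{-1}\ln|\lambda_u|-\underline\tau\ln|\lambda_u|}{\ln|\lambda_u|+\underline\tau}$ and $d_s\tfrac{\ln|\lambda_s|^{-1}+\ln|\lambda_u|}{\ln|\lambda_u|+\underline\tau}$. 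For the upper bounds in all three cases I would use $\cap_j\mathfrak{S}_d(f_A,\varphi_j,\mathcal{Z}_j,\mathbb{N})\subseteq\mathfrak{S}_d(f_A,\varphi_{j_0},\mathcal{Z}_{j_0},\mathbb{N})\subseteq\mathfrak{S}_{d_\eta}(f_A,c\varphi_{j_0},\mathcal{Z}_{j_0},\mathbb{N})$ and apply Theorem~\ref{Thm-upper-bound} in $d_\eta$ to that single set, whose relevant parameter is $\underline\tau_{j_0}$: since $\ln L_1^\eta\ge\ln|\lambda_s|^{-1}$, we are in case (2.1) whenever $\underline\tau_{j_0}<\ln|\lambda_s|^{-1}$, and (choosing $j_0$ with $\underline\tau_{j_0}>\ln|\lambda_s|^{-1}$, which is possible precisely when $\underline\tau>\ln|\lambda_s|^{-1}$) in case (2.3), at once forcing $h_{top}=\dim_H=0$ and settling case~(3). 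In the remaining situations, passing to $\eta\to0$ in the case-(2.1) bounds and then letting $\underline\tau_{j_0}\uparrow\underline\tau=\sup_j\underline\tau_j$ produces, when $\underline\tau<\ln|\lambda_s|^{-1}$, exactly the two values found above—so the upper and lower bounds coincide—and, when $\underline\tau=\ln|\lambda_s|^{-1}$, an entropy bound tending to $0$ and a dimension bound tending to $d_s$, which is case~(2). The one real obstacle is, as noted, the non-conformality of $A$ on $E^s$ and $E^u$: the operator norms can be pushed only to within $e^{\eta}$ of the spectral radii, never equal, so every estimate must be routed through the double limit $\eta\to0$, $\underline\tau_{j_0}\uparrow\underline\tau$; once that bookkeeping is arranged, the identity $d_s\ln|\lambda_s|^{-1}=d_u\ln|\lambda_u|$ does the rest.
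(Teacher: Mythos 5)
Your argument is correct in substance but follows a genuinely different route from the paper. The paper never changes the metric: it fixes a large power $N$ so that, by the spectral radius formula, $A^N$ restricted to the generalized eigenspaces has norm and conorm within $\varepsilon$ of $|\lambda_s|^N,|\lambda_u|^N$ in the \emph{standard} metric, applies Theorem \ref{Thm-hyper} and Theorem \ref{Thm-upper-bound} to $f_{A^N}$, and transfers between $f_A$ and $f_{A^N}$ by re-indexing the data along residue classes mod $N$ (the sets $\tilde{\mathcal{Z}}_j,\tilde S_j,\tilde\varphi_j$ for the lower bound and the union $\mathfrak{S}(f_A,\varphi_j,\mathcal{Z}_j,\mathbb{N})\subseteq\cup_{n}\mathfrak{S}(f_{A^N},\varphi_{j,n},\mathcal{Z}_{j,n},\mathbb{N})$ for the upper bound), before letting $\varepsilon\to0$. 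You instead keep $f_A$ itself, replace the metric by a Lyapunov (adapted) flat metric $d_\eta$ with $E^s\perp E^u$, and invoke the bi-Lipschitz invariance of Hausdorff dimension, the metric-independence of Bowen entropy on a compact space, and the observation that rescaling $\varphi_j$ by a constant changes neither $\underline\tau$ nor the membership relations up to a constant, then let $\eta\to0$. Your route avoids the power trick and all the mod-$N$ bookkeeping (and the hypotheses of Theorem \ref{theorem-shrinking-1}, Proposition \ref{Prop-hyper-spec}, Proposition \ref{Prop-hype} and Theorem \ref{Thm-upper-bound} do hold verbatim for the flat Riemannian metric $d_\eta$), at the price of having to argue that the quantities being estimated are metric-independent; the paper's route stays in one metric throughout, at the price of the reindexing. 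Both need the same final limit and the same identity $d_s\ln|\lambda_s|^{-1}=d_u\ln|\lambda_u|=h_{top}(f_A,\mathbb{T}^d)$.

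Two small repairs are needed in your write-up. First, the motivating claim that $A|_{E^s}$, $A|_{E^u}$ are never semisimple (so that $\|A\|>|\lambda_u|$ in every inner product) is false: for instance Arnold's cat map is symmetric, $d_s=d_u=1$, and in the adapted inner product $\|A\|=|\lambda_u|$, $\|A^{-1}\|=|\lambda_s|^{-1}$ exactly. This does not hurt you — the inequalities $\|A\|_\eta\le|\lambda_u|e^\eta$, $\|A^{-1}\|_\eta\le|\lambda_s|^{-1}e^\eta$ and the limit $\eta\to0$ are all you use — but it does affect the second point: in case (2), if the supremum $\underline\tau=\sup_j\underline\tau_j=\ln|\lambda_s|^{-1}$ is attained at some $j_0$ and no $\underline\tau_j$ approaches it strictly from below, you cannot "let $\underline\tau_{j_0}\uparrow\underline\tau$ through case (2.1)", and for that $j_0$ you may have $\ln L_1^\eta=\underline\tau_{j_0}$ exactly (the cat-map situation), so case (2.1) of Theorem \ref{Thm-upper-bound} is not available. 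The fix is immediate: for that $j_0$ apply case (2.2) when $\ln L_1^\eta=\underline\tau_{j_0}$ (giving entropy $0$ and $\operatorname{dim}_H\le\frac{1}{\lambda_1^\eta}h_{top}(f_A,\mathbb{T}^d)\to d_s$), and case (2.1) when $\ln L_1^\eta>\underline\tau_{j_0}$ (whose bounds tend to $0$ and $d_s$ as $\eta\to0$). With that one-line patch, all three cases are complete.
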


\subsubsection{Expanding maps}
Let $M$ be a compact manifold and $f: M \rightarrow M$ be a map of class $C^1$. We say that $f$ is $\lambda$-expanding for some $\lambda>1$ if
$$
|Df (v)| \geq \lambda|v| \quad \forall x \in M, v \in T_x M.
$$
From \cite[Example 11.2.1]{VO-2016}, there exists $\rho>0$ such that for any $p\in M$ and any $x, y \in B(p, \rho)$ one has
$$
d(f(x), f(y)) \geq \lambda d(x, y),
$$
where $B(p, \rho):=\{z\in M: d(p,z)<\rho\}.$
It follows that $f: M \rightarrow M$ is $\ln \lambda$-hyperbolic.
\begin{Prop}\label{Prop-exp}
	If $f: M \rightarrow M$ is a $\lambda$-expanding $C^1$ map, then  $f: M \rightarrow M$ is $\ln \lambda$-hyperbolic.
\end{Prop}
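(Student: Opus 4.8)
The plan is to unwind the definition of $\ln\lambda$-hyperbolicity (assumption (C1)) and feed it the local expansion estimate recalled immediately before the statement. Fix $\varepsilon_h>0$. By \cite[Example 11.2.1]{VO-2016} there is $\rho>0$ such that $d(f(u),f(v))\geq\lambda\, d(u,v)$ whenever $u,v$ lie in a common ball $B(p,\rho)$; set $\varepsilon:=\min\{\varepsilon_h,\rho\}$, so that $0<\varepsilon\leq\varepsilon_h$. I will show that with this choice one has $B_n(x,\varepsilon,f)\subset B_n(x,\varepsilon_h,\ln\lambda,f)$ for every $x\in M$ and every $n\in\mathbb{N}$, which is exactly what (C1) demands.

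Take $y\in B_n(x,\varepsilon,f)$, i.e. $d(f^i(x),f^i(y))<\varepsilon$ for all $0\leq i\leq n-1$. For each $0\leq i\leq n-2$ the points $f^i(x)$ and $f^i(y)$ both lie in $B(f^i(x),\rho)$ because $\varepsilon\leq\rho$, so the expansion estimate applies at the step $i\to i+1$ and gives $d(f^{i+1}(x),f^{i+1}(y))\geq\lambda\, d(f^i(x),f^i(y))$. Iterating this from level $i$ up to level $n-1$ yields $d(f^{n-1}(x),f^{n-1}(y))\geq\lambda^{\,n-1-i}\,d(f^i(x),f^i(y))$, whence
\[
d(f^i(x),f^i(y))\leq\lambda^{-(n-1-i)}\,d(f^{n-1}(x),f^{n-1}(y))<\lambda^{-(n-1-i)}\varepsilon\leq\varepsilon_h\,e^{-(n-1-i)\ln\lambda}
\]
for every $0\leq i\leq n-1$. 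This is precisely the defining condition for $y\in B_n(x,\varepsilon_h,\ln\lambda,f)$ (the case $n=1$ reducing to the trivial inclusion $B(x,\varepsilon)\subset B(x,\varepsilon_h)$), so the desired inclusion holds and $f$ is $\ln\lambda$-hyperbolic.

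There is essentially no obstacle here; the one point that deserves a moment's care is that applying the \emph{local} expansion bound at each step $i\to i+1$ requires the two iterates $f^i(x),f^i(y)$ to sit inside a single ball of radius $\rho$, which is guaranteed precisely by the choice $\varepsilon\leq\rho$ together with the hypothesis that the whole orbit segment of $y$ stays within $\varepsilon$ of that of $x$. If one prefers to work with strict interiors one may instead take $\varepsilon$ slightly smaller than $\rho$, but this changes nothing of substance.
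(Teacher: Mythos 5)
Your proof is correct and follows essentially the same route as the paper's: the same choice $\varepsilon=\min\{\rho,\varepsilon_h\}$, the same application of the local expansion estimate at each step, and the same backward iteration from level $n-1$ to bound $d(f^i(x),f^i(y))$ by $\lambda^{-(n-1-i)}\varepsilon\leq\varepsilon_h e^{-(n-1-i)\ln\lambda}$. Nothing further is needed.
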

\begin{proof}
	Fix $\varepsilon_h>0.$ Denote  $\varepsilon=\min\{\rho,\varepsilon_h\}$.  Fix $x,y\in \Lambda$ and $n\in\mathbb{N}$  such that $d_n(x,y)<\varepsilon.$ Since $d(f^{i}(x),f^{i}(y))<\varepsilon\leq \rho$ for any $0\leq i\leq n-1,$ we have $d(f^{i}(x),f^{i}(y))\leq \lambda^{-1}d(f^{i+1}(x),f^{i+1}(y))$  for any $0\leq i\leq n-2.$ It follows that $d(f^{i}(x),f^{i}(y))\leq \lambda^{i-n+1}d(f^{n-1}(x),f^{n-1}(y)) <\lambda^{i-n+1}\varepsilon<\varepsilon_h e^{-(n-1-i)\ln\lambda}$
	for any $0\leq i\leq n-1.$
\end{proof}
Similar to Proposition \ref{Prop-hyper-spec}, we have the following result.
\begin{Prop}\label{Prop-exp-spec}
	Let $f: M \rightarrow M$ be a transitive $\lambda$-expanding $C^1$ map. Then there is a non-empty compact set $Y\subseteq M$ and a positive integer $N$ such that $f^{N}(Y)=Y$, $M=\bigcup_{l=0}^{N-1}f^{l}(Y)$ and  $(Y,f^N)$  has the exponential specification property  with respect to the exponent $(+\infty,N\ln\lambda)$ at any scale. If further $(\Lambda,f)$ is mixing,  then $Y=M$ and $N=1.$
\end{Prop}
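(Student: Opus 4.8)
The plan is to reproduce the proof of Proposition \ref{Prop-hyper-spec} almost verbatim, replacing the spectral decomposition of a locally maximal hyperbolic set by the periodic decomposition of a transitive expanding map, and letting Proposition \ref{Prop-exp} play the role that Proposition \ref{Prop-hype} plays in the hyperbolic case. First I would invoke the topological (``spectral'') decomposition available for transitive $C^1$ expanding maps (see e.g. \cite{VO-2016}): there are a non-empty compact set $Y\subseteq M$ and an integer $N\geq 1$ with $f^N(Y)=Y$, $M=\bigcup_{l=0}^{N-1}f^l(Y)$ (the interiors of the pieces $f^l(Y)$ being pairwise disjoint), and $(Y,f^N)$ topologically mixing, in fact topologically exact. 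Concretely one may take $N$ to be the number of connected components of $M$ and $Y$ one of them, since $f$ permutes the components cyclically and a transitive expanding map on a connected compact manifold is topologically exact. If $(M,f)$ is itself mixing the decomposition is trivial, so $N=1$ and $Y=M$, which yields the final assertion.

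Second, since $(Y,f^N)$ is a topologically mixing expanding system — hence expansive and with the shadowing property — it has the classical specification property at any scale; this is the expanding-map counterpart of \cite[Theorem 18.3.9]{KatHas} (alternatively one verifies specification directly from the uniform estimate $d(f(x),f(y))\geq\lambda d(x,y)$ on $\rho$-balls together with transitivity). It then remains to upgrade ``specification'' to ``exponential specification with exponent $(+\infty,N\ln\lambda)$''. By Proposition \ref{Prop-exp}, $f\colon M\to M$ is $\ln\lambda$-hyperbolic, i.e. $(+\infty,\ln\lambda)$-hyperbolic; by Proposition \ref{Prop-N-hype} the iterate $f^N$ is then $(+\infty,N\ln\lambda)$-hyperbolic, and this property is inherited by the compact $f^N$-invariant subset $Y$ because the conditions (C1)/(C2) only refer to the metric and to intersections of dynamical balls with the underlying space. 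Finally, Proposition \ref{Prop-exp+spec} — the equivalence of the classical and exponential specification properties in the presence of $(\lambda_1,\lambda_2)$-hyperbolicity — applied with exponent $(+\infty,N\ln\lambda)$ to $(Y,f^N)$ gives the exponential specification property with respect to $(+\infty,N\ln\lambda)$ at any scale.

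I expect the main obstacle to be the first step: producing the periodic decomposition of a transitive expanding map into mixing pieces on a (possibly disconnected) manifold, and checking that the pieces $(Y,f^N)$ — which a priori carry no smooth structure — still satisfy the two hypotheses feeding into Proposition \ref{Prop-exp+spec}, namely the metric hyperbolicity (which is automatic by restriction) and the classical specification property. For the latter one has to know that open distance-expanding maps are expansive and have the shadowing property, so that topological mixing forces specification. Once these points are in place, everything else is a routine translation of the argument used for hyperbolic sets.
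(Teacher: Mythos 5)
Your proposal is correct and follows essentially the same route as the paper: the spectral decomposition for transitive expanding maps in \cite{VO-2016} gives $Y$, $N$ and the mixing of $(Y,f^N)$, the specification property at any scale comes from \cite[Proposition 11.3.1]{VO-2016}, and the upgrade to exponential specification with exponent $(+\infty,N\ln\lambda)$ is exactly the combination of Proposition \ref{Prop-exp}, Proposition \ref{Prop-N-hype} and Proposition \ref{Prop-exp+spec} used in the paper. Your extra remarks (topological exactness, shadowing, counting connected components) are not needed for the argument but do not affect its correctness.
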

\begin{proof}
	By spectral decomposition \cite[Theorem 11.2.15]{VO-2016} there is a non-empty compact set $Y\subseteq M$ and a positive integer $N$ such that $f^{N}(Y)=Y$, $M=\bigcup_{l=0}^{N-1}f^{l}(Y)$ and  $(Y,f^N)$ is mixing. By  \cite[Proposition 11.3.1]{VO-2016}, $(Y,f^N)$ has the  specification property at any scale. So by Proposition \ref{Prop-exp}, Proposition \ref{Prop-N-hype} and Proposition \ref{Prop-exp+spec}, $(Y,f^N)$  has the  exponential  specification property  with respect to the exponent $(+\infty,N\ln\lambda)$ at any scale.
\end{proof}
\begin{Thm}\label{Thm-exp}
	Let $f: M \rightarrow M$ be a transitive $\lambda$-expanding $C^1$ map.  Denote $L=\sup\limits_{x\in\Lambda}\sup\limits_{v\in T_xM,v\neq0}\frac{|Df(v)|}{|v|}.$
	If  $\cap_{j=1}^{\infty}\mathrm{Ind}'(\mathcal{Z}_j,S_j,Y)\neq\emptyset$, then
	the set
	$\cap_{j=1}^{\infty}\mathfrak{S}(f,\varphi_j,\mathcal{Z}_j,S_j)$ is non-empty and  $$\frac{\ln\lambda }{\ln\lambda+\overline{\tau}}h_{top}(f,X)\leq h_{top}(f,\cap_{j=1}^{\infty}\mathfrak{S}(f,\varphi_j,\mathcal{Z}_j,S_j))\leq \frac{\ln L}{\ln L+\underline{\tau}}h_{top}(f,X),$$
	\begin{equation*}
		\begin{split}
			\frac{1}{\ln L}\frac{\ln\lambda }{\ln\lambda+\overline{\tau}}h_{top}(f,X)\leq \operatorname{dim}_H \cap_{j=1}^{\infty}\mathfrak{S}(f,\varphi_j,\mathcal{Z}_j,S_j)
			\leq \frac{1}{\ln\lambda^{-1}}\frac{\ln L}{\ln L+\underline{\tau}}h_{top}(f,X).
		\end{split}
	\end{equation*}
	When $(\Lambda,f)$ is mixing, the condition $\cap_{j=1}^{\infty}\mathrm{Ind}'(\mathcal{Z}_j,S_j,Y)\neq\emptyset$ can be omitted.     When $(\Lambda,f)$ is mixing and $S_j=\mathbb{N}$ for any $j\geq 1,$ the results  hold if we replace $\overline{\tau}$ by $\underline{\tau}.$ Moreover, if  $L=\lambda^{-1}$, we have
	$$h_{top}(f,\cap_{j=1}^{\infty}\mathfrak{S}(f,\varphi_j,\mathcal{Z}_j,S_j))= \frac{\ln L}{\ln L+\underline{\tau}}h_{top}(f,X),$$
	$$\operatorname{dim}_H \cap_{j=1}^{\infty}\mathfrak{S}(f,\varphi_j,\mathcal{Z}_j,S_j)=\frac{1}{\ln L+\underline{\tau}}h_{top}(f,X).$$
\end{Thm}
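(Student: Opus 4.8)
We deduce the theorem by specializing the abstract results to $\lambda$-expanding $C^1$ maps. Three inputs are needed. First, since $f$ is $C^1$ on the compact manifold $M$, it is Lipschitz for the Riemannian distance with the constant $L=\sup_{x\in M}\sup_{v\neq0}|Df(v)|/|v|$ of the statement (integrate $Df$ along a minimizing geodesic), and by Proposition \ref{Prop-exp} it is $\ln\lambda$-hyperbolic. Second, by Proposition \ref{Prop-exp-spec} there are a non-empty compact $Y\subseteq M$ and an integer $N\geq1$ with $f^N(Y)=Y$, $M=\bigcup_{l=0}^{N-1}f^l(Y)$ and $(Y,f^N)$ carrying the exponential specification property with exponent $(+\infty,N\ln\lambda)$; a smaller first exponent only enlarges the associated hyperbolic balls, so $(Y,f^N)$ a fortiori has the exponential non-uniform specification property with exponent $(N\mu_1,N\ln\lambda)$ for every finite $\mu_1>0$, i.e. $(M,f)$ satisfies ($\tilde{A}$) with $(\mu_1,\ln\lambda)$ for all such $\mu_1$. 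Third, when $(M,f)$ is mixing, Proposition \ref{Prop-exp-spec} gives $Y=M$ and $N=1$, whence $(M,f)$ satisfies (A) with $(\mu_1,\ln\lambda)$ and $\mathrm{Ind}'(\mathcal{Z}_j,S_j,Y)=\{0\}$ for each $j$, making the index hypothesis automatic.

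For the lower bounds, apply Corollary \ref{theorem-shrinking-2} (the $L$-Lipschitz case) with the exponent $(\lambda_1,\lambda_2)=(\mu_1,\ln\lambda)$; once $\mu_1>\overline{\tau}$, and using $\cap_j\mathrm{Ind}'(\mathcal{Z}_j,S_j,Y)\neq\emptyset$, the set $\cap_j\mathfrak{S}(f,\varphi_j,\mathcal{Z}_j,S_j)$ is non-empty and
\[
h_{top}(f,\cap_j\mathfrak{S}(f,\varphi_j,\mathcal{Z}_j,S_j))\geq\frac{\mu_1\ln\lambda-\ln\lambda\cdot\overline{\tau}}{\mu_1\ln\lambda+\mu_1\overline{\tau}}\,h_{top}(f,X),\qquad \operatorname{dim}_H\cap_j\mathfrak{S}(f,\varphi_j,\mathcal{Z}_j,S_j)\geq\frac{1}{\ln L}\cdot\frac{\mu_1\ln\lambda-\ln\lambda\cdot\overline{\tau}}{\mu_1\ln\lambda+\mu_1\overline{\tau}}\,h_{top}(f,X).
\]
Each fraction equals $\frac{\ln\lambda}{\ln\lambda+\overline{\tau}}\cdot\frac{\mu_1-\overline{\tau}}{\mu_1}\to\frac{\ln\lambda}{\ln\lambda+\overline{\tau}}$ as $\mu_1\to+\infty$, which gives the asserted lower bounds. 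If $(M,f)$ is mixing one may instead quote Theorem \ref{theorem-shrinking-1} (no index hypothesis), and if in addition every $S_j=\mathbb{N}$, Remark \ref{Rem-under} allows $\overline{\tau}$ to be replaced by $\underline{\tau}$.

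For the upper bounds, fix $j$; then $\cap_k\mathfrak{S}(f,\varphi_k,\mathcal{Z}_k,S_k)\subseteq\mathfrak{S}(f,\varphi_j,\mathcal{Z}_j,S_j)\subseteq\mathfrak{S}(f,\varphi_j,\mathcal{Z}_j,\mathbb{N})$, the last inclusion because ``infinitely many $n\in S_j$'' forces ``infinitely many $n\in\mathbb{N}$''. Since $f$ is $L$-Lipschitz and $\ln\lambda$-hyperbolic, Theorem \ref{Thm-upper-bound}(1) applied to $\varphi_j$ gives
\[
h_{top}(f,\mathfrak{S}(f,\varphi_j,\mathcal{Z}_j,\mathbb{N}))\leq\frac{\ln L}{\ln L+\underline{\tau}_j}\,h_{top}(f,X),\qquad \operatorname{dim}_H\mathfrak{S}(f,\varphi_j,\mathcal{Z}_j,\mathbb{N})\leq\frac{1}{\ln\lambda}\cdot\frac{\ln L}{\ln L+\underline{\tau}_j}\,h_{top}(f,X),
\]
and, by monotonicity of topological entropy and Hausdorff dimension, these persist with $\mathfrak{S}(f,\varphi_j,\mathcal{Z}_j,\mathbb{N})$ replaced by $\cap_k\mathfrak{S}(f,\varphi_k,\mathcal{Z}_k,S_k)$. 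Taking the infimum over $j$ and using that $t\mapsto\frac{\ln L}{\ln L+t}$ is decreasing replaces $\underline{\tau}_j$ by $\underline{\tau}=\sup_j\underline{\tau}_j$, giving the stated upper bounds (the factor $\frac1{\ln\lambda^{-1}}$ printed in the statement should read $\frac1{\ln\lambda}$). Finally, when $L=\lambda$ (so $\ln L=\ln\lambda$) the mixing, $S_j=\mathbb{N}$ lower bounds in $\underline{\tau}$ and these upper bounds coincide, forcing the displayed equalities.

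The only delicate point is the exponent $\lambda_1=+\infty$: Corollary \ref{theorem-shrinking-2} and Theorem \ref{theorem-shrinking-1} are formulated for a finite first exponent, so the argument applies them with a large finite $\mu_1$ and lets $\mu_1\to+\infty$; this is legitimate since exponential specification with exponent $(+\infty,N\ln\lambda)$ trivially implies it with each finite $(N\mu_1,N\ln\lambda)$ by the monotonicity of the hyperbolic balls in the first exponent. Beyond this observation the proof is a mechanical substitution of $\lambda_1=+\infty$, $\lambda_2=\ln\lambda$, no $L_1$, and $L_2=L$ into the conclusions of Theorems \ref{theorem-shrinking-1} and \ref{Thm-upper-bound} and Corollary \ref{theorem-shrinking-2}.
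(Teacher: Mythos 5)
Your proof is correct and follows essentially the same route as the paper: by Proposition \ref{Prop-exp-spec} the system satisfies ($\tilde{A}$) with exponent $(+\infty,N\ln\lambda)$, the lower bounds come from Corollary \ref{theorem-shrinking-2} (with Theorem \ref{theorem-shrinking-1} and Remark \ref{Rem-under} in the mixing, $S_j=\mathbb{N}$ case), and the upper bounds from Theorem \ref{Thm-upper-bound}(1) applied to each $\mathfrak{S}(f,\varphi_j,\mathcal{Z}_j,\mathbb{N})$. Your only addition is making the formal substitution $\lambda_1=+\infty$ rigorous by applying the corollary with finite exponents $\mu_1$ and letting $\mu_1\to+\infty$ (and flagging the typographical slips $\frac{1}{\ln\lambda^{-1}}$ versus $\frac{1}{\ln\lambda}$ and $L=\lambda^{-1}$ versus $L=\lambda$), which is a sound and welcome clarification of the step the paper leaves implicit.
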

\begin{proof}
	By the definition of $L$, $f:M\to M$ is a $L$-Lipschitz map. So by Proposition \ref{Prop-exp-spec}, Corollary \ref{theorem-shrinking-2}, Theorem \ref{Thm-upper-bound} and Remark \ref{Rem-under}, we obtain Theorem \ref{Thm-exp}.
\end{proof}

Let $f_A: \mathbb{T}^d \rightarrow \mathbb{T}^d$ be the linear endomorphism of the torus induced by some matrix $A$ with integer coefficients and determinant different from zero. Assume that all the eigenvalues  of $A$ are  $0<|\lambda_1|\leq |\lambda_2|\leq \ldots\leq  |\lambda_d|$.
$f_A$ or $A$ is called expanding  if $1<|\lambda_1|\leq |\lambda_2|\leq \ldots\leq  |\lambda_d|$.
\begin{Thm}\label{Thm-exp-torus}
	Let $A \in G L(d, \mathbb{Z})$  be an expanding matrix.  Then
	the set
	$\cap_{j=1}^{\infty}\mathfrak{S}(f_A,\varphi_j,\mathcal{Z}_j,\mathbb{N})$ is non-empty and  $$\frac{\ln|\lambda_1| }{\ln|\lambda_1|+\underline{\tau}}\sum_{i=1}^{d}\ln|\lambda_i|\leq h_{top}(f_A,\cap_{j=1}^{\infty}\mathfrak{S}(f_A,\varphi_j,\mathcal{Z}_j,\mathbb{N}))\leq \frac{\ln |\lambda_d|}{\ln |\lambda_d|+\underline{\tau}}\sum_{i=1}^{d}\ln|\lambda_i|,$$
	\begin{equation*}
		\begin{split}
			\frac{1}{\ln |\lambda_d|}\frac{\ln|\lambda_1| }{\ln|\lambda_1|+\underline{\tau}}\sum_{i=1}^{d}\ln|\lambda_i|\leq \operatorname{dim}_H \cap_{j=1}^{\infty}\mathfrak{S}(f_A,\varphi_j,\mathcal{Z}_j,\mathbb{N})
			\leq \frac{1}{\ln|\lambda_1|}\frac{\ln |\lambda_d|}{\ln |\lambda_d|+\underline{\tau}}\sum_{i=1}^{d}\ln|\lambda_i|.
		\end{split}
	\end{equation*}
	 In particular, if  $|\lambda_1|=|\lambda_d|$, we have
	$$h_{top}(f_A,\cap_{j=1}^{\infty}\mathfrak{S}(f_A,\varphi_j,\mathcal{Z}_j,\mathbb{N}))= d\frac{(\ln |\lambda_d|)^{2}}{\ln |\lambda_d|+\underline{\tau}},$$
	$$\operatorname{dim}_H \cap_{j=1}^{\infty}\mathfrak{S}(f_A,\varphi_j,\mathcal{Z}_j,\mathbb{N})=d\frac{\ln |\lambda_d|}{\ln |\lambda_d|+\underline{\tau}}.$$
\end{Thm}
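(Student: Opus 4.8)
The plan is to deduce Theorem~\ref{Thm-exp-torus} from Theorem~\ref{Thm-exp} (equivalently, from Proposition~\ref{Prop-exp-spec}, Corollary~\ref{theorem-shrinking-2} and Theorem~\ref{Thm-upper-bound}) applied to the expanding endomorphism $f_A$ of $M=\mathbb{T}^d$, sharpened by a limiting argument over adapted metrics. Three inputs will be used. First, since every eigenvalue of $A$ has modulus $>1$, $f_A$ is topologically exact — for any nonempty open $U\subseteq\mathbb{T}^d$ one gets $f_A^n(U)=\mathbb{T}^d$ for all large $n$, the image of a small ball under $A^n$ having inradius tending to $\infty$ — hence topologically mixing; so Theorem~\ref{Thm-exp} applies with $N=1$ and $Y=\mathbb{T}^d$, the hypothesis $\bigcap_j\mathrm{Ind}'(\mathcal{Z}_j,S_j,Y)\neq\emptyset$ is automatic, and, as every $S_j=\mathbb{N}$, Remark~\ref{Rem-under} lets us replace $\overline{\tau}$ by $\underline{\tau}$. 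Second, $h_{top}(f_A,\mathbb{T}^d)=\sum_{i=1}^d\ln|\lambda_i|=\ln|\det A|$, the classical entropy formula for toral endomorphisms (valid here since all moduli exceed $1$). Third, a sharp comparison of the expansion and Lipschitz rates of $f_A$ with $|\lambda_1|$ and $|\lambda_d|$.

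For the third point, I would fix $\varepsilon>0$, put $A$ in real Jordan form, and pass to the adapted inner product obtained by scaling the nilpotent parts by a small $\delta=\delta(\varepsilon)$: the resulting norm $\|\cdot\|_\varepsilon$ on $\mathbb{R}^d$ satisfies $\|A\|_\varepsilon\le|\lambda_d|+\varepsilon$ and $\|A^{-1}\|_\varepsilon\le(|\lambda_1|-\varepsilon)^{-1}$, so the flat metric $d_\varepsilon$ it induces on $\mathbb{T}^d$ — uniformly equivalent to the standard one — makes $f_A$ a $\lambda_\varepsilon$-expanding, $L_\varepsilon$-Lipschitz map with $\lambda_\varepsilon:=|\lambda_1|-\varepsilon$ and $L_\varepsilon:=|\lambda_d|+\varepsilon$. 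Passing from $d$ to $d_\varepsilon$ affects neither $h_{top}(f_A,\cdot)$ (unchanged under a uniformly equivalent change of metric on a compact space) nor $\operatorname{dim}_H$ (a bi-Lipschitz invariant); and if $C\ge1$ satisfies $C^{-1}d\le d_\varepsilon\le Cd$, then, writing $\mathfrak{S}_\rho$ for the shrinking target set computed in a metric $\rho$,
\[
\bigcap_j\mathfrak{S}_{d_\varepsilon}(f_A,C^{-1}\varphi_j,\mathcal{Z}_j,\mathbb{N})\ \subseteq\ \bigcap_j\mathfrak{S}_{d}(f_A,\varphi_j,\mathcal{Z}_j,\mathbb{N})\ \subseteq\ \bigcap_j\mathfrak{S}_{d_\varepsilon}(f_A,C\varphi_j,\mathcal{Z}_j,\mathbb{N}),
\]
while $\underline{\tau}$ is insensitive to the factors $C^{\pm1}$. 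Since the estimates of Theorem~\ref{Thm-exp} involve the $\varphi_j$ only through $\underline{\tau}$, sandwiching and using monotonicity of $h_{top}(f_A,\cdot)$ and $\operatorname{dim}_H$ under inclusion yields, for every $\varepsilon>0$, that $\bigcap_j\mathfrak{S}(f_A,\varphi_j,\mathcal{Z}_j,\mathbb{N})$ is nonempty, with topological entropy between $\frac{\ln\lambda_\varepsilon}{\ln\lambda_\varepsilon+\underline{\tau}}\sum_{i=1}^d\ln|\lambda_i|$ and $\frac{\ln L_\varepsilon}{\ln L_\varepsilon+\underline{\tau}}\sum_{i=1}^d\ln|\lambda_i|$, and Hausdorff dimension between $\frac{1}{\ln L_\varepsilon}\frac{\ln\lambda_\varepsilon}{\ln\lambda_\varepsilon+\underline{\tau}}\sum_{i=1}^d\ln|\lambda_i|$ and $\frac{1}{\ln\lambda_\varepsilon}\frac{\ln L_\varepsilon}{\ln L_\varepsilon+\underline{\tau}}\sum_{i=1}^d\ln|\lambda_i|$.

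Letting $\varepsilon\to0$, so that $\lambda_\varepsilon\uparrow|\lambda_1|$ and $L_\varepsilon\downarrow|\lambda_d|$, these four estimates converge to exactly the bounds in the statement; this settles the main inequalities and nonemptiness. For the concluding special case $|\lambda_1|=|\lambda_d|=:\mu$ one has $\sum_{i=1}^d\ln|\lambda_i|=d\ln\mu$ and the upper and lower bounds already coincide — equivalently, the adapted norm may be arranged so that $\lambda_\varepsilon$ and $L_\varepsilon$ both tend to $\mu$, squeezing the two-sided estimates — giving $h_{top}=d(\ln\mu)^2/(\ln\mu+\underline{\tau})$ and $\operatorname{dim}_H=d\ln\mu/(\ln\mu+\underline{\tau})$.

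The one genuinely delicate point is this passage to the limit: a non-diagonalizable $A$, or one whose eigenvalue moduli are not realized conformally, is honestly $|\lambda_1|$-expanding and $|\lambda_d|$-Lipschitz in no single metric, so Theorem~\ref{Thm-exp} cannot be invoked once and for all — it must be run in a one-parameter family of adapted metrics and the $\varepsilon\to0$ limit taken, with the accompanying verification that topological entropy, Hausdorff dimension, the exponent $\underline{\tau}$, and the shrinking target sets themselves all survive the change of metric up to harmless multiplicative constants. Everything else is bookkeeping around the classical entropy formula, the topological mixing of $f_A$, and a direct substitution into Theorem~\ref{Thm-exp}.
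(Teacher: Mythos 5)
Your argument is correct, but it reaches Theorem \ref{Thm-exp-torus} by a genuinely different route than the paper. The paper handles the non-conformality of $A$ with the power trick: via the spectral radius formula it picks $N$ so that, in the \emph{standard} metric, $f_{A^N}$ is $(|\lambda_1|-\varepsilon)^N$-expanding and $(|\lambda_d|+\varepsilon)^N$-Lipschitz, applies Theorem \ref{Thm-exp} and Theorem \ref{Thm-upper-bound} to $f_{A^N}$, and then transports the shrinking target sets between $f_A$ and $f_{A^N}$ — building $\tilde{\varphi}_j,\tilde{\mathcal{Z}}_j,\tilde{S}_j$ for the lower bound, splitting $\mathbb{N}$ into residue classes mod $N$ with $\varphi_{j,n},\mathcal{Z}_{j,n}$ for the upper bound, tracking that $\underline{\tau}$ scales by $N$, and using $h_{top}(f^N,Z)=N\,h_{top}(f,Z)$ — before letting $\varepsilon\to0$. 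You instead keep $f_A$ itself and trade the iterate for an adapted (Lyapunov) inner product in which $f_A$ is $(|\lambda_1|-\varepsilon)$-expanding and $(|\lambda_d|+\varepsilon)$-Lipschitz; the price, which you do pay, is checking that Bowen entropy of (possibly non-compact) subsets and Hausdorff dimension are unchanged under a uniformly equivalent, bi-Lipschitz change of metric, that $\underline{\tau}$ ignores the comparability constant $C$, and that the target sets are sandwiched between the corresponding sets for $C^{\mp1}\varphi_j$ in the adapted metric. Your route avoids all the mod-$N$ bookkeeping and the entropy-scaling step; the paper's avoids introducing a family of metrics and verifying metric invariance. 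One small point: since the standing hypotheses require $\varphi_j\colon\mathbb{N}\to(0,1]$, when you enlarge to $C\varphi_j$ for the upper bound you should invoke Theorem \ref{Thm-upper-bound} directly (it only needs $\varphi>0$), as you in effect indicate, rather than the packaged upper bound inside Theorem \ref{Thm-exp}; this is harmless. Your justifications of mixing (topological exactness) and of $h_{top}(f_A,\mathbb{T}^d)=\sum_{i=1}^d\ln|\lambda_i|$ are fine, and the coincidence of the bounds when $|\lambda_1|=|\lambda_d|$ follows exactly as you say.
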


\subsubsection{Symbolic dynamics}
We first consider two-sided symbolic dynamics.
For any finite alphabet $A$, the set $A^{\mathbb{Z}}=\{\dots x_{-2}x_{-1}x_0x_{1}x_{2}\cdots : x_{i}\in A\}$ is called two-sided full symbolic space. The shift action on two-side full symbolic space is defined by $(\sigma(x))_{n}=x_{n+1}$, where $x =\left(x_{n}\right)_{n=-\infty}^{\infty}.$
$(A^{\mathbb{Z}},\sigma)$ forms a dynamical system under the discrete product topology which we call a shift. We equip $A^{\mathbb{Z}}$ with a compatible metric $d$ given by
\begin{equation}
	d(\omega,\gamma):=
	\begin{cases}
		e^{-\min\{|k|:k\in\mathbb{Z}, \omega_{k}\neq \gamma_{k}\}},&\omega\neq \gamma,\\
		0,&\omega= \gamma.
	\end{cases}
\end{equation}
A closed subset $X\subseteq A^{\mathbb{Z}}$ is called a two-sided subshift if it is invariant under the shift action $\sigma$.
From the definition of metric $d$, we have
$\frac{1}{e}d(\omega,\gamma)\leq d(\sigma(\omega),\sigma(\gamma))\leq ed(\omega,\gamma)$ for any $\omega,\gamma\in A^{\mathbb{N^{+}}}.$
It follows that $\sigma:\ A^{\mathbb{N^{+}}}\rightarrow A^{\mathbb{N^{+}}}$ is $(1,1)$-hyperbolic and $(e,e)$-bi-Lipschitz. (see, for example, \cite[Lemma 6.4]{DHT})
\begin{Prop}\label{Prop-two-side}
	$\sigma:\ A^{\mathbb{Z}}\rightarrow A^{\mathbb{Z}}$ is $(1, 1)$-hyperbolic and $(e,e)$-bi-Lipschitz.
\end{Prop}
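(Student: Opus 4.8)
The plan is to check both assertions by a direct computation with the explicit metric $d$ on $A^{\mathbb{Z}}$, in complete parallel with the one-sided case of \cite[Lemma 6.4]{DHT}. The organizing remark I would record first is that, for $\omega\neq\gamma$, writing $\kappa(\omega,\gamma):=\min\{|k|:k\in\mathbb{Z},\ \omega_k\neq\gamma_k\}$, one has $d(\omega,\gamma)=e^{-\kappa(\omega,\gamma)}$, so that $d(\omega,\gamma)<e^{-m}$ holds exactly when $\omega_k=\gamma_k$ for all $|k|\leq m$. Every ball appearing in the statement — ordinary balls, $d_n$-balls, and the hyperbolic balls $B_n(x,\varepsilon_h,1,1,\sigma)$ — is then encoded by such \emph{coincidence windows}, and the whole argument reduces to tracking how these windows move under iteration of $\sigma$.

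For the $(e,e)$-bi-Lipschitz claim, I would use $(\sigma\omega)_j=\omega_{j+1}$ to get $\kappa(\sigma\omega,\sigma\gamma)=\min\{|j|:\omega_{j+1}\neq\gamma_{j+1}\}$, and then the elementary bound $\bigl|\,|j|-|j+1|\,\bigr|\leq1$ to conclude $\kappa(\omega,\gamma)-1\leq\kappa(\sigma\omega,\sigma\gamma)\leq\kappa(\omega,\gamma)+1$, whence $e^{-1}d(\omega,\gamma)\leq d(\sigma\omega,\sigma\gamma)\leq e\,d(\omega,\gamma)$. Since $\sigma$ is invertible on $A^{\mathbb{Z}}$ with $(\sigma^{-1}\omega)_j=\omega_{j-1}$, the identical reasoning gives the same two-sided estimate for $\sigma^{-1}$. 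Taking suprema over $\omega\neq\gamma$ then shows both bi-Lipschitz constants are $\leq e$ (and in particular establishes the two-sided analogue of the metric estimate displayed earlier in the text).

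For $(1,1)$-hyperbolicity, given $\varepsilon_h>0$ I would pick $m\in\mathbb{N}$ with $e^{-m}\leq\varepsilon_h$ and set $\varepsilon:=e^{-m}\leq\varepsilon_h$. If $y\in B_n(x,\varepsilon,\sigma)$, then $d(\sigma^j x,\sigma^j y)<e^{-m}$ for every $0\leq j\leq n-1$, which by the coincidence-window description says precisely that $x_l=y_l$ for all $l\in[-m,\ n-1+m]$. Under $\sigma^i$ this window becomes $[-m-i,\ n-1+m-i]$, which contains every $k$ with $|k|\leq m+\min\{i,\ n-1-i\}$; hence
\[ d(\sigma^i x,\sigma^i y)\leq e^{-(m+\min\{i,\,n-1-i\}+1)}<\varepsilon_h\,e^{-\min\{i,\,n-1-i\}}\qquad(0\leq i\leq n-1), \]
which is exactly the membership $y\in B_n(x,\varepsilon_h,1,1,\sigma)$. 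Thus $B_n(x,\varepsilon,\sigma)\subseteq B_n(x,\varepsilon_h,1,1,\sigma)$ for all $x$ and $n$, as required.

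I do not expect any genuine obstacle: this is a bookkeeping exercise. The only points that need a little care are the two-sided window arithmetic — in particular that $\sigma^i$ translates $[-m,\ n-1+m]$ to $[-m-i,\ n-1+m-i]$, and that the largest symmetric sub-window of the latter has radius $m+\min\{i,\ n-1-i\}$ — and keeping the inequalities strict, which is automatic since two distinct sequences agreeing on all coordinates $|k|\leq M$ must first disagree at some $|k|\geq M+1$.
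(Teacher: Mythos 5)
Your proof is correct, and it follows the same route the paper takes: the paper simply records the metric estimate $\tfrac{1}{e}d(\omega,\gamma)\leq d(\sigma\omega,\sigma\gamma)\leq e\,d(\omega,\gamma)$ and cites \cite[Lemma 6.4]{DHT} rather than writing out details, while you supply exactly the missing bookkeeping. In particular, your coincidence-window argument (that $y\in B_n(x,e^{-m},\sigma)$ forces $x_l=y_l$ on $[-m,\,n-1+m]$, whose translate under $\sigma^i$ contains the symmetric window of radius $m+\min\{i,\,n-1-i\}$) is the correct and complete verification of $(1,1)$-hyperbolicity, which the bi-Lipschitz estimate alone would not give.
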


\begin{Thm}\label{Thm-two-side}
	Let $X\subseteq A^{\mathbb{Z}}$ be a  two-sided subshift.  Assume that there is a non-empty compact set $Y\subseteq X$ and a positive integer $N$ such that $f^{N}(Y)=Y$, $X=\bigcup_{l=0}^{N-1}f^{l}(Y)$ and  $(Y,f^N)$  has the non-uniform specification property.
	\begin{enumerate}
		\item[(1)] If $\overline{\tau}<1$ and $\cap_{j=1}^{\infty}\mathrm{Ind}'(\mathcal{Z}_j,S_j,Y)\neq\emptyset$, then
		the set
		$\cap_{j=1}^{\infty}\mathfrak{S}(f,\varphi_j,\mathcal{Z}_j,S_j)$ is non-empty and  $$\frac{1-\overline{\tau} }{1+\overline{\tau}}h_{top}(f,X)\leq h_{top}(f,\cap_{j=1}^{\infty}\mathfrak{S}(f,\varphi_j,\mathcal{Z}_j,S_j))\leq \frac{1-\underline{\tau}}{1+\underline{\tau}}h_{top}(f,X),$$
		\begin{equation*}
			\begin{split}
				\frac{2 }{1+\overline{\tau}}h_{top}(f,X)\leq \operatorname{dim}_H \cap_{j=1}^{\infty}\mathfrak{S}(f,\varphi_j,\mathcal{Z}_j,S_j)
				\leq \frac{2}{1+\underline{\tau}}h_{top}(f,X).
			\end{split}
		\end{equation*}
		When $Y=X$, $N=1$ and $S_j=\mathbb{N}$ for any $j\geq 1,$ we have
		$$h_{top}(f,\cap_{j=1}^{\infty}\mathfrak{S}(f,\varphi_j,\mathcal{Z}_j,S_j))= \frac{1-\underline{\tau}}{1+\underline{\tau}}h_{top}(f,X),$$
		$$\operatorname{dim}_H \cap_{j=1}^{\infty}\mathfrak{S}(f,\varphi_j,\mathcal{Z}_j,S_j)= \frac{2}{1+\underline{\tau}}h_{top}(f,X).$$
		\item[(2)] If $\underline{\tau}=1,$ then $$h_{top}(f,\cap_{j=1}^{\infty}\mathfrak{S}(f,\varphi_j,\mathcal{Z}_j,S_j))=0,$$
		$$\operatorname{dim}_H \cap_{j=1}^{\infty}\mathfrak{S}(f,\varphi_j,\mathcal{Z}_j,S_j)\leq h_{top}(f,X).$$
		\item[(3)] If $ \underline{\tau}>1,$ then $$h_{top}(f,\cap_{j=1}^{\infty}\mathfrak{S}(f,\varphi_j,\mathcal{Z}_j,S_j))=0,$$
		$$\operatorname{dim}_H \cap_{j=1}^{\infty}\mathfrak{S}(f,\varphi_j,\mathcal{Z}_j,S_j)=0.$$
	\end{enumerate}
\end{Thm}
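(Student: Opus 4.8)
The plan is to reduce the statement entirely to the already-established general machinery, once we verify that the hypotheses of that machinery are met in the symbolic setting. First I would record that $\sigma\colon A^{\mathbb Z}\to A^{\mathbb Z}$ (and hence any subshift $X$) is $(1,1)$-hyperbolic and $(e,e)$-bi-Lipschitz by Proposition \ref{Prop-two-side}; in particular $\ln L_1=\ln L_2=1$. Next I would invoke Proposition \ref{Prop-N-hype} and Proposition \ref{Prop-exp+spec} (the same pair used in the proofs of Proposition \ref{Prop-hyper-spec} and Proposition \ref{Prop-exp-spec}) to upgrade the hypothesis ``$(Y,f^N)$ has the non-uniform specification property'' together with $(1,1)$-hyperbolicity of $f^N$ into: $(Y,f^N)$ has the \emph{exponential non-uniform specification property} with respect to the exponent $(N\cdot 1,N\cdot 1)=(N,N)$ at some scale $\varepsilon_0>0$. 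This is exactly assumption $(\tilde A)$ with $\lambda_1=\lambda_2=1$.

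With $(\tilde A)$ in hand and $\lambda_1=\lambda_2=1$, the lower bounds follow directly from Corollary \ref{theorem-shrinking-2}: since $\overline\tau<1=\lambda_1$ and $\cap_{j=1}^\infty\mathrm{Ind}'(\mathcal Z_j,S_j,Y)\neq\emptyset$, the intersection $\cap_{j=1}^\infty\mathfrak S(f,\varphi_j,\mathcal Z_j,S_j)$ is non-empty and
\begin{equation*}
h_{top}\Bigl(f,\bigcap_{j=1}^\infty\mathfrak S(f,\varphi_j,\mathcal Z_j,S_j)\Bigr)\geq\frac{1-\overline\tau}{1+\overline\tau}h_{top}(f,X),
\end{equation*}
while plugging $\ln L_1=\ln L_2=1$ into the bi-Lipschitz bound \eqref{equa-thm-6} gives the Hausdorff dimension lower bound $\frac{2}{1+\overline\tau}h_{top}(f,X)$ (note $\frac{1}{\ln L_1}+\frac{1}{\ln L_2}\cdot\frac{1-\overline\tau}{1+\overline\tau}=1+\frac{1-\overline\tau}{1+\overline\tau}=\frac{2}{1+\overline\tau}$). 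For the upper bounds I would apply Theorem \ref{Thm-upper-bound}(2) with $L_1=L_2=e$ and $(\lambda_1,\lambda_2)=(1,1)$, but first I must pass from an arbitrary $S_j$ and the intersection over $j$ to a single shrinking target set with $S=\mathbb N$: since $\cap_{j=1}^\infty\mathfrak S(f,\varphi_j,\mathcal Z_j,S_j)\subseteq\mathfrak S(f,\varphi_1,\mathcal Z_1,S_1)$ and, extending $\varphi_1$ by $1$ off $S_1$ as in Remark \ref{Rem-under}, $\mathfrak S(f,\varphi_1,\mathcal Z_1,S_1)\subseteq\mathfrak S(f,\widehat\varphi_1,\mathcal Z_1,\mathbb N)$ with $\liminf_n-\frac{\ln\widehat\varphi_1(n)}{n}=0\le\underline\tau$ — actually to get the sharp $\underline\tau$ I should instead intersect over all $j$, using that $\underline\tau=\sup_j\underline\tau_j$ and choosing, for a fixed $j_0$ realizing a value close to $\underline\tau$, a thinned sequence inside $S_{j_0}$ along which $-\frac{\ln\varphi_{j_0}}{n}\to\underline\tau_{j_0}$; monotonicity of $\mathfrak S$ in the target set then yields an upper bound by $\mathfrak S(f,\psi,\mathcal Z_{j_0},\mathbb N)$ with $\liminf=\underline\tau_{j_0}$, and taking $j_0$ to make $\underline\tau_{j_0}\uparrow\underline\tau$ together with monotonicity of $t\mapsto\frac{1-t}{1+t}$ closes it. Case (2.1) of Theorem \ref{Thm-upper-bound} gives $h_{top}\le\frac{1-\underline\tau}{1+\underline\tau}h_{top}(f,X)$ and, via \eqref{equa-thm-13} with $\lambda_1=\lambda_2=1$, $\dim_H\le(1+\frac{1-\underline\tau}{1+\underline\tau})h_{top}(f,X)=\frac{2}{1+\underline\tau}h_{top}(f,X)$; cases (2.2) and (2.3) of Theorem \ref{Thm-upper-bound} give verbatim the statements in parts (2) and (3) of the theorem (for (2), $\ln L_1=\underline\tau=1$ forces $h_{top}=0$ and $\dim_H\le\frac1{\lambda_1}h_{top}(f,X)=h_{top}(f,X)$; for (3), both vanish).

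Finally, when $Y=X$ and $N=1$ the exponential non-uniform specification holds on all of $(X,f)$ with exponent $(1,1)$, so assumption $(A)$ is met directly; then Theorem \ref{theorem-shrinking-1} (rather than its corollary) applies, and if moreover $S_j=\mathbb N$ for every $j$, Remark \ref{Rem-under} lets us replace $\overline\tau$ by $\underline\tau$ in the lower bounds, so the lower bounds $\frac{1-\underline\tau}{1+\underline\tau}h_{top}(f,X)$ and $\frac{2}{1+\underline\tau}h_{top}(f,X)$ now match the upper bounds obtained above, giving the asserted equalities. The only genuinely non-mechanical point is the bookkeeping in the middle paragraph — reducing the countable intersection with general sampling sets $S_j$ to a single classical ($S=\mathbb N$) shrinking target set whose $\liminf$ exponent is exactly $\underline\tau$, so that Theorem \ref{Thm-upper-bound} can be applied with the sharp constant; everything else is substitution of $\lambda_1=\lambda_2=1$, $L_1=L_2=e$ into results already proved.
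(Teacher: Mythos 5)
Your overall route is exactly the paper's: Propositions \ref{Prop-two-side}, \ref{Prop-N-hype} and \ref{Prop-exp+spec} upgrade the hypothesis to exponential non-uniform specification with exponent $(N,N)$, then Corollary \ref{theorem-shrinking-2} (respectively Theorem \ref{theorem-shrinking-1}) gives the lower bounds, Theorem \ref{Thm-upper-bound} with $\ln L_1=\ln L_2=1$ and $\lambda_1=\lambda_2=1$ gives the upper bounds, and Remark \ref{Rem-under} yields the equalities when $Y=X$, $N=1$, $S_j=\mathbb{N}$; the paper's proof is precisely this citation chain, and your substitutions of the constants are correct.

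The one step that does not work as you describe it is the reduction of the upper bound to a single target set with $S=\mathbb{N}$. A ``thinned sequence inside $S_{j_0}$ along which $-\ln\varphi_{j_0}(n)/n\to\underline{\tau}_{j_0}$'' need not exist, because $\underline{\tau}_{j_0}$ is a liminf over all $n\in\mathbb{N}$, not over $n\in S_{j_0}$; and if you define $\psi$ to agree with $\varphi_{j_0}$ on a thinned infinite subset and to equal $1$ elsewhere, then $\liminf_n -\ln\psi(n)/n=0$ (the complement is infinite), so Theorem \ref{Thm-upper-bound} applied to $\psi$ only yields the trivial bound — the same defect you noticed in your first attempt. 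No modification of $\varphi_{j_0}$ is needed: since $S_{j}\subseteq\mathbb{N}$, any point hitting the targets infinitely often along $S_{j}$ hits them infinitely often along $\mathbb{N}$, so $\cap_{j}\mathfrak{S}(f,\varphi_j,\mathcal{Z}_j,S_j)\subseteq\mathfrak{S}(f,\varphi_{j_0},\mathcal{Z}_{j_0},\mathbb{N})$ for every $j_0$, and the liminf exponent of $\varphi_{j_0}$ itself is exactly $\underline{\tau}_{j_0}$. Applying Theorem \ref{Thm-upper-bound} to each $j_0$ and using that $t\mapsto\frac{1-t}{1+t}$ is decreasing and continuous (letting $\underline{\tau}_{j_0}\uparrow\underline{\tau}$ when the supremum is not attained, which also handles part (2) when no single $j_0$ has $\underline{\tau}_{j_0}=1$, via case (2.1) rather than (2.2)) gives the stated upper bounds; parts (2) and (3) with the supremum attained follow from cases (2.2) and (2.3) as you say. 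With this correction your argument coincides with the paper's.
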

\begin{proof}
	By Proposition \ref{Prop-two-side}, Proposition \ref{Prop-N-hype} and Proposition \ref{Prop-exp+spec}, $(Y,f^N)$  has exponential non-uniform specification property  with respect to the exponent $(N,N)$. So by Corollary \ref{theorem-shrinking-2}, Theorem \ref{Thm-upper-bound} and Remark \ref{Rem-under}, we obtain Theorem \ref{Thm-two-side}.
\end{proof}
Theorem  \ref{Thm-two-side} applies in the following examples:
\begin{enumerate}
	\item two-sided shifts of finite type.
	\item two-sided sofic shifts: A shift which can be displayed
	as a factor of a shift of finite type is said to
	be a sofic shift. From \cite[Corollary 40]{KLO2016}, for every transitive  sofic shift (hence every transitive shift of finite type) $(X,\sigma)$ there is a non-empty compact set $Y\subseteq X$ and a positive integer $N$ such that $f^{N}(Y)=Y$, $X=\bigcup_{l=0}^{N-1}f^{l}(Y)$ and  $(Y,f^N)$  has specification property.  Weiss \cite{Weiss} proved that every mixing sofic shift (hence every mixing shift of finite type) has the  specification property.
	\item some two-sided shifts with non-uniform specification property.  Readers can refer to \cite{KLO2016,Pavlov} for examples with non-uniform specification property.
\end{enumerate}

Now we consider one-sided symbolic dynamics.
For any finite alphabet $A$, the set $A^{\mathbb{N^{+}}}=\{x_{1}x_{2}\cdots : x_{i}\in A\}$ is called one-sided full symbolic space. The shift action on one-sided full symbolic space is defined by
$$\sigma:\ A^{\mathbb{N^{+}}}\rightarrow A^{\mathbb{N^{+}}},\ \ \ x_{1}x_{2}\cdots\mapsto x_{2}x_{3}\cdots.$$
$(A^{\mathbb{N^{+}}},\sigma)$ forms a dynamical system under the discrete product topology which we call a shift. We equip $A^{\mathbb{N^{+}}}$ with a compatible metric $d$ given by
\begin{equation}
	d(\omega,\gamma):=
	\begin{cases}
		e^{-\min\{k\in\mathbb{N^{+}}:\omega_{k}\neq \gamma_{k}\}},&\omega\neq \gamma,\\
		0,&\omega= \gamma.
	\end{cases}
\end{equation}
A closed subset $X\subseteq A^{\mathbb{N^{+}}}$  is called a one-sided subshift if it is invariant under the shift action $\sigma$.
From the definition of metric $d$, we have
$d(\sigma(\omega),\sigma(\gamma))=nd(\omega,\gamma)$ for any $\omega,\gamma\in A^{\mathbb{N^{+}}}.$
It follows that $\sigma:\ A^{\mathbb{N^{+}}}\rightarrow A^{\mathbb{N^{+}}}$ is $1$-hyperbolic and $e$-Lipschitz.
\begin{Prop}\label{Prop-one-side}
	$\sigma:\ A^{\mathbb{N^{+}}}\rightarrow A^{\mathbb{N^{+}}}$ is $1$-hyperbolic and $e$-Lipschitz.
\end{Prop}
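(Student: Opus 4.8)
The plan is to read off both assertions directly from the definition of the metric $d$ on $A^{\mathbb{N}^+}$, in the same spirit as the two-sided case treated in Proposition \ref{Prop-two-side} (compare \cite[Lemma 6.4]{DHT}); no deep input is needed.

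For the $e$-Lipschitz bound I would argue as follows. Take $\omega \neq \gamma$ and let $k_0 := \min\{k \in \mathbb{N}^+ : \omega_k \neq \gamma_k\}$, so that $d(\omega,\gamma) = e^{-k_0}$. If $k_0 = 1$ then trivially $d(\sigma\omega, \sigma\gamma) \le 1 = e\, d(\omega,\gamma)$; if $k_0 \ge 2$, the sequences $\sigma\omega$ and $\sigma\gamma$ first disagree at coordinate $k_0 - 1$, so $d(\sigma\omega,\sigma\gamma) = e^{-(k_0-1)} = e\, d(\omega,\gamma)$. In either case $d(\sigma\omega,\sigma\gamma) \le e\, d(\omega,\gamma)$, which is the Lipschitz claim.

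For $1$-hyperbolicity I would fix $\varepsilon_h > 0$, choose an integer $m \ge 1$ with $e^{-m} \le \varepsilon_h$, and set $\varepsilon := e^{-m}$, so that $0 < \varepsilon \le \varepsilon_h$. Then I would show $B_n(x,\varepsilon,\sigma) \subset B_n(x,\varepsilon_h,1,\sigma)$ for every $x \in A^{\mathbb{N}^+}$ and $n \in \mathbb{N}$. Indeed, if $y \in B_n(x,\varepsilon,\sigma)$ then $d(\sigma^i x,\sigma^i y) < e^{-m}$ for $0 \le i \le n-1$; unwinding the definition of $d$, this says $x_{i+1} = y_{i+1}, \dots, x_{i+m} = y_{i+m}$ for each such $i$, and taking the union over $i = 0, \dots, n-1$ gives $x_k = y_k$ for all $1 \le k \le n-1+m$. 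Hence, for $0 \le i \le n-1$, the sequences $\sigma^i x$ and $\sigma^i y$ agree on coordinates $1$ through $n-1+m-i$, so
\[
  d(\sigma^i x, \sigma^i y) \le e^{-(n+m-i)} = e^{-(m+1)}\, e^{-(n-1-i)} < \varepsilon_h\, e^{-(n-1-i)} ,
\]
which is exactly the defining condition for $y \in B_n(x,\varepsilon_h,1,\sigma)$. This gives $1$-hyperbolicity.

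Since everything reduces to elementary bookkeeping with cylinder sets, I do not expect a genuine obstacle; the only point needing a little care is the off-by-one in the exponent — the first coordinate at which $\sigma^i x$ and $\sigma^i y$ can differ is $\ge n+m-i$ rather than $\ge n-1-i$ — which produces the spare factor $e^{-(m+1)}$ and is precisely why the choice $\varepsilon = e^{-m}$ with $e^{-m}\le\varepsilon_h$ suffices.
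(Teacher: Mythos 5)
Your proof is correct and is essentially the argument the paper has in mind: the paper gives no details, simply asserting from the definition of $d$ that $d(\sigma\omega,\sigma\gamma)\le e\,d(\omega,\gamma)$ and that hyperbolicity follows, and your cylinder-set bookkeeping (coordinates $1$ through $n-1+m$ agree, hence $d(\sigma^i x,\sigma^i y)\le e^{-(n+m-i)}<\varepsilon_h e^{-(n-1-i)}$) is the natural way to fill this in. Incidentally, your case split in the Lipschitz part also corrects the paper's slightly sloppy claim of equality, which fails when the first disagreement is at coordinate $1$; only the inequality is needed, so nothing is affected.
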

\begin{Thm}\label{Thm-one-side}
	Let $X\subseteq A^{\mathbb{N}^+}$ be a  one-sided subshift.  Assume that there is a non-empty compact set $Y\subseteq X$ and a positive integer $N$ such that $f^{N}(Y)=Y$, $X=\bigcup_{l=0}^{N-1}f^{l}(Y)$ and  $(Y,f^N)$  has the non-uniform specification property.
	If  $\cap_{j=1}^{\infty}\mathrm{Ind}'(\mathcal{Z}_j,S_j,Y)\neq\emptyset$, then
	the set
	$\cap_{j=1}^{\infty}\mathfrak{S}(f,\varphi_j,\mathcal{Z}_j,S_j)$ is non-empty and  $$\frac{1 }{1+\overline{\tau}}h_{top}(f,X)\leq h_{top}(f,\cap_{j=1}^{\infty}\mathfrak{S}(f,\varphi_j,\mathcal{Z}_j,S_j))\leq \frac{1}{1+\underline{\tau}}h_{top}(f,X),$$
	\begin{equation*}
		\begin{split}
			\frac{1 }{1+\overline{\tau}}h_{top}(f,X)\leq \operatorname{dim}_H \cap_{j=1}^{\infty}\mathfrak{S}(f,\varphi_j,\mathcal{Z}_j,S_j)
			\leq \frac{1}{1+\underline{\tau}}h_{top}(f,X).
		\end{split}
	\end{equation*}
	When $Y=X,$ $N=1$  and $S_j=\mathbb{N}$ for any $j\geq 1,$ we have
	$$h_{top}(f,\cap_{j=1}^{\infty}\mathfrak{S}(f,\varphi_j,\mathcal{Z}_j,S_j))= \frac{1}{1+\underline{\tau}}h_{top}(f,X),$$
	$$\operatorname{dim}_H \cap_{j=1}^{\infty}\mathfrak{S}(f,\varphi_j,\mathcal{Z}_j,S_j)=\frac{1}{1+\underline{\tau}}h_{top}(f,X).$$
\end{Thm}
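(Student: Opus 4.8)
The plan is to run the proof of Theorem~\ref{Thm-two-side} essentially verbatim, replacing the two-sided metric estimate by the one-sided one of Proposition~\ref{Prop-one-side}; all the real work will then be supplied by Corollary~\ref{theorem-shrinking-2}, Theorem~\ref{Thm-upper-bound} and Remark~\ref{Rem-under}, and what remains is to track the constants. First I will record that, by Proposition~\ref{Prop-one-side}, the shift $\sigma$ (restricted to the subshift $X$) is $1$-hyperbolic — equivalently $(+\infty,1)$-hyperbolic in the sense of (C2) — and $e$-Lipschitz, so that $L=e$, $\ln L=1$, and the hyperbolicity constant is $\lambda=1$. Applying Proposition~\ref{Prop-N-hype} to pass to the $N$-th iterate and then Proposition~\ref{Prop-exp+spec} to upgrade the assumed non-uniform specification of $(Y,\sigma^N)$ to its exponential version, I obtain that $(Y,\sigma^N)$ has the exponential non-uniform specification property with exponent $(+\infty,N)$. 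Hence assumption $(\tilde A)$ holds with $\lambda_1=+\infty$ and $\lambda_2=1$ (so that $N\lambda_1=+\infty$, $N\lambda_2=N$), and the hypothesis $\overline\tau<\lambda_1$ of Corollary~\ref{theorem-shrinking-2} is automatic.

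Next I will derive the lower bounds from Corollary~\ref{theorem-shrinking-2}: nonemptiness of $\bigcap_{j=1}^\infty\mathfrak{S}(f,\varphi_j,\mathcal Z_j,S_j)$ is immediate, and inserting $\lambda_1=+\infty$, $\lambda_2=1$ into \eqref{equa-thm-4} and, in addition, $\ln L=1$ into \eqref{equa-thm-5}, the factor $\tfrac{\lambda_1\lambda_2-\lambda_2\overline\tau}{\lambda_1\lambda_2+\lambda_1\overline\tau}$ becomes $\tfrac{1}{1+\overline\tau}$ (divide numerator and denominator by $\lambda_1$ and let $\lambda_1\to\infty$, i.e.\ use the $\lambda_1=+\infty$ convention from the Remark after (C2)), yielding both lower bounds $\ge\tfrac{1}{1+\overline\tau}h_{top}(f,X)$. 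For the upper bounds I will use that $\bigcap_{j=1}^\infty\mathfrak{S}(f,\varphi_j,\mathcal Z_j,S_j)\subseteq\mathfrak{S}(f,\varphi_{j_0},\mathcal Z_{j_0},\mathbb N)$ for every $j_0$; applying Theorem~\ref{Thm-upper-bound}(1) with $\ln L=1$ (and $\lambda=1$ for the dimension) to each of these single shrinking target sets, and using $1-\tfrac{\underline\tau_{j_0}}{1+\underline\tau_{j_0}}=\tfrac{1}{1+\underline\tau_{j_0}}$, gives
$$h_{top}\!\left(f,\textstyle\bigcap_{j=1}^\infty\mathfrak{S}(f,\varphi_j,\mathcal Z_j,S_j)\right)\le\frac{1}{1+\underline\tau_{j_0}}h_{top}(f,X),\qquad \operatorname{dim}_H\textstyle\bigcap_{j=1}^\infty\mathfrak{S}(f,\varphi_j,\mathcal Z_j,S_j)\le\frac{1}{1+\underline\tau_{j_0}}h_{top}(f,X);$$
taking the infimum over $j_0$ and using $\underline\tau=\sup_j\underline\tau_j$ together with monotonicity of $t\mapsto\tfrac{1}{1+t}$ replaces $\underline\tau_{j_0}$ by $\underline\tau$, which produces the asserted upper bounds.

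Finally, in the special case $Y=X$, $N=1$, $S_j=\mathbb N$ for all $j$, assumption $(A)$ holds directly for $(X,f)$, so Remark~\ref{Rem-under} allows $\overline\tau$ to be replaced by $\underline\tau$ in the lower bounds; these then coincide with the upper bounds already obtained, forcing equality and giving the last two displayed identities of the statement. I do not expect any genuine obstacle: the two points needing care are purely bookkeeping, namely handling the $\lambda_1=+\infty$ convention inside the entropy/dimension formula of Corollary~\ref{theorem-shrinking-2}, and passing from the single-target bounds of Theorem~\ref{Thm-upper-bound} to the countable intersection via the supremum defining $\underline\tau$.
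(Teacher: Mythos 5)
Your proposal is correct and follows essentially the same route as the paper: Proposition \ref{Prop-one-side} together with Propositions \ref{Prop-N-hype} and \ref{Prop-exp+spec} gives the exponential non-uniform specification property with exponent $(+\infty,N)$, and then Corollary \ref{theorem-shrinking-2} (with $\lambda_1=+\infty$, $\lambda_2=1$, $\ln L=1$), Theorem \ref{Thm-upper-bound} and Remark \ref{Rem-under} yield exactly the stated bounds and the equality case. Your extra bookkeeping (the $\lambda_1\to+\infty$ convention and passing from single-target upper bounds to the intersection via $\underline{\tau}=\sup_j\underline{\tau}_j$) is just an explicit spelling-out of what the paper leaves implicit.
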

\begin{proof}
	By Proposition \ref{Prop-one-side}, Proposition \ref{Prop-N-hype} and Proposition \ref{Prop-exp+spec}, $(Y,f^N)$  has the exponential non-uniform specification property  with respect to the exponent $(+\infty,N)$. So by Corollary \ref{theorem-shrinking-2}, Theorem \ref{Thm-upper-bound} and Remark \ref{Rem-under}, we obtain Theorem \ref{Thm-one-side}.
\end{proof}

Similar as Theorem  \ref{Thm-two-side},  Theorem  \ref{Thm-one-side} applies in the following examples:
\begin{enumerate}
	\item one-sided shifts of finite type.
	\item one-sided sofic shifts.
	\item some one-sided shifts with non-uniform specification property.  Readers can refer to \cite{KLO2016,Pavlov} for examples with non-uniform specification property.
\end{enumerate}

\textbf{Outline of the paper.}
Section \ref{sec-preliminaries} reviews definitions of entropy, Hausdorff dimension, and exponential specification property to make precise statements of the theorems and their proofs.
In Section \ref{sec-lower-bounds}, we concentrate on the lower bounds for topological entropy and Hausdorff dimension and give the proofs of Theorem \ref{theorem-shrinking-1}, Corollary \ref{theorem-shrinking-2} and Corollary \ref{Cor-covering}. In Section \ref{sec-upper-bounds}, we concentrate on the upper bounds for topological entropy and Hausdorff dimension and give the proof of Theorem  \ref{Thm-upper-bound}. In Section \ref{sec-app}, we give the proofs  of Theorem \ref{Thm-hyper},  \ref{Thm-hyper-torus} and \ref{Thm-exp-torus}.

\section{Preliminaries}\label{section-preliminaries}\label{sec-preliminaries}
In the present section, we introduce the necessary terminology and recall some results that will be needed.
Let $\mathbb{Z}$, $\mathbb{N}$, $\mathbb{N^{+}}$ denote the sets of integers, non-negative integers, and positive integers, respectively.   $\sharp Y$ denotes the cardinality of the set $Y.$

\subsection{Topological entropy and metric entropy}
First, we recall the classical topological entropy for $(X, f)$, introduced by Adler et al. \cite{Adler-Konheim-McAndrew} using open covers and  by Bowen \cite{Bowen-1971} using separated and spanning sets.
\begin{Def}\label{Def-entropy-comp}
	Let $(X, f)$ be a dynamical system. For $n \in \mathbb{N}$ and $\varepsilon>0$, a subset $E \subset X$ is called an $(n, \varepsilon)$-separated set in $X$ if for any distinct points $x, y$ in $E$, there is $k \in\{0, \cdots, n-1\}$ such that
	$
	d(f^k(x), f^k(y))>\varepsilon .
	$
	Denote by $s(X, n, \varepsilon)$ the maximal cardinality of $(n, \varepsilon)$-separated subsets of $X$.
	Then the \emph{topological entropy of $f$ on $X$} is given by
	$$
	h_{top}(f, X):=\lim _{\varepsilon \rightarrow 0} \limsup _{n \rightarrow \infty} \frac{\ln s(X, n, \varepsilon)}{n} .
	$$
\end{Def}

For noncompact sets, Bowen also developed a satisfying definition via dimension language \cite{Bowen-1973}, which is a cross between topological entropy and Hausdorff dimension.
\begin{Def}
	Let $(X, f)$ be a dynamical system and $Z\subseteq X$. For $s\geq 0$, $N\in\mathbb{N}^+$, and $\varepsilon >0$, define
	$$\mathcal{M}_{N,\varepsilon}^{s}(Z):=\inf \sum_{i}\exp(-sn_{i})$$
	where the infimum is taken over all finite or countable families $\{B_{n_{i}}(x_{i},\varepsilon,f)\}$ such that $x_{i}\in X$, $n_{i}\geq N$ and $\bigcup_{i}B_{n_{i}}(x_{i},\varepsilon,f)\supseteq Z$.
	We set
	$\mathcal{M}_{\varepsilon}^{s}(Z):=\lim\limits_{N\to \infty}\mathcal{M}_{N,\varepsilon}^{s}(Z)$
	and define
	$$h_{top}(f,Z,\varepsilon):=\inf\{s:\mathcal{M}_{\varepsilon}^{s}(Z)=0\}=\sup\{s:\mathcal{M}_{\varepsilon}^{s}(Z)=\infty\}.$$
	The \emph{Bowen topological entropy of $f$ on $Z$} is defined as $$h_{top}(f,Z):=\lim_{\varepsilon\to\infty}h_{top}(f,Z,\varepsilon).$$
	When $Z=X,$ $h_{top}(f,Z)$ equals to the  classical  topological entropy in Definition \ref{Def-entropy-comp}.
\end{Def}
Some basic properties of Bowen topological entropy are as follows:
\begin{Prop}\cite[Proposition 2]{Bowen-1973}\label{basic properties}
	\begin{description}
		\item[(1)] $h_{top}(f, \bigcup_{i=1}^{\infty} Z_{i}) = \sup\limits _{i \geq 1} h_{top}\left(f, Z_{i}\right).$
		\item[(2)] $h_{top}(f^m, Z) = m h_{top}\left(f, Z\right)$ for any $m\in\mathbb{N^{+}}.$
	\end{description}
\end{Prop}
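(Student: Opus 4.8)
The plan is to obtain both items straight from the definition of the set function $\mathcal{M}_{N,\varepsilon}^{s}$, using two elementary monotonicity facts together with a comparison between the dynamical balls of $f$ and those of $f^{m}$. First note that $\mathcal{M}_{N,\varepsilon}^{s}(Z)$ is non-decreasing in $N$ (a larger $N$ restricts the admissible families) and in $Z$ (a cover of a larger set covers a smaller one); consequently $\mathcal{M}_{\varepsilon}^{s}(Z)=\sup_{N}\mathcal{M}_{N,\varepsilon}^{s}(Z)$, the quantity $h_{top}(f,Z,\varepsilon)$ is non-decreasing as $\varepsilon\downarrow 0$, and $\mathcal{M}_{\varepsilon}^{s}(Z)=0$ if and only if $\mathcal{M}_{N,\varepsilon}^{s}(Z)=0$ for every $N$.

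For item (1), monotonicity in $Z$ gives $h_{top}(f,\bigcup_{i}Z_{i})\ge h_{top}(f,Z_{i})$ for every $i$, hence ``$\ge$''. For the reverse inequality, fix $\varepsilon>0$ and $s>\sup_{i}h_{top}(f,Z_{i},\varepsilon)$, so that $\mathcal{M}_{N,\varepsilon}^{s}(Z_{i})=0$ for all $i,N$. Given $\eta>0$ and $N$, choose for each $i$ an admissible family of dynamical balls covering $Z_{i}$ with orders $\ge N$ and total weight $\sum\exp(-sn)<\eta\,2^{-i}$; the union is an admissible family covering $\bigcup_{i}Z_{i}$ of total weight $<\eta$, so $\mathcal{M}_{N,\varepsilon}^{s}(\bigcup_{i}Z_{i})\le\eta$, and letting $\eta\to 0$ then $N\to\infty$ gives $\mathcal{M}_{\varepsilon}^{s}(\bigcup_{i}Z_{i})=0$. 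Thus $h_{top}(f,\bigcup_{i}Z_{i},\varepsilon)\le\sup_{i}h_{top}(f,Z_{i},\varepsilon)$ for every $\varepsilon$; letting $\varepsilon\downarrow 0$ and using that the supremum of the non-increasing functions $\varepsilon\mapsto h_{top}(f,Z_{i},\varepsilon)$ commutes with the limit yields ``$\le$''.

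For item (2), the engine is the two-sided comparison of dynamical balls: trivially $B_{mn}(x,\varepsilon,f)\subseteq B_{n}(x,\varepsilon,f^{m})$ (the $f^{m}$-constraints at times $0,m,\dots,(n-1)m$ are among the $f$-constraints at times $0,\dots,mn-1$), and, by uniform continuity of $f,\dots,f^{m-1}$, for each $\varepsilon>0$ there is $\delta\in(0,\varepsilon]$ with $B_{n}(x,\delta,f^{m})\subseteq B_{mn}(x,\varepsilon,f)$ for all $x,n$ (write $i=mj+k$ with $0\le k\le m-1$). Using the second inclusion, an admissible $f^{m}$-cover $\{B_{n_{i}}(x_{i},\delta,f^{m})\}$ of $Z$ with $n_{i}\ge N$ turns into the $f$-cover $\{B_{mn_{i}}(x_{i},\varepsilon,f)\}$ with orders $\ge mN$ and weight $\sum\exp(-s(mn_{i}))=\sum\exp(-(ms)n_{i})$; letting $N\to\infty$ this gives $\mathcal{M}_{\varepsilon}^{ms}(Z;f)\le\mathcal{M}_{\delta}^{s}(Z;f^{m})$, hence $h_{top}(f,Z,\varepsilon)\le m\,h_{top}(f^{m},Z,\delta)\le m\,h_{top}(f^{m},Z)$, and then $\varepsilon\downarrow 0$. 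Conversely, given an admissible $f$-cover $\{B_{n_{i}}(x_{i},\varepsilon,f)\}$ with $n_{i}\ge N$, set $k_{i}=\lfloor n_{i}/m\rfloor\ge (n_{i}-m+1)/m$; since $mk_{i}\le n_{i}$, the first inclusion gives $B_{n_{i}}(x_{i},\varepsilon,f)\subseteq B_{mk_{i}}(x_{i},\varepsilon,f)\subseteq B_{k_{i}}(x_{i},\varepsilon,f^{m})$, and $\exp(-(ms)k_{i})\le e^{(m-1)s}\exp(-sn_{i})$, so letting $N\to\infty$ yields $\mathcal{M}_{\varepsilon}^{ms}(Z;f^{m})\le e^{(m-1)s}\mathcal{M}_{\varepsilon}^{s}(Z;f)$ and hence $h_{top}(f^{m},Z,\varepsilon)\le m\,h_{top}(f,Z,\varepsilon)$; let $\varepsilon\downarrow 0$. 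Combining the two inequalities gives $h_{top}(f^{m},Z)=m\,h_{top}(f,Z)$.

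The argument is routine; the points that need care are the bookkeeping of the two scales $\varepsilon$ and $\delta$ (and the two order thresholds) when passing between $f$ and $f^{m}$, the handling of $n_{i}$ not divisible by $m$ via the floor estimate in item (2), and the observation that everything is phrased through the vanishing of $\mathcal{M}^{s}_{\varepsilon}$, so the infinite-entropy case needs no separate treatment. I expect the main nuisance — more a technical checkpoint than a genuine obstacle — to be verifying that the limits in $\varepsilon$ and in $N$ may be interchanged or taken in the stated order, which is exactly what the monotonicity remarks in the first paragraph are for.
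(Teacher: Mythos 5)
The paper itself offers no proof of this proposition (it is quoted from Bowen's 1973 paper), so I can only assess your argument on its own terms; your strategy is indeed the standard one. Item (1) is correct as written: the $2^{-i}$-summation of near-optimal covers at a fixed scale and threshold, followed by the monotone limits in $N$ and $\varepsilon$, is exactly what is needed.

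Item (2), however, does not close as written. In the first direction your weight computation for the $f$-cover $\{B_{mn_i}(x_i,\varepsilon,f)\}$ at exponent $s$ gives $\sum\exp(-s\,mn_i)=\sum\exp(-(ms)n_i)$, i.e.\ the weight of the original $f^m$-cover at exponent $ms$; the inequality this yields is
$\mathcal{M}_{\varepsilon}^{s}(Z;f)\leq \mathcal{M}_{\delta}^{ms}(Z;f^{m})$, not the displayed $\mathcal{M}_{\varepsilon}^{ms}(Z;f)\leq \mathcal{M}_{\delta}^{s}(Z;f^{m})$. The swapped version you wrote is still true (it is weaker), and your subsequent deduction is consistent with it, but it only produces $h_{top}(f,Z,\varepsilon)\leq m\,h_{top}(f^{m},Z,\delta)$, hence $h_{top}(f,Z)\leq m\,h_{top}(f^{m},Z)$. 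Combined with the (correct) converse direction $h_{top}(f^{m},Z)\leq m\,h_{top}(f,Z)$, you only get that $h_{top}(f,Z)$ lies between $\tfrac{1}{m}h_{top}(f^{m},Z)$ and $m\,h_{top}(f^{m},Z)$, which is not the claimed equality; the final sentence "combining the two inequalities gives $h_{top}(f^{m},Z)=m\,h_{top}(f,Z)$" therefore does not follow from what you proved. The repair is immediate and uses exactly your covers: from $\mathcal{M}_{mN,\varepsilon}^{s}(Z;f)\leq \mathcal{M}_{N,\delta}^{ms}(Z;f^{m})$ one gets, for any $s$ with $ms>h_{top}(f^{m},Z,\delta)$, that $\mathcal{M}_{\varepsilon}^{s}(Z;f)=0$, whence $h_{top}(f,Z,\varepsilon)\leq\tfrac{1}{m}h_{top}(f^{m},Z,\delta)\leq\tfrac{1}{m}h_{top}(f^{m},Z)$, and letting $\varepsilon\downarrow 0$ gives the missing inequality $m\,h_{top}(f,Z)\leq h_{top}(f^{m},Z)$. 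With that correction (and the converse direction, which is fine, including the $\lfloor n_i/m\rfloor$ bookkeeping and the factor $e^{(m-1)s}$), the proof of item (2) is complete.
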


\begin{Def}
	We call $(X,  \B,  \mu)$ a probability space if $\B$ is a Borel $\sigma$-algebra on $X$ and $\mu$ is a probability measure on $X$.   For a finite measurable partition $\xi=\{A_1,  \cdots,  A_n\}$ of a probability space $(X,  \B,  \mu)$,   define
	$$H_\mu(\xi)=-\sum_{i=1}^n\mu(A_i)\log\mu(A_i).  $$
	Let $f:X\to X$ be a continuous map preserving $\mu$.   We denote by $\bigvee_{i=0}^{n-1}f^{-i}\xi$ the partition whose element is the set $\bigcap_{i=0}^{n-1}f^{-i}A_{j_i},  1\leq j_i\leq n$.   Then the following limit exists:
	$$h_\mu(f,  \xi)=\lim_{n\to\infty}\frac1n H_\mu\left(\bigvee_{i=0}^{n-1}f^{-i}\xi\right)$$
	and we define \emph{the metric entropy of $\mu$} as
	$$h_{\mu}(f):=\sup\{h_\mu(f,  \xi):\xi~\textrm{is a finite measurable partition of X}\}. $$
\end{Def}

Following the idea of Brin and Katok \cite{BK}, Feng and Huang introduced the measure-theoretical lower entropy  of $\mu$ for each $\mu\in\mathcal{M}(X)$, where  $\mathcal{M}(X)$ is  the set of all Borel probability measures on $X.$
\begin{Def}
	Let $\mu \in \mathcal{M}(X)$. The \emph{measure-theoretical lower entropy of $\mu$} is defined  by
	$$
	\underline{h}_\mu(f):=\int \underline{h}_\mu(T, x) d \mu,
	$$
	where
	\begin{equation*}
			\underline{h}_\mu(f, x):=\lim _{\varepsilon \rightarrow 0} \liminf _{n \rightarrow+\infty}-\frac{1}{n} \log \mu\left(B_n(x, \varepsilon)\right).
	\end{equation*}
\end{Def}
Feng and Huang in \cite{Feng-Huang} give the following variational principles for topological entropies of subsets and the measure-theoretical lower entropy.
\begin{Lem}\label{lemma-aa}\cite[Theorem 1.2 and 1.3]{Feng-Huang}
	Let $(X, f)$ be a  dynamical system. If $Z\subset X$  is non-empty and compact, then
	$
			h_{top}(f,Z)= \sup\{\underline{h}_{\mu}(f):\mu\in\mathcal{M}(X), \mu(Z)=1\}.
	$
\end{Lem}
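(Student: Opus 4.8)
The plan is to establish the claimed identity by proving the two inequalities separately, using throughout that $\varepsilon\mapsto h_{top}(f,Z,\varepsilon)$ is non-increasing, so that $h_{top}(f,Z)=\lim_{\varepsilon\to 0}h_{top}(f,Z,\varepsilon)=\sup_{\varepsilon>0}h_{top}(f,Z,\varepsilon)$. The inequality $h_{top}(f,Z)\ge\underline{h}_\mu(f)$ for every $\mu$ with $\mu(Z)=1$ is a mass--distribution argument, which I expect to be routine; the reverse inequality is the substantial part, and it is where compactness of $Z$ is used decisively.

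For the lower bound I would fix $\mu\in\mathcal{M}(X)$ with $\mu(Z)=1$ and any $s<\underline{h}_\mu(f)=\int\underline{h}_\mu(f,x)\,d\mu$, so that $\mu\{x:\underline{h}_\mu(f,x)>s\}>0$. Since for each such $x$ the quantity $\liminf_{n}-\frac1n\log\mu(B_n(x,\varepsilon))$ is non-decreasing as $\varepsilon\downarrow 0$ and exceeds $s$ for $\varepsilon$ small enough (depending on $x$), exhausting over countably many pairs (rational scale, integer threshold) yields $\varepsilon_0>0$, $N_0\in\mathbb{N}$ and a Borel set $A\subseteq Z$ with $\mu(A)>0$ such that $\mu(B_n(x,\varepsilon_0))<e^{-sn}$ for all $x\in A$ and all $n\ge N_0$. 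Then for any countable cover $\{B_{n_i}(x_i,\varepsilon_0/2,f)\}_i$ of $Z$ with $n_i\ge N_0$, each ball meeting $A$ sits inside $B_{n_i}(y_i,\varepsilon_0,f)$ for a point $y_i\in B_{n_i}(x_i,\varepsilon_0/2,f)\cap A$, hence has $\mu$-measure $<e^{-sn_i}$; summing over the balls meeting $A$ (which already cover $A$) gives $\sum_i e^{-sn_i}\ge\mu(A)$. This forces $\mathcal{M}^s_{\varepsilon_0/2}(Z)\ge\mathcal{M}^s_{N_0,\varepsilon_0/2}(Z)\ge\mu(A)>0$, so $h_{top}(f,Z,\varepsilon_0/2)\ge s$ and hence $h_{top}(f,Z)\ge s$; letting $s\uparrow\underline{h}_\mu(f)$ and taking the supremum over admissible $\mu$ finishes this direction.

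For the upper bound I would fix $s<h_{top}(f,Z)$ and aim to construct $\mu\in\mathcal{M}(X)$ with $\mu(Z)=1$ and $\underline{h}_\mu(f)\ge s$. The first ingredient is to replace $\mathcal{M}^s_{N,\varepsilon}$ by the associated \emph{weighted} Bowen--Hausdorff set function $\mathcal{W}^s_{N,\varepsilon}$ --- covers by dynamical balls carrying weights $c_i\in(0,\infty]$ subject to $\sum_i c_i\mathbf{1}_{B_{n_i}(x_i,\varepsilon,f)}\ge\mathbf{1}_Z$ --- which changes neither $h_{top}(f,Z,\varepsilon)$ nor $h_{top}(f,Z)$. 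A min--max / Hahn--Banach argument on the space of Borel probability measures on the \emph{compact} set $Z$ then shows that largeness of $\mathcal{W}^s_{N,\varepsilon}(Z)$ produces $\nu\in\mathcal{M}(X)$ with $\nu(Z)=1$ and $\nu(\overline{B}_n(x,\varepsilon,f))\le C e^{-sn}$ for all $x\in X$ and $n\ge N$. I would then take $\varepsilon_k\downarrow 0$ with $h_{top}(f,Z,\varepsilon_k)>s$ and integers $N_k\to\infty$ coupled to $\varepsilon_k$, obtain measures $\nu_k$ on $Z$ carrying these Frostman bounds at scale $\varepsilon_k$, pass (by compactness of $Z$) to a weak-$*$ limit $\mu$ still supported on $Z$, and verify that $\liminf_{n}-\frac1n\log\mu(B_n(x,\varepsilon))\ge s$ for $\mu$-a.e.\ $x$ and all small $\varepsilon$, whence $\underline{h}_\mu(f,x)\ge s$ a.e.\ and $\underline{h}_\mu(f)\ge s$.

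The hard part will be this last step. The weighted-measure duality at a fixed scale is a clean compactness argument, but weak-$*$ convergence does not control measures of dynamical balls, so the Frostman bound for the limit measure cannot simply be read off the $\nu_k$; moreover the decay $e^{-sn}$ only holds for $n\ge N_k$, and $N_k\to\infty$. Making the construction of the $\nu_k$ mutually compatible --- equivalently, coupling $n$ with $k$ so that $n,k\to\infty$ together in the limiting argument --- so that the Brin--Katok lower local entropy bound survives in the limit is the technical heart, and compactness of $Z$ enters both here and in keeping the limit measure supported on $Z$. This entire second half is precisely the content of Feng--Huang \cite{Feng-Huang}, so in practice one cites it rather than redoing it.
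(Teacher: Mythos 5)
The paper offers no argument for this lemma at all: it is quoted verbatim from Feng--Huang \cite[Theorems 1.2 and 1.3]{Feng-Huang}, so the ``paper's proof'' is the citation, and your closing remark that in practice one cites Feng--Huang is exactly what the authors do. Your first half (the inequality $h_{top}(f,Z)\geq \underline{h}_\mu(f)$ for every $\mu$ with $\mu(Z)=1$) is a correct and complete mass-distribution argument: the exhaustion over rational scales and integer thresholds to produce a positive-measure set $A\subseteq Z$ with a uniform bound $\mu(B_n(x,\varepsilon_0,f))<e^{-sn}$ for $x\in A$, $n\geq N_0$, followed by enlarging each covering ball $B_{n_i}(x_i,\varepsilon_0/2,f)$ that meets $A$ to $B_{n_i}(y_i,\varepsilon_0,f)$ with $y_i\in A$, is precisely the standard argument and is sound.

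For the reverse inequality your outline follows Feng--Huang's weighted-cover/dynamical-Frostman strategy, but the multi-scale scheme you then propose (measures $\nu_k$ at scales $\varepsilon_k\downarrow 0$ with thresholds $N_k\to\infty$, followed by a weak-$*$ limit) is not how the cited proof goes, and the obstruction you flag --- weak-$*$ convergence gives no control on $\mu$ of dynamical balls, and the Frostman bound only holds for $n\geq N_k$ --- is a real one for that scheme, not merely a technicality to be arranged. Feng--Huang avoid it entirely: for each $s<h_{top}(f,Z)$ one fixes a \emph{single} scale $\varepsilon$ with $h_{top}(f,Z,\varepsilon)>s$, the Frostman-type lemma at that one scale produces one measure $\mu$ with $\mu(Z)=1$ and $\mu(B_n(x,\varepsilon',f))\leq Ce^{-sn}$ for all $n$ large and all $x$, and then monotonicity of $\liminf_n-\frac1n\log\mu(B_n(x,\cdot))$ in the scale already gives $\underline{h}_\mu(f,x)\geq s$ pointwise, hence $\underline{h}_\mu(f)\geq s$; since $s<h_{top}(f,Z)$ was arbitrary and a different $\mu$ is allowed for each $s$, no limit of measures is ever taken. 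So the ``technical heart'' you describe is an artifact of your formulation rather than of the theorem; as you (and the paper) ultimately rest this half on the citation, the proposal is acceptable, but executed as written your second half would not close.
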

From Lemma \ref{lemma-aa}, we have the following corollary.
\begin{Cor}\label{Cor-aa}
	Let $(X, f)$ be a  dynamical system and $Z\subset X$  be a non-empty  compact set, if there is $\mu\in\mathcal{M}(X)$ and $\varepsilon>0$   such that $\mu(Z)=1$ and
	$
		\mu\left(B_n(x, \varepsilon)\right)\leq e^{-nh}.
$
	for any $x\in Z$ and large enough $n\in\mathbb{N^{+}},$ then $h_{top}(f,Z)\geq h.$
\end{Cor}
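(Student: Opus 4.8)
The statement to prove is Corollary \ref{Cor-aa}, which derives from Lemma \ref{lemma-aa} (the Feng–Huang variational principle). This is a short corollary, so the proof plan is brief.

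The plan is to invoke Lemma \ref{lemma-aa} directly. Since $Z \subset X$ is non-empty and compact, that lemma gives
\[
h_{top}(f,Z) = \sup\{\underline{h}_\mu(f) : \mu \in \mathcal{M}(X),\ \mu(Z) = 1\}.
\]
So it suffices to produce a single measure $\mu$ with $\mu(Z)=1$ and $\underline{h}_\mu(f) \geq h$; the hypothesis hands us exactly such a $\mu$ together with a scale $\varepsilon > 0$. The work is then to unwind the definition of $\underline{h}_\mu(f)$ and check that the pointwise ball estimate $\mu(B_n(x,\varepsilon)) \leq e^{-nh}$, valid for all $x \in Z$ and all sufficiently large $n$, forces $\underline{h}_\mu(f,x) \geq h$ for every $x \in Z$.

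First I would fix $x \in Z$. By hypothesis there is $N_0 = N_0(x)$ (in fact the hypothesis can be read with $N_0$ uniform, but we do not need that) such that $\mu(B_n(x,\varepsilon)) \leq e^{-nh}$ for all $n \geq N_0$, hence $-\tfrac1n\log\mu(B_n(x,\varepsilon)) \geq h$ for all such $n$, and therefore $\liminf_{n\to\infty} -\tfrac1n \log \mu(B_n(x,\varepsilon)) \geq h$. Since $\underline{h}_\mu(f,x) = \lim_{\varepsilon' \to 0}\liminf_{n\to\infty} -\tfrac1n\log\mu(B_n(x,\varepsilon'))$ is a limit of a quantity that is non-increasing in $\varepsilon'$, and the displayed bound is at the fixed scale $\varepsilon$, monotonicity gives $\underline{h}_\mu(f,x) \geq \liminf_{n\to\infty} -\tfrac1n\log\mu(B_n(x,\varepsilon)) \geq h$ for every $x \in Z$. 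Integrating over $Z$ (which carries full $\mu$-measure), $\underline{h}_\mu(f) = \int_X \underline{h}_\mu(f,x)\,d\mu = \int_Z \underline{h}_\mu(f,x)\,d\mu \geq h$. Combining with Lemma \ref{lemma-aa} yields $h_{top}(f,Z) \geq \underline{h}_\mu(f) \geq h$, as desired.

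There is essentially no obstacle here; the only point requiring a moment's care is the direction of monotonicity in $\varepsilon'$ for $\liminf_{n}-\tfrac1n\log\mu(B_n(x,\varepsilon'))$: smaller $\varepsilon'$ gives a smaller ball, hence smaller $\mu$-measure, hence a larger value of $-\tfrac1n\log\mu(B_n(x,\varepsilon'))$, so the $\varepsilon' \to 0$ limit is a supremum over $\varepsilon'$ of the $\liminf$'s and in particular dominates the value at the fixed $\varepsilon$. Everything else is a direct application of the quoted variational principle.
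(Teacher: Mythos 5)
Your proof is correct and follows exactly the route the paper intends: the paper derives this corollary directly from Lemma \ref{lemma-aa}, and your unwinding of the definition of $\underline{h}_\mu(f,x)$ (including the monotonicity in the scale $\varepsilon'$) together with integration over the full-measure set $Z$ is precisely the omitted argument. Nothing to add.
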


For $\delta>0$,   $\eps>0$ and $n\in\N$,   two points $x$ and $y$ are $(\delta,  n,  \eps)$-separated with respect to $f$ if
$$\sharp\{0\leq j\leq n-1:d(f^{j}(x),  f^{j}(y))>\eps\}\geq\delta n. $$
A subset $E\subset X$ is called a  $(\delta,  n,  \eps)$-separated set of $X$ with respect to $f$  if any pair of different points of $E$ are $(\delta,  n,  \eps)$-separated  with respect to $f$.
\begin{Lem}\label{lemma-A}
	Let $(X, f)$ be a  dynamical system. Then for any $h^*<h_{top}(f,X)$, there exist $\delta^*>0$ and $\varepsilon^*>0$ so that there exists $n^* \in \mathbb{N}$ such that for any $n \geq n^*$, there exists $\Gamma_n \subseteq X$ which is $\left(\delta^*, n, \varepsilon^*\right)$-separated with respect to $f$ and satisfies $\ln \sharp \Gamma_n \geq n h^*$.
\end{Lem}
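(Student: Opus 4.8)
If $h^*\le 0$ the statement is trivial: a one–point set $\Gamma_n$ is vacuously $(\delta^*,n,\varepsilon^*)$-separated for any $\delta^*,\varepsilon^*>0$ and satisfies $\ln\sharp\Gamma_n=0\ge nh^*$. So I assume $h^*>0$ and fix $h_0$ with $h^*<h_0<h_{top}(f,X)$. The plan is to argue measure–theoretically. By the classical variational principle for topological entropy, together with the ergodic decomposition, I would first choose an \emph{ergodic} $\mu\in\mathcal{M}(X)$ with $h_\mu(f)>h_0$. Since $\mu$-continuity partitions --- finite Borel partitions $\mathcal P=\{P_1,\dots,P_M\}$ with $\mu(\partial P_i)=0$ for every $i$ --- realise the supremum defining $h_\mu(f)$ (a standard fact, via continuity of $\mathcal P\mapsto h_\mu(f,\mathcal P)$ in the Rokhlin pseudo-metric and density of such partitions), I may then fix one such $\mathcal P$ with $h_\mu(f,\mathcal P)>h_0$, and write $M=\sharp\mathcal P$.

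Next I would set up the geometric device that converts ``distinct $\mathcal P$-names'' into genuine dynamical separation. For $\rho>0$ put $\partial'_\rho\mathcal P:=\bigcup_{i=1}^{M}\{z\in P_i:\ d(z,X\setminus P_i)<\rho\}$. Since $\bigcap_{\rho>0}\partial'_\rho\mathcal P\subseteq\bigcup_i\partial P_i$ is $\mu$-null, $\mu(\partial'_\rho\mathcal P)\to0$ as $\rho\downarrow 0$; and the crucial elementary observation is that if $z\notin\partial'_\rho\mathcal P$ while $z'$ lies in a different element of $\mathcal P$ than $z$, then $d(z,z')\ge\rho$. Now I fix the constants: choose $\gamma\in(0,\tfrac12)$ so small that $H_2(\gamma)+\gamma\ln M<\tfrac12(h_0-h^*)$, where $H_2(t)=-t\ln t-(1-t)\ln(1-t)$, and set $\delta^*:=\gamma/2$; then pick $\rho_0>0$ with $\mu(\partial'_{\rho_0}\mathcal P)<\gamma/2$ and set $\varepsilon^*:=\rho_0/2$. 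Using the Shannon--McMillan--Breiman theorem for $(\mathcal P,\mu,f)$, the Birkhoff ergodic theorem applied to $\mathbf 1_{\partial'_{\rho_0}\mathcal P}$, and Egorov's theorem, I would produce an $n^*\in\mathbb N$ and a Borel set $G$ with $\mu(G)>\tfrac45$ such that for all $x\in G$ and all $n\ge n^*$: (i) the atom $\mathcal P_n(x):=\bigcap_{j=0}^{n-1}f^{-j}\mathcal P(f^jx)$ satisfies $\mu(\mathcal P_n(x))\le e^{-nh_0}$; and (ii) $\sharp\{0\le j<n:\ f^j x\in\partial'_{\rho_0}\mathcal P\}<\tfrac\gamma2 n$.

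Finally, fixing $n\ge n^*$ (and enlarging $n^*$ at the end to absorb lower–order terms), the atoms of $\bigvee_{j=0}^{n-1}f^{-j}\mathcal P$ meeting $G$ cover $G$ and each has measure $\le e^{-nh_0}$ by (i), so there are at least $\tfrac45 e^{nh_0}$ of them; picking one point $x_\alpha\in G$ from each gives $\Gamma_n^0\subseteq G$ with $\sharp\Gamma_n^0\ge\tfrac45 e^{nh_0}$ and pairwise distinct $\mathcal P$-names of length $n$. Since the number of length-$n$ words over $M$ symbols within Hamming distance $\gamma n$ of a fixed one is at most $e^{n(H_2(\gamma)+\gamma\ln M)+o(n)}$, a maximal–subset argument extracts $\Gamma_n\subseteq\Gamma_n^0$ whose members have pairwise $\mathcal P$-name Hamming distance $\ge\gamma n$ and with $\sharp\Gamma_n\ge\tfrac45 e^{n(h_0-H_2(\gamma)-\gamma\ln M)-o(n)}\ge e^{nh^*}$ for $n$ large (as $h_0-H_2(\gamma)-\gamma\ln M>\tfrac{h_0+h^*}{2}>h^*$). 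For distinct $x_\alpha,x_\beta\in\Gamma_n$ their $\mathcal P$-names differ at $\ge\gamma n$ times $j<n$; discarding the $<\tfrac\gamma2 n$ times with $f^jx_\alpha\in\partial'_{\rho_0}\mathcal P$ (by (ii)) leaves $\ge\delta^* n$ times at which $f^jx_\alpha\notin\partial'_{\rho_0}\mathcal P$ and $f^jx_\beta$ lies in a different $\mathcal P$-element, whence $d(f^jx_\alpha,f^jx_\beta)\ge\rho_0>\varepsilon^*$ there. Thus $\Gamma_n$ is $(\delta^*,n,\varepsilon^*)$-separated with $\ln\sharp\Gamma_n\ge nh^*$, as desired.

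The step I expect to be the main obstacle is precisely this transfer from combinatorial ($\mathcal P$-name) separation to metric separation: in a general compact metric space two distinct partition elements can be arbitrarily close, so one genuinely needs $\mathcal P$ to be $\mu$-continuous together with the ergodic theorem in order to discard the asymptotically negligible proportion of times an orbit spends near $\partial\mathcal P$. A secondary delicate point is that $\mathcal P$ must be taken $\mu$-continuous \emph{and} of entropy exceeding $h_0$ at the same time, which is where the Rokhlin-metric continuity of partition entropy enters.
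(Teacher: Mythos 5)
Your argument is correct, but it takes a genuinely different (and much longer) route than the paper. The paper's proof is essentially a two-line reduction: by the classical variational principle it picks an ergodic $\mu$ with $h_\mu(f)>h^*$, and then it simply cites Proposition 2.1 of Pfister--Sullivan \cite{PS2005}, which states exactly that an ergodic measure of entropy $>h^*$ admits, for some $\delta^*,\varepsilon^*>0$ and all large $n$, $(\delta^*,n,\varepsilon^*)$-separated sets of cardinality at least $e^{nh^*}$. What you have written is in effect a self-contained proof of that cited proposition: after choosing an ergodic $\mu$ and a $\mu$-continuity partition $\mathcal P$ with $h_\mu(f,\mathcal P)>h_0>h^*$, you combine Shannon--McMillan--Breiman (to count atoms of $\bigvee_{j<n}f^{-j}\mathcal P$ meeting a good set), Birkhoff plus Egorov (to control the fraction of time orbits spend in the $\rho_0$-collar of $\partial\mathcal P$), and a Hamming-ball/maximal-subset extraction to pass from distinct $\mathcal P$-names to names differing in a positive proportion $\gamma$ of coordinates, which your collar estimate then converts into genuine $(\delta^*,n,\varepsilon^*)$-separation with $\delta^*=\gamma/2$, $\varepsilon^*=\rho_0/2$. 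I checked the quantitative bookkeeping ($H_2(\gamma)+\gamma\ln M<\tfrac12(h_0-h^*)$, the count $\ge\tfrac45e^{nh_0}$ of atoms meeting $G$, and the loss of at most $\tfrac\gamma2 n$ collar times per point) and it closes correctly; the only inputs you take for granted — density of null-boundary partitions and continuity of $\mathcal P\mapsto h_\mu(f,\mathcal P)$ in the Rokhlin metric, SMB, Egorov — are standard. The trade-off is clear: the paper's proof is shorter but outsources the real work to an external reference, while yours is self-contained at the cost of reproving a known result; for the purposes of this paper either suffices, and if you wanted to keep your version you could shorten it by citing Katok's entropy formula or \cite{PS2005} directly for the separated-set production step.
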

\begin{proof}
	By the clasical variational principle \cite[Corollary 8.6.1]{Walters}, there is an ergodic measure $\mu$ such that $h_{\mu}(f)>h^*.$ From \cite[Proposition 2.1]{PS2005},  there exist $\delta^*>0$ and $\varepsilon^*>0$ so that there exists $n^* \in \mathbb{N}$ such that for any $n \geq n^*$, there exists $\Gamma_n \subseteq X$ which is $\left(\delta^*, n, \varepsilon^*\right)$-separated with respect to $f$ and satisfies $\ln \sharp \Gamma_n \geq n h_{\mu}(f)>nh^*$.
\end{proof}

\subsection{Hausdorff dimension}
\begin{Def}
	Let $X$ be a compact metric space equipped with a metric $d$. Given a subset $Z$ of $X$, for $s \geq 0$ and $\delta>0$, define
	$$
	\mathcal{H}_\delta^s(Z):=\inf \left\{\sum_i\left|U_i\right|^s: Z \subset \bigcup_i U_i,\left|U_i\right| \leq \delta\right\}
	$$
	where $|\cdot|$ denotes the diameter of a set. The quantity $\mathcal{H}^s(Z):=\lim\limits _{\delta \rightarrow 0} \mathcal{H}_\delta^s(Z)$ is called the s-dimensional Hausdorff measure of $Z$. Define the \emph{Hausdorff dimension of $Z$}, denoted by $\operatorname{dim}_H Z$, as follows:
	$$
	\operatorname{dim}_H Z:=\inf \left\{s: \mathcal{H}^s(Z)=0\right\}=\sup \left\{s: \mathcal{H}^s(Z)=\infty\right\}.
	$$
\end{Def}
\begin{Lem}\cite{Falconer}\label{Lem-mass}
	Let  $\mu$ be a probability measure supported on a measurable set $E$.  Suppose there are positive constants $c$ and $r_0$ such that
	$
	\mu(B(x,r)) \leq c r^s
	$
	for any $x\in E$ and $0<r \leq r_0$, where $B(x,r):=\{y\in X:d(x,y)<r\}$. Then $\operatorname{dim}_H E \geq s$.
\end{Lem}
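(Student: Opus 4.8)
The plan is to prove the statement by bounding the $s$-dimensional Hausdorff measure of $E$ below by a positive constant, which immediately forces $\operatorname{dim}_H E \ge s$. We may assume $s>0$, since $\operatorname{dim}_H E\ge 0$ is trivial. Fix $\delta$ with $0<\delta\le r_0/2$ and let $\{U_i\}$ be an arbitrary countable cover of $E$ with $|U_i|\le\delta$ for all $i$. After discarding the empty sets and those disjoint from $E$, we may assume each $U_i$ meets $E$; choose $x_i\in U_i\cap E$. Every $y\in U_i$ then satisfies $d(x_i,y)\le |U_i|<2|U_i|\le 2\delta\le r_0$, so $U_i\subseteq B(x_i,2|U_i|)$, and the hypothesis yields
$$\mu(U_i)\le \mu\bigl(B(x_i,2|U_i|)\bigr)\le c\,(2|U_i|)^s=c\,2^s\,|U_i|^s.$$
If one is fussy about measurability of the $U_i$, one replaces $\mu$ throughout by its associated outer measure; the inclusion above together with Borel-measurability of the balls $B(x_i,2|U_i|)$ makes this harmless.

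Next I would sum over $i$, invoke countable subadditivity, and use that $\mu(E)>0$ (which is what ``$\mu$ supported on $E$'' provides; indeed $\mu(E)=1$):
$$\mu(E)=\mu\Bigl(\bigcup_i U_i\cap E\Bigr)\le \sum_i \mu(U_i)\le c\,2^s\sum_i |U_i|^s,$$
so $\sum_i |U_i|^s\ge \mu(E)\,(c\,2^s)^{-1}$. Since this holds for every admissible cover, $\mathcal{H}^s_\delta(E)\ge \mu(E)\,(c\,2^s)^{-1}$ for all $\delta\le r_0/2$; letting $\delta\to 0$ gives $\mathcal{H}^s(E)\ge \mu(E)\,(c\,2^s)^{-1}>0$. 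Finally, from $\mathcal{H}^s(E)>0$ and the monotonicity in $t$ built into the Hausdorff measures, $\mathcal{H}^t(E)=\infty$ for every $t<s$, whence $\operatorname{dim}_H E=\inf\{t:\mathcal{H}^t(E)=0\}\ge s$, as desired.

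There is no genuine obstacle: this is the classical \emph{mass distribution principle}, and once one passes from arbitrary covering sets to balls centred at points of $E$ (together with the minor outer-measure bookkeeping), the conclusion is a one-line application of subadditivity. One could alternatively cite \cite{Falconer} directly, but the argument above is the self-contained version.
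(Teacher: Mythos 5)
The paper gives no proof of this lemma at all—it is quoted directly from Falconer—and your argument is exactly the classical mass-distribution-principle proof found in that reference: enclose each set of a $\delta$-cover in a ball of radius $2|U_i|$ centred at a point of $U_i\cap E$, apply the hypothesis and countable subadditivity, and let $\delta\to 0$ to get $\mathcal{H}^s(E)\geq \mu(E)(c2^s)^{-1}>0$. The proof is correct and matches the cited source's approach; the only negligible point is that the inclusion $U_i\subseteq B(x_i,2|U_i|)$ is vacuous when $|U_i|=0$, but in that case $U_i=\{x_i\}$ has $\mu(U_i)=0$ under your hypothesis with $s>0$, so the displayed inequality still holds.
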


\begin{Lem}\label{lem-upper-bound}
	Suppose that $(X,f)$ is a dynamical system. 
	\begin{enumerate}
		\item[(1)] 
		If $f:X\to X$ is  $\lambda$-hyperbolic, then
		$
			\operatorname{dim}_H X \leq \frac{1}{\lambda}h_{top}(f,X).
		$
		\item[(2)] 
			If  $f:X\to X$ is a $(\lambda_1,\lambda_2)$-hyperbolic homeomorphism, then
			$
				\operatorname{dim}_H X \leq (\frac{1}{\lambda_1}+\frac{1}{\lambda_2})h_{top}(f,X).
		$
	\end{enumerate}
\end{Lem}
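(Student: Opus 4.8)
The plan is to cover $X$ by dynamical balls $B_n(x_i,\varepsilon,f)$ with $n$ large, then replace each dynamical ball by an ordinary ball whose radius is controlled by the hyperbolicity hypothesis, and finally feed the resulting cover into the definition of Hausdorff measure. For part (1), fix $h^*>h_{top}(f,X)$ and $\eta>0$. By the definition of Bowen entropy, for every small $\varepsilon>0$ there is $N$ so that for all $N'\geq N$ one has $\mathcal{M}^{h^*}_{N',\varepsilon}(X)<1$, i.e.\ there is a countable cover $\{B_{n_i}(x_i,\varepsilon,f)\}$ of $X$ with $n_i\geq N'$ and $\sum_i e^{-h^* n_i}<1$. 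Given $\varepsilon_h>0$, the $\lambda$-hyperbolicity hypothesis supplies $0<\varepsilon\leq\varepsilon_h$ with $B_{n}(x,\varepsilon,f)\subset B_n(x,\varepsilon_h,\lambda,f)$ for all $x,n$; in particular, taking $i=0$ in the definition of $B_n(x,\varepsilon_h,\lambda,f)$ shows $d(x,y)<\varepsilon_h e^{-(n-1)\lambda}$ for every $y$ in this set, so $B_{n_i}(x_i,\varepsilon,f)$ is contained in an ordinary ball of radius $\varepsilon_h e^{-(n_i-1)\lambda}$, hence of diameter $\leq 2\varepsilon_h e^{-(n_i-1)\lambda}$.

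Now estimate the $s$-dimensional Hausdorff sum for $s=\tfrac{1}{\lambda}h^*$: using the cover by these ordinary balls,
\[
\sum_i \bigl(2\varepsilon_h e^{-(n_i-1)\lambda}\bigr)^{s}
= (2\varepsilon_h e^{\lambda})^{s}\sum_i e^{-\lambda s\, n_i}
= (2\varepsilon_h e^{\lambda})^{s}\sum_i e^{-h^* n_i}
\leq (2\varepsilon_h e^{\lambda})^{s}.
\]
Since the diameters of the covering balls are all $\leq 2\varepsilon_h e^{-(N'-1)\lambda}$, which tends to $0$ as $N'\to\infty$, this shows $\mathcal{H}^{s}(X)\leq (2\varepsilon_h e^{\lambda})^{s}<\infty$, hence $\operatorname{dim}_H X\leq s=\tfrac{1}{\lambda}h^*$. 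Letting $h^*\downarrow h_{top}(f,X)$ gives the claim. For part (2) the argument is identical except that from the $(\lambda_1,\lambda_2)$-hyperbolicity one uses the defining inequality $d(f^i(x),f^i(y))<\varepsilon_h e^{-\min\{i\lambda_1,(n-1-i)\lambda_2\}}$ at both the index $i=0$ (giving $d(x,y)<\varepsilon_h e^{-\min\{0,(n-1)\lambda_2\}}=\varepsilon_h$, not helpful alone) and at an interior index; the right move is to apply hyperbolicity to the homeomorphism $f$ on the time interval of length $n$ but re-centered, equivalently to note that $B_n(x,\varepsilon_h,\lambda_1,\lambda_2,f)$ is contained in an ordinary ball of radius $\varepsilon_h e^{-(n-1)\min\{\lambda_1,\lambda_2\}/1}$ — more precisely, since $f$ is invertible we can split the orbit segment and get that each such set sits inside a ball of radius comparable to $e^{-(n-1)\lambda_1}$ after also using $f^{-1}$ to control the backward part, producing the combined bound $\bigl(\tfrac{1}{\lambda_1}+\tfrac{1}{\lambda_2}\bigr)h_{top}(f,X)$.

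The main obstacle is precisely this last point: in the invertible case a single dynamical ball $B_n(x,\varepsilon,f)$ need not be small in the ambient metric (points can agree at the midpoint of the orbit segment while being far apart in $X$), so one cannot directly bound its diameter by an exponentially small ordinary ball. The fix is to cover $X$ instead by ``two-sided'' dynamical balls, i.e.\ to work with $d_{2n,f}(f^{-n}x,\cdot)$ or equivalently to intersect forward and backward Bowen balls, and to run the Hausdorff estimate with the radius split as a product of a backward-contraction factor $e^{-n\lambda_1}$ (from applying the hyperbolicity of $f^{-1}$, i.e.\ the $\lambda_1$ in the definition) and a forward-contraction factor $e^{-n\lambda_2}$; this is where the weights $\tfrac{1}{\lambda_1}$ and $\tfrac{1}{\lambda_2}$ in the stated bound come from. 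Everything else is the routine cover-and-sum computation above, and the passage $h^*\downarrow h_{top}(f,X)$ together with $\varepsilon_h\downarrow 0$ at the end.
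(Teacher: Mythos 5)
Your part (1) is correct and is essentially the paper's argument: you cover $X$ by dynamical balls (via a Bowen-entropy cover rather than the paper's maximal $(n,\varepsilon/2)$-separated sets, an immaterial difference), note that for the one-sided notion the defining inequality at $i=0$ already gives $d(x,y)<\varepsilon_h e^{-(n-1)\lambda}$, and run the routine Hausdorff sum; this goes through.

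Part (2), however, has a genuine gap. You correctly identify the obstacle (a forward ball $B_n(x,\varepsilon_h,\lambda_1,\lambda_2,f)$ gives no smallness at $i=0$) and that two-sided dynamical balls are needed, but you never carry out the count/radius bookkeeping, and the mechanism you describe is wrong: the ordinary-ball radius obtained from a two-sided ball with backward length $m$ and forward length $n$ is governed by $\varepsilon_h e^{-\min\{m\lambda_1,(n-1)\lambda_2\}}$, i.e.\ by the \emph{worse} of the two contractions, not by a product of a backward factor $e^{-n\lambda_1}$ and a forward factor $e^{-n\lambda_2}$. In particular, with equal forward and backward lengths $m=n$ (as in your sketch) the radius is only $\varepsilon_h e^{-n\min\{\lambda_1,\lambda_2\}}$ while the number of two-sided balls needed is about $e^{2nh_{top}(f,X)}$, which yields $\operatorname{dim}_H X\le \frac{2}{\min\{\lambda_1,\lambda_2\}}h_{top}(f,X)$ --- strictly weaker than the claimed bound whenever $\lambda_1\neq\lambda_2$. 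The missing idea, which is the heart of the paper's proof, is the \emph{asymmetric} choice of the backward window: intersect each forward ball $B_n(w,\tfrac{\varepsilon}{2},f)$ with the sets $f^{\psi(n)}B_{\psi(n)}(\tilde w,\tfrac{\varepsilon}{2},f)$, where $\psi(n)=\lceil \lambda_2 n/\lambda_1\rceil$ is chosen so that the backward contraction $e^{-\psi(n)\lambda_1}$ matches the forward contraction $e^{-(n-1)\lambda_2}$; then any point $w''$ of a nonempty intersection satisfies that the intersection lies in $f^{\psi(n)}B_{n+\psi(n)}(f^{-\psi(n)}(w''),\varepsilon,f)\subset B(w'',\varepsilon_h e^{-(n-1)\lambda_2})$. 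The number of pieces is about $e^{n(h_{top}(f,X)+2\delta)}\cdot e^{\psi(n)(h_{top}(f,X)+2\delta)}$, and in the Hausdorff sum the extra multiplicity $e^{\psi(n)h_{top}(f,X)}\approx e^{(\lambda_2/\lambda_1)n\,h_{top}(f,X)}$ measured against the radius $e^{-n\lambda_2}$ is precisely what produces the $\frac{1}{\lambda_1}$ term, while the forward count against the same radius produces the $\frac{1}{\lambda_2}$ term. Without this balancing step your sketch does not yield $(\frac{1}{\lambda_1}+\frac{1}{\lambda_2})h_{top}(f,X)$.
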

\begin{proof}
	Fix $\delta>0.$ Then there is $\varepsilon>0$ such that
$
		\limsup\limits _{n \rightarrow \infty} \frac{\ln s(X, n, \frac{\varepsilon}{2})}{n}<h_{top}(f,X)+\delta,
$
	And there is $N^*\in\mathbb{N}$ such that for any $n\geq N^*,$
	\begin{equation}\label{lem-eq-upper-3}
		s(X, n, \frac{\varepsilon}{2})<e^{n(h_{top}(f,X)+2\delta)}.
	\end{equation}
For any $n\in\mathbb{N},$ let $\Gamma_{n,\frac{\varepsilon}{2}}$ be an $(n, \frac{\varepsilon}{2})$-separated set in $X$ with $\sharp \Gamma_{n,\frac{\varepsilon}{2}}=s(X, n, \frac{\varepsilon}{2}).$ Then we have $X=\bigcup_{w\in \Gamma_{n,\frac{\varepsilon}{2}}}B_{n}(w,\frac{\varepsilon}{2},f).$

(1) If $f:X\to X$ is  $\lambda$-hyperbolic, we also  require that $0<\varepsilon<\varepsilon_h$ for some $\varepsilon_h>0$ such that for any $x\in X$ and $n\in\mathbb{N},$ one has $B_{n}(x,\varepsilon,f)\subset B_n(x,\varepsilon_h,\lambda,f).$ Then
we have $B_{n}(w,\varepsilon,f)\subset B_{n}(w,\varepsilon_h,\lambda,f)\subset B(w,\varepsilon_h e^{-(n-1)\lambda})$.
Hence, for any $n\geq N^*$, we get an  cover of $X$ as:
$$X=\bigcup_{w\in \Gamma_{n,\frac{\varepsilon}{2}}} B(w,\varepsilon_h e^{-(n-1)\lambda}).$$
Denote $\delta_n:=\varepsilon_h e^{-(n-1)\lambda}.$ Then $\lim\limits_{n\to\infty}\delta_n=0.$
Let $s_h:=\frac{1}{\lambda}(h_{top}(f,X)+3\delta).$ Then for any $n\geq N^*,$ by (\ref{lem-eq-upper-3}) we have
\begin{equation*}
	\begin{split}
		\mathcal{H}_{\delta_n}^{s_h}(X)\leq\sum_{w\in \Gamma_{n,\frac{\varepsilon}{2}}} \varepsilon_h^{s_h} e^{-(n-1)\lambda s_h}
		\leq \varepsilon_h^{s_h} e^{h_{top}(f,X)+3\delta}\sum_{w\in \Gamma_{n,\frac{\varepsilon}{2}}}  e^{-n(h_{top}(f,X)+3\delta)}
		\leq \varepsilon_h      ^{s_h} e^{h_{top}(f,X)+3\delta} e^{-n\delta}.
	\end{split}
\end{equation*}
It implies $\mathcal{H}^{s_h}(X):=\lim\limits _{n \rightarrow \infty}\mathcal{H}_{\delta_n}^{s_h}(X)=0.$ So $s_h=\frac{1}{\lambda}(h_{top}(f,X)+3\delta)\geq \operatorname{dim}_H X.$
Let $\delta\to 0,$ we obtain $\operatorname{dim}_H X \leq \frac{1}{\lambda}h_{top}(f,X).$

(2) 
If  $f:X\to X$ is  a $(\lambda_1,\lambda_2)$-hyperbolic homeomorphism, we also  require that $0<\varepsilon<\varepsilon_h$ for some $\varepsilon_h>0$ such that for any $x\in X$ and $n\in\mathbb{N},$ one has $B_{n}(x,\varepsilon,f)\subset B_n(x,\varepsilon_h,\lambda_1,\lambda_2,f).$ Take $\tilde{N}^*\geq N^*$ such that $\frac{\lambda_2n}{\lambda_1}\geq N^*+1$ for any $n\geq \tilde{N}^*.$ Denote $\psi(n)=\lceil\frac{\lambda_2n}{\lambda_1}\rceil.$
Then we have  $\psi(n)\geq N^*$ and
\begin{equation}\label{lem-equation-EE}
	\frac{\lambda_2n}{\lambda_1}\leq \psi(n)\leq \frac{\lambda_2n}{\lambda_1}+1.
\end{equation}
Since  $X=\bigcup_{w\in \Gamma_{\psi(n),\frac{\varepsilon}{2}}} B_{\psi(n)}(w,\frac{\varepsilon}{2},f)$, then $X=\bigcup_{w\in \Gamma_{\psi(n),\frac{\varepsilon}{2}}}f^{\psi(n)}B_{\psi(n)}(w,\frac{\varepsilon}{2},f)$ and thus
$$X= \bigcup_{w\in \Gamma_{n,\frac{\varepsilon}{2}}} \bigcup_{\tilde{w}\in \Gamma_{\psi(n),\frac{\varepsilon}{2}}}B_{n}(w,\frac{\varepsilon}{2},f)\cap f^{\psi(n)}B_{\psi(n)}(\tilde{w},\frac{\varepsilon}{2},f).$$
When $B_{n}(w,\frac{\varepsilon}{2},f)\cap f^{\psi(n)}B_{\psi(n)}(\tilde{w},\frac{\varepsilon}{2},f)\neq\emptyset,$ choose $w''\in B_{n}(w,\frac{\varepsilon}{2},f)\cap f^{\psi(n)}B_{\psi(n)}(\tilde{w},\frac{\varepsilon}{2},f).$ Then  $$B_{n}(w,\frac{\varepsilon}{2},f)\cap f^{\psi(n)}B_{\psi(n)}(\tilde{w},\frac{\varepsilon}{2},f)\subset B_{n}(w'',\varepsilon,f)\cap B_{\psi(n)}(w'',\varepsilon,f^{-1})=f^{\psi(n)}B_{n+\psi(n)}(f^{-\psi(n)}(w''),\varepsilon,f).$$
Since $f:X\to X$ is  $(\lambda_1,\lambda_2)$-hyperbolic,
then
we have
\begin{equation*}
	\begin{split}
		&B_{n+\psi(n)}(f^{-\psi(n)}(w''),\varepsilon,f)\\
		\subset& B_{n+\psi(n)}(f^{-\psi(n)}(w''),\varepsilon_h,\lambda_1,\lambda_2,f)\\
		\subset &\{y\in X:d(f^{\psi(n)}(f^{-\psi(n)}(w'')),f^{\psi(n)}(y))< \varepsilon_h e^{-\min\{\psi(n)\lambda_1,(n+\psi(n)-1-\psi(n))\lambda_2\} }\}\\
		=&\{y\in X:d(w'',f^{\psi(n)}(y))< \varepsilon_h e^{-\min\{\psi(n)\lambda_1,(n-1)\lambda_2\} }\}\\
		=&\{y\in X:d(w'',f^{\psi(n)}(y))< \varepsilon_h e^{-(n-1)\lambda_2 }\}\qquad\qquad\qquad\qquad\qquad\qquad~~~(\text{using }  (\ref{equation-EE}))\\
		=&f^{-\psi(n)}B(w'',\varepsilon_h e^{-(n-1)\lambda_2}).
	\end{split}
\end{equation*}
Hence, $B_{n}(w,\frac{\varepsilon}{2},f)\cap f^{\psi(n)}B_{\psi(n)}(\tilde{w},\frac{\varepsilon}{2},f)\subset B(w'',\varepsilon_h e^{-(n-1)\lambda_2}).$
For any $N\geq \tilde{N}^*$, we get an  cover of $X$:
$$X=\bigcup_{w\in \Gamma_{n,\frac{\varepsilon}{2}}} \bigcup_{\tilde{w}\in \Gamma_{\psi(n),\frac{\varepsilon}{2}}}B(w'',\varepsilon_h e^{-(n-1)\lambda_2}).$$
Denote $\delta_n:=\varepsilon_h e^{-(n-1)\lambda_2}.$ Then  $\lim\limits_{n\to\infty}\delta_n=0.$
Let $s_h:=\frac{1}{\lambda_2}(h_{top}(f,X)+3\delta)+\frac{1}{\lambda_1}(h_{top}(f,X)+2\delta).$ Then for any $n\geq \tilde{N}^*,$  we have
\begin{equation*}
	\begin{split}
		\mathcal{H}_{\delta_n}^{s_h}(X)
		\leq &\sum_{w\in \Gamma_{n,\frac{\varepsilon}{2}}} \sum_{\tilde{w}\in \Gamma_{\psi(n),\frac{\varepsilon}{2}}}\varepsilon_h^{s_h} e^{-(n-1)\lambda_2 s_h}\\
		\leq &\varepsilon_h^{s_h} e^{\lambda_2s_h}\sum_{w\in \Gamma_{n,\frac{\varepsilon}{2}}}  e^{-n\lambda_2 s_h}e^{\psi(n)(h_{top}(f,X)+2\delta)}\\
		=& \varepsilon_h^{s_h} e^{\lambda_2s_h}\sum_{w\in \Gamma_{n,\frac{\varepsilon}{2}}}  e^{-n(h_{top}(f,X)+3\delta)}e^{(-\frac{\lambda_2n}{\lambda_1}+\psi(n))(h_{top}(f,X)+2\delta)}\\
		\leq & \varepsilon_h^{s_h} e^{\lambda_2s_h}e^{h_{top}(f,X)+2\delta}\sum_{w\in \Gamma_{n,\frac{\varepsilon}{2}}}  e^{-n(h_{top}(f,X)+3\delta)}~~~~~\qquad\quad\qquad\qquad\qquad\qquad\qquad~~~~~~~~~~~~~~~(\text{using }  (\ref{lem-equation-EE}))\\
		\leq & \varepsilon_h^{s_h} e^{\lambda_2s_h}e^{h_{top}(f,X)+2\delta}e^{-n\delta}.~~~~~~~\quad\qquad\qquad\qquad\qquad\qquad\qquad\qquad\qquad\qquad\qquad~~~~~~~~~~~~(\text{using }  (\ref{lem-eq-upper-3}))
	\end{split}
\end{equation*}
It implies $\mathcal{H}^{s_h}(X):=\lim\limits _{N \rightarrow \infty}\mathcal{H}_{\delta_n}^{s_h}(X)=0.$ So $s_h=\frac{1}{\lambda_2}(h_{top}(f,X)+3\delta)+\frac{1}{\lambda_1}(h_{top}(f,X)+2\delta)\geq \operatorname{dim}_H X.$
Let $\delta\to 0,$ we obtain  $\operatorname{dim}_H X \leq (\frac{1}{\lambda_1}+\frac{1}{\lambda_2})h_{top}(f,X).$
\end{proof}

\subsection{Exponential specification property}\label{sec-specfication}
First, we introduce exponential Bowen ball. Let $x\in X,$ $n\in\mathbb{N^{+}}$ and $\varepsilon,\lambda_1,\lambda_2>0,$ we denote the exponential $(n,\varepsilon)$-Bowen ball centered in $x$ with respect to the exponent $(\lambda_1,\lambda_2)$  by
$$B_{n}(x,\varepsilon,\lambda_1,\lambda_2,f):=\{y\in X:d(f^{i}(x),f^i(y))< \varepsilon e^{-\min\{i\lambda_1,(n-1-i)\lambda_2\} }\text{ for any }0\leq i\leq n-1\},$$
$$\overline{B}_{n}(x,\varepsilon,\lambda_1,\lambda_2,f):=\{y\in X:d(f^{i}(x),f^i(y))\leq  \varepsilon e^{-\min\{i\lambda_1,(n-1-i)\lambda_2\} }\text{ for any }0\leq i\leq n-1\}.$$

Now we define the exponential (non-uniform) specification property and recall the classic (non-uniform) specification property.
\begin{Def}\label{def-exp-nonuni-speci}
	We say that a dynamical system $(X,f)$  satisfies the \emph{exponential non-uniform specification property with respect to the exponent $(\lambda_1,\lambda_2)$  at scale $\varepsilon$}, if  for every $n \geq 1$ there exists a positive integer $p(n, \varepsilon)$ so that
	$$
	\limsup _{n \rightarrow \infty} \frac{1}{n} p(n, \varepsilon)=0
	$$
	and the following holds: given points $x_1, \cdots, x_k$ in $X$ and positive integers $n_1, \cdots, n_k$, if $p_i \geq p\left( n_i, \varepsilon\right)$ then there exists $z \in X$ such that
	\begin{equation*}
		z \in B_{n_1}\left(x_1,  \varepsilon,\lambda_1,\lambda_2,f\right) \text{ and } f^{n_1+p_1+\cdots+n_{i-1}+p_{i-1}}(z) \in B_{n_i}\left(x_i, \varepsilon,\lambda_1,\lambda_2,f\right) \text{ for every  }2 \leq i \leq k.
	\end{equation*}
	When $B_{n_i}\left(x_i, \varepsilon,\lambda_1,\lambda_2,f\right) $ is replaced by $B_{n_i}\left(x_i, \varepsilon,f\right),$ we say $(X,f)$  satisfies the \emph{non-uniform specification property at scale $\varepsilon$.}
\end{Def} 
\begin{Def}\label{def-nonuni-speci}
	We say that a dynamical system $(X,f)$  satisfies the \emph{exponential specification property with respect to the exponent $(\lambda_1,\lambda_2)$  at scale $\varepsilon$}, if  for there exists a positive integer $p_\varepsilon $ so that the following holds: given points $x_1, \cdots, x_k$ in $X$ and positive integers $n_1, \cdots, n_k$, if $p_i \geq p_\varepsilon$ then there exists $z \in X$ such that
	\begin{equation*}
		z \in B_{n_1}\left(x_1,  \varepsilon,\lambda_1,\lambda_2,f\right) \text{ and } f^{n_1+p_1+\cdots+n_{i-1}+p_{i-1}}(z) \in B_{n_i}\left(x_i, \varepsilon,\lambda_1,\lambda_2,f\right) \text{ for every  }2 \leq i \leq k.
	\end{equation*}
	When $B_{n_i}\left(x_i, \varepsilon,\lambda_1,\lambda_2,f\right) $ is replaced by $B_{n_i}\left(x_i, \varepsilon,f\right),$ we say $(X,f)$  satisfies the \emph{specification property at scale $\varepsilon$.}
\end{Def}
\begin{Rem}
	When $f:X\to X$ is a homeomorphism and $(X,f)$ satisfies the exponential non-uniform specification property with respect to the exponent $(\lambda_1,\lambda_2)$  at scale $\varepsilon$, the following holds: given points $x_{-k_2},\dots,x_{-1},$$x_1, \cdots, x_{k_1}$ in $X$ and positive integers $n_{-k_2},\dots,n_{-1},n_1, \cdots, n_{n_k}$, if $p_i \geq p\left( n_i, \varepsilon\right)$ then there exists $z \in X$ such that
	\begin{equation*}
		z \in B_{n_1}\left(x_1,  \varepsilon,\lambda_1,\lambda_2,f\right) \text{ and } f^{n_1+p_1+\cdots+n_{i-1}+p_{i-1}}(z) \in B_{n_i}\left(x_i, \varepsilon,\lambda_1,\lambda_2,f\right) \text{ for every  }2 \leq i \leq k_1,
	\end{equation*}
	\begin{equation*}
		f^{-n_{-1}-p_{-1}-\cdots-n_{-i}-p_{-i}}(z) \in B_{n_{-i}}\left(x_{-i}, \varepsilon,\lambda_1,\lambda_2,f\right) \text{ for every  }1 \leq i \leq k_2.
	\end{equation*}
\end{Rem}
By Definition \ref{def-exp-nonuni-speci}, \ref{def-nonuni-speci} and the definition of $(\lambda_1,\lambda_2)$-hyperbolic(see Subsection \ref{subsection_upperbounds}), we have the following proposition.
\begin{Prop}\label{Prop-exp+spec}
	Suppose a dynamical system $(X,f)$  satisfies the (non-uniform) specification property at  any scale $\varepsilon.$ If  $(X,f)$ is  $(\lambda_1,\lambda_2)$-hyperbolic, then $(X,f)$  satisfies the exponential (non-uniform) specification property with respect to the exponent $(\lambda_1,\lambda_2)$ at any scale.
\end{Prop}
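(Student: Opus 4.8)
The plan is to unwind the two definitions and observe that the only bridge needed is precisely the statement of $(\lambda_1,\lambda_2)$-hyperbolicity: an ordinary Bowen ball of sufficiently small radius is contained in an exponential Bowen ball of a prescribed larger radius. Concretely, I would fix an arbitrary target scale $\varepsilon_h>0$ at which the exponential (non-uniform) specification property is to be established. By the definition of $(\lambda_1,\lambda_2)$-hyperbolic (property (C2)), there is some $0<\varepsilon\le\varepsilon_h$ such that for every $x\in X$ and every $n\in\mathbb{N}$ one has $B_{n}(x,\varepsilon,f)\subset B_{n}(x,\varepsilon_h,\lambda_1,\lambda_2,f)$.

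Next I would invoke the hypothesis that $(X,f)$ has the (non-uniform) specification property at this scale $\varepsilon$. In the non-uniform case this furnishes, for each $n\ge 1$, a gap $p(n,\varepsilon)$ with $\limsup_{n\to\infty}\frac{1}{n}p(n,\varepsilon)=0$; in the uniform case a single constant $p_\varepsilon$. Given points $x_1,\dots,x_k$ in $X$, times $n_1,\dots,n_k$, and gaps $p_i\ge p(n_i,\varepsilon)$ (resp.\ $p_i\ge p_\varepsilon$), specification at scale $\varepsilon$ yields a point $z\in X$ with $z\in B_{n_1}(x_1,\varepsilon,f)$ and $f^{n_1+p_1+\cdots+n_{i-1}+p_{i-1}}(z)\in B_{n_i}(x_i,\varepsilon,f)$ for every $2\le i\le k$.

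Then I would apply the inclusion from the first step term by term: since each orbit segment is trapped in an ordinary $(n_i,\varepsilon)$-Bowen ball around $x_i$, it is a fortiori trapped in the exponential Bowen ball $B_{n_i}(x_i,\varepsilon_h,\lambda_1,\lambda_2,f)$. Hence the very same $z$ witnesses the exponential (non-uniform) specification property at scale $\varepsilon_h$, with the gap function taken to be the one already provided by the ordinary property, i.e.\ $p(\cdot,\varepsilon_h):=p(\cdot,\varepsilon)$ (resp.\ $p_{\varepsilon_h}:=p_\varepsilon$), which still obeys $\limsup_{n\to\infty}\frac{1}{n}p(n,\varepsilon_h)=0$ since it is literally the function from the ordinary property. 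As $\varepsilon_h>0$ was arbitrary, the exponential (non-uniform) specification property with exponent $(\lambda_1,\lambda_2)$ holds at every scale.

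I do not expect a serious obstacle here: the statement is essentially a bookkeeping exercise once the definitions are aligned. The only points requiring care are the quantifier matching --- the scale $\varepsilon$ at which ordinary specification is applied is smaller than the scale $\varepsilon_h$ at which exponential specification is obtained --- and checking that the orbit-tracing indices $n_1+p_1+\cdots+n_{i-1}+p_{i-1}$ coincide verbatim in the two definitions, which they do. In the non-uniform case one should also note explicitly that the admissibility condition on the gap function transfers unchanged because the gap function is reused without modification.
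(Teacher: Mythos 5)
Your proposal is correct and is exactly the argument the paper intends: the paper states this proposition as an immediate consequence of the definitions together with the inclusion $B_{n}(x,\varepsilon,f)\subset B_{n}(x,\varepsilon_h,\lambda_1,\lambda_2,f)$ furnished by $(\lambda_1,\lambda_2)$-hyperbolicity, offering no further proof. Your careful handling of the quantifiers (applying ordinary specification at the smaller scale $\varepsilon$ and reusing its gap function at the scale $\varepsilon_h$) is precisely the bookkeeping the paper leaves implicit.
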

\begin{Prop}\label{Prop-N-hype}
	If a dynamical system $(X,f)$ is $(\lambda_1,\lambda_2)$-hyperbolic, then $(X,f^N)$ is $(N\lambda_1,N\lambda_2)$-hyperbolic.
\end{Prop}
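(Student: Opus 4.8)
The plan is to deduce the $(N\lambda_1,N\lambda_2)$-hyperbolicity of $(X,f^N)$ directly from the $(\lambda_1,\lambda_2)$-hyperbolicity of $(X,f)$, applied to $f$-orbit segments whose length is a multiple of $N$. The one thing that needs to be supplied is that closeness of the $f^N$-orbit at the times $0,N,2N,\dots$ does not by itself control the $f$-orbit at the intermediate times; this gap is filled using the uniform continuity of the finitely many maps $f,f^2,\dots,f^{N-1}$ on the compact space $X$.

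Concretely, I would fix $\varepsilon_h>0$. First, by the $(\lambda_1,\lambda_2)$-hyperbolicity of $f$, choose $0<\varepsilon'\le\varepsilon_h$ such that $B_n(x,\varepsilon',f)\subset B_n(x,\varepsilon_h,\lambda_1,\lambda_2,f)$ for all $x\in X$ and $n\in\mathbb{N}$. Next, using uniform continuity of $X$, choose $0<\varepsilon\le\varepsilon'$ small enough that $d(a,b)<\varepsilon$ implies $d(f^k(a),f^k(b))<\varepsilon'$ for every $0\le k\le N-1$. I claim this $\varepsilon$ witnesses the $(N\lambda_1,N\lambda_2)$-hyperbolicity of $f^N$.

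For the verification, take $m\in\mathbb{N}$ and $y\in B_m(x,\varepsilon,f^N)$, i.e.\ $d(f^{Nj}(x),f^{Nj}(y))<\varepsilon$ for $0\le j\le m-1$. Applying the choice of $\varepsilon$ to each pair $(f^{Nj}(x),f^{Nj}(y))$ and each map $f^k$ with $0\le k\le N-1$ gives $d(f^i(x),f^i(y))<\varepsilon'$ for all $0\le i\le Nm-1$, that is, $y\in B_{Nm}(x,\varepsilon',f)$. Hence $y\in B_{Nm}(x,\varepsilon_h,\lambda_1,\lambda_2,f)$, so $d(f^i(x),f^i(y))<\varepsilon_h e^{-\min\{i\lambda_1,(Nm-1-i)\lambda_2\}}$ for all $0\le i\le Nm-1$. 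Specializing to $i=Nj$ and using $Nm-1-Nj\ge Nm-N-Nj=N(m-1-j)$ (valid since $N\ge 1$), one has $\min\{Nj\lambda_1,(Nm-1-Nj)\lambda_2\}\ge\min\{j(N\lambda_1),(m-1-j)(N\lambda_2)\}$, whence $d(f^{Nj}(x),f^{Nj}(y))<\varepsilon_h e^{-\min\{j(N\lambda_1),(m-1-j)(N\lambda_2)\}}$ for every $0\le j\le m-1$; this is exactly $y\in B_m(x,\varepsilon_h,N\lambda_1,N\lambda_2,f^N)$.

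There is no real obstacle in this argument: the only subtle point is the mismatch between the $f^N$-Bowen ball (which only constrains the orbit at multiples of $N$) and the $f$-Bowen ball needed to invoke hypothesis (C2), and this is precisely what the uniform-continuity refinement of $\varepsilon$ repairs. After that, the conclusion is a matter of the elementary inequality $Nm-1-Nj\ge N(m-1-j)$ together with monotonicity of $t\mapsto e^{-t}$.
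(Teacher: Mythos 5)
Your argument is correct. Note that the paper states Proposition \ref{Prop-N-hype} without any proof, treating it as immediate, so there is no written argument to compare against; what you supply is exactly the verification one would expect: the uniform continuity of $f,\dots,f^{N-1}$ on the compact space $X$ converts an $f^N$-Bowen ball $B_m(x,\varepsilon,f^N)$ into an $f$-Bowen ball $B_{Nm}(x,\varepsilon',f)$, after which hypothesis (C2) for $f$ and the inequality $Nm-1-Nj\geq N(m-1-j)$ yield precisely the exponential bound defining $B_m(x,\varepsilon_h,N\lambda_1,N\lambda_2,f^N)$. The only point worth stressing, which you already flag, is that the choice of $\varepsilon$ must be uniform in $x$ and $m$ (it is, since it depends only on $\varepsilon'$ and the finitely many maps $f^k$), so the proof is complete as written.
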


As readers will realize from Definition \ref{def-exp-nonuni-speci}, \ref{def-nonuni-speci} and Proposition \ref{Prop-exp+spec}, different specification properties have the relationships shown in Figure \ref{Fig-1}.
\begin{figure}\caption{The relationships between different specification properties}\label{Fig-1}
	\centering
	\includegraphics[width=16cm]{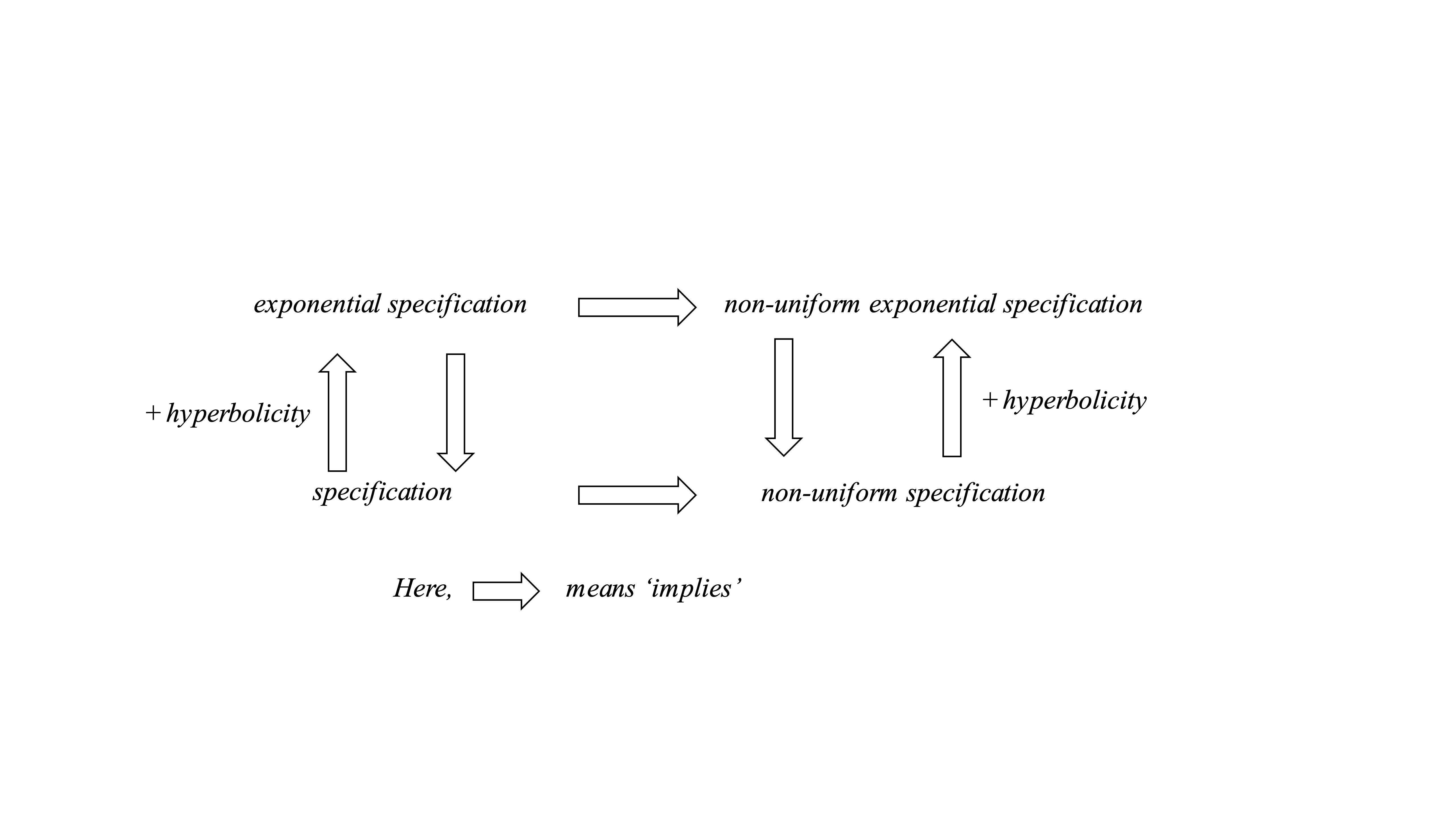}
\end{figure}

\section{Lower bounds }\label{sec-lower-bounds}
\subsection{Proof of Theorem \ref{theorem-shrinking-1} }
We are intended to prove  Theorem \ref{theorem-shrinking-1}  by the following strategy. First, we choose appropriate parameters and  orbit segments (Steps 1 and 2).  Then we construct a Cantor subset $G$ of  $\cap_{j=1}^{\infty}\mathfrak{S}(f,\varphi_j,\mathcal{Z}_j,S_j)$ and a probability measure $\mu$ supported on  $G$ by gluing these orbit segments using exponential non-uniform specification property (Steps 3 and 4).
Finally we estimate  the $\mu$ measure of dynamical ball $B_n(z, \varepsilon,f)$ and  ball $B(z,\varepsilon)$, and obtain topological entropy and Hausdorff dimension of $\cap_{j=1}^{\infty}\mathfrak{S}(f,\varphi_j,\mathcal{Z}_j,S_j)$ by the variational principles for topological entropies of subsets and  the mass distribution principle, respectively (Steps 5, 6, 7).
Now, we proceed with the proof as follows.
\newline

\noindent {\bf Step 1: preliminary choices}

$-$ When $h_{top}(f,X)=0,$ it's clear that the results of  Theorem \ref{theorem-shrinking-1} hold. Now, we assume that $h_{top}(f,X)>0.$

$-$ Fix $0<\eta<\min\{\frac{\lambda_1-\overline{\tau}}{2\lambda_1+1}, h_{top}(f,X),1\}.$

$-$ Choose $\tilde{n}\in\mathbb{N}$ such that $p(n,\varepsilon_0)<n\eta$ for any $n\geq \tilde{n}.$

$-$ By Lemma \ref{lemma-A}  there exist $\delta^*>0$ and $\varepsilon^*>0$ so that there exists $n^* \in \mathbb{N}$ such that for any $n \geq n^*$, there exists $\Gamma_n \subseteq \Lambda$ which is $\left(\delta^*, n, 3\varepsilon^*\right)$-separated  and satisfies $\ln \sharp \Gamma_n \geq n (h_{top}(f,X)-\eta)$.

$-$ Choose $\mathfrak{n}>\max\{\tilde{n},n^*\}$ such that $\mathfrak{n}\delta^*>\max\{\lceil \frac{\ln \varepsilon_0-\ln \varepsilon^*}{\lambda_1}\rceil+\lceil \frac{\ln \varepsilon_0-\ln \varepsilon^*}{\lambda_2}\rceil,0\}+1.$

$-$ Choose $\overline{n}_j\geq \tilde{n}$ such that $\varphi_j(n)\geq e^{-n(\overline{\tau}_j+\eta)}$ and $e^{\lambda_1\eta n}>\varepsilon_0 $  for any $n\geq \overline{n}_j.$
\newline

\noindent {\bf Step 2: choices of orbit segments}

Now we  choose appropriate  orbit segments. Before that, we give a useful lemma.
\begin{Lem}\label{Lemma-B}
	Let $\eta>0,$ $0<\alpha<\beta<1$ be three real numbers, $M,P$ be two positive integers and $S=\{s_i\}_{i=0}^{\infty}$ be a sequence of positive integers with $\lim\limits_{i \rightarrow \infty}s_i=\infty$. Then for any $n\in\mathbb{N}$   there exist  $s\in S$ and $m,N,t\in \mathbb{N}^{+}$ such that
	\begin{enumerate}
		\item[(1)] $s>n,m>n,$
		\item[(2)] $\alpha<\frac{m}{s}<\beta,$
		\item[(3)] $0<\frac{M}{s-M-m}<\eta.$
		\item[(4)] $s=M+m+NP+t$ with $0\leq t\leq P-1,$
	\end{enumerate}
\end{Lem}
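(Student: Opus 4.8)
\textbf{Proof strategy for Lemma \ref{Lemma-B}.}
The plan is to first fix one suitably large element $s\in S$, then pick the integer $m$ inside the window $(\alpha s,\beta s)$, and finally obtain $N$ and $t$ from the Euclidean division of $s-M-m$ by $P$. Everything reduces to choosing $s$ large enough that the window $(\alpha s,\beta s)$ is long (so that $m$ can be located there with a prescribed residue modulo $P$ and with $m>n$) and that the remainder $s-M-m$ is comfortably larger than $P$ (so that $N\geq 1$ and so that the ratio in (3) is small).

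In detail, I would first record the single threshold that $s$ must exceed. Since the window $(\alpha s,\beta s)$ has length $(\beta-\alpha)s\to\infty$, since $(1-\beta)s-M\to\infty$, and since $M/((1-\beta)s-M)\to 0$ as $s\to\infty$, there is an integer $s_0$ so that every integer $s\geq s_0$ satisfies simultaneously: (i) $s>n$ and $\alpha s>n$; (ii) $(\alpha s,\beta s)$ contains at least $2P$ consecutive integers; (iii) $(1-\beta)s-M>P$; and (iv) $M/((1-\beta)s-M)<\eta$. Using $\lim_{i\to\infty}s_i=\infty$, choose $s\in S$ with $s\geq s_0$. Next, by (ii) the integers lying in $(\alpha s,\beta s)$ contain a complete residue system modulo $P$, so (taking $P\geq 2$; if $P=1$ one simply takes $t=0$ below) one may choose an integer $m\in(\alpha s,\beta s)$ with $s-M-m\not\equiv 0\pmod P$. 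Set $t:=(s-M-m)\bmod P\in\{1,\dots,P-1\}$ and $N:=(s-M-m-t)/P$. Then $s=M+m+NP+t$ with $0\leq t\leq P-1$, which is (4). Since $t\leq P-1<P+1\leq s-M-m$ (the last inequality from (iii), as the quantities are integers and $s-M-m>(1-\beta)s-M>P$), the number $s-M-m-t$ is a positive multiple of $P$, so $N\geq 1$; thus $N,t\in\mathbb{N}^{+}$. Moreover $m>\alpha s>n$ and $s>n$ give (1), the membership $m\in(\alpha s,\beta s)$ gives $\alpha<m/s<\beta$, i.e.\ (2), and finally $s-M-m>(1-\beta)s-M>0$ yields $M/(s-M-m)>0$, while $M/(s-M-m)<M/((1-\beta)s-M)<\eta$ by (iv); this is (3).

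The argument is entirely elementary, and I do not expect a genuine obstacle: the two crude bounds $m\leq\beta s$ and $s-M-m\geq(1-\beta)s-M$ do all the work, and the freedom to shift $m$ within the long window $(\alpha s,\beta s)$ is exactly what allows one to pin down the residue and hence force $t\geq 1$. The only point requiring a little care is to check that a \emph{single} threshold $s_0$ can be chosen so that (i)--(iv) hold at once, after which $s_i\to\infty$ places a valid $s$ in $S$ above it.
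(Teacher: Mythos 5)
Your proof is correct and follows essentially the same route as the paper's (very terse) argument: the paper simply observes that $\alpha<m/s<\beta$ means $\alpha s<m<\beta s$ and that $s_j(\beta-\alpha)\to\infty$, leaving the choice of a large $s\in S$, of $m$ in the window, and the Euclidean division producing $N,t$ implicit. Your write-up just makes all thresholds explicit and additionally arranges $t\geq 1$ by pinning the residue of $s-M-m$ modulo $P$ (addressing the statement's $t\in\mathbb{N}^{+}$ requirement, which the paper's one-line proof does not discuss), so there is nothing to correct.
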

\begin{proof}
	$\alpha<\frac{m}{s}<\beta$
	if and only if
	$
	\alpha s<m<\beta s.
	$
	This lemma is from the facts:
	$
	\lim\limits_{j \rightarrow \infty} s_j(\beta-\alpha)=\infty.
	$
\end{proof}

Recall that $0<\eta<\frac{\lambda_1-\overline{\tau}}{2\lambda_1+1}.$ Then we have $\frac{\overline{\tau}+\eta}{\lambda_1}+\eta<\frac{\overline{\tau}+\eta}{\lambda_1}+2\eta<1.$ Apply Lemma \ref{Lemma-B} to $\eta, \alpha=\frac{\overline{\tau}+\eta}{\lambda_1}+\eta, \beta=\frac{\overline{\tau}+\eta}{\lambda_1}+2\eta,$ $M=0,$ $P=\mathfrak{n}+p(\mathfrak{n},\varepsilon_0)$ and $S_1,$  there exist $s_1\in S_1$ and $m_1,N_1,t_{1}\in\mathbb{N^{+}}$ such that
\begin{equation*}
	\begin{split}
		&s_1>\overline{n}_1, m_1>\tilde{n}.\\
		&\frac{\overline{\tau}+\eta}{\lambda_1}+\eta<\frac{m_1}{s_1}<\frac{\overline{\tau}+\eta}{\lambda_1}+2\eta,\\
		&s_1=N_1(\mathfrak{n}+p(\mathfrak{n},\varepsilon_0))+m_1+t_1 \text{ with } 0\leq t_1\leq \mathfrak{n}+p(\mathfrak{n},\varepsilon_0)-1\leq \mathfrak{n}+\eta \mathfrak{n}-1\leq 2\mathfrak{n}-1.
	\end{split}
\end{equation*}
Let $m'_1:=\lceil \frac{\lambda_1m_1}{\lambda_2} \rceil+1.$ Then
$
	\frac{\lambda_1m_1}{\lambda_2}+1\leq m'_1\leq \frac{\lambda_1m_1}{\lambda_2}+2.
$
Denote $M_1:=N_1(\mathfrak{n}+p(\mathfrak{n},\varepsilon_0))+t_1+m_1+m'_1+p(m_1+m_1',\varepsilon_0),$ and take $z_1\in X$ such that $f^{m_1}(z_1)=z_1^{s_1}.$ Then we have
$
	M_1=s_1+m'_1+p(m_1+m_1',\varepsilon_0).
$

Next, for any $k\geq 2$, there is  $u\geq 1$ and $1\leq v\leq u+1$ such that $k=\frac{1}{2}u(u+1)+v.$ Apply Lemma \ref{Lemma-B} to $\eta, \alpha=\frac{\overline{\tau}+\eta}{\lambda_1}+\eta, \beta=\frac{\overline{\tau}+\eta}{\lambda_1}+2\eta,$ $M=M_{k-1},$ $P=\mathfrak{n}+p(\mathfrak{n},\varepsilon_0)$ and $S_v,$  there exist $s_k\in S_v$ and $m_k,N_k,t_{k}\in\mathbb{N^{+}}$ such that
\begin{subequations}
	\begin{align}
		&s_k>\max\{s_{k-1},\overline{n}_v\}, m_k>\tilde{n}, \label{equation-A1}\\
		&\frac{\overline{\tau}+\eta}{\lambda_1}+\eta<\frac{m_k}{s_k}<\frac{\overline{\tau}+\eta}{\lambda_1}+2\eta, \label{equation-A2}\\
		&0<\frac{M_{k-1}}{s_k-M_{k-1}-m_k}<\eta, \label{equation-A4}\\
		&s_k=M_{k-1}+N_k(\mathfrak{n}+p(\mathfrak{n},\varepsilon_0))+m_k+t_k \text{ with } 0\leq t_k\leq \mathfrak{n}+p(\mathfrak{n},\varepsilon_0)-1\leq 2\mathfrak{n}-1. \label{equation-A3}
	\end{align}
\end{subequations}
Let $m'_k:=\lceil \frac{\lambda_1m_k}{\lambda_2} \rceil+1.$ Then
\begin{equation}\label{equation-AO}
	\frac{\lambda_1m_k}{\lambda_2}+1\leq m'_k\leq \frac{\lambda_1m_k}{\lambda_2}+2.
\end{equation}
Denote $M_k:=M_{k-1}+N_k(\mathfrak{n}+p(\mathfrak{n},\varepsilon_0))+t_k+m_k+m'_k+p(m_k+m'_k,\varepsilon_0)$ and take $z_k\in X$ such that
\begin{equation}\label{equation-CO}
	f^{m_k}(z_k)=z_v^{s_k}.
\end{equation}
Then by (\ref{equation-A3}) we have
$
	M_k=s_k+m'_k+p(m_k+m_k',\varepsilon_0),
$
and by (\ref{equation-A4}) we have
\begin{equation}\label{equation-AP}
	M_{k-1}<\eta s_k<\eta M_k.
\end{equation}
By (\ref{equation-A2}), we have $M_k\geq s_k+m'_k\geq s_k+\frac{\lambda_1m_k}{\lambda_2}>s_k+\frac{\lambda_1}{\lambda_2}(\frac{\overline{\tau}+\eta}{\lambda_1}+\eta)s_k.$ It follows that
\begin{equation}\label{equation-G}
	s_k\leq \frac{\lambda_2}{\lambda_2+\overline{\tau}+\eta+\eta\lambda_1}M_k.
\end{equation}
Then we have
\begin{equation}\label{equation-AQ}
	\begin{split}
		&m_{k}+m'_{k}+p(m_{k}+m'_{k},\varepsilon_0)\\
		\leq &(1+\eta)(m_{k}+m'_{k})\qquad\qquad\qquad\qquad\qquad\qquad\qquad\qquad (\text{using } (\ref{equation-A1}) \text{ and the definition } \tilde{n})\\
		\leq &(1+\eta)(m_{k}+\frac{m_{k}\lambda_1}{\lambda_2}+2)\qquad\qquad\qquad\qquad\qquad\qquad\qquad\qquad\qquad(\text{using }  (\ref{equation-AO}))\\
		\leq & (1+\eta)(1+\frac{\lambda_1}{\lambda_2})(\frac{\overline{\tau}+\eta}{\lambda_1}+2\eta)s_{k}+4\qquad\qquad\qquad\qquad\qquad\qquad\qquad(\text{using }  (\ref{equation-A2}))\\
		\leq &(1+\eta)(1+\frac{\lambda_1}{\lambda_2})(\frac{\overline{\tau}+\eta}{\lambda_1}+2\eta)\frac{\lambda_2}{\lambda_2+\overline{\tau}+\eta+\eta\lambda_1}M_{k}+4\qquad\qquad\qquad(\text{using }  (\ref{equation-G})).
	\end{split}
\end{equation}

Now, we define a sequences $\{n_{j}'\}_{j=1}^{+\infty}$ as
$$
n_{j}'=\left\{\begin{array}{ll}
	\mathfrak{n}+p(\mathfrak{n},\varepsilon_0), & \text { for } j=N_{1}+N_{2}+\dots+N_{k-1}+k-1+q\ \mathrm{with}\  1\leq q \leq N_{k}-1,\\
	\mathfrak{n}+p(\mathfrak{n},\varepsilon_0)+t_k, & \text { for } j=N_{1}+N_{2}+\dots+N_{k-1}+N_k+k-1,\\
	m_k+m'_k+p(m_k+m'_k,\varepsilon_0), & \text { for } j=N_{1}+N_{2}+\dots+N_{k}+k,
\end{array}\right.
$$
where $N_0=0.$
Let $M'_0:=0$ and $M'_{j}:=\sum_{i=1}^{j}n_i'$ for any $j\geq 1.$ Then we have
\begin{equation}\label{equation-AM}
	M_k=M'_{N_1+\dots+N_k+k}=M'_{N_1+\dots+N_{k-1}+k-1}+N_k(\mathfrak{n}+p(\mathfrak{n},\varepsilon_0))+t_k+m_k+m'_k+p(m_k+m'_k,\varepsilon_0).
\end{equation}
Denote $\Gamma_{k}:=\Gamma_{\mathfrak{n}}^{N_k}=\Gamma_{\mathfrak{n}}\times \cdots \times\Gamma_{\mathfrak{n}}=\{(y_1^k,\dots,y_{N_k}^k):y_j^k\in\Gamma_{\mathfrak{n}}\text{ for } 1\leq j\leq N_k\}.$ Then $|\Gamma_{k}|=|\Gamma_{\mathfrak{n}}|^{N_k}.$

The  points $y_j^k$ with length $\mathfrak{n}$, denoted by $(y_j^k,\mathfrak{n}),$ and points $z_k$ with length $m_k+m'_k,$ denoted by $(z_k, m_k+m'_k),$  are the desired orbit segments as shown in Figure \ref{Fig-2}. We will use them to construct a Cantor subset $G$ of  $\cap_{j=1}^{\infty}\mathfrak{S}(f,\varphi_j,\mathcal{Z}_j,S_j)$.
\newline

\noindent {\bf Step 3: construction of a Cantor set $G$}

Now, we construct  a Cantor subset $G$ of  $\cap_{j=1}^{\infty}\mathfrak{S}(f,\varphi_j,\mathcal{Z}_j,S_j)$.
Let $k\geq 1.$ For any $(\mathbf{y}_1,\dots,\mathbf{y}_{k})\in  \Gamma_1\times\dots\times\Gamma_k$ with $\mathbf{y}_i=(y_1^i,\dots,y_{N_i}^i)\in \Gamma_i,$
denote $$G(\mathbf{y}_i):=\left(\cap_{j=1}^{N_{i}}f^{-M'_{N_0+\dots+N_{i-1}+i+j-2}}\overline{B}_{\mathfrak{n}}\left(y_j^i,  \varepsilon_0,\lambda_1,\lambda_2,f\right)\right)\bigcap f^{-M'_{N_1+\dots+N_i+i-1}}\overline{B}_{m_i+m'_i}\left(z_i,  \varepsilon_0,\lambda_1,\lambda_2,f\right).$$
By exponential non-uniform specification property  of $(X,f),$
\begin{equation*}
	\begin{split}
		G(\mathbf{y}_1,\dots,\mathbf{y}_k)
		:=\bigcap_{i=1}^{k} G(\mathbf{y}_i)
	\end{split}
\end{equation*}
is a non-empty closed set.
In Figure \ref{Fig-2} we give an  illustration of canonical points in $G(\mathbf{y}_1,\dots,\mathbf{y}_k).$ The points in $G(\mathbf{y}_1,\dots,\mathbf{y}_k)$ trace exponentially the orbit segments $(y_1^1,\mathfrak{n}),\dots,(y_{N_1}^1,\mathfrak{n}),$ $(z_1, m_1+m'_1),\dots $ $(y_1^k,\mathfrak{n}),\dots,(y_{N_k}^k,\mathfrak{n}),$ $(z_k, m_k+m'_k)$  one by one, and spend time  $p(\mathfrak{n},\varepsilon_0)$ or $p(m_k+m'_k,\varepsilon_0)$ between consecutive segments.
	\begin{figure}\caption{Illustration of canonical points in $G(\mathbf{y}_1,\dots,\mathbf{y}_k)$}\label{Fig-2}
		\centering
		\includegraphics[width=16cm]{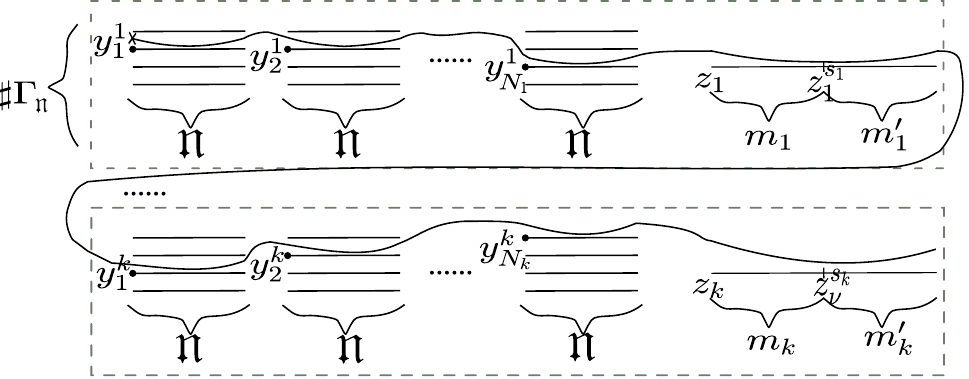}
	\end{figure}

Let $$G_k:=\bigcup_{(\mathbf{y}_1,\dots,\mathbf{y}_k)\in \Gamma_1\times\dots\times\Gamma_k}G(\mathbf{y}_1,\dots,\mathbf{y}_k),$$ then we have $G_{k+1}\subset G_{k}$.
Let
\begin{equation*}
	\qquad\qquad\qquad\qquad\qquad\qquad G:=\bigcap_{k\geq 1}G_k\qquad\qquad\qquad\qquad\qquad\qquad(\star)
\end{equation*}
Then $G$ is a non-empty closed set.
\newline

\noindent {\bf Step 4: verification of $G\subset \cap_{j=1}^{\infty}\mathfrak{S}(f,\varphi_j,\mathcal{Z}_j,S_j)$}

As shown in Figure \ref{Fig-2}, every point $z\in G$ trace exponentially every orbit segments $(z_k, m_k+m'_k).$  Recall  from (\ref{equation-CO}) that $f^{m_k}(z_k)=z_v^{s_k}$, exponentia tracing implies the orbit of $z$ is in fact very close to $z_v^{s_k}$ and thus visit the target ball $B(z_v^{s_k},\varphi_v(s_k)).$

Let  $z\in G$ and $j\geq 1.$ Then for any $u\geq j,$ we have $$z\in G\subset G_{k_{u,j}}\subset f^{-M'_{N_1+\dots+N_{k_{u,j}}+k_{u,j}-1}}\overline{B}_{ m_{k_{u,j}}+m'_{k_{u,j}}}\left(z_{k_{u,j}},  \varepsilon_0,\lambda_1,\lambda_2,f\right)$$
where $k_{u,j}:=\frac{1}{2}u(u+1)+j.$
By (\ref{equation-AO}), we have
$(m'_{k_{u,j}}-1)\lambda_2\geq m_{k_{u,j}}\lambda_1.$
Then by the definition of  exponential Bowen ball, we have $$d(f^{M'_{N_1+\dots+N_{k_{u,j}}+k_{u,j}-1}+m_{k_{u,j}}}(z),f^{m_{k_{u,j}}}(z_{k_{u,j}}))<\varepsilon_0 e^{-\min\{m_{k_{u,j}}\lambda_1,(m_{k_{u,j}}+m'_{k_{u,j}}-1-m_{k_{u,j}})\lambda_2\} }   \leq \varepsilon_0e^{-m_{k_{u,j}}\lambda_1}.$$
Note that $M'_{N_1+\dots+N_{k_{u,j}}+k_{u,j}-1}=M_{k_{u,j}-1}+(\mathfrak{n}+p(\mathfrak{n},\varepsilon))N_{k_{u,j}}+t_{u,j}.$ Then we have $M'_{N_1+\dots+N_{k_{u,j}}+k_{u,j}-1}+m_{k_{u,j}}=s_{k_{u,j}}\in S_j$ by (\ref{equation-A3}). Thus by $f^{m_{k_{u,j}}}(z_{k_{u,j}})=z_j^{s_{k_{u,j}}},$ we have
\begin{equation*}
	\begin{split}
			d(f^{s_{k_{u,j}}}(z),z_j^{s_{k_{u,j}}})&<\varepsilon_0  e^{-m_{k_{u,j}}\lambda_1}.\\
		&<\varepsilon_0e^{-s_{k_{u,j}}(\overline{\tau}+\eta)-s_{k_{u,j}}\lambda_1\eta}\qquad\qquad\qquad\qquad\qquad\qquad\qquad(\text{using }  (\ref{equation-A2})) \\
		&\leq e^{-s_{k_{u,j}}(\overline{\tau}_j+\eta)}<\varphi_j(s_{k_{u,j}}).\qquad\qquad\qquad\qquad(\text{using }  (\ref{equation-A1}) \text{ and the definition of } \overline{n}_j)
	\end{split}
\end{equation*}
By the arbitrariness of $u$,  we obtain $z\in \mathfrak{S}(f,\varphi_j,\mathcal{Z}_j,S_j)$ and thus $G\subset \cap_{j=1}^{\infty}\mathfrak{S}(f,\varphi_j,\mathcal{Z}_j,S_j).$
\newline

\noindent {\bf Step 5: estimate of topological entropy of $\cap_{j=1}^{\infty}\mathfrak{S}(f,\varphi_j,\mathcal{Z}_j,S_j)$}

Now we construct a probability measure $\mu$ supported on  $G$,  estimate  the $\mu$ measure of dynamical ball $B_n(z, \varepsilon,f)$ and obtain the topological entropy of $\cap_{j=1}^{\infty}\mathfrak{S}(f,\varphi_j,\mathcal{Z}_j,S_j)$ by the variational principles for topological entropies of subsets.

Let $\mathbf{y}_i^1=(y_1^{i,1},\dots,y_{N_i}^{i,1}),\ \mathbf{y}_i^2=(y_1^{i,2},\dots,y_{N_i}^{i,2})\in \Gamma_i$ for each $1\leq i\leq k$ with $(\mathbf{y}_1^1,\dots,\mathbf{y}_k^1)\neq (\mathbf{y}_1^2,\dots,\mathbf{y}_k^2).$ If $y_{\tilde{j}}^{\tilde{i},1}\neq y_{\tilde{j}}^{\tilde{i},2}$ for some $1\leq \tilde{i}\leq k$ and some $1\leq \tilde{j}\leq N_{\tilde{i}},$ then $y_{\tilde{j}}^{\tilde{i},1}$ and $y_{\tilde{j}}^{\tilde{i},2}$ are $\left(\delta^*, \mathfrak{n}, 3\varepsilon^*\right)$-separated with respect to $f.$ This implies
\begin{equation}\label{equation-E}
	\sharp \{0\leq j\leq \mathfrak{n}-1:d(f^{j}(y_{\tilde{j}}^{\tilde{i},1}),  f^{j}(y_{\tilde{j}}^{\tilde{i},2}))>3\varepsilon^*\}\geq\delta^* \mathfrak{n}.
\end{equation}
Take $z^1\in G(\mathbf{y}_1^1,\dots,\mathbf{y}_k^1)$ and $z^2\in G(\mathbf{y}_1^2,\dots,\mathbf{y}_k^2),$ we have $$f^{M'_{N_0+\dots+N_{\tilde{i}-1}+\tilde{i}+\tilde{j}-2}}(z^1)\in \overline{B}_{\mathfrak{n}}\left(y_{\tilde{j}}^{\tilde{i},1},  \varepsilon_0,\lambda_1,\lambda_2,f\right),$$
$$f^{M'_{N_0+\dots+N_{\tilde{i}-1}+\tilde{i}+\tilde{j}-2}}(z^2)\in \overline{B}_{\mathfrak{n}}\left(y_{\tilde{j}}^{\tilde{i},2},  \varepsilon_0,\lambda_1,\lambda_2,f\right).$$
Let $\mathfrak{N}_1:=\max\{\lceil \frac{\ln \varepsilon_0-\ln \varepsilon^*}{\lambda_1}\rceil,0\},$ $\mathfrak{N}_2:=\max\{\lceil \frac{\ln \varepsilon_0-\ln \varepsilon^*}{\lambda_2}\rceil,0\}.$ Then for any $x\in X$ we have $$\overline{B}_{\mathfrak{n}}\left(x,  \varepsilon_0,\lambda_1,\lambda_2,f\right)\subset f^{-\mathfrak{N}_1}\overline{B}_{\mathfrak{n}-\mathfrak{N}_1-\mathfrak{N}_2}\left(f^{\mathfrak{N}_1}(x),  \varepsilon^*,f\right).$$
Thus $$f^{M'_{N_0+\dots+N_{\tilde{i}-1}+\tilde{i}+\tilde{j}-2}}(z^1)\in f^{-\mathfrak{N}_1}\overline{B}_{\mathfrak{n}-\mathfrak{N}_1-\mathfrak{N}_2}\left(f^{\mathfrak{N}_1}(y_{\tilde{j}}^{\tilde{i},1}),  \varepsilon^*,f\right),$$
$$f^{M'_{N_0+\dots+N_{\tilde{i}-1}+\tilde{i}+\tilde{j}-2}}(z^2)\in f^{-\mathfrak{N}_1}\overline{B}_{\mathfrak{n}-\mathfrak{N}_1-\mathfrak{N}_2}\left(f^{\mathfrak{N}_1}(y_{\tilde{j}}^{\tilde{i},2}),  \varepsilon^*,f\right).$$
Recall $\mathfrak{n}\delta^*>\max\{\lceil \frac{\ln \varepsilon_0-\ln \varepsilon^*}{\lambda_1}\rceil+\lceil \frac{\ln \varepsilon_0-\ln \varepsilon^*}{\lambda_2}\rceil,0\}+1\geq\mathfrak{N}_1+\mathfrak{N}_2+1.$  Combining with (\ref{equation-E}), there is $\mathfrak{N}_1\leq t\leq \mathfrak{n}-\mathfrak{N}_2$ such that
 $d(f^{M'_{N_0+\dots+N_{\tilde{i}-1}+\tilde{i}+\tilde{j}-2}+t}(z^1),f^{M'_{N_0+\dots+N_{\tilde{i}-1}+\tilde{i}+\tilde{j}-2}+t}(z^2))>\varepsilon^*.$ So
\begin{equation}\label{equation-F}
	z^1\text{ and }z^2\text{ are }(M'_{N_0+\dots+N_{\tilde{i}-1}+\tilde{i}+\tilde{j}-1},\eps^*)\text{-separated with respect to } f.
\end{equation}
That is to say, the orbits of $z^1\text{ and }z^2$ separate in the place where it happens that $y_{\tilde{j}}^{\tilde{i},1}\neq y_{\tilde{j}}^{\tilde{i},2}.$

For any $k\geq 1$ and  $(\mathbf{y}_1,\dots,\mathbf{y}_{k})\in  \Gamma_1\times\dots\times\Gamma_k,$ we choose $z(\mathbf{y}_1,\dots,\mathbf{y}_{k})\in G(\mathbf{y}_1,\dots,\mathbf{y}_k).$ Then by (\ref{equation-F}), $F_{k}:=\{z(\mathbf{y}_1,\dots,\mathbf{y}_{k}):(\mathbf{y}_1,\dots,\mathbf{y}_{k})\in  \Gamma_1\times\dots\times\Gamma_k\}$ is a $(M'_{N_1+\dots+N_{k}+k},\eps^*)$-separated set with respect to $f$ and $\sharp F_k=\sharp \Gamma_1\times\dots\times\sharp\Gamma_k=(\sharp\Gamma_{\mathfrak{n}})^{N_1+\dots+N_k}$.
Define $\mu_{k}:=\frac{1}{\sharp F_k}\sum_{z\in F_{k}}\delta_{z}.$ Suppose $\mu=\lim\limits_{n\to\infty}\mu_{k_s}$ for some $k_s\to \infty$. For any fixed $p\in\mathbb{N^{+}}$ and all $p'\geq 0$,
since $F_{p+p'}\subseteq G_{p+p'},$ one has $\mu_{p+p'}(G_{p+p'})=1.$ Combining with $G_{p+p'}\subseteq G_{p},$ we have $\mu_{p+p'}(G_{p})=1.$
Then $\mu(G_{p})\geq \limsup\limits_{n\to\infty}\mu_{k_s}(G_{p})=1$. It follows that
$\mu(G)=\lim\limits_{p\to \infty}\mu(G_p)=1.$

Next, we estimate  the $\mu$ measure of dynamical ball $B_n(z , \varepsilon,f).$
Denote \begin{equation}\label{equation-DR}
	\overline{N}:=\max\{M'_{N_1+N_2+3},\frac{6\mathfrak{n} +2}{\eta},\frac{4\mathfrak{n} +3}{\eta}\}.
\end{equation}
Fix $z\in G$ and $n\geq \overline{N},$ there exists $i'\geq 3$ and $1\leq j'\leq N_{i'}+1$ such that
\begin{equation}\label{equation-AL}
	M'_{N_1+\dots+N_{i'-1}+i'-1+j'}\leq n\leq M'_{N_1+\dots+N_{i'-1}+i'-1+j'+1}.
\end{equation}
Since $z\in G\subseteq G_{i'},$ then there exists $\mathbf{y}_{i}^0=(y_1^{i,0},\dots,y_{N_{i}}^{i,0})\in \Gamma_{i}$ for each $1\leq i\leq i'$ such that
$z\in G(\mathbf{y}_1^0,\dots,\mathbf{y}_{i'}^0).$
Note that for any $k_s\geq i'$ and $(\mathbf{y}_1,\dots,\mathbf{y}_{k_s})\in  \Gamma_1\times\dots\times\Gamma_{k_s},$ if $(y_1^{1},\dots,y_{N_{1}}^{1},\dots,y_1^{i'},\dots,y_{j''}^{i'})\neq (y_1^{1,0},\dots,y_{N_{1}}^{1,0},\dots,y_1^{i',0},\dots,y_{j''}^{i',0})$ where $j''=\min\{j',N_{i'}\},$ then by (\ref{equation-F}) $$z_{k_s}(\mathbf{y}_1,\dots,\mathbf{y}_{k_s})\not\in B_{M'_{N_1+\dots+N_{i'-1}+i'-1+j''}}(z,\eps^*,f).$$  Then we have
\begin{equation}\label{equation-AG}
	\begin{split}
		\mu_{k_{s}}(B_{n}(z,\eps^*,f))\leq&\mu_{k_{s}}(B_{M'_{N_1+\dots+N_{i'-1}+i'-1+j''}}(z,\eps^*,f))\\
		\leq & \frac{1}{\sharp\Gamma_{1}\times\dots\times\sharp \Gamma_{k_s}}\times((\sharp\Gamma_{\mathfrak{n}})^{N_{i'}-j''}\times \sharp\Gamma_{i'+1}\times\dots\times\sharp\Gamma_{k_s})\\
		=&\frac{1}{\sharp\Gamma_{1}\times\dots\times\sharp \Gamma_{i'-1}\times (\sharp\Gamma_{\mathfrak{n}})^{j''}}\\
		=&\frac{1}{(\sharp\Gamma_{\mathfrak{n}})^{N_1+\dots+N_{i'-1}+j''}}\\
		\leq& e^{-(N_1+\dots+N_{i'-1}+j'')\mathfrak{n} (h_{top}(f,X)-\eta)}.
	\end{split}
\end{equation}

If $1\leq j'\leq N_{i'}-1,$ then  $j''=j'$ and we have
\begin{equation*}
	\begin{split}
		&n-(N_1+\dots+N_{i'-1}+j'')\mathfrak{n}\\
		\leq& M'_{N_1+\dots+N_{i'-1}+i'+j'}-(N_1+\dots+N_{i'-1}+j'')\mathfrak{n} \qquad\qquad\qquad\qquad\qquad (\text{using }  (\ref{equation-AL}))\\
		\leq& M'_{N_1+\dots+N_{i'-1}+i'+j'}-(N_{i'-1}+j')\mathfrak{n} \\
		= & M'_{N_1+\dots+N_{i'-2}+i'-2}+(N_{i'-1}+j')p(\mathfrak{n},\varepsilon_0)+\mathfrak{n}+p(\mathfrak{n},\varepsilon_0)+t_{i'-1}+t_{i'}\\
		&+m_{i'-1}+m'_{i'-1}+p(m_{i'-1}+m'_{i'-1},\varepsilon_0)\qquad\qquad\qquad\qquad\qquad\qquad\quad(\text{using }  (\ref{equation-AM}))\\
		\leq &M_{i'-2}+(N_{i'-1}+j')p(\mathfrak{n},\varepsilon_0)+2\mathfrak{n}+2\mathfrak{n}-1+2\mathfrak{n}-1\qquad\qquad\qquad(\text{using } (\ref{equation-AM}) \text{ and } (\ref{equation-A3}))\\
		&+m_{i'-1}+m'_{i'-1}+p(m_{i'-1}+m'_{i'-1},\varepsilon_0)\\
		\leq &\eta M_{i'-1}   +(N_{i'-1}+j')p(\mathfrak{n},\varepsilon_0)+6\mathfrak{n} -2     \qquad\qquad\qquad\qquad\qquad\qquad\qquad(\text{using }  (\ref{equation-AP}))\\
		&+m_{i'-1}+m'_{i'-1}+p(m_{i'-1}+m'_{i'-1},\varepsilon_0)\\
		\leq & \eta n+(N_{i'-1}+j')\eta\mathfrak{n}+6\mathfrak{n} -2 \qquad\qquad\qquad\qquad\qquad\qquad(\text{using } (\ref{equation-AL}) \text{ and the definition } \tilde{n})\\
		&+m_{i'-1}+m'_{i'-1}+p(m_{i'-1}+m'_{i'-1},\varepsilon_0)\\
		\leq &\eta n+\eta n+6\mathfrak{n} +2+(1+\eta)(1+\frac{\lambda_1}{\lambda_2})(\frac{\overline{\tau}+\eta}{\lambda_1}+2\eta)\frac{\lambda_2}{\lambda_2+\overline{\tau}+\eta+\eta\lambda_1}M_{i'-1}\quad(\text{using }  (\ref{equation-AQ}))\\
		\leq & 3\eta n+(1+\eta)(1+\frac{\lambda_1}{\lambda_2})(\frac{\overline{\tau}+\eta}{\lambda_1}+2\eta)\frac{\lambda_2}{\lambda_2+\overline{\tau}+\eta+\eta\lambda_1}n. \qquad\qquad\qquad\quad(\text{using }  (\ref{equation-DR}))
	\end{split}
\end{equation*}
It follows that
\begin{equation}\label{equation-H1}
	(N_1+\dots+N_{i'-1}+j'')\mathfrak{n}/n\geq 1-3\eta -(1+\eta)(1+\frac{\lambda_1}{\lambda_2})(\frac{\overline{\tau}+\eta}{\lambda_1}+2\eta)\frac{\lambda_2}{\lambda_2+\overline{\tau}+\eta+\eta\lambda_1}.
\end{equation}
If $j'= N_{i'} $ or $N_{i'}+1,$ then  $j''=N_{i'}$  and
$$n\leq M'_{N_1+\dots+N_{i'}+i'+1}=M_{i'}+\mathfrak{n}+p(\mathfrak{n},\varepsilon_0).$$
Thus we have
\begin{equation*}
	\begin{split}
		&1-(N_1+\dots+N_{i'-1}+j'')\mathfrak{n}/n\\
		\leq& 1-j''\mathfrak{n}/M'_{N_1+\dots+N_{i'}+i'+1}\qquad\qquad\qquad\qquad\qquad\qquad\qquad\qquad\qquad\qquad\qquad\quad (\text{using }  (\ref{equation-AL}))\\
		=  &\frac{M_{i'}+\mathfrak{n}+p(\mathfrak{n},\varepsilon_0)-N_{i'}\mathfrak{n}}{M_{i'}+\mathfrak{n}+p(\mathfrak{n},\varepsilon_0)}\qquad\qquad\qquad\qquad\qquad\qquad\qquad\qquad\qquad\qquad\qquad\quad(\text{using }  (\ref{equation-AM}))\\
		= &\frac{\mathfrak{n}+p(\mathfrak{n},\varepsilon_0)+M_{i'-1}+N_{i'}p(\mathfrak{n},\varepsilon_0)+t_{i'}+m_{i'}+m'_{i'}+p(m_{i'}+m'_{i'},\varepsilon_0)}{M_{i'}+\mathfrak{n}+p(\mathfrak{n},\varepsilon_0)}\qquad\quad\quad(\text{using }  (\ref{equation-AM}))\\
		\leq &\frac{2\mathfrak{n}+\eta M_{i'}+N_{i'}\eta \mathfrak{n}+2\mathfrak{n}-1+m_{i'}+m'_{i'}+p(m_{i'}+m'_{i'},\varepsilon_0)}{M_{i'}+\mathfrak{n}+p(\mathfrak{n},\varepsilon_0)}\qquad\qquad\quad(\text{using }  (\ref{equation-AP}) \text{ and } (\ref{equation-A3}))\\
		\leq &\frac{\eta M_{i'}+\eta M_{i'}+4\mathfrak{n}-1+m_{i'}+m'_{i'}+p(m_{i'}+m'_{i'},\varepsilon_0)}{M_{i'}+\mathfrak{n}+p(\mathfrak{n},\varepsilon_0)}\qquad\qquad\qquad\qquad(\text{using }  (\ref{equation-AP}) \text{ and } (\ref{equation-A3}))\\
		\leq &2\eta+\frac{4\mathfrak{n}-1}{M_{i'}+\mathfrak{n}+p(\mathfrak{n},\varepsilon_0)}+\frac{m_{i'}+m'_{i'}+p(m_{i'}+m'_{i'},\varepsilon_0)}{M_{i'}+\mathfrak{n}+p(\mathfrak{n},\varepsilon_0)}.\\
		\leq &2\eta+\frac{4\mathfrak{n}+3}{M_{i'}+\mathfrak{n}+p(\mathfrak{n},\varepsilon_0)}+(1+\eta)(1+\frac{\lambda_1}{\lambda_2})(\frac{\overline{\tau}+\eta}{\lambda_1}+2\eta)\frac{\lambda_2}{\lambda_2+\overline{\tau}+\eta+\eta\lambda_1}\qquad\quad (\text{using }  (\ref{equation-AQ}))\\
		\leq &2\eta+\frac{4\mathfrak{n}+3}{n}+(1+\eta)(1+\frac{\lambda_1}{\lambda_2})(\frac{\overline{\tau}+\eta}{\lambda_1}+2\eta)\frac{\lambda_2}{\lambda_2+\overline{\tau}+\eta+\eta\lambda_1}\\
		\leq &3\eta+(1+\eta)(1+\frac{\lambda_1}{\lambda_2})(\frac{\overline{\tau}+\eta}{\lambda_1}+2\eta)\frac{\lambda_2}{\lambda_2+\overline{\tau}+\eta+\eta\lambda_1}. \quad\qquad\qquad\qquad\qquad\qquad\quad(\text{using }  (\ref{equation-DR}))
	\end{split}
\end{equation*}
It follows that
\begin{equation}\label{equation-H2}
	(N_1+\dots+N_{i'-1}+j'')\mathfrak{n}/n\geq 1-3\eta -(1+\eta)(1+\frac{\lambda_1}{\lambda_2})(\frac{\overline{\tau}+\eta}{\lambda_1}+2\eta)\frac{\lambda_2}{\lambda_2+\overline{\tau}+\eta+\eta\lambda_1}.
\end{equation}
By (\ref{equation-AG}), (\ref{equation-H1}) and (\ref{equation-H2}),  we have
\begin{equation}\label{equation-BA}
	\mu_{k_{s}}(B_{n}(z,\eps^*,f))
		\leq e^{-n(1-3\eta -(1+\eta)(1+\frac{\lambda_1}{\lambda_2})(\frac{\overline{\tau}+\eta}{\lambda_1}+2\eta)\frac{\lambda_2}{\lambda_2+\overline{\tau}+\eta+\eta\lambda_1})(h_{top}(f,X)-\eta)}.
\end{equation}
Then
\begin{equation*}
	\begin{split}
	\mu(B_{n}(z,\eps^*,f))
		\leq \liminf_{s\rightarrow \infty}\mu_{k_s}(B_{n}(z,\eps^*,f))
		\leq e^{-n(1-3\eta -(1+\eta)(1+\frac{\lambda_1}{\lambda_2})(\frac{\overline{\tau}+\eta}{\lambda_1}+2\eta)\frac{\lambda_2}{\lambda_2+\overline{\tau}+\eta+\eta\lambda_1})(h_{top}(f,X)-\eta)}.
	\end{split}
\end{equation*}
This implies $$h_{top}(f,G)\geq (1-3\eta -(1+\eta)(1+\frac{\lambda_1}{\lambda_2})(\frac{\overline{\tau}+\eta}{\lambda_1}+2\eta)\frac{\lambda_2}{\lambda_2+\overline{\tau}+\eta+\eta\lambda_1})(h_{top}(f,X)-\eta)$$ by Corollary \ref{Cor-aa}.
Let $\eta\to 0$, we eventually have (\ref{equa-thm-1}).
\newline

\noindent {\bf Step 6: estimate of Hausdorff dimension of $\cap_{j=1}^{\infty}\mathfrak{S}(f,\varphi_j,\mathcal{Z}_j,S_j)$ for $L$-Lipschitz map}

Next, we estimate  the $\mu$ measure of ball $B(z,\varepsilon)$ and obtain  Hausdorff dimension of $\cap_{j=1}^{\infty}\mathfrak{S}(f,\varphi_j,\mathcal{Z}_j,S_j)$ by  the mass distribution principle.
Assume that $f:X\to X$ is  a $L$-Lipschitz map. For any $\eta>0,$ denote
$$\kappa_\eta:=(1-3\eta -(1+\eta)(1+\frac{\lambda_1}{\lambda_2})(\frac{\overline{\tau}+\eta}{\lambda_1}+2\eta)\frac{\lambda_2}{\lambda_2+\overline{\tau}+\eta+\eta\lambda_1})(h_{top}(f,X)-\eta).$$ Let $\eta>0$ small enough such that $\kappa_\eta>0.$
We denote $$r_0:=\frac{\varepsilon^{*}}{L^{\overline{N}+1}}, c:=e^{(1-\frac{\ln\varepsilon^*}{\ln L})\kappa_\eta}.$$
For any $0<r\leq r_0$, denote $\psi(r):=\max\{m\in\mathbb{Z}: L^mr<\varepsilon^*\}.$
Then
\begin{equation}\label{equation-BB}
	\psi(r)\geq \frac{\ln \varepsilon^*-\ln r}{\ln L}-1\geq \frac{\ln \varepsilon^*-\ln r_0}{\ln L}-1=\overline{N},
\end{equation}
and by the definition of $L$, for any $x\in X$ we have
$$B(x,r)\subset B_{\psi(r)}(x,\varepsilon^*,f).$$
Combining with  (\ref{equation-BA}), for any $z\in G$ we obtain
\begin{equation}\label{equation-BC}
		\mu(B(z,r))\leq \mu(B_{\psi(r)}(z,\varepsilon^*,f))
		\leq e^{-\psi(r)\kappa_\eta}.
\end{equation}
By (\ref{equation-BB}), we have
$$e^{-\psi(r)}\leq e^{\frac{\ln r-\ln \varepsilon^*}{\ln L}+1}=r^{\frac{1}{\ln L}}e^{-\frac{\ln \varepsilon^*}{\ln L}+1}.$$
So by (\ref{equation-BC}) we have $\mu(B(z,r))\leq e^{(1-\frac{\ln \varepsilon^*}{\ln L})\kappa_\eta}r^{\frac{\kappa_\eta}{\ln L}}=cr^{\frac{\kappa_\eta}{\ln L}}.$
Thus by Lemma \ref{Lem-mass}
$$\operatorname{dim}_H \cap_{j=1}^{\infty}\mathfrak{S}(f,\varphi_j,\mathcal{Z}_j,S_j)\geq \operatorname{dim}_HG\geq \frac{\kappa_\eta}{\ln L} .$$
Let $\eta\to 0$, we eventually have (\ref{equa-thm-2}).
\newline

\noindent {\bf Step 7: estimate of  Hausdorff dimension of $\cap_{j=1}^{\infty}\mathfrak{S}(f,\varphi_j,\mathcal{Z}_j,S_j)$ for $(L_1,L_2)$-bi-Lipschitz homeomorphism}

Assume that $f:X\to X$ is  a $(L_1,L_2)$-bi-Lipschitz homeomorphism. We will construct  a new subset $\tilde{G}$ of  $\cap_{j=1}^{\infty}\mathfrak{S}(f,\varphi_j,\mathcal{Z}_j,S_j)$ to give a bigger lower bound for Hausdorff dimension. For every point in  $\tilde{G},$ its forward orbit has the same behavior as points in  $G$.  However, we require that its backward orbit also traces some orbit segments, so its backward orbit will contribute to the  Hausdorff dimension.

For any integer $j\leq -1$, we define
$
n_{j}':=
	\mathfrak{n}+p(\mathfrak{n},\varepsilon_0).
$
Let $M'_{j}:=\sum_{i=-1}^{j}n_i'$ for any $j\leq -1.$
For any $k\geq 1$, denote $$\Gamma_{-k}:=\Gamma_{k}=\Gamma_{\mathfrak{n}}^{N_k}=\Gamma_{\mathfrak{n}}\times \cdots \times\Gamma_{\mathfrak{n}}=\{(y_1^k,\dots,y_{N_k}^k):y_j^k\in\Gamma_{\mathfrak{n}}\text{ for } 1\leq j\leq N_k\}.$$ Then $|\Gamma_{-k}|=|\Gamma_{\mathfrak{n}}|^{N_k}.$ Denote $N_{-k}:=N_k.$

Now, we construct  a new subset of $\cap_{j=1}^{\infty}\mathfrak{S}(f,\varphi_j,\mathcal{Z}_j,S_j)$.
Let $k\geq 1.$ For any $(\mathbf{y}_1,\dots,\mathbf{y}_{k})\in  \Gamma_1\times\dots\times\Gamma_k$  and any $(\mathbf{y}_{-1},\dots,\mathbf{y}_{-k})\in  \Gamma_{-1}\times\dots\times\Gamma_{-k}$ with $\mathbf{y}_i=(y_1^i,\dots,y_{N_i}^i)\in \Gamma_i,$
denote $$G(\mathbf{y}_i):=\left(\cap_{j=1}^{N_{i}}f^{-M'_{N_0+\dots+N_{i-1}+i+j-2}}\overline{B}_{\mathfrak{n}}\left(y_j^i,  \varepsilon_0,\lambda_1,\lambda_2,f\right)\right)\bigcap f^{-M'_{N_1+\dots+N_i+i-1}}\overline{B}_{m_i+m'_i}\left(z_i,  \varepsilon_0,\lambda_1,\lambda_2,f\right),$$
$$G(\mathbf{y}_{-i}):=\cap_{j=1}^{N_{i}}f^{M'_{-N_{0}-\dots-N_{-i+1}-j}}\overline{B}_{\mathfrak{n}}\left(y_j^{-i},  \varepsilon_0,\lambda_1,\lambda_2,f\right),$$
where $1\leq i\leq k.$
 By exponential non-uniform specification property  of $(X,f),$
\begin{equation*}
	\begin{split}
		\tilde{G}(\mathbf{y}_{-k},\dots,\mathbf{y}_{-1},\mathbf{y}_1,\dots,\mathbf{y}_k):=\bigcap_{i=1}^{k}(G(\mathbf{y}_i)\cap G(\mathbf{y}_{-i}))
	\end{split}
\end{equation*}
is a non-empty closed set and $\tilde{G}(\mathbf{y}_{-k},\dots,\mathbf{y}_{-1},\mathbf{y}_1,\dots,\mathbf{y}_k)\subset G(\mathbf{y}_1,\dots,\mathbf{y}_k)$.
Let $$\tilde{G}_k:=\bigcup_{(\mathbf{y}_{-k},\dots,\mathbf{y}_{-1},\mathbf{y}_1,\dots,\mathbf{y}_k)\in \Gamma_{-k}\times\dots\times\Gamma_k}\tilde{G}(\mathbf{y}_{-k},\dots,\mathbf{y}_{-1},\mathbf{y}_1,\dots,\mathbf{y}_k),$$ then we have $\tilde{G}_{k+1}\subset \tilde{G}_{k}\subset G_k$.
Let
\begin{equation*}
	\tilde{G}:=\bigcap_{k\geq 1}\tilde{G}_k.
\end{equation*}
Then $\tilde{G}$ is a non-empty closed set and $\tilde{G}\subset G$. So by Step 4 we have $\tilde{G}\subset G\subset \cap_{j=1}^{\infty}\mathfrak{S}(f,\varphi_j,\mathcal{Z}_j,S_j)$.

Next, we construct a probability measure $\tilde{\mu}$ supported on  $\tilde{G}$,  estimate  the $\tilde{\mu}$ measure of ball $B(z,\varepsilon)$ and obtain  Hausdorff dimension of $\cap_{j=1}^{\infty}\mathfrak{S}(f,\varphi_j,\mathcal{Z}_j,S_j)$ by  the mass distribution principle.

Let $\mathbf{y}_i^1=(y_1^{i,1},\dots,y_{N_i}^{i,1}),\ \mathbf{y}_i^2=(y_1^{i,2},\dots,y_{N_i}^{i,2})\in \Gamma_i$ for each $i\in[-k,-1]\cup [1,k]$ with $$(\mathbf{y}_{-k}^1,\dots,\mathbf{y}_{-1}^1,\mathbf{y}_1^1,\dots,\mathbf{y}_k^1)\neq (\mathbf{y}_{-k}^2,\dots,\mathbf{y}_{-1}^2,\mathbf{y}_1^2,\dots,\mathbf{y}_k^2).$$
Take $$z^1\in \tilde{G}_k(\mathbf{y}_{-k}^1,\dots,\mathbf{y}_{-1}^1,\mathbf{y}_1^1,\dots,\mathbf{y}_k^1), z^2\in \tilde{G}_k(\mathbf{y}_{-k}^2,\dots,\mathbf{y}_{-1}^2,\mathbf{y}_1^2,\dots,\mathbf{y}_k^2).$$
Similar as Step 5,  the following holds: if  $1\leq \tilde{i},\tilde{i}_{-1}\leq k$ and $1\leq \tilde{j}\leq N_{\tilde{i}},$ $1\leq \tilde{j}_{-1}\leq N_{-\tilde{i}_{-1}}$ satisfy
\begin{equation*}
	\begin{split}
		&(y^{-\tilde{i}_{-1},1}_{\tilde{j}_{-1}},\dots,y^{-\tilde{i}_{-1},1}_{1},\dots,y^{-1,1}_{N_{-1}},\dots,y^{-1,1}_{1},y_1^{1,1},\dots,y_{N_{1}}^{1,1},\dots,y_1^{\tilde{i},1},\dots,y_{\tilde{j}}^{\tilde{i},1})\\
		\neq& (y^{-\tilde{i}_{-1},2}_{\tilde{j}_{-1}},\dots,y^{-\tilde{i}_{-1},2}_{1},\dots,y^{-1,2}_{N_{-1}},\dots,y^{-1,2}_{1},y_1^{1,2},\dots,y_{N_{1}}^{1,2},\dots,y_1^{\tilde{i},2},\dots,y_{\tilde{j}}^{\tilde{i},2}),
	\end{split}
\end{equation*}
then  there is $-M'_{-N_0-\dots-N_{-\tilde{i}_{-1}+1}-\tilde{j}_{-1}}\leq t\leq M'_{N_0+\dots+N_{\tilde{i}-1}+\tilde{i}+\tilde{j}-1}$ such that
\begin{equation}\label{equation-H}
	d(f^{t}(z^1),f^{t}(z^2))>\varepsilon^*,
\end{equation}
in particular, $z^1\neq z^2.$
For any $k\geq 1$ and  $(\mathbf{y}_{-k},\dots,\mathbf{y}_{-1},\mathbf{y}_1,\dots,\mathbf{y}_k)\in   \Gamma_{-k}\times\dots\times\Gamma_k,$ we choose $$\tilde{z}(\mathbf{y}_{-k},\dots,\mathbf{y}_{-1},\mathbf{y}_1,\dots,\mathbf{y}_k)\in \tilde{G}(\mathbf{y}_{-k},\dots,\mathbf{y}_{-1},\mathbf{y}_1,\dots,\mathbf{y}_k).$$

Denote $\tilde{F}_{k}:=\{\tilde{z}(\mathbf{y}_{-k},\dots,\mathbf{y}_{-1},\mathbf{y}_1,\dots,\mathbf{y}_k):(\mathbf{y}_{-k},\dots,\mathbf{y}_{-1},\mathbf{y}_1,\dots,\mathbf{y}_k)\in  \Gamma_{-k}\times\dots\times\Gamma_k\}$.  Then, similar to Step 5, one has $\sharp \tilde{F}_k=\sharp \Gamma_{-k}\times\dots\times\sharp\Gamma_k=(\sharp\Gamma_{\mathfrak{n}})^{N_{-k}+\dots +N_{-1}+N_1+\dots+N_k}.$
Define $\tilde{\mu}_{k}:=\frac{1}{\sharp \tilde{F}_k}\sum_{z\in \tilde{F}_{k}}\delta_{z}.$ Suppose $\tilde{\mu}=\lim\limits_{n\to\infty}\tilde{\mu}_{k_s}$ for some $k_s\to \infty$. For any fixed $p\in\mathbb{N^{+}}$ and all $p'\geq 0$,
since $\tilde{F}_{p+p'}\subseteq \tilde{G}_{p+p'},$ one has $\tilde{\mu}_{p+p'}(\tilde{G}_{p+p'})=1.$ Combining with $\tilde{G}_{p+p'}\subseteq \tilde{G}_{p},$ we have $\tilde{\mu}_{p+p'}(\tilde{G}_{p})=1.$
Then $\tilde{\mu}(\tilde{G}_{p})\geq \limsup\limits_{n\to\infty}\tilde{\mu}_{k_s}(\tilde{G}_{p})=1$. It follows that
$\tilde{\mu}(\tilde{G})=\lim\limits_{p\to \infty}\tilde{\mu}(\tilde{G}_p)=1.$

For any $\eta>0,$ denote
$$\kappa_\eta^1:=(1-2\eta)(h_{top}(f,X)-\eta),$$
$$\kappa_\eta^2:=(1-3\eta -(1+\eta)(1+\frac{\lambda_1}{\lambda_2})(\frac{\overline{\tau}+\eta}{\lambda_1}+2\eta)\frac{\lambda_2}{\lambda_2+\overline{\tau}+\eta+\eta\lambda_1})(h_{top}(f,X)-\eta).$$ Let $\eta>0$ small enough such that $\kappa_\eta>0.$
We denote $$r_0:=\min\{\frac{\varepsilon^{*}}{L_1^{\tilde{N}+1}},\frac{\varepsilon^{*}}{L_2^{\tilde{N}+1}}\}, c:=e^{(1-\frac{\ln\varepsilon^*}{\ln L_1})\kappa_\eta^1+(1-\frac{\ln\varepsilon^*}{\ln L_2})\kappa_\eta^2}.$$
Take an integer $\tilde{i}_{-1}<0$ such that
\begin{equation}\label{equation-CA}
	\frac{1}{N_{-1}+\dots+N_{\tilde{i}_{-1}}}<\eta.
\end{equation}
Set $\tilde{N}:=\max\{M'_{N_1+N_2+3},M'_{-N_{-1}-\dots-N_{\tilde{i}_{-1}-1}},\frac{6\mathfrak{n} +2}{\eta},\frac{4\mathfrak{n} +3}{\eta}\}.$For any $0<r\leq r_0$, denote $\psi_i(r):=\max\{m\in\mathbb{Z}: L_i^mr<\varepsilon^*\}$ for each $i\in\{1,2\}$
Then
\begin{equation}\label{equation-DB}
	\psi_i(r)\geq \frac{\ln \varepsilon^*-\ln r}{\ln L_i}-1\geq \frac{\ln \varepsilon^*-\ln r_0}{\ln L_i}-1\geq \tilde{N},
\end{equation}
and by the definition of $L$, for any $x\in X$ we have
$$B(x,r)\subset B_{\psi_{1}(r)}(x,\varepsilon^*,f^{-1})\cap B_{\psi_{2}(r)}(x,\varepsilon^*,f).$$

Fix $z\in \tilde{G}$ and $0<r\leq r_0.$ By (\ref{equation-DB})  there exists $i'\geq 3,$ $-i'_{-1}\leq \tilde{i}_{-1}-1$ and $1\leq j'\leq N_{i'}+1,$ $1\leq j'_{-1}\leq N_{-i'_{-1}}$ such that
\begin{equation}\label{equation-BM}
	M'_{-N_{-1}-\dots-N_{-i'_{-1}+1}-j'_{-1}}\leq \psi_1(r) \leq M'_{-N_{-1}-\dots-N_{-i'_{-1}+1}-j'_{-1}-1}.
\end{equation}
\begin{equation}\label{equation-BL}
	M'_{N_1+\dots+N_{i'-1}+i'-1+j'}\leq \psi_2(r) \leq M'_{N_1+\dots+N_{i'-1}+i'+j'}.
\end{equation}
Since $z\in \tilde{G}\subseteq \tilde{G}_{i_{\max}}$ where $i_{\max}=\max\{i',i'_{-1}\},$  then there exists $\mathbf{y}_{i}^0=(y_1^{i,0},\dots,y_{N_{i}}^{i,0})\in \Gamma_{i}$ for each $i\in [-i_{\max},1]\cup [1,i_{\max}]$ such that
$$z\in \tilde{G}(\mathbf{y}_{-i_{\max}}^0,,\dots,\mathbf{y}_{-1}^0,\mathbf{y}_1^0,\dots,\mathbf{y}_{i_{\max}}^0).$$
Note that for any $k_s\geq i_{\max}$ and any $(\mathbf{y}_{-k_s},\dots,\mathbf{y}_{k_s})\in  \Gamma_{-k_s}\times\dots\times\Gamma_{k_s},$ if
\begin{equation*}
	\begin{split}
		&(y^{-i'_{-1}}_{j'_{-1}},\dots,y^{-i'_{-1}}_{1},\dots,y^{-1}_{N_{-1}},\dots,y^{-1}_{1},y_1^{1},\dots,y_{N_{1}}^{1},\dots,y_1^{i'},\dots,y_{j''}^{i'})\\
		\neq& (y^{-i'_{-1},0}_{j'_{-1}},\dots,y^{-i'_{-1},0}_{1},\dots,y^{-1,0}_{N_{-1}},\dots,y^{-1,0}_{1},y_1^{1,0},\dots,y_{N_{1}}^{1,0},\dots,y_1^{i',0},\dots,y_{j''}^{i',0}),
	\end{split}
\end{equation*}
where $j''=\min\{j',N_{i'}\},$ then  by (\ref{equation-H})  $$\tilde{z}(\mathbf{y}_{-k_s},\dots\mathbf{y}_{k_s})\not\in B_{M'_{-N_{0}-\dots-N_{-i'_{-1}+1}-j'_{-1}}}(z,\eps^*,f^{-1})\cap B_{M'_{N_1+\dots+N_{i'-1}+i'-1+j''}}(z,\eps^*,f).$$ Then we have
\begin{equation}\label{equation-BG}
	\begin{split}
		\mu_{k_{s}}(B(z,r))\leq&\mu_{k_{s}}(B_{\psi_{1}(r)}(z,\varepsilon^*,f^{-1})\cap B_{\psi_{2}(r)}(z,\varepsilon^*,f))\\
		\leq &\mu_{k_{s}}(B_{M'_{-N_1-\dots-N_{-i'_{-1}+1}-j'_{-1}}}(z,\varepsilon^*,f^{-1})\cap B_{M'_{N_1+\dots+N_{i'-1}+i'-1+j''}}(z,\varepsilon^*,f))\\
		\leq & \frac{(\sharp\Gamma_{\mathfrak{n}})^{N_{-i'_{-1}}-j'_{-1}}\times \sharp\Gamma_{-i'_{-1}-1}\times\dots\times\sharp\Gamma_{-k_s}\times (\sharp\Gamma_{\mathfrak{n}})^{N_{i'}-j''}\times \sharp\Gamma_{i'+1}\times\dots\times\sharp\Gamma_{k_s}}{\sharp\Gamma_{-k_s}\times\dots\times\sharp \Gamma_{k_s}}\\
		=&\frac{1}{\sharp\Gamma_{-1}\times\dots\times\sharp \Gamma_{-i'_{-1}+1}\times (\sharp\Gamma_{\mathfrak{n}})^{j'_{-1}}}\times\frac{1}{\sharp\Gamma_{1}\times\dots\times\sharp \Gamma_{i'-1}\times (\sharp\Gamma_{\mathfrak{n}})^{j''}}\\
		=&\frac{1}{(\sharp\Gamma_{\mathfrak{n}})^{N_{-1}+\dots+N_{-i'_{-1}+1}+j'_{-1}}}\times\frac{1}{(\sharp\Gamma_{\mathfrak{n}})^{N_1+\dots+N_{i'-1}+j''}}\\
		\leq& e^{-(N_{-1}+\dots+N_{-i'_{-1}+1}+j'_{-1})\mathfrak{n} (h_{top}(f,X)-\eta)}\times e^{-(N_1+\dots+N_{i'-1}+j'')\mathfrak{n} (h_{top}(f,X)-\eta)}
	\end{split}
\end{equation}
We have the following
\begin{equation*}
	\begin{split}
		&\frac{\psi_1(r)-(N_{-1}+\dots+N_{-i'_{-1}+1}+j'_{-1})\mathfrak{n}}{\psi_1(r)} \\
		\leq& \frac{M'_{-N_{-1}-\dots-N_{-i'_{-1}+1}-j'_{-1}-1}-(N_{-1}+\dots+N_{-i'_{-1}+1}+j'_{-1})\mathfrak{n}}{M'_{-N_{-1}-\dots-N_{-i'_{-1}+1}-j'_{-1}-1}}\qquad\qquad\qquad (\text{using }  (\ref{equation-BM}))\\
		=& \frac{(N_{-1}+\dots+N_{-i'_{-1}+1}+j'_{-1}+1)(\mathfrak{n}+p(\mathfrak{n},\varepsilon_0))-(N_{-1}+\dots+N_{-i'_{-1}+1}+j'_{-1})\mathfrak{n}}{(N_{-1}+\dots+N_{-i'_{-1}+1}+j'_{-1}+1)(\mathfrak{n}+p(\mathfrak{n},\varepsilon_0))} \\
		=& \frac{(N_{-1}+\dots+N_{-i'_{-1}+1}+j'_{-1})p(\mathfrak{n},\varepsilon_0)+\mathfrak{n}+p(\mathfrak{n},\varepsilon_0)}{(N_{-1}+\dots+N_{-i'_{-1}+1}+j'_{-1}+1)(\mathfrak{n}+p(\mathfrak{n},\varepsilon_0))} \\
		\leq &\frac{p(\mathfrak{n},\varepsilon_0)}{\mathfrak{n}+p(\mathfrak{n},\varepsilon_0)} +\frac{1}{N_{-1}+\dots+N_{-i'_{-1}+1}+j'_{-1}+1} \\
		\leq &\frac{\eta}{1+\eta} +\frac{1}{N_{-1}+\dots+N_{-i'_{-1}+1}+j'_{-1}+1} \qquad\qquad\qquad\qquad (\text{using } (\ref{equation-A1}) \text{ and the definition } \tilde{n})\\
		\leq &\eta +\frac{1}{N_{-1}+\dots+N_{-i'_{-1}+1}+j'_{-1}+1} \\
		\leq& 2\eta.\qquad\qquad\qquad\qquad\qquad\qquad\qquad\qquad\qquad\qquad\qquad\qquad\qquad\qquad\qquad\quad (\text{using }  (\ref{equation-CA}))\\
	\end{split}
\end{equation*}
It follows that
\begin{equation}\label{equation-CB}
	(N_{-1}+\dots+N_{-i'_{-1}+1}+j'_{-1})\mathfrak{n}\geq (1-2\eta)\psi_1(r)
\end{equation}
By   (\ref{equation-H1}) and (\ref{equation-H2}), we have
\begin{equation}\label{equation-CC}
	(N_1+\dots+N_{i'-1}+j'')\mathfrak{n}\geq (1+\eta)(1+\frac{\lambda_1}{\lambda_2})(\frac{\overline{\tau}+\eta}{\lambda_1}+2\eta)\frac{\lambda_2}{\lambda_2+\overline{\tau}+\eta+\eta\lambda_1}\psi_2(r)
\end{equation}
Then by  (\ref{equation-BG}), (\ref{equation-CB}) and (\ref{equation-CC}),  we have
\begin{equation*}
	\begin{split}
		\mu_{k_{s}}(B(z,r))\leq e^{-\psi_1(r)\kappa_\eta^1}\times e^{-\psi_2(r)\kappa_\eta^2}.
	\end{split}
\end{equation*}
By (\ref{equation-DB}), we have
$$e^{-\psi_i(r)}\leq e^{\frac{\ln r-\ln \varepsilon^*}{\ln L_i}+1}=r^{\frac{1}{\ln L_i}}e^{-\frac{\ln \varepsilon^*}{\ln L_i}+1}.$$
So we have
$$\mu_{k_{s}}(B(z,r))\leq r^{\frac{\kappa_\eta^1}{\ln L_1}}e^{(1-\frac{\ln \varepsilon^*}{\ln L_1})\kappa_\eta^1}\times r^{\frac{\kappa_\eta^2}{\ln L_2}}e^{(1-\frac{\ln \varepsilon^*}{\ln L_2})\kappa_\eta^2}=cr^{\frac{\kappa_\eta^1}{\ln L_1}+\frac{\kappa_\eta^2}{\ln L_2}}.$$
Thus by Lemma \ref{Lem-mass}
$$\operatorname{dim}_H \cap_{j=1}^{\infty}\mathfrak{S}(f,\varphi_j,\mathcal{Z}_j,S_j)\geq \operatorname{dim}_H\tilde{G}\geq \frac{\kappa_\eta^1}{\ln L_1}+\frac{\kappa_\eta^2}{\ln L_2} .$$
Let $\eta\to 0$, we eventually have (\ref{equa-thm-3}).
\qed

\subsection{Proof of Corollary \ref{theorem-shrinking-2}}
Assume that $\cap_{j=1}^{\infty}\mathrm{Ind}'(\mathcal{Z}_j,S_j,\Lambda)\neq\emptyset.$
Take $I'\in\cap_{j=1}^{\infty}\mathrm{Ind}'(\mathcal{Z}_j,S_j,\Lambda),$  there is $(I_1^j,I_2^j)\in \mathrm{Ind}(\mathcal{Z}_j,S_j,\Lambda)$ such that $I'=I_1^j-I_2^j.$ Then we have $(I_2^j,I_2^j)\in \mathrm{Ind}(\mathcal{Z}_j,S_j,f^{(I_1^j-I_2^j)\textrm{ mod } N}(\Lambda))$ for any $j\geq 1.$  Since $f^{N}(f^{(I_1^j-I_2^j)\textrm{ mod } N}(\Lambda))=f^{(I_1^j-I_2^j)\textrm{ mod } N}(\Lambda)$, $X=\bigcup_{l=0}^{N-1}f^{l}(f^{(I_1^j-I_2^j)\textrm{ mod } N}(\Lambda)),$  then there is no loss of generality in assuming $0\in\cap_{j=1}^{\infty}\mathrm{Ind}'(\mathcal{Z}_j,S_j,\Lambda),$ and $(I_2^j,I_2^j)\in \mathrm{Ind}(\mathcal{Z}_j,S_j,\Lambda)$ for any $j\geq 1.$

Since $f$ is Lipschitz, then   $0<L:=\sup\{\frac{d(f(x),f(y))}{d(x,y)}:x\neq y\in X\}<\infty.$  For each $j\geq 1,$ define
\begin{equation*}
	\begin{split}
		&\tilde{\mathcal{Z}}_j:=\{\tilde{z}_j^i\}_{i=0}^{\infty}\text{ with }  f^{I_2^j}(\tilde{z}_j^i)=z_j^i,\\
		&\tilde{S}_j:=\{\tilde{s}_j^i\}_{i=0}^{\infty}  \text{ with } \tilde{s}_j^i=s_j^i-I_2^j,\\
		&\tilde{\varphi_j}:=\frac{\varphi_j}{L^{I_2}}.
	\end{split}
\end{equation*}
Then we have $\mathfrak{S}(f,\tilde{\varphi_j},\tilde{\mathcal{Z}}_j,\tilde{S}_j)\subseteq \mathfrak{S}(f,\varphi_j,\mathcal{Z}_j,S_j)$ and $(0,0)\in \mathrm{Ind}(\tilde{\mathcal{Z}}_j,\tilde{S}_j,\Lambda).$
Thus there is no loss of generality in assuming $(0,0)\in \mathrm{Ind}(\mathcal{Z}_j,S_j,\Lambda)$ for any $j\geq 1.$
So there is $\overline{S}_j=\{\overline{s}_j^i\}_{i=0}^{\infty}\subset S_j$ with $\lim\limits_{i\to\infty}\overline{s}_j^i=\infty$ such that $z_j^{\overline{s}_j^i}\in \Lambda$ and $N|\overline{s}_j^i.$
Then we define $\frac{\overline{S}_j}{N}:=\{\frac{\overline{s}_j^i}{N}\}_{i=0}^{\infty}$ and $\overline{\mathcal{Z}}_j:=\{\overline{z}_j^i\}_{i=0}^{\infty}\subset X$ as
$$
\overline{z}_j^i:=\left\{\begin{array}{ll}
	z_j^{iN}, & \text { if } i\in \frac{\overline{S}_j}{N},\\
	z_j^1, & \text { if } i\in \mathbb{N}\setminus \frac{\overline{S}_j}{N}.
\end{array}\right.
$$
\
Define  $\overline{\varphi}_j$ on $\mathbb{N}$  as
$$
\overline{\varphi}_j(n):=\left\{\begin{array}{ll}
	\varphi_j(nN), & \text { if } n\in \frac{\overline{S}_j}{N},\\
	1, & \text { if } n\in \mathbb{N}\setminus \frac{\overline{S}_j}{N}.
\end{array}\right.
$$
Then we have
\begin{equation*}
	\begin{split}
		\overline{\tau}_j(\overline{\varphi}_j):&=\limsup\limits_{n\to\infty}-\frac{\ln\overline{\varphi}_j(n)}{n}=\limsup\limits_{n\to\infty, nN\in \overline{S}_j}-\frac{\ln\overline{\varphi}_j(n)}{n}  \\
		&=\limsup\limits_{n\to\infty, nN\in \overline{S}_j}-\frac{\ln\varphi_j(nN)}{n}  \\
		&= N\limsup\limits_{i\to \infty}-\frac{\ln\varphi(\tilde{s}_j^i)}{\tilde{s}_j^i}\leq N\overline{\tau}_j.
	\end{split}
\end{equation*}
It implies $\overline{\overline{\tau}}=\sup\limits_{j\geq 1}\overline{\tau}_j(\overline{\varphi}_j)\leq N\overline{\tau}<N\lambda_1.$
Then by Theorem \ref{theorem-shrinking-1} we have
\begin{equation*}
	\begin{split}
		h_{top}(f^N,\cap_{j=1}^{\infty}\mathfrak{S}(f^N,\overline{\varphi}_j,\overline{\mathcal{Z}}_j,\frac{\overline{S}_j}{N}))&\geq \frac{N\lambda_1N\lambda_2-N\lambda_2N\overline{\tau} }{N\lambda_1N\lambda_2+N\lambda_1N\overline{\tau}}h_{top}(f^N,\Lambda)\\
		&= \frac{\lambda_1\lambda_2-\lambda_2\overline{\tau} }{\lambda_1\lambda_2+\lambda_1\overline{\tau}}h_{top}(f^N,\Lambda).
	\end{split}
\end{equation*}
By Proposition \ref{basic properties}, we have
$$h_{top}(f,\cap_{j=1}^{\infty}\mathfrak{S}(f^N,\overline{\varphi}_j,\overline{\mathcal{Z}}_j,\frac{\overline{S}_j}{N}))\geq \frac{\lambda_1\lambda_2-\lambda_2\overline{\tau} }{\lambda_1\lambda_2+\lambda_1\overline{\tau}}h_{top}(f,X).$$
Note that $\mathfrak{S}(f^N,\overline{\varphi}_j,\overline{\mathcal{Z}}_j,\frac{\overline{S}_j}{N})\subset\mathfrak{S}(f,\varphi_j,\mathcal{Z}_j,S_j).$  So we obtain
(\ref{equa-thm-4}).
Similarly, one can get  (\ref{equa-thm-5}) and (\ref{equa-thm-6}).

\subsection{Proof of Corollary \ref{Cor-covering}}
Since $X$ is compact,  there is a sequence of points $\{x_i\}_{i=1}^{\infty}\subset X$ such that  $\overline{\{x_i\}_{i=1}^{\infty}}= X.$
For each $i\geq 1,$ define
\begin{equation*}
	\begin{split}
		&\varphi_j:=\varphi,\\
		&\mathcal{Z}_j:=\{z_j^i\}_{i=1}^{\infty}  \text{ with } z_j^i=x_j \text{ for any }i\geq 1,\\
		&S_j:=\mathbb{N}.
	\end{split}
\end{equation*}
Then $\cap_{j=1}^{\infty}\mathfrak{S}(f,\varphi_j,\mathcal{Z}_j,S_j)\subset \mathfrak{D}(f,\varphi).$
Note that if $x_j\in f^{I}(\Lambda),$ then there  are infinitely many $i\in \mathbb{N}$ such that $N|(i-I).$ Thus
we have  $0\in \cap_{j=1}^{\infty}\mathrm{Ind}'(\mathcal{Z}_j,S_j,\Lambda).$ So by Corollary \ref{theorem-shrinking-1} and Remark \ref{Rem-under} we obtain Corollary \ref{Cor-covering}.

\section{Upper bounds}\label{sec-upper-bounds}
In this section we give the proof of Theorem \ref{Thm-upper-bound}. It is clear that the result holds for $\underline{\tau}=0$ from Lemma \ref{lem-upper-bound}.  Now, we assume that $\underline{\tau}>0.$

{\bf Proof of item(1)}
Fix $\delta>0.$ Then there is $\underline{\tau}>\varepsilon>0$ such that
\begin{equation}\label{eq-upper-6}
	h_{top}(f,\mathfrak{S}(f,\varphi,\mathcal{Z},\mathbb{N}),\varepsilon)>h_{top}(f,\mathfrak{S}(f,\varphi,\mathcal{Z},\mathbb{N}))-\delta,
\end{equation}
\begin{equation}\label{eq-upper-7}
	\limsup _{n \rightarrow \infty} \frac{\ln s(X, n, \frac{\varepsilon}{2})}{n}<h_{top}(f,X)+\delta,
\end{equation}
\begin{equation}\label{eq-upper-1}
	\frac{2\varepsilon}{\underline{\tau}+\ln L}(h_{top}(f,X)+3\delta)<\frac{\delta}{2}.
\end{equation}
And there is $N^*\in\mathbb{N}$ such that for any $n\geq N^*$
\begin{equation}\label{eq-upper-2}
	\varphi(n)<e^{-(\underline{\tau}-\varepsilon)n}<\frac{\varepsilon}{2},
\end{equation}
\begin{equation}\label{eq-upper-3}
	s(X, n, \frac{\varepsilon}{2})<e^{n(h_{top}(f,X)+2\delta)},
\end{equation}
\begin{equation}\label{eq-upper-4}
	-\varepsilon n<\ln \frac{\varepsilon}{2}-\ln L.
\end{equation}

By definition of $\mathfrak{S}(f,\varphi,\mathcal{Z},\mathbb{N})$ we have
$$\mathfrak{S}(f,\varphi,\mathcal{Z},\mathbb{N})\subset \bigcap_{N=1}^{\infty}\bigcup_{n=N}^{\infty}\{x\in X: d(f^n(x),z_n)<\varphi(n)\}.$$
For any $n\in\mathbb{N},$ let $\Gamma_{n,\frac{\varepsilon}{2}}$ be an $(n, \frac{\varepsilon}{2})$-separated set in $X$ with $\sharp \Gamma_{n,\frac{\varepsilon}{2}}=s(X, n, \frac{\varepsilon}{2}).$ Then we have $X=\bigcup_{w\in \Gamma_{n,\frac{\varepsilon}{2}}}B_{n}(w,\frac{\varepsilon}{2},f)$ and thus $$\{x\in X: d(f^n(x),z_n)<\varphi(n)\}\subset \bigcup_{w\in \Gamma_{n,\frac{\varepsilon}{2}}}J(w)$$
where
$J(w):=\{x\in B_{n}(w,\frac{\varepsilon}{2},f): d(f^n(x),z_n)<\varphi(n)\}.$
Denote $\psi(n):=\max\{m\in\mathbb{Z}: L^m\varphi(n)<\frac{\varepsilon}{2}\}.$ Then  $\psi(n)\geq 0$ for any $n\geq N^*,$ and by (\ref{eq-upper-2}) and (\ref{eq-upper-4}) we have
\begin{equation}\label{eq-upper-5}
	\psi(n)\geq \frac{\ln \frac{\varepsilon}{2}-\ln \varphi(n)}{\ln L}-1\geq \frac{\ln \frac{\varepsilon}{2}+(\underline{\tau}-\varepsilon)n}{\ln L}-1= \frac{\ln \frac{\varepsilon}{2}-\ln L}{\ln L}+\frac{(\underline{\tau}-\varepsilon)n}{\ln L}\geq \frac{(\underline{\tau}-2\varepsilon)n}{\ln L}.
\end{equation}
By definition of $\psi(n)$ and $L,$  for any $n\geq N^*$ we have
$$J(w)\subset \{x\in B_{n}(w,\frac{\varepsilon}{2},f): d(f^{n+i}(x),f^{i}(z_n))<\frac{\varepsilon}{2} \text{ for any } 0\leq i\leq \psi(n)\}.$$
When $J(w)\neq\emptyset,$ choose $w'\in J(w).$ Then it is easy to see that $J(w)\subset B_{n+\psi(n)}(w',\varepsilon,f)$.
Hence, for any $N\geq N^*$, we get an  cover of $\mathfrak{S}(f,\varphi,\mathcal{Z},\mathbb{N})$ as:
$$\mathfrak{S}(f,\varphi,\mathcal{Z},\mathbb{N})\subset\bigcup_{n=N}^{\infty}\bigcup_{w\in \Gamma_{n,\frac{\varepsilon}{2}}, J(w)\neq\emptyset} B_{n+\psi(n)}(w',\varepsilon,f).$$
Let $s:=\frac{\ln L}{\ln L+\underline{\tau}}(h_{top}(f,X)+3\delta).$ For any $N\geq N^*$ we have
\begin{equation}\label{equation-EA}
	\begin{split}
		& \sum_{n=N}^{\infty}\sum_{w\in \Gamma_{n,\frac{\varepsilon}{2}}} e^{-s(n+\psi(n))}\\
		\leq & \sum_{n=N}^{\infty} e^{-s(n+\psi(n))}  \sharp \Gamma_{n,\frac{\varepsilon}{2}} \\
		\leq  &\sum_{n=N}^{\infty} e^{-\frac{\ln L}{\ln L+\underline{\tau}}(h_{top}(f,X)+3\delta)(n+\frac{(\underline{\tau}-2\varepsilon)n}{\ln L})+n(h_{top}(f,X)+2\delta)}\qquad (\text{using } (\ref{eq-upper-5}) \text{ and } (\ref{eq-upper-3}))\\
		=  &\sum_{n=N}^{\infty} e^{\frac{2\varepsilon n}{\ln L+\underline{\tau}}(h_{top}(f,X)+3\delta)-n\delta}\\
		\leq & \sum_{n=N}^{\infty} e^{-\frac{n\delta}{2}}.    \qquad\qquad\qquad\qquad\qquad\qquad\qquad\qquad\qquad\qquad\qquad(\text{using } (\ref{eq-upper-1}) )
	\end{split}
\end{equation}
Then
$
		\mathcal{M}_{N,\varepsilon}^{s}(\mathfrak{S}(f,\varphi,\mathcal{Z},\mathbb{N}))\leq \sum_{n=N}^{\infty}\sum_{w\in \Gamma_{n,\frac{\varepsilon}{2}}} e^{-s(n+\psi(n))}
		\leq  \sum_{n=N}^{\infty} e^{-\frac{n\delta}{2}}.
$
It implies $\mathcal{M}_{\varepsilon}^{s}(\mathfrak{S}(f,\varphi,\mathcal{Z},\mathbb{N}))=\lim\limits_{N\to \infty}\mathcal{M}_{N,\varepsilon}^{s}(\mathfrak{S}(f,\varphi,\mathcal{Z},\mathbb{N}))=0.$ So $$s=\frac{\ln L}{\ln L+\underline{\tau}}(h_{top}(f,X)+3\delta)\geq h_{top}(f,\mathfrak{S}(f,\varphi,\mathcal{Z},\mathbb{N}),\varepsilon)>h_{top}(f,\mathfrak{S}(f,\varphi,\mathcal{Z},\mathbb{N}))-\delta.$$
Let $\delta\to 0,$ we obtain (\ref{equa-thm-10}).

If further $f:X\to X$ is  $\lambda$-hyperbolic, we also  require that $0<\varepsilon<\varepsilon_h$ for some $\varepsilon_h>0$ such that for any $x\in X$ and $n\in\mathbb{N},$ one has $B_{n}(x,\varepsilon,f)\subset B_n(x,\varepsilon_h,\lambda,f).$ Then
we have $B_{n+\psi(n)}(w',\varepsilon,f)\subset B_{n+\psi(n)}(w',\varepsilon_h,\lambda,f)\subset B(w',\varepsilon_h e^{-(n+\psi(n)-1)\lambda})$.
Hence, for any $N\geq N^*$, we get an  cover of $\mathfrak{S}(f,\varphi,\mathcal{Z},\mathbb{N})$ as:
$$\mathfrak{S}(f,\varphi,\mathcal{Z},\mathbb{N})\subset\bigcup_{n=N}^{\infty}\bigcup_{w\in \Gamma_{n,\frac{\varepsilon}{2}}, J(w)\neq\emptyset} B(w',\varepsilon_h e^{-(n+\psi(n)-1)\lambda}).$$
Denote $\delta_N:=\varepsilon_h e^{-(N-1)\lambda}.$ Then $\varepsilon_h e^{-(n+\psi(n)-1)\lambda}<\delta_N$ for any $n\geq N$ and $\lim\limits_{N\to\infty}\delta_N=0.$
Let $s_h:=\frac{1}{\lambda}\frac{\ln L}{\ln L+\underline{\tau}}(h_{top}(f,X)+3\delta)=\frac{1}{\lambda}s.$ Then for any $N\geq N^*,$ by (\ref{equation-EA}) we have
\begin{equation*}
	\begin{split}
		\mathcal{H}_{\delta_N}^{s_h}(\mathfrak{S}(f,\varphi,\mathcal{Z},\mathbb{N}))&\leq \sum_{n=N}^{\infty}\sum_{w\in \Gamma_{n,\frac{\varepsilon}{2}}} \varepsilon_h^{s_h} e^{-(n+\psi(n)-1)\lambda s_h}\\
		&\leq \varepsilon_h^{s_h} e^s\sum_{n=N}^{\infty}\sum_{w\in \Gamma_{n,\frac{\varepsilon}{2}}}  e^{-(n+\psi(n))s}\\
		&\leq \varepsilon_h      ^{s_h} e^s \sum_{n=N}^{\infty} e^{-\frac{n\delta}{2}}.
	\end{split}
\end{equation*}
It implies $$\mathcal{H}^{s_h}(\mathfrak{S}(f,\varphi,\mathcal{Z},\mathbb{N})):=\lim\limits _{N \rightarrow \infty}\mathcal{H}_{\delta_N}^{s_h}(\mathfrak{S}(f,\varphi,\mathcal{Z},\mathbb{N}))=0.$$ So $$s_h=\frac{1}{\lambda}\frac{\ln L}{\ln L+\underline{\tau}}(h_{top}(f,X)+3\delta)\geq \operatorname{dim}_H \mathfrak{S}(f,\varphi,\mathcal{Z},\mathbb{N}).$$
Let $\delta\to 0,$ we obtain  (\ref{equa-thm-11}).

{\bf Proof of item(2.1)}
Fix $\delta>0.$  Since $\ln L_1>\underline{\tau},$ then there is $\underline{\tau}>\varepsilon>0$ such that (\ref{eq-upper-6}) and (\ref{eq-upper-7}) hold and
\begin{equation}\label{eq-upper-8}
	\frac{\ln L_1\ln L_2-\underline{\tau}\ln L_2}{\ln L_1\ln L_2+\underline{\tau}\ln L_1}\frac{2\varepsilon }{\ln L_2}(h_{top}(f,X)+3\delta)+\frac{2\varepsilon }{\ln L_1}(h_{top}(f,X)+2\delta)<\frac{\ln L_1-\underline{\tau}}{2\ln L_1}\delta
\end{equation}
\begin{equation}\label{eq-upper-11}
	\frac{\ln L_1\ln L_2-\underline{\tau}\ln L_2}{\ln L_1\ln L_2+\underline{\tau}\ln L_1}\frac{2\varepsilon }{\ln L_2}(h_{top}(f,X)+3\delta)<\frac{\ln L_1-\underline{\tau}}{\ln L_1}(h_{top}(f,X)+\frac{5}{2}\delta)
\end{equation}
And there is $N^*\in\mathbb{N}$ such that  (\ref{eq-upper-2}), (\ref{eq-upper-3})  and
\begin{equation}\label{eq-upper-41}
	-\varepsilon n<\ln \frac{\varepsilon}{2}-\ln L_i.
\end{equation}
 hold for any $n\geq N^*$ and $i\in\{1,2\}.$

By definition of $\mathfrak{S}(f,\varphi,\mathcal{Z},\mathbb{N})$ we have
$$\mathfrak{S}(f,\varphi,\mathcal{Z},\mathbb{N})\subset \bigcap_{N=1}^{\infty}\bigcup_{n=N}^{\infty}\{x\in X: d(f^n(x),z_n)<\varphi(n)\}.$$
For any $n\in\mathbb{N},$ let $\Gamma_{n,\frac{\varepsilon}{2}}$ be an $(n, \frac{\varepsilon}{2})$-separated set in $X$ with $\sharp \Gamma_{n,\frac{\varepsilon}{2}}=s(X, n, \frac{\varepsilon}{2}).$ Then we have $X=\bigcup_{w\in \Gamma_{n,\frac{\varepsilon}{2}}}B_{n}(w,\frac{\varepsilon}{2},f)$.
Denote $\psi_i(n):=\max\{m\in\mathbb{Z}: L_i^m\varphi(n)<\frac{\varepsilon}{2}\}$ for each $i\in \{1,2\}.$ Then  $\psi_i(n)\geq 0$ for any $n\geq N^*,$ and by (\ref{eq-upper-2}) and (\ref{eq-upper-41}) we have
\begin{equation}\label{eq-upper-51}
	\psi_i(n)\geq \frac{\ln \frac{\varepsilon}{2}-\ln \varphi(n)}{\ln L_i}-1\geq \frac{\ln \frac{\varepsilon}{2}+(\underline{\tau}-\varepsilon)n}{\ln L_i}-1= \frac{\ln \frac{\varepsilon}{2}-\ln L_i}{\ln L_i}+\frac{(\underline{\tau}-\varepsilon)n}{\ln L_i}\geq \frac{(\underline{\tau}-2\varepsilon)n}{\ln L_i}.
\end{equation}
For any $n\geq N^*,$ define
\begin{equation*}
	\Psi(n):=
	\begin{cases}
		n-\psi_1(n),&n-\psi_1(n)\geq N^*,\\
		N^*,&n-\psi_1(n)<N^*.
	\end{cases}
\end{equation*}
Then we have  $X=\bigcup_{w\in \Gamma_{\Psi(n),\frac{\varepsilon}{2}}}B_{\Psi(n)}(w,\frac{\varepsilon}{2})$ and thus
$$\{x\in X: d(f^n(x),z_n)<\varphi(n)\}\subset\bigcup_{w\in \Gamma_{\Psi(n),\frac{\varepsilon}{2}}}J^*(w)$$
where
$J^*(w):=\{x\in B_{\Psi(n)}(w,\frac{\varepsilon}{2},f): d(f^n(x),z_n)<\varphi(n)\}.$

By definition of $\Psi(n)$ and $L_i,$ if $n-\psi_1(n)\geq N^*$ we have
\begin{equation}\label{equation-EB}
	J^*(w)\subset \{x\in B_{n-\psi_1(n)}(w,\frac{\varepsilon}{2},f): d(f^{n+i}(x),f^{i}(z_n))<\frac{\varepsilon}{2} \text{ for any } -\psi_1(n)\leq i\leq \psi_2(n)\}.
\end{equation}
if $n-\psi(n)< N^*$ we have
\begin{equation}\label{equation-EC}
	J^*(w)\subset \{x\in B_{N^*}(w,\frac{\varepsilon}{2},f): d(f^{n+i}(x),f^{i}(z_n))<\frac{\varepsilon}{2} \text{ for any } -\psi_1(n)\leq i\leq \psi_2(n)\}.
\end{equation}
When $J^*(w)\neq\emptyset,$ choose $w'\in J^*(w).$ Then it is easy to see that $J^*(w)\subset B_{n+\psi_2(n)}(w',\varepsilon,f)$.
Hence, for any $N\geq N^*$, we get an  cover of $\mathfrak{S}(f,\varphi,\mathcal{Z},\mathbb{N})$ as:
$$\mathfrak{S}(f,\varphi,\mathcal{Z},\mathbb{N})\subset\bigcup_{n=N}^{\infty}\bigcup_{w\in \Gamma_{\Psi(n),\frac{\varepsilon}{2}}, J^*(w)\neq\emptyset} B_{n+\psi_2(n)}(w',\varepsilon,f).$$
Let $s:=\frac{\ln L_1\ln L_2-\underline{\tau}\ln L_2}{\ln L_1\ln L_2+\underline{\tau}\ln L_1}(h_{top}(f,X)+3\delta).$
If $n-\psi_1(n)\geq N^*$ we have
\begin{equation*}
	\begin{split}
		&\sum_{w\in \Gamma_{\Psi(n),\frac{\varepsilon}{2}}, J^*(w)\neq\emptyset} e^{-s(n+\psi_2(n))}\\
		&\leq e^{-s(n+\psi_2(n))}  \sharp \Gamma_{n-\psi_1(n),\frac{\varepsilon}{2}} \\
		&\leq  e^{-\frac{\ln L_1\ln L_2-\underline{\tau}\ln L_2}{\ln L_1\ln L_2+\underline{\tau}\ln L_1}(h_{top}(f,X)+3\delta)(n+\psi_2(n))+(n-\psi_1(n))(h_{top}(f,X)+2\delta)}\,\,\,\quad\quad\qquad\qquad(\text{using }  (\ref{eq-upper-3}))\\
		&\leq  e^{-\frac{\ln L_1\ln L_2-\underline{\tau}\ln L_2}{\ln L_1\ln L_2+\underline{\tau}\ln L_1}(h_{top}(f,X)+3\delta)(n+\frac{(\underline{\tau}-2\varepsilon)n}{\ln L_2})+(n-\frac{(\underline{\tau}-2\varepsilon)n}{\ln L_1})(h_{top}(f,X)+2\delta)}\quad\quad\quad\qquad\,(\text{using }  (\ref{eq-upper-5}))\\
		&=   e^{\frac{\ln L_1\ln L_2-\underline{\tau}\ln L_2}{\ln L_1\ln L_2+\underline{\tau}\ln L_1}\frac{2\varepsilon n}{\ln L_2}(h_{top}(f,X)+3\delta)+\frac{2\varepsilon n}{\ln L_1}(h_{top}(f,X)+2\delta)-\frac{\ln L_1-\underline{\tau}}{\ln L_1}n\delta}\\
		&\leq e^{-\frac{\ln L_1-\underline{\tau}}{2\ln L_1}n\delta}.      \,\,\,\qquad\qquad\qquad\qquad\qquad\qquad\qquad\qquad\qquad\qquad\quad\quad\qquad\qquad\qquad(\text{using }  (\ref{eq-upper-8}))
	\end{split}
\end{equation*}
If $n-\psi_1(n)<N^*$ we have
\begin{equation*}
	\begin{split}
		&\sum_{w\in \Gamma_{\Psi(n),\frac{\varepsilon}{2}}, J^*(w)\neq\emptyset} e^{-s(n+\psi_2(n))}\\
		&\leq e^{-s(n+\psi_2(n))}  \sharp \Gamma_{N^*,\frac{\varepsilon}{2}} \\
		&\leq  e^{-\frac{\ln L_1\ln L_2-\underline{\tau}\ln L_2}{\ln L_1\ln L_2+\underline{\tau}\ln L_1}(h_{top}(f,X)+3\delta)(n+\psi_2(n))}\sharp \Gamma_{N^*,\frac{\varepsilon}{2}}~~~~~\qquad\qquad\qquad\qquad\qquad\qquad\qquad\qquad~~~~~~~~~~~~~~~~~~~~~~~~(\text{using }  (\ref{eq-upper-3}))\\
		&\leq  e^{-\frac{\ln L_1\ln L_2-\underline{\tau}\ln L_2}{\ln L_1\ln L_2+\underline{\tau}\ln L_1}(h_{top}(f,X)+3\delta)(n+\frac{(\underline{\tau}-2\varepsilon)n}{\ln L_2})}\sharp \Gamma_{N^*,\frac{\varepsilon}{2}}~~~~~~~~~\,\qquad\qquad\qquad\quad\qquad\qquad\qquad\qquad~~~~~~~~~~~~~~~~~~~(\text{using }  (\ref{eq-upper-5}))\\
		&=   C^* e^{\frac{\ln L_1\ln L_2-\underline{\tau}\ln L_2}{\ln L_1\ln L_2+\underline{\tau}\ln L_1}\frac{2\varepsilon n}{\ln L_2}(h_{top}(f,X)+3\delta)-\frac{\ln L_1-\underline{\tau}}{\ln L_1}n(h_{top}(f,X)+3\delta)}~~~~~~~~~\qquad\qquad\qquad\qquad~~~~~~~~~~~(\text{denote }C^*:=\sharp \Gamma_{N^*,\frac{\varepsilon}{2}})\\
		&\leq C^* e^{-\frac{\ln L_1-\underline{\tau}}{2\ln L_1}n\delta}.      ~~~~~~~~~~~~~~~~~~~~~~~~~~~~~~~~~~~~~~~~~~~~\qquad\qquad\qquad\qquad\qquad\qquad\qquad\qquad\qquad\qquad\qquad\qquad\qquad\qquad\qquad~~~~~~~~~~~~~~~~~~~~~~~(\text{using }  (\ref{eq-upper-11}))
	\end{split}
\end{equation*}
Then for any $N\geq N^{*}$ we have
\begin{equation}\label{equation-EF}
	\begin{split}
		\mathcal{M}_{N,\varepsilon}^{s}(\mathfrak{S}(f,\varphi,\mathcal{Z},\mathbb{N}))\leq \sum_{n=N}^{\infty}\sum_{w\in \Gamma_{\Psi(n),\frac{\varepsilon}{2}}, J^*(w)\neq\emptyset} e^{-s(n+\psi_2(n))}
		\leq  \max\{1,C^*\}\sum_{n=N}^{\infty} e^{-\frac{\ln L_1-\underline{\tau}}{2\ln L_1}n\delta}.
	\end{split}
\end{equation}
It implies $\mathcal{M}_{\varepsilon}^{s}(\mathfrak{S}(f,\varphi,\mathcal{Z},\mathbb{N}))=\lim\limits_{N\to \infty}\mathcal{M}_{N,\varepsilon}^{s}(\mathfrak{S}(f,\varphi,\mathcal{Z},\mathbb{N}))=0.$ So $$s=\frac{\ln L_1\ln L_2-\underline{\tau}\ln L_2}{\ln L_1\ln L_2+\underline{\tau}\ln L_1}(h_{top}(f,X)+3\delta)\geq h_{top}(f,\mathfrak{S}(f,\varphi,\mathcal{Z},\mathbb{N}),\varepsilon)>h_{top}(f,\mathfrak{S}(f,\varphi,\mathcal{Z},\mathbb{N}))-\delta.$$
Let $\delta\to 0,$ we obtain  (\ref{equa-thm-12}).

If further $f:X\to X$ is  $(\lambda_1,\lambda_2)$-hyperbolic, we also  require that $0<\varepsilon<\varepsilon_h$ for some $\varepsilon_h>0$ such that for any $x\in X$ and $n\in\mathbb{N},$ one has $B_{n}(x,\varepsilon,f)\subset B_n(x,\varepsilon_h,\lambda_1,\lambda_2,f).$ Take $\tilde{N}^*\geq N^*$ such that $\frac{\lambda_2(n+\psi_{2}(n))}{\lambda_1}\geq N^*+1$ for any $n\geq \tilde{N}^*.$ Denote $\psi_{3}(n)=\lceil\frac{\lambda_2(n+\psi_{2}(n))}{\lambda_1}\rceil.$
Then we have  $\psi_{3}(n)\geq N^*$ and
\begin{equation}\label{equation-EE}
	\frac{\lambda_2(n+\psi_{2}(n))}{\lambda_1}\leq \psi_{3}(n)\leq \frac{\lambda_2(n+\psi_{2}(n))}{\lambda_1}+1.
\end{equation}
Since  $X=\bigcup_{w\in \Gamma_{\psi_3(n),\frac{\varepsilon}{2}}} B_{\psi_3(n)}(w,\frac{\varepsilon}{2},f)$, then $X=\bigcup_{w\in \Gamma_{\psi_3(n),\frac{\varepsilon}{2}}}f^{\psi_{3}(n)}B_{\psi_3(n)}(w,\frac{\varepsilon}{2},f)$ and thus
$$\{x\in X: d(f^n(x),z_n)<\varphi(n)\}\subset \bigcup_{w\in \Gamma_{\Psi(n),\frac{\varepsilon}{2}}} \bigcup_{\tilde{w}\in \Gamma_{\psi_3(n),\frac{\varepsilon}{2}}}J^*(w)\cap f^{\psi_{3}(n)}B_{\psi_3(n)}(\tilde{w},\frac{\varepsilon}{2},f).$$
Combining with (\ref{equation-EB}) and (\ref{equation-EC},)
when $J^*(w)\cap f^{\psi_{3}(n)}B_{\psi_3(n)}(\tilde{w},\frac{\varepsilon}{2},f)\neq\emptyset,$ choose $w''\in J^*(w)\cap f^{\psi_{3}(n)}B_{\psi_3(n)}(\tilde{w},\frac{\varepsilon}{2},f).$ Then it is easy to see that $J^*(w)\cap f^{\psi_{3}(n)}B_{\psi_3(n)}(\tilde{w},\frac{\varepsilon}{2},f)\subset B_{n+\psi_2(n)}(w'',\varepsilon,f)\cap B_{\psi_3(n)}(w'',\varepsilon,f^{-1})=f^{\psi_3(n)}B_{n+\psi_2(n)+\psi_{3}(n)}(f^{-\psi_3(n)}(w''),\varepsilon,f)$.
Since $f:X\to X$ is  $(\lambda_1,\lambda_2)$-hyperbolic,
then
we have
\begin{equation*}
	\begin{split}
		&B_{n+\psi_2(n)+\psi_{3}(n)}(f^{-\psi_3(n)}(w''),\varepsilon,f)\\
		\subset& B_{n+\psi_2(n)+\psi_{3}(n)}(f^{-\psi_3(n)}(w''),\varepsilon_h,\lambda_1,\lambda_2,f)\\
		\subset &\{y\in X:d(f^{\psi_3(n)}(f^{-\psi_3(n)}(w'')),f^{\psi_3(n)}(y))< \varepsilon_h e^{-\min\{\psi_3(n)\lambda_1,(n+\psi_2(n)+\psi_{3}(n)-1-\psi_3(n))\lambda_2\} }\}\\
		=&\{y\in X:d(w'',f^{\psi_3(n)}(y))< \varepsilon_h e^{-\min\{\psi_3(n)\lambda_1,(n+\psi_2(n)-1)\lambda_2\} }\}\\
		=&\{y\in X:d(w'',f^{\psi_3(n)}(y))< \varepsilon_h e^{-(n+\psi_2(n)-1)\lambda_2 }\}\qquad\qquad\qquad\qquad\qquad\qquad~~~(\text{using }  (\ref{equation-EE}))\\
		=&f^{-\psi_3(n)}B(w'',\varepsilon_h e^{-(n+\psi_2(n)-1)\lambda_2}).
	\end{split}
\end{equation*}
Hence, $J^*(w)\cap f^{\psi_{3}(n)}B_{\psi_3(n)}(\tilde{w},\frac{\varepsilon}{2},f)\subseteq B(w'',\varepsilon_h e^{-(n+\psi_2(n)-1)\lambda_2}).$
For any $N\geq \tilde{N}^*$, we get an  cover of $\mathfrak{S}(f,\varphi,\mathcal{Z},\mathbb{N})$ as:
$$\mathfrak{S}(f,\varphi,\mathcal{Z},\mathbb{N})\subset\bigcup_{n=N}^{\infty}\{x\in X: d(f^n(x),z_n)<\varphi(n)\}\subset \bigcup_{n=N}^{\infty}\bigcup_{w\in \Gamma_{\Psi(n),\frac{\varepsilon}{2}}} \bigcup_{\tilde{w}\in \Gamma_{\psi_3(n),\frac{\varepsilon}{2}}}B(w'',\varepsilon_h e^{-(n+\psi_2(n)-1)\lambda_2}).$$
Denote $\delta_N:=\varepsilon_h e^{-(N-1)\lambda_2}.$ Then $\varepsilon_h e^{-(n+\psi_2(n)-1)\lambda_2}<\delta_N$ for any $n\geq N$ and $\lim\limits_{N\to\infty}\delta_N=0.$
Let $$s_h:=\frac{1}{\lambda_2}\frac{\ln L_1\ln L_2-\underline{\tau}\ln L_2}{\ln L_1\ln L_2+\underline{\tau}\ln L_1}(h_{top}(f,X)+3\delta)+\frac{1}{\lambda_1}(h_{top}(f,X)+2\delta)=\frac{1}{\lambda_2}s+\frac{1}{\lambda_1}(h_{top}(f,X)+2\delta).$$ Then for any $N\geq \tilde{N}^*,$  we have
\begin{equation*}
	\begin{split}
		&\mathcal{H}_{\delta_N}^{s_h}(\mathfrak{S}(f,\varphi,\mathcal{Z},\mathbb{N}))\\
		\leq &\sum_{n=N}^{\infty}\sum_{w\in \Gamma_{\Psi(n),\frac{\varepsilon}{2}}} \sum_{\tilde{w}\in \Gamma_{\psi_3(n),\frac{\varepsilon}{2}}}\varepsilon_h^{s_h} e^{-(n+\psi_2(n)-1)\lambda_2 s_h}\\
		\leq &\varepsilon_h^{s_h} e^{\lambda_2s_h}\sum_{n=N}^{\infty}\sum_{w\in \Gamma_{\Psi(n),\frac{\varepsilon}{2}}}  e^{-(n+\psi_2(n))\lambda_2 s_h}e^{\psi_3(n)(h_{top}(f,X)+2\delta)}\\
		=& \varepsilon_h^{s_h} e^{\lambda_2s_h}\sum_{n=N}^{\infty}\sum_{w\in \Gamma_{\Psi(n),\frac{\varepsilon}{2}}}  e^{-(n+\psi_2(n))s}e^{(-\frac{\lambda_2(n+\psi_2(n))}{\lambda_1}+\psi_3(n))(h_{top}(f,X)+2\delta)}\\
		\leq & \varepsilon_h^{s_h} e^{\lambda_2s_h}e^{h_{top}(f,X)+2\delta}\sum_{n=N}^{\infty}\sum_{w\in \Gamma_{\Psi(n),\frac{\varepsilon}{2}}}  e^{-(n+\psi_2(n))s}~~~~~\qquad\quad\qquad\qquad\qquad~~~~~~~~~~~~~~~(\text{using }  (\ref{equation-EE}))\\
		\leq & \varepsilon_h^{s_h} e^{\lambda_2s_h}e^{h_{top}(f,X)+2\delta}\max\{1,C^*\}\sum_{n=N}^{\infty} e^{-\frac{\ln L_1-\underline{\tau}}{2\ln L_1}n\delta}.~~~~~~~\qquad\qquad\qquad\qquad~~~~~~~~~~~~(\text{using }  (\ref{equation-EF}))
	\end{split}
\end{equation*}
It implies $$\mathcal{H}^{s_h}(\mathfrak{S}(f,\varphi,\mathcal{Z},\mathbb{N})):=\lim\limits _{N \rightarrow \infty}\mathcal{H}_{\delta_N}^{s_h}(\mathfrak{S}(f,\varphi,\mathcal{Z},\mathbb{N}))=0.$$ So $$s_h=\frac{1}{\lambda_2}\frac{\ln L_1\ln L_2-\underline{\tau}\ln L_2}{\ln L_1\ln L_2+\underline{\tau}\ln L_1}(h_{top}(f,X)+3\delta)+\frac{1}{\lambda_1}(h_{top}(f,X)+2\delta)\geq \operatorname{dim}_H \mathfrak{S}(f,\varphi,\mathcal{Z},\mathbb{N}).$$
Let $\delta\to 0,$ we obtain  (\ref{equa-thm-13}).

{\bf Proof of item(2.2)}
Assume that $(X,f)$ is $(L_1,L_2)$-bi-Lipschitz with $\ln L_1=\underline{\tau}.$ Fix $\delta>0.$ Then $(X,f)$ is also $(L^\delta_1,L_2)$-bi-Lipschitz where $L_1^\delta=L_1+\delta$. Since $\ln L_1^\delta>\underline{\tau},$ by Theorem \ref{Thm-upper-bound}(2.1) we have
$$h_{top}(f,\mathfrak{S}(f,\varphi,\mathcal{Z},\mathbb{N}))\leq \frac{\ln L_1^\delta\ln L_2-\underline{\tau}\ln L_2}{\ln L_1^\delta\ln L_2+\underline{\tau}\ln L_1^\delta}h_{top}(f,X).$$
Let $\delta\to 0,$ we have
$$h_{top}(f,\mathfrak{S}(f,\varphi,\mathcal{Z},\mathbb{N}))\leq  \frac{\ln L_1\ln L_2-\underline{\tau}\ln L_2}{\ln L_1\ln L_2+\underline{\tau}\ln L_1}h_{top}(f,X)=\frac{\ln L_1\ln L_2-\ln L_1\ln L_2}{\ln L_1\ln L_2+\ln L_1\ln L_1}h_{top}(f,X)=0$$
and thus get  (\ref{equa-thm-14}).

If further $f:X\to X$ is  $(\lambda_1,\lambda_2)$-hyperbolic, then
$$\operatorname{dim}_H \mathfrak{S}(f,\varphi,\mathcal{Z},\mathbb{N})\leq (\frac{1}{\lambda_1}+\frac{1}{\lambda_2}\frac{\ln L_1^\delta\ln L_2-\underline{\tau}\ln L_2}{\ln L^\delta_1\ln L_2+\underline{\tau}\ln L^\delta_1})h_{top}(f,X).$$
Let $\delta\to 0,$ we have
$$\operatorname{dim}_H \mathfrak{S}(f,\varphi,\mathcal{Z},\mathbb{N})\leq (\frac{1}{\lambda_1}+\frac{1}{\lambda_2}\frac{\ln L_1\ln L_2-\underline{\tau}\ln L_2}{\ln L_1\ln L_2+\underline{\tau}\ln L_1})h_{top}(f,X)=\frac{1}{\lambda_1}h_{top}(f,X)$$
and thus get  (\ref{equa-thm-15}).

{\bf Proof of item(2.3)}
Now we assume that $(X,f)$ is $(L_1,L_2)$-bi-Lipschitz with $\ln L_1<\underline{\tau}.$ Fix $\delta>0.$  Then there is $\frac{1}{2}(\underline{\tau}-\ln L_1)>\varepsilon>0$ such that (\ref{eq-upper-6}) holds. Take $N^*\in\mathbb{N}$ such that  (\ref{eq-upper-2})  and
\begin{equation}\label{eq-upper-42}
	-\varepsilon n<\ln \frac{\varepsilon}{2}-\ln L_1.
\end{equation}
 hold for any $n\geq N^*.$  Denote $\psi_1(n):=\max\{m\in\mathbb{Z}: L_1^m\varphi(n)<\frac{\varepsilon}{2}\}.$ Then  $\psi_1(n)\geq 0$ for any $n\geq N^*,$ and by (\ref{eq-upper-2}) and (\ref{eq-upper-42}) we have
 \begin{equation}\label{eq-upper-52}
 	\psi_1(n)\geq \frac{\ln \frac{\varepsilon}{2}-\ln \varphi(n)}{\ln L_1}-1\geq \frac{\ln \frac{\varepsilon}{2}+(\underline{\tau}-\varepsilon)n}{\ln L_1}-1= \frac{\ln \frac{\varepsilon}{2}-\ln L_1}{\ln L_1}+\frac{(\underline{\tau}-\varepsilon)n}{\ln L_1}\geq \frac{(\underline{\tau}-2\varepsilon)n}{\ln L_1}.
 \end{equation}
 Since $\frac{1}{2}(\underline{\tau}-\ln L_1)>\varepsilon,$ by (\ref{eq-upper-52})  we have
\begin{equation*}
	\psi_1(n)\geq \frac{(\underline{\tau}-2\varepsilon)n}{\ln L_1}>n.
\end{equation*}
By definition, for any $N\in\mathbb{N}$ we have
$$\mathfrak{S}(f,\varphi,\mathcal{Z},\mathbb{N})\subset \bigcup_{n=N}^{\infty} J^{**}(n)$$
where $J^{**}(n):=\{x\in X: d(f^n(x),z_n)<\varphi(n)\}.$
By definition of $\psi_1(n)$ and $L,$ for any $n\geq N^*$ we have
$$J^{**}(n)\subset \{x\in X: d(f^{n+i}(x),f^{i}(z_n))<\frac{\varepsilon}{2} \text{ for any } -\psi_1(n)\leq i\leq 0\}\subset B_{n}(f^{-n}(z_n),\frac{\varepsilon}{2},f).$$
Let $s:=\delta$. Then $N\geq N^*$ we have
\begin{equation*}
	\begin{split}
		\mathcal{M}_{N,\varepsilon}^{s}(\mathfrak{S}(f,\varphi,\mathcal{Z},\mathbb{N}))&\leq \sum_{n=N}^{\infty} e^{-n\delta}\\
	\end{split}
\end{equation*}
It implies $\mathcal{M}_{\varepsilon}^{s}(\mathfrak{S}(f,\varphi,\mathcal{Z},\mathbb{N}))=\lim\limits_{N\to \infty}\mathcal{M}_{N,\varepsilon}^{s}(\mathfrak{S}(f,\varphi,\mathcal{Z},\mathbb{N}))=0.$ So $$s=\delta>h_{top}(f,\mathfrak{S}(f,\varphi,\mathcal{Z},\mathbb{N}))-\delta.$$
Let $\delta\to 0,$ we obtain (\ref{equa-thm-16}).

If further $f:X\to X$ is  $(\lambda_1,\lambda_2)$-hyperbolic, we also  require that $0<\varepsilon<\varepsilon_h$ for some $\varepsilon_h>0$ such that for any $x\in X$ and $n\in\mathbb{N},$ one has $B_{n}(x,\varepsilon,f)\subset B_n(x,\varepsilon_h,\lambda_1,\lambda_2,f).$  Then
\begin{equation*}
	\begin{split}
		&J^{**}(n)\\
		\subset &\{x\in X: d(f^{n+i}(x),f^{i}(z_n))<\frac{\varepsilon}{2} \text{ for any } -\psi_1(n)\leq i\leq 0\}\\
		\subset &f^{\psi_{1}(n)-n}B_{\psi_1(n)}(f^{-\psi_1(n)}(z_n),\frac{\varepsilon}{2},f)\\
		\subset & f^{\psi_{1}(n)-n}B_{\psi_1(n)}(f^{-\psi_1(n)}(z_n),\varepsilon_h,\lambda_1,\lambda_2,f)\\
		\subset &f^{\psi_{1}(n)-n}\{y\in X:d(f^{\psi_1(n)-n}(f^{-\psi_1(n)}(z_n)),f^{\psi_1(n)-n}(y))< \varepsilon_h e^{-\min\{(\psi_1(n)-n)\lambda_1,(\psi_1(n)-1-\psi_1(n)+n)\lambda_2\} }\}\\
		=&f^{\psi_{1}(n)-n}\{y\in X:d(f^{-n}(z_n),f^{\psi_1(n)-n}(y))< \varepsilon_h e^{-\min\{(\psi_1(n)-n)\lambda_1,(n-1)\lambda_2\} }\}\\
		=&\{y\in X:d(f^{-n}(z_n),y)< \varepsilon_h e^{-\min\{(\psi_1(n)-n)\lambda_1,(n-1)\lambda_2\} }\}\\
		\subset &\{y\in X:d(f^{-n}(z_n),y)<\max\{ \varepsilon_h e^{-(\psi_1(n)-n)\lambda_1 },\varepsilon_h e^{-(n-1)\lambda_2})\}\}.
	\end{split}
\end{equation*}
By (\ref{eq-upper-52}), we have
\begin{equation}
	e^{-(\psi_1(n)-n)\lambda_1 }\leq    e^{-(\frac{(\underline{\tau}-2\varepsilon)n}{\ln L_1}-n)\lambda_1 } =e^{-\frac{\underline{\tau}-\ln L_1-2\varepsilon}{\ln L_1}\lambda_1 n}.
\end{equation}
Denote $\delta_N:=\varepsilon_h (e^{-\frac{\underline{\tau}-\ln L_1-2\varepsilon}{\ln L_1}\lambda_1 N}+e^{-(N-1)\lambda_2}).$ Then $\max\{ \varepsilon_h e^{-(\psi_1(n)-n)\lambda_1 },\varepsilon_h e^{-(n-1)\lambda_2})\}<\delta_N$ for any $n\geq N$ and $\lim\limits_{N\to\infty}\delta_N=0.$ Fix $s_h>0$. Then
\begin{equation*}
	\begin{split}
		\mathcal{H}_{\delta_N}^{s_h}(\mathfrak{S}(f,\varphi,\mathcal{Z},\mathbb{N}))&\leq \varepsilon_h^{s_h}\sum_{n=N}^{\infty}  \max\{  e^{-(\psi_1(n)-n)\lambda_1s_h }, e^{-(n-1)\lambda_2s_h}\}\\
		&\leq \varepsilon_h^{s_h}\sum_{n=N}^{\infty}    e^{-(\psi_1(n)-n)\lambda_1s_h }+e^{-(n-1)\lambda_2s_h}\\
		&= \varepsilon_h^{s_h}\sum_{n=N}^{\infty}    e^{-(\psi_1(n)-n)\lambda_1s_h }+\varepsilon_h^{s_h}\sum_{n=N}^{\infty} e^{-(n-1)\lambda_2s_h}\\
		&\leq \varepsilon_h^{s_h}\sum_{n=N}^{\infty}  e^{-\frac{\underline{\tau}-\ln L_1-2\varepsilon}{\ln L_1}\lambda_1 ns_h}+\varepsilon_h^{s_h}\sum_{n=N}^{\infty} e^{-(n-1)\lambda_2s_h}.
	\end{split}
\end{equation*}
It implies $$\mathcal{H}^{s_h}(\mathfrak{S}(f,\varphi,\mathcal{Z},\mathbb{N})):=\lim\limits _{N \rightarrow \infty}\mathcal{H}_{\delta_N}^{s_h}(\mathfrak{S}(f,\varphi,\mathcal{Z},\mathbb{N}))=0.$$ So $$s_h\geq \operatorname{dim}_H \mathfrak{S}(f,\varphi,\mathcal{Z},\mathbb{N}).$$
Let $s_h\to 0,$ we obtain (\ref{equa-thm-17}).  \qed

\section{Applications}\label{sec-app}
\subsection{Proof of Theorem \ref{Thm-hyper}}
Given $\varepsilon>0$, for each $x \in \Lambda$, set
$$
V_{\varepsilon}^s(x):=\left\{y : \lim\limits_{n\to+\infty}d(f^n(x), f^n(y))=0 \text{ and  }d(f^n(x), f^n(y))\leq\varepsilon \text { for every } n \geq 0\right\},
$$
and
$$
V_{\varepsilon}^u(x):=\left\{y :  \lim\limits_{n\to-\infty}d(f^n(x), f^n(y))=0 \text{ and  }d(f^n(x), f^n(y))\leq\varepsilon \text { for every } n \leq 0\right\}.
$$
By \cite[Theorem 6.2 and IV.1]{Shub-1987}, there is $\varepsilon^*>0$ such that for any $0<\varepsilon<\varepsilon^*,$ $V_{\varepsilon}^s(x)$ and $V_{\varepsilon}^u(x)$ are embedded disks. Moreover, for any $\lambda_1<\lambda'_1<1$ and $\lambda_2<\lambda'_2<1$ one has
$$
d(f^n(x), f^n(y)) \leq (\lambda'_1)^n d(x, y) \text{ for any } y \in V^s_\varepsilon(x) \text{ and } n\geq 0
$$
and
$$
d(f^{-n}(x), f^{-n}(y)) \leq (\lambda'_2)^n d(x, y) \text{ for any } y \in V^u_\varepsilon(x) \text{ and } n\geq 0.
$$
By \cite[Proposition 7.2]{Shub-1987}, $\Lambda$ has a product structure, that is, for any $0<\varepsilon^{**}<\varepsilon^*$  there exists $\delta^{**}=\delta^{**}(\varepsilon^{**})>0$ such that
$$
\sharp \left(V_{\varepsilon^{**}}^s(x) \cap V_{\varepsilon^{**}}^u(y)\right)=1
$$
whenever $x, y \in \Lambda$ and $d(x, y) \leq \delta^{**}$.
\begin{Prop}\label{Prop-hype}
	If $\Lambda\subset M$ is a uniformly $(\lambda_1,\lambda_2)$-hyperbolic set for  a diffeomorphism $f,$ then  $f:\Lambda\to \Lambda$ is  $(\ln(\lambda'_1)^{-1},\ln(\lambda'_2)^{-1})$-hyperbolic for any $\lambda_1<\lambda'_1<1$ and $\lambda_2<\lambda'_2<1$.
\end{Prop}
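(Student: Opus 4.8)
The plan is to deduce this from the local stable and unstable manifolds $V^s_{\varepsilon^{**}}$, $V^u_{\varepsilon^{**}}$ and the local product structure recalled just above the statement. Fix $\lambda_1<\lambda'_1<1$ and $\lambda_2<\lambda'_2<1$, so that the quoted exponential estimates hold, and fix once and for all some $\varepsilon^{**}\in(0,\varepsilon^*)$ with associated product-structure constant $\delta^{**}=\delta^{**}(\varepsilon^{**})$; for $a,b\in\Lambda$ with $d(a,b)\le\delta^{**}$ write $[a,b]$ for the unique point of $V^s_{\varepsilon^{**}}(a)\cap V^u_{\varepsilon^{**}}(b)$. Since $[a,a]=a$ and the bracket is continuous on the compact set $\{(a,b)\in\Lambda^2:d(a,b)\le\delta^{**}\}$, there is a function $\beta$ with $\lim_{\varepsilon\to 0^+}\beta(\varepsilon)=0$ such that $d(a,b)\le\varepsilon<\delta^{**}$ forces $\max\{d(a,[a,b]),d(b,[a,b])\}\le\beta(\varepsilon)$. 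Given $\varepsilon_h>0$, I would then choose $\varepsilon\in(0,\delta^{**})$ so small that $\beta(\varepsilon)+\varepsilon\le\min\{\varepsilon^{**},\varepsilon_h/2\}$; this $\varepsilon$ is the one demanded by the definition (C2) of $(\ln(\lambda'_1)^{-1},\ln(\lambda'_2)^{-1})$-hyperbolic.

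Next, take $x\in\Lambda$, $n\ge1$, and $y\in B_n(x,\varepsilon,f)$, so $d(f^i(x),f^i(y))<\varepsilon$ for $0\le i\le n-1$. The key is to split the closeness of $y$ to the orbit of $x$ into a stable and an unstable part by bracketing \emph{at the right endpoint of the orbit segment}: set $p=f^{n-1}(x)$, $q=f^{n-1}(y)$, $u=[q,p]\in V^s_{\varepsilon^{**}}(q)\cap V^u_{\varepsilon^{**}}(p)$, and $w=f^{-(n-1)}(u)$, noting $d(p,u),d(q,u)\le\beta(\varepsilon)$. From $u\in V^u_{\varepsilon^{**}}(p)$ and the backward contraction $d(f^{-j}(p),f^{-j}(u))\le(\lambda'_2)^{j}d(p,u)$, read at $j=n-1-i$ (so $f^{-j}(p)=f^i(x)$ and $f^{-j}(u)=f^i(w)$), I obtain $d(f^i(x),f^i(w))\le(\lambda'_2)^{\,n-1-i}\beta(\varepsilon)$ for $0\le i\le n-1$; hence
\[
d(f^i(w),f^i(y))\le d(f^i(w),f^i(x))+d(f^i(x),f^i(y))<\beta(\varepsilon)+\varepsilon\le\varepsilon^{**}\qquad(0\le i\le n-1).
\]

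The remaining step, and the delicate one, is to promote this to $w\in V^s_{\varepsilon^{**}}(y)$. The point is that $f^{n-1}(w)=u$ already lies in $V^s_{\varepsilon^{**}}(q)=V^s_{\varepsilon^{**}}(f^{n-1}(y))$, so for $i\ge n-1$ one has $d(f^i(w),f^i(y))\le(\lambda'_1)^{\,i-(n-1)}d(u,q)\le\beta(\varepsilon)\le\varepsilon^{**}$ and $d(f^i(w),f^i(y))\to0$; combined with the previous display (which covers $0\le i\le n-1$) this shows $w\in W^s(y)$ with forward orbit staying $\varepsilon^{**}$-close, i.e. $w\in V^s_{\varepsilon^{**}}(y)$, and in particular $d(w,y)\le\beta(\varepsilon)+\varepsilon$. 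Applying the forward contraction on $V^s_{\varepsilon^{**}}(y)$ gives $d(f^i(w),f^i(y))\le(\lambda'_1)^{i}\bigl(\beta(\varepsilon)+\varepsilon\bigr)$, and the triangle inequality yields, for $0\le i\le n-1$,
\begin{align*}
d(f^i(x),f^i(y)) &\le (\lambda'_2)^{\,n-1-i}\beta(\varepsilon)+(\lambda'_1)^{i}\bigl(\beta(\varepsilon)+\varepsilon\bigr)\\
&\le 2\bigl(\beta(\varepsilon)+\varepsilon\bigr)\max\{(\lambda'_1)^{i},(\lambda'_2)^{\,n-1-i}\}\\
&\le \varepsilon_h\, e^{-\min\{i\ln(\lambda'_1)^{-1},\,(n-1-i)\ln(\lambda'_2)^{-1}\}},
\end{align*}
which is precisely $y\in B_n(x,\varepsilon_h,\ln(\lambda'_1)^{-1},\ln(\lambda'_2)^{-1},f)$. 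Since $x$ and $n$ were arbitrary, $f:\Lambda\to\Lambda$ is $(\ln(\lambda'_1)^{-1},\ln(\lambda'_2)^{-1})$-hyperbolic.

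I expect the main obstacle to be exactly that last verification that $w$ lies in the \emph{local} stable manifold of $y$: the hypothesis only controls orbits on $\{0,\dots,n-1\}$, so the trick is to exploit that $f^{n-1}(w)=u$ sits in the local stable manifold at time $n-1$ to cover all larger times for free. The only other point requiring attention is that the modulus $\beta$ can be chosen uniformly in $x$ and $n$, which is where compactness of $\Lambda$ and continuity of the local product structure enter; after that the argument is just triangle inequalities and the exponential rates quoted from \cite{Shub-1987}.
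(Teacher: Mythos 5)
Your proof is correct and is essentially the paper's argument reflected left-to-right: the paper brackets at time $0$, taking $z\in V^s_{\varepsilon^{**}}(x)\cap V^u_{\varepsilon^{**}}(y)$, propagates the stable rate forward and then promotes $f^{n-1}(z)$ into $V^u_{\varepsilon_h/2}(f^{n-1}(y))$, whereas you bracket at time $n-1$ and promote $w$ into $V^s_{\varepsilon^{**}}(y)$; both hinge on the same local product structure and the exponential rates on $V^s_\varepsilon$, $V^u_\varepsilon$. Your modulus $\beta(\varepsilon)$ is unnecessary (the definitions of $V^s_{\varepsilon^{**}}$ and $V^u_{\varepsilon^{**}}$ already give $d(u,p),d(u,q)\le\varepsilon^{**}$, which is how the paper avoids invoking continuity of the bracket), but this costs nothing.
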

\begin{proof}
	Fix $\varepsilon_h>0.$ Denote $\varepsilon^{**}=\min\{\frac{\varepsilon_h}{4},\varepsilon^*\}$ and $\varepsilon=\min\{\delta^{**}(\varepsilon^{**}),\varepsilon^{**}\}$.  Fix $x,y\in \Lambda$ and $n\in\mathbb{N}$  such that $d_n(x,y)<\varepsilon.$ Since $d(x,y)<\varepsilon\leq\delta^{**},$ we have $
	\sharp \left(V_{\varepsilon^{**}}^s(x) \cap V_{\varepsilon^{**}}^u(y)\right)=1.
	$ Denote $V_{\varepsilon^{**}}^s(x) \cap V_{\varepsilon^{**}}^u(y)=\{z\}.$ Then
	$$
	d(f^i(x), f^i(z)) \leq (\lambda'_1)^i d(x, z)\leq  (\lambda'_1)^i \varepsilon^{**} \text{ for any } 0\leq i\leq n-1.
	$$
	Combing this inequality with $d_n(x,y)<\varepsilon^{**}$ we have
	$$
	d(f^i(y), f^i(z)) \leq d(f^i(y), f^i(x)) +d(f^i(x), f^i(z))  \leq \varepsilon^{**}+\varepsilon^{**}<\frac{\varepsilon_h}{2}  \text{ for any } 0\leq i\leq n-1.
	$$
	It implies $f^{n-1}(z)\in V^u_{\frac{\varepsilon_h}{2}}(f^{n-1}(y)),$ and thus
	$$
	d(f^{i}(z), f^{i}(y)) \leq  (\lambda'_2)^{n-1-i} d(f^{n-1}(z), f^{n-1}(y))\leq (\lambda'_2)^{n-1-i} \frac{\varepsilon_h}{2}  \text{ for any } 0\leq i\leq n-1.
	$$
	So
	\begin{equation*}
		\begin{split}
			d(f^{i}(x), f^{i}(y))&\leq d(f^{i}(x), f^{i}(z)+d(f^{i}(z), f^{i}(y)))\\
			&\leq  (\lambda'_1)^i \varepsilon^{**}+(\lambda'_2)^{n-1-i} \frac{\varepsilon_h}{2} \\
			&\leq \varepsilon_h\max \{(\lambda'_1)^i ,(\lambda'_2)^{n-1-i}\}\\
			&=\varepsilon_h e^{-\min\{-i\ln \lambda'_1, -(n-1-i)\ln\lambda'_2\}}\\
			&=\varepsilon_h e^{-\min\{i\ln (\lambda'_1)^{-1}, (n-1-i)\ln(\lambda'_2)^{-1}\}}
		\end{split}
	\end{equation*}
	for any $0\leq i\leq n-1.$
\end{proof}

	By the definition of $L_1$ and $L_2$, $f:\Lambda\to\Lambda$ is a $(L_1,L_2)$-bi-Lipschitz homeomorphism and $\lambda_1^{-1}\leq L_1$. So by Proposition \ref{Prop-hyper-spec}, Corollary \ref{theorem-shrinking-2}, Theorem \ref{Thm-upper-bound} and Remark \ref{Rem-under}, we obtain Theorem \ref{Thm-hyper}. \qed

\subsection{Proof of Theorem \ref{Thm-hyper-torus}}
Given a matrix $B \in S L(d, \mathbb{Z})$, denote $$||B||:=\sup_{x\in \mathbb{R}^d}\frac{|Bx|}{|x|}, m(B):=\inf_{x\in \mathbb{R}^d}\frac{|Bx|}{|x|}.$$ Denote by $\lambda_1, \ldots, \lambda_d$ the eigenvalues of $B$ counted with multiplicity and ordered so that
$$
0<\left|\lambda_1\right| \leq\left|\lambda_2\right| \leq \ldots \leq\left|\lambda_d\right| .
$$
By the spectral radius formula, one has
$$\lim\limits_{m\to\infty}||B^n||^{\frac{1}{n}}=|\lambda_d|,$$
$$\lim\limits_{m\to\infty}(m(B^n))^{\frac{1}{n}}=\lim\limits_{m\to\infty}||B^{-n}||^{-\frac{1}{n}}=|\lambda_1|.$$

	Now we give the proof of Theorem \ref{Thm-hyper-torus}. Assume that the multiplicities of $\lambda_u$ and $\lambda_s$ are $d_u$ and $d_s,$ respectively. Denote
	$$V_u:=\{x\in \mathbb{R}^d:(A-\lambda_uI)^{d_u}x=0\},$$
	$$V_s:=\{x\in \mathbb{R}^d:(A-\lambda_sI)^{d_s}x=0\},$$
	where $I$ is the identity matrix.
	Then we have $ \mathbb{R}^d=V^{u}\oplus V^{s}$ and the restriction of $A$ on $V_u$ and $V_s,$ $A|_{V_u}$ and $A|_{V_s},$ has only one eigenvalue $\lambda_u$ and $\lambda_s,$ respectively. By the spectral radius formula, one has
	$$\lim\limits_{m\to\infty}(m((A|_{V_u})^n))^{\frac{1}{n}}=\lim\limits_{m\to\infty}||(A|_{V_u})^n||^{\frac{1}{n}}=|\lambda_u|,$$
	$$\lim\limits_{m\to\infty}(m((A|_{V_s})^n))^{\frac{1}{n}}=\lim\limits_{m\to\infty}||(A|_{V_s})^n||^{\frac{1}{n}}=|\lambda_s|.$$
	Fix $0<\varepsilon<|\lambda_s|.$ There is $N\in\mathbb{N^{+}}$ such that
	$$(|\lambda_u|-\varepsilon)^N<\frac{|A^Nx|}{|x|}<(|\lambda_u|+\varepsilon)^N, \forall x\in V_u,$$
	$$(|\lambda_s|-\varepsilon)^N<\frac{|A^Nx|}{|x|}<(|\lambda_s|+\varepsilon)^N, \forall x\in V_s.$$
	It follows that $\mathbb{T}^d$ is a
	uniformly $((|\lambda_s|+\varepsilon)^N,(|\lambda_u|-\varepsilon)^{-N})$-hyperbolic set for $f_{A^N}$ and  $f_{A^N}$ is $((|\lambda_s|-\varepsilon)^{-N},(|\lambda_u|+\varepsilon)^N)$-bi-Lipschitz.
	Every  hyperbolic automorphism of $\mathbb{T}^d$ is mixing (see, for example, \cite{BS}). So $f_{A^N}$ is mixing.
	Since  the multiplicities of  $\lambda_u$ and $\lambda_s$ are $d_u$ and $d_s$ respectivelty, then we have $d_s+d_u=d$ and $|\lambda_s|^{d_s}|\lambda_u|^{d_u}=1$. From \cite[Theorem 8.14]{Walters}, one has $h_{top}(f_{A},\mathbb{T}^d)=d_u\ln|\lambda_u|=d_s\ln|\lambda_s|^{-1}$ and thus $h_{top}(f_{A^N},\mathbb{T}^d)=Nh_{top}(f_{A},\mathbb{T}^d)=Nd_s\ln|\lambda_s|^{-1}.$
	
	For each $j\geq 1,$ we can choose $S_j=\{s_j^i\}_{i=1}^{\infty}\subset \mathbb{N}$ and $0\leq I_j\leq N-1$ such that $\lim\limits_{i\to \infty}-\frac{\ln\varphi(s_j^i)}{s_j^i}=\underline{\tau}_j$ and $s_j^i=I_j \mod N$ for any $i\in\mathbb{N}.$ Since $f_A$ is Lipschitz, then   $0<L=\sup\{\frac{d(f_A(x),f_A(y))}{d(x,y)}:x\neq y\in X\}<\infty.$  For each $j\geq 1,$ define
	\begin{equation*}
		\begin{split}
			&\tilde{\mathcal{Z}}_j:=\{\tilde{z}_j^i\}_{i=0}^{\infty}\text{ with }  f^{I_j}_A(\tilde{z}_j^{i})=z_j^{iN+I_j},\\
			&\tilde{S}_j:=\{\tilde{s}_j^i\}_{i=0}^{\infty}  \text{ with } \tilde{s}_j^i=\frac{s_j^i-I_j}{N},\\
		\end{split}
	\end{equation*}
	and
	$$
	\tilde{\varphi}_j(i):=\left\{\begin{array}{ll}
		\frac{\varphi_j(iN+I_j)}{L^{I_j}}, & \text { if } i\in \tilde{S}_j,\\
		1, & \text { if } i\in \mathbb{N}\setminus \tilde{S}_j.
	\end{array}\right.
	$$
	Then we have $\mathfrak{S}(f_{A^N},\tilde{\varphi_j},\tilde{\mathcal{Z}}_j,\tilde{S}_j)\subseteq \mathfrak{S}(f_{A},\varphi_j,\mathcal{Z}_j,\mathbb{N})$ and
	\begin{equation*}
		\begin{split}
			\overline{\tau}_j(\tilde{\varphi}_j):&=\limsup\limits_{i\to\infty}-\frac{\ln\tilde{\varphi}_j(i)}{i}=\limsup\limits_{i\to\infty}-\frac{\ln\tilde{\varphi}_j(\tilde{s}_j^i)}{\tilde{s}_j^i}\\
			&=N\lim\limits_{i\to\infty}-\frac{\ln\varphi_j(s_j^i)}{s_j^i}  = N\underline{\tau}_j.
		\end{split}
	\end{equation*}
	Denote $\tilde{\overline{\tau}}:=\sup\limits_{j\geq 1}\overline{\tau}_j(\tilde{\varphi}_j)=\sup\limits_{j\geq 1}\underline{\tau}_j= N\underline{\tau}.$

	On the other hand, for each $j\geq 1$ and $0\leq n\leq N-1,$ define
	\begin{equation*}
		\begin{split}
			&\mathcal{Z}_{j,n}:=\{z_{j,n}^i\}_{i=0}^{\infty}\text{ with } z_{j,n}^{i+1}=f^{N-n}_A(z_{j}^{iN+n}),\\
			&\varphi_{j,n}(i+1):=\varphi_j(iN+n)L^{N-n}.
		\end{split}
	\end{equation*}
	Then we have $\mathfrak{S}(f_{A},\varphi_j,\mathcal{Z}_j,\mathbb{N})\subseteq \cup_{n=0}^{N-1}\mathfrak{S}(f_{A},\varphi_j,\mathcal{Z}_j,N\mathbb{N}+n)\subseteq \cup_{n=0}^{N-1}\mathfrak{S}(f_{A^N},\varphi_{j,n},\mathcal{Z}_{j,n},\mathbb{N})$ and
	\begin{equation}\label{equa-RT}
		\begin{split}
			\underline{\tau}_{j,n}(\varphi_{j,n}):&=\liminf\limits_{i\to\infty}-\frac{\ln\varphi_{j,n}(i)}{i}=\liminf\limits_{i\to\infty}-\frac{\ln\varphi_j(iN+n)}{i}\\
			&=N\liminf\limits_{i\to\infty}-\frac{\ln\varphi_j(iN+n)}{iN+n}  \geq N\underline{\tau}_j,
		\end{split}
	\end{equation}
	where $N\mathbb{N}+n:=\{mN+n:m\in\mathbb{N}\}.$

	\bigskip
	\noindent{\bf Upper bounds}
	
	If  $\underline{\tau}_{j,n}(\varphi_{j,n})\leq \ln(|\lambda_s|-\varepsilon)^{-N},$ by Theorem \ref{Thm-upper-bound}(2.1)(2.2), Proposition \ref{Prop-hype} and (\ref{equa-RT})  we have
	\begin{equation*}
		\begin{split}
			h_{top}(f_{A^N},\mathfrak{S}(f_{A^N},\varphi_{j,n},\mathcal{Z}_{j,n},\mathbb{N}))&\leq \frac{\ln(|\lambda_s|-\varepsilon)^{-N}\ln(|\lambda_u|+\varepsilon)^{N}-\underline{\tau}_{j,n}\ln(|\lambda_u|+\varepsilon)^{N} }{\ln(|\lambda_s|-\varepsilon)^{-N}\ln(|\lambda_u|+\varepsilon)^{N}+\underline{\tau}_{j,n}\ln(|\lambda_s|-\varepsilon)^{-N}}h_{top}(f_{A^N},\mathbb{T}^d)\\
			& \leq  \frac{\ln(|\lambda_s|-\varepsilon)^{-1}\ln(|\lambda_u|+\varepsilon)-\underline{\tau}_j\ln(|\lambda_u|+\varepsilon) }{\ln(|\lambda_s|-\varepsilon)^{-1}\ln(|\lambda_u|+\varepsilon)+\underline{\tau}_j\ln(|\lambda_s|-\varepsilon)^{-1}}Nd_s\ln|\lambda_s|^{-1}
		\end{split}
	\end{equation*}
	and
	\begin{equation*}
		\begin{split}
			& \operatorname{dim}_H \mathfrak{S}(f_{A^N},\varphi_{j,n},\mathcal{Z}_{j,n},\mathbb{N})\\
			\leq  &(\frac{1}{\ln (|\lambda_s|+\varepsilon)^{-N}}+\frac{1}{\ln (|\lambda_u|-\varepsilon)^N}\frac{\ln(|\lambda_s|-\varepsilon)^{-N}\ln(|\lambda_u|+\varepsilon)^{N}-\underline{\tau}_{j,n}\ln(|\lambda_u|+\varepsilon)^{N} }{\ln(|\lambda_s|-\varepsilon)^{-N}\ln(|\lambda_u|+\varepsilon)^{N}+\underline{\tau}_{j,n}\ln(|\lambda_s|-\varepsilon)^{-N}})h_{top}(f,X)\\
			\leq &(\frac{1}{\ln (|\lambda_s|+\varepsilon)^{-1}}+\frac{1}{\ln (|\lambda_u|-\varepsilon)}\frac{\ln(|\lambda_s|-\varepsilon)^{-1}\ln(|\lambda_u|+\varepsilon)-\underline{\tau}_j\ln(|\lambda_u|+\varepsilon) }{\ln(|\lambda_s|-\varepsilon)^{-1}\ln(|\lambda_u|+\varepsilon)+\underline{\tau}_j\ln(|\lambda_s|-\varepsilon)^{-1}})d_s\ln|\lambda_s|^{-1}.
		\end{split}
	\end{equation*}
	
	If  $\underline{\tau}_{j,n}(\varphi_{j,n})> \ln(|\lambda_s|-\varepsilon)^{-N},$ by Theorem \ref{Thm-upper-bound}(2.3)  we have
	\begin{equation}\label{equa-RO}
		\begin{split}
			h_{top}(f_{A^N},\mathfrak{S}(f_{A^N},\varphi_{j,n},\mathcal{Z}_{j,n},\mathbb{N}))=0,
		\end{split}
	\end{equation}
	\begin{equation}\label{equa-RP}
		\operatorname{dim}_H \mathfrak{S}(f_{A^N},\varphi_{j,n},\mathcal{Z}_{j,n},\mathbb{N})=0.
	\end{equation}
	
	It follows that
	\begin{equation*}
		\begin{split}
			h_{top}(f_{A},\mathfrak{S}(f_{A},\varphi_j,\mathcal{Z}_j,\mathbb{N}))&\leq \max_{0\leq n\leq N-1 } h_{top}(f_{A},\mathfrak{S}(f_{A^N},\varphi_{j,n},\mathcal{Z}_{j,n},\mathbb{N}))\\
			&= \max_{0\leq n\leq N-1 } \frac{1}{N} h_{top}(f_{A^N},\mathfrak{S}(f_{A^N},\varphi_{j,n},\mathcal{Z}_{j,n},\mathbb{N})) \\
			& \leq \frac{\ln(|\lambda_s|-\varepsilon)^{-1}\ln(|\lambda_u|+\varepsilon)-\underline{\tau}_j\ln(|\lambda_u|+\varepsilon) }{\ln(|\lambda_s|-\varepsilon)^{-1}\ln(|\lambda_u|+\varepsilon)+\underline{\tau}_j\ln(|\lambda_s|-\varepsilon)^{-1}}d_s\ln|\lambda_s|^{-1}
		\end{split}
	\end{equation*}
	and
	\begin{equation*}
		\begin{split}
			&\operatorname{dim}_H\mathfrak{S}(f_{A},\varphi_j,\mathcal{Z}_j,\mathbb{N})\\
			\leq& \max_{0\leq n\leq N-1 } \operatorname{dim}_H\mathfrak{S}(f_{A^N},\varphi_{j,n},\mathcal{Z}_{j,n},\mathbb{N})\\
			\leq &(\frac{1}{\ln (|\lambda_s|+\varepsilon)^{-1}}+\frac{1}{\ln (|\lambda_u|-\varepsilon)}\frac{\ln(|\lambda_s|-\varepsilon)^{-1}\ln(|\lambda_u|+\varepsilon)-\underline{\tau}_j\ln(|\lambda_u|+\varepsilon) }{\ln(|\lambda_s|-\varepsilon)^{-1}\ln(|\lambda_u|+\varepsilon)+\underline{\tau}_j\ln(|\lambda_s|-\varepsilon)^{-1}})d_s\ln|\lambda_s|^{-1}.
		\end{split}
	\end{equation*}
	Thus we have
	$$h_{top}(f_A,\cap_{j=1}^{\infty}\mathfrak{S}(f,\varphi_j,\mathcal{Z}_j,\mathbb{N})) \leq \frac{\ln(|\lambda_s|-\varepsilon)^{-1}\ln(|\lambda_u|+\varepsilon)-\underline{\tau}\ln(|\lambda_u|+\varepsilon) }{\ln(|\lambda_s|-\varepsilon)^{-1}\ln(|\lambda_u|+\varepsilon)+\underline{\tau}\ln(|\lambda_s|-\varepsilon)^{-1}}d_s\ln|\lambda_s|^{-1}$$
	and
	\begin{equation*}
		\begin{split}
			&\operatorname{dim}_H \cap_{j=1}^{\infty}\mathfrak{S}(f_{A},\varphi_j,\mathcal{Z}_j,\mathbb{N})\\
			\leq &(\frac{1}{\ln (|\lambda_s|+\varepsilon)^{-1}}+\frac{1}{\ln (|\lambda_u|-\varepsilon)}\frac{\ln(|\lambda_s|-\varepsilon)^{-1}\ln(|\lambda_u|+\varepsilon)-\underline{\tau}\ln(|\lambda_u|+\varepsilon) }{\ln(|\lambda_s|-\varepsilon)^{-1}\ln(|\lambda_u|+\varepsilon)+\underline{\tau}\ln(|\lambda_s|-\varepsilon)^{-1}})d_s\ln|\lambda_s|^{-1}.
		\end{split}
	\end{equation*}
	Let $\varepsilon\to 0,$ we obtain
	\begin{equation}\label{equa-RQ}
		h_{top}(f_A,\cap_{j=1}^{\infty}\mathfrak{S}(f_A,\varphi_j,\mathcal{Z}_j,\mathbb{N}))\leq d_s\frac{\ln |\lambda_s|^{-1}\ln |\lambda_u|-\underline{\tau}\ln |\lambda_u|}{\ln |\lambda_u|+\underline{\tau}}.
	\end{equation}
	\begin{equation}\label{equa-RU}
		\begin{split}
			\operatorname{dim}_H\cap_{j=1}^{\infty}\mathfrak{S}(f_A,\varphi_j,\mathcal{Z}_j,\mathbb{N})\leq d_s\frac{\ln |\lambda_s|^{-1}+\ln |\lambda_u|}{\ln |\lambda_u|+\underline{\tau}}.
		\end{split}
	\end{equation}
	In particular, when $\underline{\tau}=-\ln|\lambda_s|$, we have
	\begin{equation}\label{equa-RV}
		h_{top}(f_A,\cap_{j=1}^{\infty}\mathfrak{S}(f_A,\varphi_j,\mathcal{Z}_j,\mathbb{N}))=0,
	\end{equation}
	\begin{equation}\label{equa-RW}
		\operatorname{dim}_H\cap_{j=1}^{\infty}\mathfrak{S}(f_A,\varphi_j,\mathcal{Z}_j,\mathbb{N})\leq d_s.
	\end{equation}
	
	When $\underline{\tau}>-\ln|\lambda_s|$, there is $j_0\in\mathbb{N}$ such that $\underline{\tau}_{j_0}>-\ln|\lambda_s|.$
	We require that $\varepsilon>0$ is small enough such that $$\underline{\tau}_{j_0}>-\ln(|\lambda_s|-\varepsilon).$$ Then for any $0\leq n\leq N-1$  we have  $\underline{\tau}_{j_0,n}(\varphi_{j_0,n})\geq N\underline{\tau}_{j_0}>-\ln(|\lambda_s|-\varepsilon)^{N}.$ So by (\ref{equa-RO}) and (\ref{equa-RP})  for any $0\leq n\leq N-1$ we have
	 \begin{equation*}
		\begin{split}
			h_{top}(f_{A^N},\mathfrak{S}(f_{A^N},\varphi_{j_0,n},\mathcal{Z}_{j_0,n},\mathbb{N}))=0,
		\end{split}
	\end{equation*}
	\begin{equation*}
		\operatorname{dim}_H \mathfrak{S}(f_{A^N},\varphi_{j_0,n},\mathcal{Z}_{j_0,n},\mathbb{N})=0.
	\end{equation*}
	 And thus
 	\begin{equation}\label{equa-RX}
 		h_{top}(f_A,\cap_{j=1}^{\infty}\mathfrak{S}(f_A,\varphi_j,\mathcal{Z}_j,\mathbb{N}))=0,
 	\end{equation}
 	\begin{equation}\label{equa-RY}
 		\operatorname{dim}_H\cap_{j=1}^{\infty}\mathfrak{S}(f_A,\varphi_j,\mathcal{Z}_j,\mathbb{N})=0.
 	\end{equation}

	\bigskip
	\noindent{\bf Lower bounds}
	
	When  $\underline{\tau}<-\ln|\lambda_s|$, we require that $\varepsilon>0$ is small enough such that
	\begin{equation*}
		\underline{\tau}<-\ln(|\lambda_s|+\varepsilon).
	\end{equation*}
	Then we have  $\tilde{\overline{\tau}}= N\underline{\tau}<-\ln(|\lambda_s|+\varepsilon)^{N}.$
	Apply Theorem \ref{Thm-hyper}(1) to $(\mathbb{T}^d,f_{A^N}),$ the set
	$\cap_{j=1}^{\infty}\mathfrak{S}(f_{A^N},\tilde{\varphi_j},\tilde{\mathcal{Z}}_j,\tilde{S}_j)$ is non-empty and
	\begin{equation*}
		\begin{split}
			h_{top}(f_{A^N},\cap_{j=1}^{\infty}\mathfrak{S}(f_{A^N},\tilde{\varphi_j},\tilde{\mathcal{Z}}_j,\tilde{S}_j)&\geq \frac{\ln(|\lambda_s|+\varepsilon)^{-N}\ln(|\lambda_u|-\varepsilon)^{N}-\tilde{\overline{\tau}}\ln(|\lambda_u|-\varepsilon)^{N} }{\ln(|\lambda_s|+\varepsilon)^{-N}\ln(|\lambda_u|-\varepsilon)^{N}+\tilde{\overline{\tau}}\ln(|\lambda_s|+\varepsilon)^{-N}}h_{top}(f_{A^N},\mathbb{T}^d)\\
			& =\frac{\ln(|\lambda_s|+\varepsilon)^{-1}\ln(|\lambda_u|-\varepsilon)-\underline{\tau}\ln(|\lambda_u|-\varepsilon) }{\ln(|\lambda_s|+\varepsilon)^{-1}\ln(|\lambda_u|-\varepsilon)+\underline{\tau}\ln(|\lambda_s|+\varepsilon)^{-1}}Nd_s\ln|\lambda_s|^{-1}
		\end{split}
	\end{equation*}
	and
		\begin{equation*}
		\begin{split}
			& \operatorname{dim}_H \cap_{j=1}^{\infty}\mathfrak{S}(f_{A^N},\tilde{\varphi_j},\tilde{\mathcal{Z}}_j,\tilde{S}_j)\\
			\geq &(\frac{1}{\ln (|\lambda_s|-\varepsilon)^{-N}}+\frac{1}{\ln (|\lambda_u|+\varepsilon)^N}\frac{\ln(|\lambda_s|+\varepsilon)^{-N}\ln(|\lambda_u|-\varepsilon)^{N}-\tilde{\overline{\tau}}\ln(|\lambda_u|-\varepsilon)^{N} }{\ln(|\lambda_s|+\varepsilon)^{-N}\ln(|\lambda_u|-\varepsilon)^{N}+\tilde{\overline{\tau}}\ln(|\lambda_s|+\varepsilon)^{-N}})h_{top}(f,X)\\
			=&(\frac{1}{\ln (|\lambda_s|-\varepsilon)^{-1}}+\frac{1}{\ln (|\lambda_u|+\varepsilon)}\frac{\ln(|\lambda_s|+\varepsilon)^{-1}\ln(|\lambda_u|-\varepsilon)-\underline{\tau}\ln(|\lambda_u|-\varepsilon) }{\ln(|\lambda_s|+\varepsilon)^{-1}\ln(|\lambda_u|-\varepsilon)+\underline{\tau}\ln(|\lambda_s|+\varepsilon)^{-1}})d_s\ln|\lambda_s|^{-1}.
		\end{split}
	\end{equation*}
	It follows that
	\begin{equation*}
		\begin{split}
			h_{top}(f_A,\cap_{j=1}^{\infty}\mathfrak{S}(f_A,\varphi_j,\mathcal{Z}_j,\mathbb{N}))&\geq h_{top}(f_{A},\cap_{j=1}^{\infty}\mathfrak{S}(f_{A^N},\tilde{\varphi_j},\tilde{\mathcal{Z}}_j,\tilde{S}_j))\\
			&= \frac{1}{N}h_{top}(f_{A^N},\cap_{j=1}^{\infty}\mathfrak{S}(f_{A^N},\tilde{\varphi_j},\tilde{\mathcal{Z}}_j,\tilde{S}_j)\\
			&\geq\frac{\ln(|\lambda_s|+\varepsilon)^{-1}\ln(|\lambda_u|-\varepsilon)-\underline{\tau}\ln(|\lambda_u|-\varepsilon) }{\ln(|\lambda_s|+\varepsilon)^{-1}\ln(|\lambda_u|-\varepsilon)+\underline{\tau}\ln(|\lambda_s|+\varepsilon)^{-1}}d_s\ln|\lambda_s|^{-1}
		\end{split}
	\end{equation*}
	and
	\begin{equation*}
		\begin{split}
			&\operatorname{dim}_H\cap_{j=1}^{\infty}\mathfrak{S}(f_A,\varphi_j,\mathcal{Z}_j,\mathbb{N})\\
			\geq &\operatorname{dim}_H \cap_{j=1}^{\infty}\mathfrak{S}(f_{A^N},\tilde{\varphi_j},\tilde{\mathcal{Z}}_j,\tilde{S}_j)\\
			\geq&(\frac{1}{\ln (|\lambda_s|-\varepsilon)^{-1}}+\frac{1}{\ln (|\lambda_u|+\varepsilon)}\frac{\ln(|\lambda_s|+\varepsilon)^{-1}\ln(|\lambda_u|-\varepsilon)-\underline{\tau}\ln(|\lambda_u|-\varepsilon) }{\ln(|\lambda_s|+\varepsilon)^{-1}\ln(|\lambda_u|-\varepsilon)+\underline{\tau}\ln(|\lambda_s|+\varepsilon)^{-1}})d_s\ln|\lambda_s|^{-1}.
		\end{split}
	\end{equation*}
	Let $\varepsilon\to 0,$ we obtain
	\begin{equation}\label{equa-RR}
		h_{top}(f_A,\cap_{j=1}^{\infty}\mathfrak{S}(f_A,\varphi_j,\mathcal{Z}_j,\mathbb{N}))\geq d_s\frac{\ln |\lambda_s|^{-1}\ln |\lambda_u|-\underline{\tau}\ln |\lambda_u|}{\ln |\lambda_u|+\underline{\tau}},
	\end{equation}
	\begin{equation}\label{equa-RS}
		\operatorname{dim}_H\cap_{j=1}^{\infty}\mathfrak{S}(f_A,\varphi_j,\mathcal{Z}_j,\mathbb{N})\geq d_s\frac{\ln |\lambda_s|^{-1}+\ln |\lambda_u|}{\ln |\lambda_u|+\underline{\tau}}.
	\end{equation}

	When  $\underline{\tau}<-\ln|\lambda_s|,$ by (\ref{equa-RQ}), (\ref{equa-RU}), (\ref{equa-RR}) and (\ref{equa-RS}), we have $$h_{top}(f_A,\cap_{j=1}^{\infty}\mathfrak{S}(f_A,\varphi_j,\mathcal{Z}_j,\mathbb{N}))= d_s\frac{\ln |\lambda_s|^{-1}\ln |\lambda_u|-\underline{\tau}\ln |\lambda_u|}{\ln |\lambda_u|+\underline{\tau}}.$$
	\begin{equation*}
		\begin{split}
			\operatorname{dim}_H\cap_{j=1}^{\infty}\mathfrak{S}(f_A,\varphi_j,\mathcal{Z}_j,\mathbb{N})= d_s\frac{\ln |\lambda_s|^{-1}+\ln |\lambda_u|}{\ln |\lambda_u|+\underline{\tau}}.
		\end{split}
			\end{equation*}
		So we obtain item(1). By (\ref{equa-RV}) and (\ref{equa-RW}), we obtain item(2). By (\ref{equa-RX}) and (\ref{equa-RY}), we obtain item(3).  \qed
	
\subsection{Proof of Theorem \ref{Thm-exp-torus}}
	By the spectral radius formula, one has
	$$\lim\limits_{m\to\infty}||A^n||^{\frac{1}{n}}=|\lambda_d|,$$
	$$\lim\limits_{m\to\infty}(m(A^n))^{\frac{1}{n}}=\lim\limits_{m\to\infty}||A^{-n}||^{-\frac{1}{n}}=|\lambda_1|.$$
	Then there is $N\in\mathbb{N^{+}}$ such that
	$$(|\lambda_1|-\varepsilon)^N<\frac{|A^Nx|}{|x|}<(|\lambda_d|+\varepsilon)^N, \forall x\in \mathbb{R}^d.$$
	It follows that $f_{A^N}$ is a
	$(|\lambda_1|-\varepsilon)^N$-expanding map and  $f_{A^N}$ is $(|\lambda_d|+\varepsilon)^N$-Lipschitz.
	Since $\mathbb{T}^d$ is connected, then  $f_{A^N}:\mathbb{T}^d \rightarrow \mathbb{T}^d$   is mixing (see \cite[Corollary 11.2.16]{VO-2016}). From \cite[Theorem 8.14]{Walters}, we have $h_{top}(f_{A},\mathbb{T}^d)=\sum_{i=1}^{d}\ln|\lambda_i|$ and thus $h_{top}(f_{A^N},\mathbb{T}^d)=Nh_{top}(f_{A},\mathbb{T}^d)=N\sum_{i=1}^{d}\ln|\lambda_i|.$

	For each $j\geq 1,$ we can choose $S_j=\{s_j^i\}_{i=1}^{\infty}\subset \mathbb{N}$ and $0\leq I_j\leq N-1$ such that $\lim\limits_{i\to \infty}-\frac{\ln\varphi(s_j^i)}{s_j^i}=\underline{\tau}_j$ and $s_j^i=I_j \mod N$ for any $i\in\mathbb{N}.$ Since $f_A$ is Lipschitz, then   $0<L=\sup\{\frac{d(f_A(x),f_A(y))}{d(x,y)}:x\neq y\in X\}<\infty.$  For each $j\geq 1,$ define
	\begin{equation*}
		\begin{split}
			&\tilde{\mathcal{Z}}_j:=\{\tilde{z}_j^i\}_{i=0}^{\infty}\text{ with }  f^{I_j}_A(\tilde{z}_j^{i})=z_j^{iN+I_j},\\
			&\tilde{S}_j:=\{\tilde{s}_j^i\}_{i=0}^{\infty}  \text{ with } \tilde{s}_j^i=\frac{s_j^i-I_j}{N},\\
		\end{split}
	\end{equation*}
	and
	$$
	\tilde{\varphi}_j(i):=\left\{\begin{array}{ll}
		\frac{\varphi_j(iN+I_j)}{L^{I_j}}, & \text { if } i\in \tilde{S}_j,\\
		1, & \text { if } i\in \mathbb{N}\setminus \tilde{S}_j.
	\end{array}\right.
	$$
	Then we have $\mathfrak{S}(f_{A^N},\tilde{\varphi_j},\tilde{\mathcal{Z}}_j,\tilde{S}_j)\subseteq \mathfrak{S}(f_{A},\varphi_j,\mathcal{Z}_j,\mathbb{N})$ and
	\begin{equation*}
		\begin{split}
			\overline{\tau}_j(\tilde{\varphi}_j):&=\limsup\limits_{i\to\infty}-\frac{\ln\tilde{\varphi}_j(i)}{i}=\limsup\limits_{i\to\infty}-\frac{\ln\tilde{\varphi}_j(\tilde{s}_j^i)}{\tilde{s}_j^i}\\
			&=N\lim\limits_{i\to\infty}-\frac{\ln\varphi_j(s_j^i)}{s_j^i}  = N\underline{\tau}_j.
		\end{split}
	\end{equation*}
	Denote $\tilde{\overline{\tau}}:=\sup\limits_{j\geq 1}\overline{\tau}_j(\tilde{\varphi}_j)=\sup\limits_{j\geq 1}\underline{\tau}_j= N\underline{\tau}.$
	
	Apply Theorem \ref{Thm-exp} to $(\mathbb{T}^d,f_{A^N}),$ the set
	$\cap_{j=1}^{\infty}\mathfrak{S}(f_{A^N},\tilde{\varphi_j},\tilde{\mathcal{Z}}_j,\tilde{S}_j)$ is non-empty and
	\begin{equation*}
		\begin{split}
			h_{top}(f_{A^N},\cap_{j=1}^{\infty}\mathfrak{S}(f_{A^N},\tilde{\varphi_j},\tilde{\mathcal{Z}}_j,\tilde{S}_j)\geq \frac{\ln(|\lambda_1|-\varepsilon)^N }{\ln(|\lambda_1|-\varepsilon)^N+\tilde{\overline{\tau}}}h_{top}(f_{A^N},\mathbb{T}^d)=\frac{\ln(|\lambda_1|-\varepsilon) }{\ln(|\lambda_1|-\varepsilon)+\underline{\tau}}N\sum_{i=1}^{d}\ln|\lambda_i|.
		\end{split}
	\end{equation*}
	and
	\begin{equation*}
		\begin{split}
			 \operatorname{dim}_H \cap_{j=1}^{\infty}\mathfrak{S}(f_{A^N},\tilde{\varphi_j},\tilde{\mathcal{Z}}_j,\tilde{S}_j)
			&\geq \frac{1}{\ln (|\lambda_d|+\varepsilon)^N}\frac{\ln(|\lambda_1|-\varepsilon)^N }{\ln(|\lambda_1|-\varepsilon)^N+\tilde{\overline{\tau}}}h_{top}(f_{A^N},\mathbb{T}^d)\\
			&= \frac{1}{\ln (|\lambda_d|+\varepsilon)}\frac{\ln(|\lambda_1|-\varepsilon) }{\ln(|\lambda_1|-\varepsilon)+\underline{\tau}}\sum_{i=1}^{d}\ln|\lambda_i|.
		\end{split}
	\end{equation*}
		It follows that
	\begin{equation*}
		\begin{split}
			h_{top}(f_A,\cap_{j=1}^{\infty}\mathfrak{S}(f_A,\varphi_j,\mathcal{Z}_j,\mathbb{N}))&\geq h_{top}(f_{A},\cap_{j=1}^{\infty}\mathfrak{S}(f_{A^N},\tilde{\varphi_j},\tilde{\mathcal{Z}}_j,\tilde{S}_j))\\
			&= \frac{1}{N}h_{top}(f_{A^N},\cap_{j=1}^{\infty}\mathfrak{S}(f_{A^N},\tilde{\varphi_j},\tilde{\mathcal{Z}}_j,\tilde{S}_j)\\
			&\geq\frac{\ln(|\lambda_1|-\varepsilon) }{\ln(|\lambda_1|-\varepsilon)+\underline{\tau}}\sum_{i=1}^{d}\ln|\lambda_i|
		\end{split}
	\end{equation*}
	and
	\begin{equation*}
		\begin{split}
			\operatorname{dim}_H\cap_{j=1}^{\infty}\mathfrak{S}(f_A,\varphi_j,\mathcal{Z}_j,\mathbb{N})
			&\geq \operatorname{dim}_H \cap_{j=1}^{\infty}\mathfrak{S}(f_{A^N},\tilde{\varphi_j},\tilde{\mathcal{Z}}_j,\tilde{S}_j)\\
			&\geq \frac{1}{\ln (|\lambda_d|+\varepsilon)}\frac{\ln(|\lambda_1|-\varepsilon) }{\ln(|\lambda_1|-\varepsilon)+\underline{\tau}}\sum_{i=1}^{d}\ln|\lambda_i|.
		\end{split}
	\end{equation*}
	Let $\varepsilon\to 0,$ we obtain
	\begin{equation}\label{equa-SR}
		h_{top}(f_A,\cap_{j=1}^{\infty}\mathfrak{S}(f_A,\varphi_j,\mathcal{Z}_j,\mathbb{N}))\geq \frac{\ln|\lambda_1| }{\ln|\lambda_1|+\underline{\tau}}\sum_{i=1}^{d}\ln|\lambda_i|,
	\end{equation}
	\begin{equation}\label{equa-SS}
		\operatorname{dim}_H\cap_{j=1}^{\infty}\mathfrak{S}(f_A,\varphi_j,\mathcal{Z}_j,\mathbb{N})\geq \frac{1}{\ln |\lambda_d|}\frac{\ln|\lambda_1| }{\ln|\lambda_1|+\underline{\tau}}\sum_{i=1}^{d}\ln|\lambda_i|.
	\end{equation}

	On the other hand, for each $j\geq 1$ and $0\leq n\leq N-1,$ define
	\begin{equation*}
		\begin{split}
			&\mathcal{Z}_{j,n}:=\{z_{j,n}^i\}_{i=0}^{\infty}\text{ with } z_{j,n}^{i+1}=f^{N-n}_A(z_{j}^{iN+n}),\\
			&\varphi_{j,n}(i+1):=\varphi_j(iN+n)L^{N-n}.
		\end{split}
	\end{equation*}
	Then we have $\mathfrak{S}(f_{A},\varphi_j,\mathcal{Z}_j,\mathbb{N})\subseteq \cup_{n=0}^{N-1}\mathfrak{S}(f_{A},\varphi_j,\mathcal{Z}_j,N\mathbb{N}+n)\subseteq \cup_{n=0}^{N-1}\mathfrak{S}(f_{A^N},\varphi_{j,n},\mathcal{Z}_{j,n},\mathbb{N})$ and
	\begin{equation*}
		\begin{split}
			\underline{\tau}_{j,n}(\varphi_{j,n}):&=\liminf\limits_{i\to\infty}-\frac{\ln\varphi_{j,n}(i)}{i}=\liminf\limits_{i\to\infty}-\frac{\ln\varphi_j(iN+n)}{i}\\
			&=N\liminf\limits_{i\to\infty}-\frac{\ln\varphi_j(iN+n)}{iN+n}  \geq N\underline{\tau}_j.
		\end{split}
	\end{equation*}
	By Theorem \ref{Thm-upper-bound}(1), we have
	\begin{equation*}
		\begin{split}
			h_{top}(f_{A^N},\mathfrak{S}(f_{A^N},\varphi_{j,n},\mathcal{Z}_{j,n},\mathbb{N}))\leq \frac{\ln(|\lambda_d|+\varepsilon)^N }{\ln(|\lambda_d|+\varepsilon)^N+\underline{\tau}_{j,n}}h_{top}(f_{A^N},\mathbb{T}^d)\leq \frac{\ln(|\lambda_d|+\varepsilon) }{\ln(|\lambda_d|+\varepsilon)+\underline{\tau}_{j}}N\sum_{i=1}^{d}\ln|\lambda_i|.
		\end{split}
	\end{equation*}
	and
	\begin{equation*}
		\begin{split}
			\operatorname{dim}_H \mathfrak{S}(f_{A^N},\varphi_{j,n},\mathcal{Z}_{j,n},\mathbb{N})
			&\leq \frac{1}{\ln (|\lambda_1|-\varepsilon)^N}\frac{\ln(|\lambda_d|+\varepsilon)^N }{\ln(|\lambda_d|+\varepsilon)^N+\underline{\tau}_{j,n}}h_{top}(f_{A^N},\mathbb{T}^d)\\
			&\leq  \frac{1}{\ln (|\lambda_1|-\varepsilon)}\frac{\ln(|\lambda_d|+\varepsilon) }{\ln(|\lambda_d|+\varepsilon)+\underline{\tau}_j}\sum_{i=1}^{d}\ln|\lambda_i|.
		\end{split}
	\end{equation*}
	It follows that
	\begin{equation*}
		\begin{split}
			h_{top}(f_{A},\mathfrak{S}(f_{A},\varphi_j,\mathcal{Z}_j,\mathbb{N}))&\leq \max_{0\leq n\leq N-1 } h_{top}(f_{A},\mathfrak{S}(f_{A^N},\varphi_{j,n},\mathcal{Z}_{j,n},\mathbb{N}))\\
			&= \max_{0\leq n\leq N-1 } \frac{1}{N} h_{top}(f_{A^N},\mathfrak{S}(f_{A^N},\varphi_{j,n},\mathcal{Z}_{j,n},\mathbb{N})) \\
			& \leq\frac{\ln(|\lambda_d|+\varepsilon) }{\ln(|\lambda_d|+\varepsilon)+\underline{\tau}_{j}}\sum_{i=1}^{d}\ln|\lambda_i|,
		\end{split}
	\end{equation*}
	and
	\begin{equation*}
		\begin{split}
			\operatorname{dim}_H\mathfrak{S}(f_{A},\varphi_j,\mathcal{Z}_j,\mathbb{N})
			&\leq \max_{0\leq n\leq N-1 } \operatorname{dim}_H\mathfrak{S}(f_{A^N},\varphi_{j,n},\mathcal{Z}_{j,n},\mathbb{N})\\
			&\leq \frac{1}{\ln (|\lambda_1|-\varepsilon)}\frac{\ln(|\lambda_d|+\varepsilon) }{\ln(|\lambda_d|+\varepsilon)+\underline{\tau}_j}\sum_{i=1}^{d}\ln|\lambda_i|.
		\end{split}
	\end{equation*}
	Thus we have
	$$h_{top}(f,\cap_{j=1}^{\infty}\mathfrak{S}(f,\varphi_j,\mathcal{Z}_j,\mathbb{N})) \leq \frac{\ln(|\lambda_d|+\varepsilon) }{\ln(|\lambda_d|+\varepsilon)+\underline{\tau}}\sum_{i=1}^{d}\ln|\lambda_i|,$$
	and
	\begin{equation*}
		\begin{split}
			\operatorname{dim}_H \cap_{j=1}^{\infty}\mathfrak{S}(f_{A},\varphi_j,\mathcal{Z}_j,\mathbb{N})\leq  \frac{1}{\ln (|\lambda_1|-\varepsilon)}\frac{\ln(|\lambda_d|+\varepsilon) }{\ln(|\lambda_d|+\varepsilon)+\underline{\tau}}\sum_{i=1}^{d}\ln|\lambda_i|.
		\end{split}
	\end{equation*}
	Let $\varepsilon\to 0,$ we obtain
	\begin{equation}\label{equa-SQ}
		h_{top}(f,\cap_{j=1}^{\infty}\mathfrak{S}(f,\varphi_j,\mathcal{Z}_j,\mathbb{N}))\leq \frac{\ln|\lambda_d| }{\ln|\lambda_d|+\underline{\tau}}\sum_{i=1}^{d}\ln|\lambda_i|,
	\end{equation}
	\begin{equation}\label{equa-SU}
		\begin{split}
			\operatorname{dim}_H\cap_{j=1}^{\infty}\mathfrak{S}(f,\varphi_j,\mathcal{Z}_j,\mathbb{N})\leq \frac{1}{\ln |\lambda_1|}\frac{\ln|\lambda_d| }{\ln|\lambda_d|+\underline{\tau}}\sum_{i=1}^{d}\ln|\lambda_i|.
		\end{split}
	\end{equation}
	By (\ref{equa-SR}), (\ref{equa-SS}),  (\ref{equa-SQ}) and  (\ref{equa-SU}), we complete the proof of Theorem \ref{Thm-exp-torus}.
	\qed

%
\bigskip

\textbf{Acknowledgements.}
We are grateful to  M. Rams for his comments. X. Hou and X. Tian are supported by the National Natural Science Foundation of China (grant No. 12071082) and in part by Shanghai Science and Technology Research Program (grant No. 21JC1400700). X. Hou is also funded by China Postdoctoral Science Foundation No. 2023M740713 and supported by the Postdoctoral Fellowship Program of CPSF under Grant Number GZB20240167.
Y. Zhang is partially supported by National Natural Science Foundation of China (grant Nos. 12161141002, 12271432), and Guangdong Basic and applied Basic Research Foundation (grant No. 2024A1515010974).

\end{document}